\definecolor{myblue}{rgb}{0.09,0.32,0.44} %22-84-113
\gdef\SetFigFontNFSS#1#2#3#4#5{} %Silence pointless warnings due to xfig
\gdef\SetFigFont#1#2#3#4#5{} %Silence pointless warnings due to xfig
\def\clap#1{\hbox to 0pt{\hss#1\hss}}
\patchcmd{\section}{\scshape}{\bfseries}{}{}
\renewcommand{\@secnumfont}{\bfseries}
\begin{document}
% ENVIRONMENTS & THEOREMS
%\newenvironment{nmath}{\begin{center}\begin{math}}{\end{math}\end{center}}

\newenvironment{subproof}[1][\proofname]
{\renewcommand{\qedsymbol}{$\square$}\begin{proof}[#1]}{\end{proof}}
\numberwithin{equation}{section}
\renewcommand{\theequation}{\thesection.\arabic{equation}}
\newtheorem{thm}{Theorem}[section]
\newtheorem{lem}[thm]{Lemma}
\newtheorem{prop}[thm]{Proposition}
\newtheorem{cor}[thm]{Corollary}
\newtheorem{conj}[thm]{Conjecture}
\newtheorem{definition}[thm]{Definition}
\newtheorem{prob}[thm]{Problem}
\newtheorem{rmk}[thm]{\normalfont\textit{Remark}}
\newtheorem{prethm}{Theorem}
\renewcommand{\theprethm}{\Alph{prethm}}

% COMMANDS
%\newcommand{\bm}[1]{\text{\textbf{\textit{#1}}}} % hack for bold math
\newcommand{\norm}[1]{\left\lVert #1 \right\rVert}
\newcommand\numberthis{\addtocounter{equation}{1}\tag{\theequation}}
\newcommand{\beq}{\begin{equation}} 
\newcommand{\eeq}{\end{equation}} 
%macro for frequently used command----------
\newcommand{\euc}[1]{|{#1}|}
\newcommand{\wt}{\widetilde}
\newcommand{\supp}{\text{\rm supp}}
\newcommand{\rdep}{M}
\newcommand{\deuc}{\text{\rm \bf dist}}
\newcommand{\Eb}{\mathbf{E}}
\newcommand{\Pb}{\mathbf{P}}
\newcommand{\Nb}{\mathbb{N}}
\newcommand{\Rb}{\mathbb{R}}
\newcommand{\Rc}{\mathcal{R}}
\newcommand{\Tc}{\mathcal{T}}
\newcommand{\hW}{\hat{W}}
\newcommand{\Zb}{\mathbb{Z}}
\newcommand{\eps}{\epsilon}
\newcommand{\zero}{\mathbf{0}}
\newcommand{\one}{{\mathbbm{1}}}
\newcommand{\Ac}{\mathcal{A}}
\newcommand{\Kc}{\mathcal{K}}
\newcommand{\Pc}{\mathcal{P}}
\newcommand{\Ic}{\mathcal{I}}
\newcommand{\Jc}{\mathcal{J}}
\newcommand{\Wc}{\mathcal{W}}
\newcommand{\Xc}{\mathcal{X}}
\newcommand{\Xct}{\widetilde{\mathcal{X}}}
\renewcommand{\Mc}{\mathcal{M}}
\newcommand{\scr}{\scriptstyle}
\newcommand{\sss}{\scriptscriptstyle}
\renewcommand{\ker}{\lambda}
\newcommand{\temp}{c}
\newcommand{\sep}{\text{\rm sep}}
\newcommand{\dzero}{\delta_{\mathbf{0}}}
\newcommand{\Euc}{\mathrm{Euc}}
\begin{abstract}
The first main goal of this article is to give a new metrization of the Mukherjee--Varadhan topology, recently introduced as a translation-invariant compactification of the space of probability measures on Euclidean spaces. This new metrization allows us to achieve our second goal which is to extend the recent program of Bates and Chatterjee on localization for the endpoint distribution of discrete directed polymers to polymers based on general random walks in Euclidean spaces. 
Following their strategy, we study the asymptotic behavior of the endpoint distribution update map and study the set of its distributional fixed points satisfying a variational principle. We show that the distributdion concentrated on the zero measure is a unique element in this set if and only if the system is in the high temperature regime. 
This enables us to prove that the asymptotic clustering (a natural continuous analogue of the asymptotic pure atomicity property) holds in the low temperature regime and that the endpoint distribution is geometrically localized with positive density if and only if the system is in the low temperature regime.
\end{abstract}

% keywords: directed polymers, localization, phase transition, Mukherjee-Varadhan topology
% MSC:60K37, (82B21), 82B26, 82B44, 82D60, 60E05
\title{Localization of directed polymers in continuous space}
\author{Yuri Bakhtin, Donghyun Seo}
%\date{05/02/2019}
\address{Courant Institute of Mathematical Sciences\\
New York University\\
251 Mercer Street\\
New York, N.Y. 10012-1185}
\email{bakhtin@cims.nyu.edu, seo@cims.nyu.edu}
\maketitle
%\tableofcontents

\section{Introduction} \label{sec: introduction}
The directed polymer model was introduced in the physics literature \cite{HH85}, \cite{HHF85}, \cite{Kar85}, \cite{MN85}, \cite{KZ87} and mathematically formulated by Imbrie and Spencer~\cite{IS88}.
 Since then, many models of directed polymers in random environment were studied in the literature over last several decades, 
see, e.g. books \cite{Szn98}, \cite{Gia07}, \cite{dH09}, \cite{Com17} and multiple references therein.
The common feature of these models is that they are based on Gibbs distributions on paths with the reference measure usually describing a process with independent increments (random walks, if the time is discrete) and the energy of the interaction between the path and the environment is given by a space-time random potential (with some decorrelation properties) accumulated along the path.

One of the intriguing phenomena that these models exhibit is the transition of dynamics of directed polymers between
high/low temperature regimes. 
In the high temperature regime, directed polymers have diffusive behavior
which is similar to that of the classical random walks and the endpoint distributions of polymer paths of length~$n$ are typically spread over 
domains of size of the order of~$n^{1/2}$ (see \cite{Bol89}, \cite{SZ96}, \cite{AZ96}). 
On the other hand, in the low temperature regime, 
they are super-diffusive, i.e. the typical transverse displacement of polymer paths is of the order of $n^{\xi}$ with $\xi>1/2$.
In particular, it has been conjectured that $\xi = 2/3$ for $d=1$, based on two following observations: (\romannumeral 1) when $\beta = +\infty$, the directed polymer models coincide with the last passage percolation (LPP) models; (\romannumeral 2) Integrable LPP models have shown the spatial fluctuation of order $n^{2/3}$ and the fluctuation of passage times of order $n^{1/3}$ placing LPP in the KPZ universality class \cite{Cor12}. This has been proved in some integrable models, see \cite{Sep12}, \cite{BCF14}. 
Besides the super-diffusive behavior, it is known that polymer measures are mostly concentrated within a relatively small region in the low temperature regime, see \cite{CV06},  \cite{Var07}, \cite{Lac10}, \cite{BK10}. 
Such localization phenomenon of directed polymers is closely related to the intermittency of the solution of stochastic heat equation, see \cite{CM94}, \cite{BC95}, \cite{Kho14}.
It is believed that the size of the small region is $\mathcal{O}(1)$ but this has been proved only for integrable models, see \cite{CN16}.
It is also conjectured that a similar picture holds for generalized directed polymers, see \cite{BK18}.

While many integrable models for (1+1)-dimensional directed polymers have been extensively studied (see \cite{MO07}, \cite{ACQ11}, \cite{MFQR13}, \cite{AKQ14}), the results on higher dimensions are rather restricted. In~\cite{BC16} and its improved version~\cite{Bat18},
a novel machinery was suggested to study localization  of directed polymers that are discrete in space and time.
This approach is based on another recent achievement, a compactification of the space of probability measures on $\Rb^d$ with respect to the weak convergence~\cite{MV16} (we will refer to this compactification as the \textit{MV topology} in this paper).  In~\cite{BC16}, the authors introduce a simple metrization of the MV topology induced on the space of measures concentrated on~$\Zb^d$ and they were able to obtain localization results for discrete directed polymers by using the metric.

The first goal of this paper is to develop a new metrization of the MV topology that will be useful for space-continuous polymer models. 
Our new metrization is inspired by the one used in the discrete setting in~\cite{BC16} and is based on coupling in optimal transport. Its relation to the metric given in~\cite{MV16}  resembles the equivalence between the definitions of the Kantorovich--Wasserstein distance via 
optimal coupling~\eqref{eq: Wasserstein metric} and via Lipschitz test functions~\eqref{eq: Kantorovich duality} known as the Kantorovich duality.

The second goal of this paper is to introduce a broad family of time-discrete and 
space-continuous polymer models where polymers are understood as discrete sequences of points in $\Rb^d$, and to generalize the entire program of~\cite{BC16} to these models with the help of our new metrization of the MV topology.

As this paper was being prepared we learned that similar results were obtained in~\cite{BM18} for a specific model 
where the reference measure is Brownian and the random potential is the space-time white noise mollified with respect to the space variable. 
We stress that the only assumption we need on the reference measure for polymers is that it defines a random walk, with no restriction on the distribution of \textit{i.i.d.}\ steps in contrast to a concrete model of~\cite{BM18}.

Due to the absence of assumptions on the random walk steps, we can say that  our results generalize those of \cite{BC16} and~\cite{Bat18}  that are restricted to lattice
random walks (except that a moment assumption on the potential is slightly weaker in~\cite{Bat18})  since one can embed any \textit{i.i.d.}\ random potential indexed by $\Zb^d$ into a stationary potential on $\Rb^d$ with a small dependence range.

In addition, we give a new result that goes beyond the asymptotic pure atomicity results of~\cite{BC16} and~\cite{BM18}. 
Under the assumption that the reference measure is absolutely continuous with respect to the Lebesgue measure, several forms of asymptotic clustering property hold for the random density of the polymer endpoint distribution in low temperature regime. An important feature of our work is that our results are based on the new metrization of the MV topology which is of independent interest. However, a similar program was executed in~\cite{BM18} using the original metrization.

The article is organized as follows: 
In the remaining part of Section~\ref{sec: introduction}, we introduce our general model of directed polymers, review the results in discrete setting, 
and state our results for localization/delocalization of directed polymers. 
In Section~\ref{sec: MV}, we review the MV topology and 
introduce a new metric which is equivalent to the original MV metric and useful for our analysis of polymer measures.
In Sections~\ref{sec: update map},~\ref{sec: convergence of empirical measure}, and~\ref{sec: characterization of high/low temperature regime}, we develop a program parallel to \cite{BC16}, proving the continuity of the update map that maps the law of the endpoint distribution to the one of the next step endpoint distribution and proving that 
the empirical measure of the endpoint distribution of directed polymers converges to the set of free energy minimizers which is a subset of the set of fixed points of the update map.
We will also see how the set of free energy minimizers can characterize the high/low temperature regimes.
In Section~\ref{sec: clustering}, we introduce an asymptotic clustering property that is an analogue of the asymptotic pure atomicity studied  in~\cite{Var07},~\cite{BC16}
for discrete directed polymers, and prove that it holds for the endpoint distribution in the low temperature regime. 
In Section~\ref{sec: geometric localization}, we show that the endpoint distribution of directed polymer is asymptotically geometrically localized with positive density.

\textbf{Acknowledgements.} We are grateful to Erik Bates, Chiranjeeb Mukherjee, and Raghu Varadhan for stimulating discussions. YB thanks NSF for partial support via grant DMS-1811444. 

\subsection{The model of directed polymers in stationary environment} \label{sec: model}
We begin with a Markov chain  $\big((\omega_n)_{n \in \Nb}, \{P^x \}_{x \in \Rb^d}\big)$ on $\Rb^d$, defined on a measurable space
$(\Omega_p, \mathscr{F})$, where
\begin{enumerate}[label=$\bullet$, nosep]
	\item $\Omega_p =(\Rb^d) ^{\Nb} = \{\omega = (\omega_n)_{n \geq 0}  :\,  \omega_n \in \Rb^d\}$, 
	\item $\mathscr{F}$ is the cylindrical $\sigma$-algebra on $\Omega_p$, 
	\item For each $x\in \Rb^d$,  $P^x$ is the unique probability measure such that $(\omega_{n+1}-\omega_n)_{n \geq 0}$ are \textit{i.i.d.} and
		\beq \label{def: single step measure}
		P^x(\omega_0=x)=1, \quad P^x(\omega_{n+1}-\omega_n \in dy) =\ker(dy)
		\eeq
		for any nondegenerate Borel probability measure $\ker$ on $\Rb^d$.
\end{enumerate}
We stress that unlike the existing papers on directed polymers, we do not require $\ker$ to be a lattice distribution. In fact, for most of the paper, we do not impose any restrictions on $\ker$ at all. Thus $\ker$ may be an arbitrary mixture of  Lebesgue absolutely continuous, singular, and atomic distributions, and, if atomic, it does not have to be concentrated on any lattice (we only exclude the trivial case where~$\lambda$ is a Dirac mass).
We denote expectation with respect to $P^x$ by $E^x$. We also write $P$ and $E$ for $P^0$ and $E^0$.

The \textit{random environment} that we will consider is a real-valued, non-constant random field $\big(X(n, x)\big)_{n\in\Nb, x\in\Rb^d}$, 
defined on a probability space $(\Omega_e, \mathscr{G}, \Pb)$ such that 
\begin{itemize}
	\item $\big(X(n,\cdot\,)\big)_{n \in \Nb}$ are independent and identically distributed,
	\item $\big(X(1, x)\big)_{x \in \Rb^d}$ is stationary and  $\rdep$-dependent for some finite number $\rdep$, 
	i.e., for any subset $A, B \subset \Rb^d$ with $\deuc (A, B):=  \inf \{ |x-y| : x \in A,\ y \in B\} >\rdep$, 
	$(X(1, x))_{x \in A}$ and $(X(1, x))_{x \in B}$ are independent of each other. 
	\item $X(1, \cdot)$ has continuous trajectories, i.e., the mapping $x \mapsto X(1, x)$ is $\mathbf{P}$-a.s.\ continuous.
\end{itemize}
The continuity condition can actually be weakened, see Remark~\ref{rem:regular_conditional_prob}.
We will write $\Eb$ for expectation with respect to $\mathbf{P}$. $X(1, x)$ will be sometimes shortened to $X(x)$ for convenience.
We denote by $\beta \geq 0$ the \textit{inverse temperature} parameter and will assume
\beq\label{def: logarithmic moment generating function}
	c(\kappa) := \log \Eb \Big[\exp\big(\kappa X(0)\big)\Big] < \infty \quad \text{for}\,\, \kappa \in [-2\beta, 2\beta].
\eeq
For given $n \in \Nb, x \in \Rb^d$ , we define the \textit{point-to-point quenched polymer measure}, starting from~$x$, at time $n$ as
	\[ \mathscr{M}_n ^x (d\omega)=\frac{1}{Z_{n, x}} \exp\Big(\beta\sum\limits_{k=1}^{n} X(k, \omega_k)\Big) P^x (d\omega), \]
where 
	\[ Z_{n, x} = E^x \bigg[\exp\Big(\beta\sum\limits_{k=1}^{n}X(k, \omega_k)\Big)\bigg] \]
is called the \textit{point-to-line partition function}. Let $\mathscr{M}_n$ and $Z_n$ denote the polymer measure and the partition function corresponding to $P$, of length $n$.
Notice that $(\mathscr{M}_n)_{n \geq 0}, (Z_n)_{n \geq 0}$ are random processes adapted to the filtration $(\mathscr{G}_n)_{n \geq 0}$ given by
	\[ \mathscr{G}_n = \sigma(X(k, x): 1 \leq k \leq n, x \in \Rb^d). \]

\subsection{An outline of existing results in discrete setting}
Directed polymer models have been largely studied on the lattice $\Zb^d$.
In this section, we recall the well-known results in the discrete setting, which will be extended to the continuous model in this paper.
To stress the similarity with our model, we will use the same notation here as for our continuous setting. 
That is, in this section, we let $\mathscr{M}_n$ be the quenched polymer measure on paths of length $n$  defined on $(\Zb^d)^\Nb$ by
	\[ \mathscr{M}_n (d\omega) = \frac{1}{Z_n} \exp\Big(\beta \sum\limits_{k=1}^{n} X (k, \omega_k)\Big) P (d\omega), \]
where
\begin{enumerate}[label=$\bullet$, nosep]
	\item $P$ is the distribution of the $d$-dimensional simple random walk starting at $0$,
	\item the random environment $\big(X(k, x)\big)_{k \in \Nb, x \in \Zb^d}$ is given by a collection of non-constant, \text{i.i.d.} random variables
	defined on some probability space $(\Omega_e, \mathscr{G}, \mathbf{P})$ and
	\item $Z_n = E \Big[\exp\big(\beta \sum\limits_{k=1}^n  X (k, \omega_k)\big)\Big]$ is the partition function.
\end{enumerate}

Most of the mathematical results on directed polymers were obtained mainly by analyzing the asymptotic behavior of the partition function $Z_n$. 
One of the interesting quantities, called the \textit{quenched free energy}, is given by
	\[ F_n = \frac{1}{n} \log Z_n. \]
It turned out that the phase transition in directed polymer model is characterized by the discrepancy between the quenched free energy and the annealed free energy, which is
	\[ \frac{1}{n} \log \Eb[Z_n] = c (\beta). \]
Applying a superadditivity argument developed in \cite{CH02}, we see that the limit
\beq \label{eq: convergence of E F_n}
	\lim\limits_{n \rightarrow \infty} \Eb F_n = \sup\limits_{n \geq 1 } \Eb F_n := p(\beta)
\eeq
is well-defined. The following exponential concentration inequality enables us to make \eqref{eq: convergence of E F_n} stronger:
\begin{prethm}[Theorem 1.4 in \cite{LW09}, for $Q=1$] \label{thm: concentration inequality for W_n}
Let $\beta > 0$ be fixed such $\Eb e^{\beta |X(1, 0)|} < \infty$ 
Then, there is a constant $a>0$, depending only on $\beta$ and the law of $X$, such that
	\[ \mathbf{P} \left( \frac{1}{n} \Big| \log Z_n - \Eb\log Z_n \Big| > x \right) \leq
	\begin{cases}
		2e^{-nax^2}   & \text{if} \quad 0 \leq x \leq 1, \\
		2e^{-nax} & \text{if} \quad  x>1.
	\end{cases} \]
In particular, 
\beq\label{eq: convergence of quenched free energy}
	\lim\limits_{n \rightarrow \infty} F_n(\beta) = p(\beta) \quad \text{a.s.\!\ and\,\,} L_p \text{\,\,for all\,\,} p \in [1, \infty)
\eeq
\end{prethm}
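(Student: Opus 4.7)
The plan is to establish the concentration bound via a Doob martingale decomposition of $\log Z_n$ along the time filtration, combined with a Bernstein-type control for martingales with sub-exponential increments, and then to deduce~\eqref{eq: convergence of quenched free energy} from the concentration by a standard Borel--Cantelli plus uniform-integrability argument.

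Set $\mathscr{G}_0 = \{\emptyset, \Omega_e\}$, and for $1 \leq k \leq n$ let $\mathscr{G}_k = \sigma(X(j, x) : 1 \leq j \leq k,\ x \in \Zb^d)$. Form the Doob martingale $M_k := \Eb[\log Z_n \mid \mathscr{G}_k]$, so that $\log Z_n - \Eb \log Z_n = \sum_{k=1}^n D_k$ with $D_k := M_k - M_{k-1}$. If $X'(k, \cdot)$ is an independent copy of $X(k, \cdot)$ and $Z_n'$ denotes the partition function obtained by replacing $X(k, \cdot)$ by $X'(k, \cdot)$, then integrating out the copy gives $D_k = \Eb[\log(Z_n / Z_n') \mid \mathscr{G}_k]$, and this ratio can be rewritten as
\[ \log \frac{Z_n}{Z_n'} = \log \frac{\int e^{\beta X(k,y)}\, \pi(dy)}{\int e^{\beta X'(k,y)}\, \pi(dy)}, \]
where $\pi$ is the random conditional distribution of $\omega_k$ under the polymer measure with the time-$k$ weight removed. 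Using Jensen's inequality on both numerator and denominator controls $|D_k|$ by $\beta$ times a random variable whose tail is sub-exponential by virtue of $\Eb e^{\beta|X(1,0)|} < \infty$; a convex-analytic computation then produces, for some constants $C, \theta_0 > 0$ independent of $n$ and $k$,
\[ \Eb[\exp(\theta D_k) \mid \mathscr{G}_{k-1}] \leq \exp(\psi(\theta)), \]
with $\psi(\theta) \leq C\theta^2$ for $|\theta| \leq \theta_0$ and $\psi(\theta) \leq C\theta$ for $\theta > \theta_0$.

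Iterating the tower property gives $\Eb[\exp(\theta \sum_k D_k)] \leq e^{n\psi(\theta)}$, and Chernoff's inequality yields $\Pb(\log Z_n - \Eb \log Z_n > nx) \leq e^{-n(\theta x - \psi(\theta))}$. Optimizing: for $x \leq 1$ the optimum lies in the quadratic regime at $\theta \sim x$ and gives $e^{-nax^2}$, while for $x > 1$ the optimum sits at $\theta = \theta_0$ and gives $e^{-nax}$. A symmetric argument applied to $-D_k$ yields the two-sided bound. For the ``in particular'' statement, $\Pb(|F_n - \Eb F_n| > x) \leq 2e^{-nax^2}$ for fixed $x>0$ is summable in $n$, so Borel--Cantelli gives $F_n - \Eb F_n \to 0$ a.s., which combined with $\Eb F_n \to p(\beta)$ from~\eqref{eq: convergence of E F_n} yields a.s.\ convergence. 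The same tail bound forces $\sup_n \Eb|F_n - \Eb F_n|^p < \infty$ for every $p \in [1,\infty)$, giving $L^p$ convergence by uniform integrability.

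The principal obstacle is the sub-exponential control of the conditional MGF of the martingale differences. A direct Lipschitz/union bound over the $O(n^d)$ lattice sites reachable in $k$ steps picks up a factor polynomial in $n$ inside the exponential moment, which is harmless in the quadratic regime (absorbed into $a$) but borderline in the linear regime, so one must either choose the truncation level and $\theta_0$ carefully, or work directly with $\log \int e^{\beta X(k,y)}\pi(dy)$ as a convex functional of $X(k,\cdot)$ and invoke the Herbst entropy argument or a transport-cost inequality, as done in \cite{LW09}.
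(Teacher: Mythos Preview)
The paper does not supply its own proof of this statement: it is quoted from \cite{LW09} with only the remark that the argument carries over to the continuous-space setting. Your outline is essentially the argument of \cite{LW09} --- Doob martingale decomposition along the time filtration, resampling representation of the increments $D_k$, and a Bernstein-type exponential inequality obtained from a two-regime bound (quadratic for small $\theta$, linear for large $\theta$) on the conditional cumulant generating function. You also correctly flag the one genuine difficulty, namely making the bound $\Eb[e^{\theta D_k}\mid\mathscr{G}_{k-1}]\le e^{\psi(\theta)}$ uniform in $n$ and $k$; in \cite{LW09} this is handled by a truncation of the environment variables at a carefully chosen level together with an explicit control of the large-tail contribution, rather than by a union bound over the reachable sites, and this is what resolves the borderline issue you note in the linear regime.
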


We remark that Theorem~\ref{thm: concentration inequality for W_n} was proved for discrete setting but the proof can be easily adapted to our space-continuous setting.
Therefore, we will use \eqref{eq: convergence of quenched free energy} later without further proof.

The \textit{Lyapunov exponent} of the system is defined as
\beq\label{eq: Lyapunov exponent}
	\Lambda(\beta) := c(\beta) - p(\beta) \geq 0,
\eeq
where the inequality follows from Jensen's inequality. Before describing the phase transition of directed polymers, we give a statement for the existence of critical temperature.
\begin{prethm}[Theorem 3.2 in \cite{CY06}, Proposition 2.4 in \cite{Bat18}] \label{thm: critical beta}
$\Lambda(\beta)$ is non-decreasing in $\beta$. In particular, there is a critical inverse temperature $\beta_c = \beta_c(d) \in [0, \infty]$ such that
\begin{align*}
	0 \leq &\beta \leq \beta_c \quad \Rightarrow \quad \Lambda(\beta)=0,
	\\& \beta >\beta_c \quad \Rightarrow \quad \Lambda(\beta)>0.
\end{align*}
\end{prethm}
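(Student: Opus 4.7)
The plan is to first prove the monotonicity of $\Lambda$ and then extract the critical temperature as the corresponding threshold. I would work at the prelimit level, setting $p_n(\beta) := \frac{1}{n} \Eb[\log Z_n(\beta)]$ so that $\Lambda_n(\beta) := c(\beta) - p_n(\beta) \to \Lambda(\beta)$ by \eqref{eq: convergence of quenched free energy}. Both $c$ and $p_n$ are convex in $\beta$: $c$ as a cumulant generating function, $p_n$ because for each realization of the environment, $\partial_\beta^2 \log Z_n(\beta)$ equals the $\mathscr{M}_n^\beta$-variance of $H_n(\omega) := \sum_{k=1}^n X(k, \omega_k)$. Consequently both are differentiable off a countable set, and it suffices to show $p_n'(\beta) \leq c'(\beta)$ at every differentiable point; this gives monotonicity of each $\Lambda_n$ and persists in the limit.

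The two derivatives admit clean expressions: $c'(\beta) = \Eb[X(0) e^{\beta X(0)}]/\Eb[e^{\beta X(0)}]$ is the mean of $X(0)$ under the tilted single-site law, and $p_n'(\beta) = \frac{1}{n}\Eb[E_{\mathscr{M}_n^\beta}[H_n]]$ is the quenched mean of the path energy. The key annealed identity follows from the independence of $(X(k, \cdot))_{k \geq 1}$ across $k$ (so that $e^{\beta H_n(\omega)}$ factors coordinate by coordinate in time): for every deterministic path $\omega$,
\[
\Eb\bigl[H_n(\omega)\, e^{\beta H_n(\omega)}\bigr] = n\, c'(\beta)\, \Eb\bigl[e^{\beta H_n(\omega)}\bigr] = n\, c'(\beta)\, e^{n c(\beta)}.
\]

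The main step is to transfer this to the quenched bound $\Eb[E_{\mathscr{M}_n^\beta}[H_n]] \leq n c'(\beta)$. By Fubini and the identity $\int \Eb[e^{\beta H_n(\omega)}/Z_n(\beta)]\, P(d\omega) = \Eb[1] = 1$, it reduces to showing that for each deterministic $\omega$,
\[
\Eb\!\left[\frac{H_n(\omega)\, e^{\beta H_n(\omega)}}{Z_n(\beta)}\right] \leq n\, c'(\beta) \cdot \Eb\!\left[\frac{e^{\beta H_n(\omega)}}{Z_n(\beta)}\right].
\]
Under the size-biased environment $\wt{\Eb}_\omega$ with density $e^{\beta H_n(\omega) - n c(\beta)}$ with respect to $\Eb$, this becomes the negative-correlation statement $\mathrm{Cov}_{\wt{\Eb}_\omega}(H_n(\omega),\, 1/Z_n(\beta)) \leq 0$, capturing the intuition that strengthening the environment along $\omega$ enlarges $Z_n$ and therefore shrinks $1/Z_n$. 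I expect this to be the hardest step; it can be obtained from an FKG-type inequality built on the product structure of the environment in the time coordinate, following the strategy of \cite{CY06} (and \cite{Bat18}), which does not depend essentially on a lattice structure of the walk.

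Once $\Lambda$ is known to be non-decreasing, I set $\beta_c := \sup\{\beta \geq 0 : \Lambda(\beta) = 0\} \in [0, \infty]$. Convexity of $c$ and $p$ on the relevant interval implies continuity, hence continuity of $\Lambda$; combining monotonicity, continuity, and $\Lambda(0) = 0$ yields $\Lambda(\beta) = 0$ for $\beta \in [0, \beta_c]$ and $\Lambda(\beta) > 0$ for $\beta > \beta_c$, as required.
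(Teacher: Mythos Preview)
The paper does not supply its own proof of this theorem; after stating it, the paper simply records that it was proved in \cite{CY06} (for simple random walk with all exponential moments), extended in \cite{Bat18} to general lattice walks under weaker moment assumptions, and remarks that the further extension to random walks on $\Rb^d$ is straightforward. Your proposal is a correct outline of the argument from those references: reduce to prelimit monotonicity of $\Lambda_n(\beta)=c(\beta)-p_n(\beta)$ via the derivative comparison $p_n'(\beta)\le c'(\beta)$, and obtain the latter from a Harris/FKG negative-correlation inequality under the size-biased environment law $\wt{\Eb}_\omega$. In the discrete i.i.d.\ setting in which the theorem is stated, the variables $(X(k,x))_{k,x}$ remain independent under $\wt{\Eb}_\omega$, the function $H_n(\omega)$ is coordinatewise increasing, and $1/Z_n(\beta)$ is coordinatewise decreasing, so $\mathrm{Cov}_{\wt{\Eb}_\omega}\big(H_n(\omega),1/Z_n\big)\le 0$ follows directly from Harris's inequality for product measures. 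Your extraction of $\beta_c$ from monotonicity, continuity of $\Lambda$ (via convexity of $c$ and $p$), and $\Lambda(0)=0$ is also correct.
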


Theorem~\ref{thm: critical beta} was first proved in \cite{CY06} when the reference measure is the simple random walk and $c(\kappa)$ exists for all $\kappa \in \Rb$.
\cite{Bat18} enhanced this by extending to reference measures given by arbitrary random walks on $\Zb^d$ and weakening the moment condition of random environment. Extending this result to general random walks on $\Rb^d$ is straightforward.

We now collect three statements which describe how the Lyapunov exponent identifies the phase transition of directed polymers.
We denote by
	\[ \rho_i (\cdot) = \mathscr{M}_i (\omega_i \in \cdot) \]
the endpoint distribution of directed polymer of length $i$.

\begin{prethm}[Corollary 2.2 and Theorem 2.3 (a) in \cite{CSY03}] \label{thm: localization of partial mass}
	\[ \Lambda(\beta) > 0 \quad  \Leftrightarrow 
	\quad \exists\,  c>0 \ \, \text{\rm s.t.} \,\,
	\liminf\limits_{n \rightarrow \infty} \frac{1}{n} \sum\limits_{i=0}^{n-1} \max\limits_{x \in \Zb^d} \rho_i \big(\{x\}\big) \geq c 
	\quad \mathbf{P} \text{-} a.s. \]
\end{prethm}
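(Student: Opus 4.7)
The plan is to follow the martingale approach of Carmona--Hu and Carmona--Shiga--Yoshida. Define the normalized partition function $W_n := Z_n e^{-n c(\beta)}$, a nonnegative $\mathscr{G}_n$-martingale with $\Eb W_n = 1$. Integrating out the $n$-th step of the walk under $\mathscr{M}_n$ yields the one-step recursion
\[
\frac{W_n}{W_{n-1}} \;=\; S_n, \qquad S_n \;:=\; \sum_{y \in \Zb^d} \hat\rho_{n-1}(\{y\})\, e^{\beta X(n,y) - c(\beta)},
\]
where $\hat\rho_{n-1}(\{y\}) := \tfrac{1}{2d}\sum_{x \sim y} \rho_{n-1}(\{x\})$ is the one-step prediction of the endpoint distribution under the simple random walk. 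Telescoping yields $\log W_n = \sum_{k=1}^{n} \log S_k$.

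By Theorem~\ref{thm: concentration inequality for W_n}, \eqref{eq: convergence of quenched free energy}, and the definition \eqref{eq: Lyapunov exponent}, $\tfrac{1}{n}\log W_n \to -\Lambda(\beta)$ $\Pb$-a.s. Decomposing $\log S_k = \Eb[\log S_k \mid \mathscr{G}_{k-1}] + D_k$, the martingale difference sequence $(D_k)$ has uniformly bounded conditional second moments (using the exponential moment hypothesis \eqref{def: logarithmic moment generating function} at $\kappa = \pm 2\beta$), so the martingale strong law gives $\tfrac{1}{n}\sum_{k=1}^{n} D_k \to 0$ a.s. Combining these we get
\[
\Lambda(\beta) \;=\; \lim_{n\to\infty} \frac{1}{n}\sum_{k=1}^{n} \Phi(\hat\rho_{k-1}) \quad \text{a.s.,} \qquad \Phi(\nu) \;:=\; -\Eb\bigg[\log \sum_{y \in \Zb^d} \nu(\{y\})\, e^{\beta X(1,y) - c(\beta)}\bigg] \,\ge\, 0.
\]

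The key analytic step is a two-sided comparison of $\Phi(\nu)$ with $\max_y \nu(\{y\})$: there exist constants $c_\pm > 0$ and a continuous $\phi:[0,1]\to[0,\infty)$ with $\phi(0)=0$ and $\phi(t)>0$ for $t>0$ such that $c_-\,\phi(\max_y \nu(\{y\})) \le \Phi(\nu) \le c_+\,\phi(\max_y \nu(\{y\}))$ uniformly over probability measures $\nu$ on $\Zb^d$. The upper bound comes from $\log(1+v) \ge v - v^2$ for $v \ge -\tfrac{1}{2}$ combined with the conditional variance identity $\Eb[(S_n - 1)^2 \mid \mathscr{G}_{n-1}] = (e^{c(2\beta) - 2c(\beta)} - 1) \sum_y \hat\rho_{n-1}(\{y\})^2 \le (e^{c(2\beta) - 2c(\beta)} - 1)\max_y \hat\rho_{n-1}(\{y\})$, whose cross terms vanish by independence of $X$ across sites. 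The lower bound isolates the site $y^*$ attaining the maximum and estimates $\Phi(\nu)$ below by the Jensen gap of the non-degenerate random variable $e^{\beta X(1, y^*) - c(\beta)}$ weighted by $\nu(\{y^*\})$; non-constancy of $X$ and a compactness argument in $\nu(\{y^*\})\in(0,1]$ then produce an explicit $\phi$. Finally, the elementary inequalities $\tfrac{1}{2d}\max_x \rho(\{x\}) \le \max_y \hat\rho(\{y\}) \le \max_x \rho(\{x\})$ let one substitute $\max_x \rho_{k-1}(\{x\})$ for $\max_y \hat\rho_{k-1}(\{y\})$ in the Cesaro average up to a bounded factor, producing the claimed equivalence. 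The main obstacle I anticipate is the lower bound on $\Phi(\nu)$, where one must rule out measures $\nu$ with $\Phi(\nu) = 0$ yet $\max_y \nu(\{y\}) > 0$; this relies crucially on the non-degeneracy of the environment distribution.
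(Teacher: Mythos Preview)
The paper does not prove Theorem~C; it is quoted from \cite{CSY03} as background. The paper's own contribution is the continuous-space analogue, Theorem~\ref{thm: localization of directed polymers}, whose proof goes through the MV compactification: the empirical measures $\psi_n$ converge to the minimizing set $\Kc_0$ (Theorem~\ref{thm: convergence of psi to K0}), Theorem~\ref{thm: chracterstic of K} identifies $\Kc_0=\{\delta_{\mathbf 0}\}$ exactly when $\beta\le\beta_c$, and the conclusion follows from continuity of the concentration functional $I_r$. Your proposal instead follows the original Carmona--Hu/CSY martingale route, which is a legitimate and more elementary path for the discrete statement, though it does not transfer to the paper's continuous setting without the compactification machinery.

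Your sketch is sound in outline but has three repairable issues. First, a single comparison function $\phi$ with $c_-\phi\le\Phi\le c_+\phi$ does not exist: for $\nu$ uniform on $\lfloor 1/m\rfloor$ sites one has $\Phi(\nu)\asymp m$, whereas for $\nu=m\delta_{y^*}+(1-m)\cdot(\text{spread thinly})$ one has $\Phi(\nu)\to g(m)\asymp m^2$ as the thin part disperses. You only need separate bounds $\Phi\le\phi_+$ and $\Phi\ge\phi_-$ with $\phi_+(0)=0$ and $\phi_-(t)>0$ for $t>0$, and that is all your argument actually uses. Second, the inequality $\log(1+v)\ge v-v^2$ fails for $v$ near $-1$, and $S_n-1$ is not bounded below by $-\tfrac12$. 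One fix: write $\Phi(\nu)=\Eb[h(S)]$ with $h(x)=x-1-\log x$, bound $h(S)\le C(S-1)^2$ on $\{S\ge\tfrac12\}$, and control $\Eb[h(S)\one_{S<1/2}]$ via Chebyshev (so $\Pb(S<\tfrac12)\le 4\,\mathrm{Var}(S)\le C\max_y\nu(\{y\})$) together with the uniform bound on $\Eb[(\log S)^2]$ coming from $(\log S)^-\le\bigl|\sum_y\nu(\{y\})(\beta X(y)-c(\beta))\bigr|$. Third, the lower bound needs no compactness. Writing $m=\nu(\{y^*\})$, $U=e^{\beta X(y^*)-c(\beta)}$, and $V$ for the normalized remainder (independent of $U$, mean~$1$), Jensen conditional on $U$ gives directly
\[
\Phi(\nu)=-\Eb\bigl[\log\bigl(mU+(1-m)V\bigr)\bigr]\ \ge\ -\Eb\bigl[\log\bigl(mU+1-m\bigr)\bigr]=:g(m),
\]
and $g$ is convex with $g(0)=g'(0)=0$ and $g''(0)=\mathrm{Var}(U)>0$, hence $g(m)>0$ on $(0,1]$. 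This is the clean replacement for your ``compactness in $\nu(\{y^*\})$'' step and resolves the obstacle you flagged.
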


Theorem~\ref{thm: localization of partial mass} tells that the endpoint distribution can localize partial mass in the low temperature regime. 
Vargas proposed in \cite{Var07} the notion of \textit{``asymptotic pure atomicity''}, which describes the localization of the entire mass of the endpoint distribution.
For any $i\geq 0, \epsilon >0$, let
	\[ \Ac_i^{\epsilon} = \{x \in \Zb^d: \rho_i \big(\{x\}\big) >\epsilon\}. \]
Then, $(\rho_i)_{i \geq 0}$ is called \textit{asymptotically purely atomic} if for every sequence $(\epsilon_i)_{i \geq 0}$ tending to $0$, we have
	\[ \lim\limits_{n \rightarrow \infty} \frac{1}{n} \sum\limits_{i=0}^{n-1} \rho_i(\Ac_i^{\epsilon_i}) =1 \quad \mathbf{P} \text{-} a.s. \]
Convergence in probability was used in \cite{Var07} and the author proved that if $c(\beta) = \infty$, then $(\rho_i)_{i \geq 0}$ is asymptotically purely atomic. Bates and Chatterjee replaced it with almost sure convergence and proved the following:

\begin{prethm} [Theorem 6.3 in \cite{BC16}, Theorem 5.3 in \cite{Bat18} ] \label{thm: asymptotic pure atomicty on Z^d}
	\[ \Lambda(\beta) > 0 \quad \Leftrightarrow \quad (\rho_i)_{i \geq 0} \text{ is asymptotically purely atomic.} \]
\end{prethm}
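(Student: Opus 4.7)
I would split the biconditional and handle each direction separately. The reverse implication ($\Lambda(\beta)=0\Rightarrow $ not asymptotically purely atomic) should be a short reduction to Theorem~\ref{thm: localization of partial mass}; the forward implication is much deeper and should mirror the MV-compactification program that the rest of the paper is designed to extend from the discrete to the continuous setting.

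For the reverse direction, suppose $\Lambda(\beta)=0$. Theorem~\ref{thm: localization of partial mass} then forces
\[
\liminf_{n\to\infty}\frac{1}{n}\sum_{i=0}^{n-1}\max_{x\in\Zb^d}\rho_i(\{x\})=0 \quad \mathbf{P}\text{-a.s.}
\]
Along an a.s.\ random subsequence $n_k$ realizing this liminf, $m_i:=\max_x\rho_i(\{x\})$ is density-small: for every $\delta>0$ the set $\{i<n_k:m_i>\delta\}$ has density going to $0$ in $k$. Thus one can pick a deterministic $\epsilon_i\to 0$ slowly enough that the set $\{i<n_k:\epsilon_i\le m_i\}$ has density going to $0$ as well; on the density-one complement we have $\Ac_i^{\epsilon_i}=\emptyset$ and hence $\rho_i(\Ac_i^{\epsilon_i})=0$, so the Ces\`aro mean in the definition of asymptotic pure atomicity fails along $n_k$.

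For the forward direction, assume $\Lambda(\beta)>0$. I would lift the endpoint distributions to their equivalence classes in the MV compactification $\Xct$ and analyse the update map $T$ on $\mathcal{P}(\Xct)$ which sends the law of $[\rho_i]$ to the law of $[\rho_{i+1}]$. The plan has three parts: (a) prove $T$ is continuous in a Kantorovich-type metrization of the MV topology; (b) introduce a free-energy functional $J$ on $\mathcal{P}(\Xct)$ and show, using the concentration inequality of Theorem~\ref{thm: concentration inequality for W_n} together with a superadditivity/entropy argument, that every a.s.\ weak accumulation point of the empirical laws $\mathcal{L}_n:=\frac{1}{n}\sum_{i=0}^{n-1}\delta_{[\rho_i]}$ lies in the compact set $\Kc$ of $J$-minimizers; and (c) use the variational principle to show $\delta_{\zero}\in\Kc$ iff $\Lambda(\beta)=0$. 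Under $\Lambda>0$ this yields a uniform positive average, along every accumulation point, of a size functional measuring non-triviality of a class in $\Xct$; on $\Zb^d$ this functional is essentially $\mu\mapsto\sum_x\mu(\{x\})^2$ or a Lipschitz proxy thereof.

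The main obstacle, and the step I expect to require most care, is translating this MV-level non-triviality into the quantitative pure-atomicity statement for an \emph{arbitrary} prescribed $\epsilon_i\to 0$. I would argue by first fixing a small $\delta>0$ and proving, via the convergence in (b), that for every such $\delta$
\[
\lim_{n\to\infty}\frac{1}{n}\sum_{i=0}^{n-1}\rho_i\bigl(\{x:\rho_i(\{x\})>\delta\}\bigr)=:F(\delta), \quad \text{with }F(\delta)\to 1\text{ as }\delta\downarrow 0,
\]
the second assertion being where $\Lambda(\beta)>0$ is crucially used via the exclusion of $\delta_{\zero}$ from $\Kc$. Then, given any $\epsilon_i\to 0$, a diagonal extraction of an auxiliary sequence $\delta_k\downarrow 0$ with $\epsilon_i>\delta_{k(i)}$ for all but a density-zero set of $i$, combined with the monotonicity of $\rho_i(\Ac_i^{\epsilon_i})$ in $\epsilon_i$, upgrades the $(n,\delta)$ double limit to the Ces\`aro limit $\frac{1}{n}\sum_i\rho_i(\Ac_i^{\epsilon_i})\to 1$, finishing the proof.
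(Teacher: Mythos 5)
This statement is not proved in the paper at all: it is quoted as background (Theorem 6.3 of \cite{BC16}, Theorem 5.3 of \cite{Bat18}), and what the paper actually proves is its continuous analogue (Theorem~\ref{thm: asymptotic clursterization, intro}, established as Theorem~\ref{thm: asymptotic clustering} via the program of Sections~\ref{sec: update map}--\ref{sec: clustering}). Measured against that program, which is the Bates--Chatterjee proof transplanted to $\Rb^d$, your forward direction has the correct architecture: continuity of the update map on the MV compactification, convergence of the empirical laws $\frac1n\sum_{i<n}\delta_{\rho_i}$ to the set $\Kc_0$ of free-energy minimizers among the fixed points, exclusion of $\delta_{\mathbf 0}$ from $\Kc_0$ precisely when $\Lambda(\beta)>0$, and the passage from a fixed threshold to arbitrary $\epsilon_i\downarrow 0$ via a monotonicity/density lemma. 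Two imprecisions: $\delta_{\mathbf 0}$ is \emph{always} a fixed point of the update map, so the dichotomy must be phrased as ``$\delta_{\mathbf 0}$ is the unique energy minimizer iff $\Lambda=0$,'' not ``$\delta_{\mathbf 0}\in\Kc$ iff $\Lambda=0$''; and the limit you call $F(\delta)$ need not exist --- the argument only delivers (and only needs) $\liminf_n\frac1n\sum_i\rho_i(\Ac_i^{\delta})>1-c$ for a suitable $\delta(c)$, the uniformity over $\Kc_0$ required here being supplied by Dini's theorem applied to the continuous functionals $\Jc_{r,\epsilon}$, a step your outline omits.

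Your reverse direction takes a genuinely different route. The source (and Theorem~\ref{thm: asymptotic clustering}(b) here) derives it from the same MV machinery: when $\Lambda=0$ one has $\Kc_0=\{\delta_{\mathbf 0}\}$, continuity of the atomicity functional gives $\frac1n\sum_i\rho_i(\Ac_i^{\epsilon})\to 0$ for each fixed $\epsilon>0$, and the sequence $(\epsilon_i)$ is then built by a diagonal scheme --- this in fact yields the stronger conclusion \eqref{eq: endpoint is not asymptotically clusterized in high temperature, intro} that the Ces\`aro averages tend to $0$, not merely that they fail to tend to $1$. Your shortcut through Theorem~\ref{thm: localization of partial mass} is more economical but as written has a gap: the literal negation of that theorem says only that for every $c>0$ the event $\{\liminf_n\frac1n\sum_i\max_x\rho_i(\{x\})\ge c\}$ fails to have full probability; it does not by itself give $\liminf=0$ almost surely, nor even with positive probability, which is what your subsequence construction requires. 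To close it you need the stronger (also standard) form of the dichotomy, namely that $\Lambda(\beta)=0$ forces $\frac1n\sum_{i<n}\max_x\rho_i(\{x\})\to 0$ a.s.\ (obtainable from the Doob decomposition of $\log W_n$ in \cite{CSY03}, or from the fixed-point analysis itself); with that input your Markov-inequality and diagonal construction of $(\epsilon_i)$ does finish the argument.
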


Theorem~\ref{thm: geometric localization on Z^d} illustrates how the favorable sites, which localize mass in the endpoint distribution of directed polymers, cluster together.
For $\delta >0$ and $K >0$, let $\mathcal{G}_{\delta, K}$ be the collection of probability measures on $\Zb^d$ that assign mass greater than $1-\delta$ to some subset of $\Zb^d$ having diameter at most~$K$. 
(We use the $l_1$ distance here.) We say that $(\rho_i)_{i \geq 0}$ is \textit{geometrically localized with positive density} 
if for every $\delta>0$, there exist $K > 0$ and $\theta >0$ such that
	\[ \liminf\limits_{n \rightarrow \infty} \frac{1}{n} \sum\limits_{i=0}^{n-1} \one_{\{\rho_i \in \mathcal{G}_{\delta, K}\}} \geq \theta \quad \mathbf{P} \text{-}a.s. \]
	
\begin{prethm}[Theorem 7.3 (a), (c) in \cite{BC16}, Theorem 5.4 in \cite{Bat18}] \label{thm: geometric localization on Z^d}
	\[ \Lambda(\beta) > 0 \quad \Leftrightarrow \quad (\rho_i)_{i \geq 0} \text{ is geometrically localized with positive density.} \]
\end{prethm}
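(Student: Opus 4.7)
The plan is to follow the Bates--Chatterjee framework recalled in the introduction: analyze the endpoint distribution as a process on the MV compactification $\widetilde{\Xc}$ and characterize the high and low temperature regimes through the fixed points of the update map.

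For the easy implication ($\Lambda(\beta)=0 \Rightarrow$ no geometric localization), I would argue by contrapositive. In the high temperature regime the normalized partition function $W_n = Z_n/\Eb Z_n$ is a nonnegative $L^1$-bounded martingale that converges a.s.\ to a strictly positive limit $W_\infty$. A one-step computation using the recursion
\[
W_n = W_{n-1}\,\sum_{y} \rho_{n-1}(\{y\})\,E^y\!\big[e^{\beta X(n,\omega_1)-c(\beta)}\big]
\]
together with \textit{i.i.d.}-ness of the environment layers shows that $\max_x \rho_n(\{x\}) \to 0$ in probability, and in fact a.s.\ along a subsequence of density one. Hence $\rho_n \in \mathcal{G}_{\delta,K}$ fails with density one for any fixed $\delta < 1$ and any $K$, ruling out geometric localization.

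For the nontrivial implication ($\Lambda(\beta)>0 \Rightarrow$ geometric localization), I would lift the sequence $(\rho_i)$ to the MV compactification $\widetilde{\Xc}$ (equivalence classes modulo translation) and exploit the update map $T$ on laws of endpoint measures induced by one more step of the environment. The program is: \textbf{(a)}~establish continuity of $T$ on $\widetilde{\Xc}$ in the metrization of Section~\ref{sec: MV}; \textbf{(b)}~show that any subsequential weak limit $\mu^*$ of the empirical measures $\frac{1}{n}\sum_{i=0}^{n-1}\delta_{[\rho_i]}$ is a fixed point of $T$ satisfying the variational principle of Section~\ref{sec: characterization of high/low temperature regime}; \textbf{(c)}~deduce from the characterization of the high/low regimes that, when $\Lambda(\beta)>0$, $\mu^*$ is \emph{not} $\delta_{\mathbf{0}}$, so $\mu^*$ must assign positive mass to classes $[\nu]\in\widetilde{\Xc}\setminus\{\mathbf{0}\}$; \textbf{(d)}~use the structural description of $\widetilde{\Xc}\setminus\{\mathbf{0}\}$: any such class admits a representative putting arbitrarily close to its total ``non-escaping'' mass on a ball of some radius. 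Together with the uniform integrability provided by the fixed-point/variational bound, this yields, for each $\delta>0$, a $K=K(\delta)$ such that $\mu^*\big(\{[\nu]:\nu\in\Gc_{\delta,K}\}\big)\geq\theta>0$, giving the required positive density $\liminf_n \frac1n\sum_{i<n}\one_{\{\rho_i\in\Gc_{\delta,K}\}}\geq \theta$ in probability.

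To upgrade convergence in probability to the almost sure $\liminf$ in the definition, I would combine Theorem~\ref{thm: concentration inequality for W_n}-type exponential concentration with a $0$--$1$ law coming from the i.i.d.\ structure of the environment layers (the event that the $\liminf$ is $\geq\theta-\eta$ is tail-measurable up to arbitrarily small error). The main obstacle I anticipate is step (d): translating ``$\mu^*$ charges $\widetilde{\Xc}\setminus\{\mathbf{0}\}$'' into a \emph{uniform} bound of the form ``positive mass on a fixed ball.'' In the discrete setting of this review theorem the obstacle is mitigated because $\widetilde{\Xc}$-limits of lattice measures can be represented by countable sums of translates, and the asymptotic pure atomicity result (Theorem~\ref{thm: asymptotic pure atomicty on Z^d}) confines the mass to a locally finite set of atoms; the clustering is then extracted by combining that atomic picture with the non-degeneracy of $\mu^*$ and an averaging argument, so that the atoms supporting the mass $\geq 1-\delta$ can be shown to lie within some bounded cluster of diameter $K(\delta)$ with positive density.
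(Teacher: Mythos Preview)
This statement is quoted from \cite{BC16,Bat18} and not proved in the paper; the paper proves the continuous analogue (Theorem~\ref{thm: localization}) by the same method, so I compare to that. Your outline (a)--(c) is correct, but step~(d) has a genuine gap. The low-temperature characterization gives that every $\xi\in\Kc_0$ is supported on $\{\|\mu\|=1\}$ (Theorem~\ref{thm: chracterstic of K}(b)), which is stronger than your ``$\mu^*\neq\delta_{\mathbf 0}$'' but still \emph{not} enough to conclude $\xi(\{G(\mu)>1-\delta\})>0$, where $G(\mu)=\max_i\|\alpha_i\|$. An element $\mu=[\wt\alpha_i]$ with $\|\mu\|=1$ can split into many orbits each of small mass; such $\mu$ lie outside $\bigcup_K\mathcal G_{\delta,K}$ for small~$\delta$, and your ``structural description'' in~(d) is simply false for them. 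Invoking asymptotic pure atomicity does not help either: that result controls concentration of $\rho_i$ on its own atoms, not whether those atoms cluster within a fixed diameter~$K$.

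The missing ingredient is the functional $Q(\mu)=\sum_i \|\alpha_i\|/(1-\|\alpha_i\|)$ and the argument of Lemma~\ref{lem: Q infty}: a one-step Jensen computation, using that distinct orbits see independent copies of the environment, gives $\Eb[Q(\hat\mu)\mid\mu]>Q(\mu)$ strictly whenever $\mu$ has at least two orbits; since $\Tc\xi=\xi$, this forces $\int Q\,d\xi=\infty$ for every $\xi\in\Kc_0$. As $Q\le 1/\delta$ on $\{G\le 1-\delta\}$, one obtains $\xi(\{G>1-\delta\})>0$ for every $\delta>0$. Compactness of $\Kc_0$ together with lower semicontinuity of $G$ and upper semicontinuity of $W_\delta(\mu)=\inf\{r:I_r(\mu)>1-\delta\}$ then produce the uniform $\theta$ and $K$, and the a.s.\ $\liminf$ follows directly from $\Wc(\psi_n,\Kc_0)\to 0$ a.s.\ (Theorem~\ref{thm: convergence of psi to K0}) plus lower semicontinuity of $\zeta\mapsto\zeta(\mathcal V_{\delta,K})$---no separate $0$--$1$ law or concentration upgrade is needed. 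A smaller issue in your easy direction: $\Lambda(\beta)=0$ does not in general imply $W_\infty>0$ a.s.\ (this can fail at $\beta=\beta_c$); the paper instead derives delocalization from $\Kc_0=\{\delta_{\mathbf 0}\}$ via the same empirical-measure machinery (Theorem~\ref{thm: localization of directed polymers}(a)).
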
	

\subsection{Main results of this paper}\label{sec: main results}
The first main result of this paper is the development of a new metrization of the translation-invariant compactification of the space of probability measures. 
The structure of the metric and relevant background are provided in Section~\ref{sec: MV}.
As an application of the theory developed in Section~\ref{sec: MV}, we prove analogues of Theorems~\ref{thm: asymptotic pure atomicty on Z^d} and~\ref{thm: geometric localization on Z^d} for our model of directed polymers in the continuous space.
Before stating our results, we denote the \textit{quenched endpoint distribution} for the polymer of length $n$ by 
	\[ \rho_n (dx)=\mathscr{M}_n (\omega_n \in dx). \]

We extend the notion of asymptotic pure atomicity applicable in discrete case to the continuous case in three ways. We introduce three related notions of clustering: we define asymptotic clustering at level $r>0$ in Definition~\ref{def:cluster-at-level} (this notion is also considered in~\cite{BM18}), the notion of asymptotic local clustering in Definition~\ref{def:asymptotic-local-clustering}, and the notion of asymptotic clustering of densities in Definition~\ref{def:clustering-densities}.  For a sequence of absolutely continuous measures, the asymptotic local
clustering is equivalent to the asymptotic clustering of densities, see Remark~\ref{rem:clustering-densities}. 

The following results concerning the notions of  asymptotic clustering at positive levels and  asymptotic local clustering
(analogues of Theorem~\ref{thm: asymptotic pure atomicty on Z^d} on asymptotic pure atomicity) are proved in Section~\ref{sec: clustering}, see Theorems~\ref{thm: asymptotic clustering} and~\ref{thm: asymptotic local clustering}: 

\begin{thm}\label{thm: asymptotic clursterization, intro}
For $r>0$, $\eps>0$, and $i \geq 0$, let us define
	\[ \Ac_i ^{\epsilon} (r) := \{ x \in \Rb^d : \rho_i (B_r (x)) > \epsilon V_d r^d\}, \quad
	\Ac_i ^{\epsilon} = \Big\{ x \in \Rb^d : \liminf\limits_{r \downarrow 0} \frac{\rho_i (B_r (x))}{V_d r^d} > \epsilon\Big\},\]
where $V_d$ is the volume of the unit ball in $\Rb^d$.
\begin{enumerate}[label=\rm(\alph*), nosep]
	\item If $\beta > \beta_c$, then for every $r>0$ and every sequence $(\epsilon_i)_{i \geq 0}$ tending to $0$,
\beq\label{eq: asymptotic clustering, intro}
	\lim_{n\rightarrow \infty}  \frac{1}{n}  \sum_{i=0}^{n-1} \rho_i \big(\Ac_i ^{\epsilon_i} (r)\big) = 1 \quad \mathbf{P}\text{-a.s.},
\eeq
and
\beq \label{eq: asymptotic local clustering, intro}
	\lim_{n\rightarrow \infty}  \frac{1}{n}  \sum_{i=0}^{n-1} \rho_i (\Ac_i ^{\epsilon_i}) = 1 \quad \mathbf{P}\text{-a.s.}
\eeq

	\item If $\beta \leq \beta_c$, then for every $r>0$, there is a sequence $(\epsilon_i)_{i \geq 0}$ tending to $0$, such that
\beq\label{eq: endpoint is not asymptotically clusterized in high temperature, intro}
	\lim\limits_{n \rightarrow \infty} \sum\limits_{i=0}^{n-1} \rho_i (\Ac_i ^{\epsilon_i} (r) ) = 0 \qquad \mathbf{P} \text{-}a.s.
\eeq
\end{enumerate}
\end{thm}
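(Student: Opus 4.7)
The plan is to exploit the asymptotic identification of the empirical measures $\mu_n := \frac{1}{n}\sum_{i=0}^{n-1}\delta_{\rho_i}$ with the distributional fixed points of the endpoint-distribution update map on the MV compactification, as established in Sections~\ref{sec: update map}--\ref{sec: characterization of high/low temperature regime}. By the characterization derived there, the delta mass $\delta_{\mathbf{0}}$ on the zero measure is the unique fixed point when $\beta\leq\beta_c$, whereas every fixed point $\nu$ assigns zero mass to $\mathbf{0}$ when $\beta>\beta_c$. The clustering statements are then extracted by expressing $\frac{1}{n}\sum_{i} \rho_i(\Ac_i^\epsilon(r))$ as $\int F_{\epsilon,r}(\zeta)\, d\mu_n(\zeta)$ for a suitable functional $F_{\epsilon,r}$ and passing to limits via the new coupling-based metric from Section~\ref{sec: MV}.

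\textbf{Part (a).} For fixed $\epsilon,r>0$, define on representatives of elements $\zeta$ of the MV compactification
\[ F_{\epsilon,r}(\zeta) := \zeta\bigl(\{x\in\Rb^d : \zeta(B_r(x)) > \epsilon V_d r^d\}\bigr). \]
Translation invariance makes $F_{\epsilon,r}$ well defined on the quotient, and smoothing the indicator around the threshold with continuous cutoffs of $\zeta(B_r(x))$ shows that $F_{\epsilon,r}$ is lower semi-continuous in the MV topology---the coupling-based metric of Section~\ref{sec: MV} is especially well suited to this smoothing since it controls masses on balls under small perturbations. Passing to any subsequential limit $\nu$ of $(\mu_n)$ gives
\[ \liminf_{n\to\infty} \frac{1}{n}\sum_{i=0}^{n-1} \rho_i(\Ac_i^\epsilon(r)) \geq \int F_{\epsilon,r}(\zeta)\, d\nu(\zeta). \]
Because $F_{\epsilon,r}(\zeta) \uparrow |\zeta|$ as $\epsilon\downarrow 0$ for every $\zeta \neq \mathbf{0}$, and because $\nu(\{\mathbf{0}\})=0$ together with the mass-preservation of the update map forces $\int|\zeta|\,d\nu=1$, the right-hand side tends to $1$ as $\epsilon\downarrow 0$. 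A diagonal argument using monotonicity of $\Ac_i^\epsilon(r)$ in $\epsilon$ then promotes the bound to arbitrary (possibly $\omega$-dependent) sequences $\epsilon_i\downarrow 0$, yielding~\eqref{eq: asymptotic clustering, intro}. Equation~\eqref{eq: asymptotic local clustering, intro} follows by an additional diagonal choice of scales $r_i\downarrow 0$, using the Besicovitch differentiation theorem to identify the set $\Ac_i^{\epsilon_i}$ with $\{x : \rho_i(B_r(x)) > \epsilon_i V_d r^d \text{ for all sufficiently small } r\}$ up to a $\rho_i$-null set.

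\textbf{Part (b).} In the high-temperature regime $\mu_n\to\delta_{\mathbf{0}}$, which I would strengthen to almost-sure convergence $\rho_i\to\mathbf{0}$ in the MV topology by observing that any subsequential a.s.\ accumulation point of $(\rho_i)$ is a fixed point of the update map and must therefore equal $\mathbf{0}$. Since $\zeta\mapsto\sup_{x}\zeta(B_r(x))$ is translation invariant, continuous, and vanishes at $\mathbf{0}$ on the MV compactification, we get $M_i(r) := \sup_{x} \rho_i(B_r(x))\to 0$ almost surely for every fixed $r>0$. Setting $\epsilon_i := M_i(r)/(V_d r^d) + (i+1)^{-1}$ then produces a sequence with $\epsilon_i\downarrow 0$ and $\Ac_i^{\epsilon_i}(r)=\emptyset$ for every $i$, which immediately gives~\eqref{eq: endpoint is not asymptotically clusterized in high temperature, intro}.

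\textbf{Main obstacle.} The principal difficulties will be threefold: (i) establishing the lower semi-continuity of $F_{\epsilon,r}$ on the MV compactification, since the strict-inequality threshold must be handled by a smoothing compatible with the new metric; (ii) justifying the equality $\int|\zeta|\,d\nu=1$, i.e.\ ruling out mass escape at limit points of $(\mu_n)$, through the variational principle of Section~\ref{sec: characterization of high/low temperature regime}; and (iii) executing the diagonal argument in part~(a), both over $\epsilon_i\downarrow 0$ and (for~\eqref{eq: asymptotic local clustering, intro}) over $r_i\downarrow 0$, in a way that the lim-inf bound tends to $1$ uniformly enough to absorb the random rate at which $\epsilon_i$ decays.
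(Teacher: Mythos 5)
Your argument for \eqref{eq: asymptotic clustering, intro} is essentially the paper's: a smoothed concentration functional dominated by $\rho_i(\Ac_i^{\epsilon}(r))$, the convergence $\Wc(\psi_n,\Kc_0)\to 0$, the fact that every $\xi\in\Kc_0$ is carried by $\{\|\mu\|=1\}$, and monotonicity in $\epsilon$ plus a Dini-type uniformity to handle arbitrary sequences $\epsilon_i\downarrow 0$. One correction there: it is not ``mass-preservation'' that yields $\int\|\zeta\|\,d\nu=1$ --- the update map strictly \emph{decreases} expected mass on $\{0<\|\mu\|<1\}$ --- but rather Theorem~\ref{thm: chracterstic of K}(b), whose proof also needs the energy-minimality defining $\Kc_0$. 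The other two assertions, however, contain genuine gaps.

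For \eqref{eq: asymptotic local clustering, intro}, a diagonal choice of scales $r_i\downarrow 0$ cannot work: knowing $\rho_i(B_{r_i}(x))>\epsilon_i V_d r_i^d$ at the \emph{single} scale $r_i$ gives no control on $\liminf_{r\downarrow 0}\rho_i(B_r(x))/(V_d r^d)$, and the differentiation theorem identifies $\Ac_i^{\epsilon}$ with a density-level set, not with any fixed-scale set. The paper never sends $r\to 0$; it fixes $r=1$ and proves a quantitative transfer: if $\alpha(\Ac_\alpha^{\epsilon}(r))>1-c$ then $\alpha(\Ac_\alpha^{\epsilon_1})>1-2c$ for an explicit $\epsilon_1(\epsilon,c,r,d)$, obtained by covering $\Ac_\alpha^{\epsilon}(r)$ with a Besicovitch family of $r$-balls and using the density comparison $\alpha(A)\le\epsilon_1\mathfrak{m}(A)$ for Borel $A\subset(\Ac_\alpha^{\epsilon_1})^c$ to show each such ball puts a definite fraction of its $\alpha$-mass on $\Ac_\alpha^{\epsilon_1}$; that covering step is the missing content. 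For part (b), the strengthening to $\rho_i\to\mathbf 0$ a.s.\ is unjustified: the fixed-point machinery acts on laws, so a subsequential limit of the realizations $\rho_i(\omega)$ in $\Xct$ has no reason to be fixed by $\Tc$ (which maps $\Xct$ into $\Pc(\Xct)$, and whose fixed-point analysis concerns $\psi_n$), and Theorem~\ref{thm: localization of directed polymers}(a) only gives Ces\`aro convergence of $\sup_x\rho_i(B_1(x))$, not termwise convergence. Hence your $\epsilon_i$ need not tend to $0$ and $\Ac_i^{\epsilon_i}(r)=\emptyset$ can fail. The paper instead first shows, for each fixed $\epsilon$, that $\frac1n\sum_{i<n}\rho_i(\Ac_i^{\epsilon}(r))\to 0$ a.s.\ (via $\rho_i(\Ac_i^{\epsilon}(r))\le J_{2r,\epsilon/2^{d+2}}(\rho_i)$ and continuity of the lifted functional at $\delta_{\mathbf 0}$), and only then extracts $(\epsilon_i)$ by a diagonal argument over the thresholds $1/k$.
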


The following localization result (an analogue of Theorem~\ref{thm: geometric localization on Z^d}) is proved in Section \ref{sec: geometric localization}, see Theorem~\ref{thm: localization}:

\begin{thm}\label{thm: geometric localization, intro}
For $\delta>0$ and $K>0$, let us define a set
	\[ \mathcal{G}_{\delta, K} = \{\alpha \in \Mc_1 : \max\limits_{x \in \Rb^d} \alpha(B_K (x)) > 1 - \delta \},\]
where $\Mc_1$ is the collection of probability measures on $\Rb^d$.
\begin{enumerate}[label=\rm(\alph*), nosep]
	\item If $\beta > \beta_c$, then for all $\delta >0$, there exist $K < \infty$ and $\theta > 0$ such that
	\[ \liminf\limits_{n \rightarrow \infty} \frac{1}{n} \sum\limits_{i=0}^{n-1} \one_{\{\rho_i \in \mathcal{G}_{\delta, K}\}} \geq \theta \quad \mathbf{P} \text{-}a.s.,\]

	\item If $\beta \leq \beta_c$, then for all $\delta \in (0, 1)$ and $K>0$,
	\[ \lim\limits_{n \rightarrow \infty} \frac{1}{n}\sum\limits_{i=0}^{n-1} \one_{\{\rho_i \in \mathcal{G}_{\delta, K}\}} = 0 \quad \mathbf{P} \text{-} a.s. \]
\end{enumerate}
\end{thm}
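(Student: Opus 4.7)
The plan is to push both parts through the convergence of the empirical measure $L_n := \frac{1}{n}\sum_{i=0}^{n-1} \delta_{[\rho_i]}$ on the MV compactification $\Xct$, established in Sections~\ref{sec: update map}--\ref{sec: characterization of high/low temperature regime}. Since $\mathcal{G}_{\delta, K}$ is translation-invariant, it descends to a subset $\tilde{\mathcal G}_{\delta, K} \subseteq \Xct$, and $\{\rho_i \in \mathcal{G}_{\delta, K}\}$ coincides with $\{[\rho_i] \in \tilde{\mathcal G}_{\delta, K}\}$. What I need topologically is a closed set $\tilde F_{\delta, K} \supseteq \tilde{\mathcal G}_{\delta, K}$ that does not contain the zero element $\zero \in \Xct$, together with an open set $\tilde U_{\delta, K} \subseteq \tilde{\mathcal G}_{\delta, K}$ such that, for any distributional fixed point $\alpha$ that can arise as a limit of $L_n$, $\alpha(\tilde U_{\delta, K})$ is close to $\alpha(\Xct \setminus \{\zero\})$ for $K$ sufficiently large. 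The optimal-transport-based metric from Section~\ref{sec: MV} should make both constructions precise: the separation from $\zero$ comes from $\zero$ being the class of measures whose mass escapes to infinity, while a class in $\tilde{\mathcal G}_{\delta, K}$ is pinned down by a ball of radius $K$ carrying mass at least $1 - \delta$.

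For part (b), when $\beta \leq \beta_c$, the characterization in Section~\ref{sec: characterization of high/low temperature regime} identifies $\delta_{\zero}$ as the unique distributional fixed point satisfying the variational principle, and combined with the convergence results of Section~\ref{sec: convergence of empirical measure} this gives $L_n \to \delta_{\zero}$ almost surely in the weak topology on probability measures on $\Xct$. The Portmanteau theorem applied to the closed set $\tilde F_{\delta, K}$ then yields
\[
\limsup_n L_n(\tilde{\mathcal G}_{\delta, K}) \leq \limsup_n L_n(\tilde F_{\delta, K}) \leq \delta_{\zero}(\tilde F_{\delta, K}) = 0,
\]
which is exactly the claim. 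For part (a), when $\beta > \beta_c$, every almost-sure sub-sequential limit $\alpha$ of $L_n$ satisfies $\alpha(\{\zero\}) < 1$, uniformly over the set of permitted limits; by the exhaustion property one can choose $K$ large and $\theta > 0$ such that $\alpha(\tilde U_{\delta, K}) \geq 2\theta$ for every such $\alpha$, and the Portmanteau lower bound on the open set $\tilde U_{\delta, K}$ then gives $\liminf_n L_n(\tilde{\mathcal G}_{\delta, K}) \geq \theta$ almost surely.

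The main obstacle will be the construction of $\tilde F_{\delta, K}$ and $\tilde U_{\delta, K}$ with the required separation and exhaustion properties, since $\tilde{\mathcal G}_{\delta, K}$ itself need be neither open nor closed in $\Xct$: weak convergence on $\Xct$ permits mass to leak to infinity, so a sequence in $\mathcal{G}_{\delta, K}$ can converge in $\Xct$ to a limit only in $\tilde{\mathcal G}_{\delta', K'}$ for slightly worse $\delta', K'$. The new metric from Section~\ref{sec: MV}, which jointly translates two measures and takes an optimal coupling, should give precisely the quantitative control needed on how much clustered mass can survive small MV perturbations, yielding the required semicontinuity statements through arguments parallel to those of~\cite{BC16}.
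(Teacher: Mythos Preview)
Your approach to part~(b) is sound and matches the paper's in spirit: once one knows $\psi_n \to \delta_{\zero}$ almost surely (which follows from $\Kc_0 = \{\delta_{\zero}\}$ when $\beta \le \beta_c$), any closed set in $\Xct$ avoiding $\zero$ and containing $\tilde{\mathcal G}_{\delta,K}$ does the job via Portmanteau. The paper carries this out through the continuous functional $I_r$ rather than an abstract closed set, but the content is the same.

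Part~(a) has a genuine gap. You want an open $\tilde U_{\delta,K} \subseteq \tilde{\mathcal G}_{\delta,K}$ with $\alpha(\tilde U_{\delta,K})$ close to $\alpha(\Xct\setminus\{\zero\})$ for large $K$, but this exhaustion is false as stated. Since $\tilde{\mathcal G}_{\delta,K}$ is by construction a subset of the single-orbit locus $\wt{\Mc}_1 \subset \Xct$, the union $\bigcup_K \tilde U_{\delta,K}$ is trapped inside $\wt{\Mc}_1$. A fixed point $\xi \in \Kc_0$ is supported on $\{\|\mu\|=1\}$ (Theorem~\ref{thm: chracterstic of K}(b)), but there is no a priori reason it puts any mass on $\wt{\Mc}_1$: it could be entirely supported on multi-component elements $\mu = [\wt\alpha_i]$ with every $\|\alpha_i\| \le 1-\delta$. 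Knowing only $\xi(\{\zero\}) = 0$ therefore does not give $\xi(\tilde U_{\delta,K}) > 0$ for any $K$, and your Portmanteau lower bound is vacuous.

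The paper closes this gap with two additional ideas you are missing. First, it does not look for an open set inside $\tilde{\mathcal G}_{\delta,K}$; instead it uses the larger open set $\mathcal V_{\delta,K} = \{W_\delta < K\} \subset \Xct$, which is not contained in $\wt{\Mc}_1$ but whose intersection with $\wt{\Mc}_1$ is sandwiched between $\mathcal G_{\delta,K}$ and $\mathcal G_{\delta,K+1}$; since $\psi_n$ is supported on $\wt{\Mc}_1$, this sandwich is enough. Second, and crucially, the positivity $\xi(\bigcup_K \mathcal V_{\delta,K}) = \xi(\{G(\mu) > 1-\delta\}) > 0$ requires a separate dynamical argument (Lemma~\ref{lem: Q infty}): the functional $Q(\mu) = \sum_i \|\alpha_i\|/(1-\|\alpha_i\|)$ satisfies $\Eb[Q(\hat\mu)\mid \mu] > Q(\mu)$ strictly under the update, forcing $\int Q\,d\xi = \infty$ for $\xi \in \Kc_0$; since $Q \le 1/\delta$ on $\{G \le 1-\delta\}$, this yields $\xi(\{G > 1-\delta\}) > 0$. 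This step uses the specific form of the update map and is not a soft topological fact, so your plan as written would not produce it.
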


\section{Compactification of a space of probability measures} \label{sec: MV}
In \cite{BC16}, the authors pointed out that the usual topologies of weak/vague convergence of probability measures are inadequate to capture the localization phenomenon of directe polymers. To tackle the issue, they used an analogue of the compact metric space $(\Xct, \mathbf{D})$ constructed in the work of Mukherjee and Varadhan \cite{MV16}.

The idea behind the MV topology is that two measures are considered close to each other if one can find several well-separated regions of high concentration for each of them such that the restrictions of the measures to these regions are close to being spatial translations of each other. To encode this, it is natural to work with an extension
of the space of measures on $\Rb^d$ to the space of measures on $\Nb\times \Rb^d$ where multiple layers (copies of $\Rb^d$)  correspond to multiple domains of concentration. In this approach, it is natural not to distinguish between two measures in one layer if they are obtained by a translation of each other, and the order of the layers is not important either. 
Now all measures on $\Rb^d$ that can be approximated as a sum of translations of the measures in the layers without much overlap can be viewed as being close to each other.

We will discuss two formalizations of these ideas in this section. 
While the Mukherjee--Varadhan (MV) topology was originally defined through test functions, Bates and Chatterjee introduced a different form of metric on the space of sub-probablity distributions on $\Nb\times \Zb^d$ in \cite{BC16} and showed that their metric space is equivalent to the discrete version of the MV topology.
We recall the original MV definition first and then construct a metrization of the MV topology that is similar to the metric introduced in \cite{BC16}.

\medskip

Before we begin, we give a brief guide to the notations that we use throughout the paper. 
\begin{itemize}
\item $x, y$ are used for elements of $\Rb^d$ and $u, v$ for elements of $\Nb \times \Rb^d$.
\item $\alpha, \gamma, \lambda$ are used for subprobabiltiy measures on $\Rb^d$.
\item $\mu, \nu, \eta, \tau$ denote elements of $\Xc$ and $\Xct$, the spaces defined in Section~\ref{sec:original-MV-topology}. 
\item $\xi, \zeta$ denote elements of $\Pc(\Xct)$, the space of probability measures on $\Xct$ introduced in Section~\ref{sec: update map}.
\item Functionals on $\Xct$ are usually denoted by capital letters, such as $T, R$ and $I_r$, while those on $\Pc(\Xct)$ are denoted in calligraphic fonts, e.g., $\Tc$ and $\Rc$.
\end{itemize}

\subsection{Mukherjee--Varadhan topology}\label{sec:original-MV-topology}

For any $a \geq 0$, we denote by $\Mc_a = \Mc_a (\Rb^d)$ $(\Mc_{\leq a})$ 
the space of measures on $\Rb^d$ with mass $a$ (less than or equal to $a$) 
and by $\wt{\Mc}_a = \Mc_a / \sim$ 
the quotient space of $\Mc_a$ under spatial shifts on~$\Rb^d$. For any $\alpha \in \Mc_a$, its \textit{orbit} is defined by 
	\[ \tilde{\alpha} = \{\alpha * \delta_x : x\in \Rb^d\} \in \wt{\Mc}_a, \]
where $\alpha_1 * \alpha_2$ denotes the convolution of $\alpha_1$ and $\alpha_2$ in $\Mc_{\leq a}$, i.e., for any measurable set $A$ in $\Rb^d$,
	\[ \alpha_1 * \alpha_2 (A) = \int_{(\Rb^d)^2} \one_A (x+y) \alpha_1(dx) \alpha_2 (dy). \]
In particular, if $\alpha_2 (dx) = f(x) dx$, then $\alpha_1 * \alpha_2(dx) = \int f(x-y)\alpha_1(dy)dx$.
We denote the zero measure on $\Rb^d$ or $\Nb \times \Rb^d$ by $\mathbf{0}$.

Let us recall the notions of the weak topology and the vague topology on $\Mc_{a}$ and $\Mc_{\leq a}$ which will be used in this paper.
We say that a sequence $(\alpha_n)_{n \in \Nb}$ in $\Mc_{a}(\text{or\,\,}\Mc_{\leq a})$ converges to~$\alpha$ in the \textit{weak topology} and write $\alpha_n \Rightarrow \alpha$ if 

\beq\label{eq: weak convergence}
	\lim\limits_{n \rightarrow \infty} \int_{\Rb^d} f(x) \alpha_n (dx) = \int_{\Rb^d} f(x) \alpha (dx),
\eeq
for all bounded continuous functions $f$ on $\Rb^d$.
We say a sequence $(\alpha_n)_{n \in \Nb}$ in $\Mc_{\leq a}$ converges to $\alpha$
in the \textit{vague topology} and write $\alpha_n \hookrightarrow \alpha$   
if \eqref{eq: weak convergence} holds for all continuous functions with compact support. 
Note that the weak convergence preserves the total mass of measures, while the vague convergence may fail to do so. 

Another distinction between two topologies is that $\Mc_{\leq a}$ is compact in the vague topology, but not in the weak topology.

Throughout the paper, we will work with multisets (sets with multiplicities) $[\tilde\alpha_i]_{i\in I}$ consisting of elements of $\wt{\Mc}_{\leq 1}$.
We define
	\[ \Xct = \Big\{ \mu = [\widetilde{\alpha_i}]_{i \in I} :\ I\subset \Nb,\ 
	 \alpha_i \in \Mc_{\leq 1}  \backslash \{\zero\},\, \sum\limits_{i\in I} \alpha_i (\Rb^d) \leq 1\Big\}\]
to be the space of all empty, finite or countable collections of orbits of subprobability measures on~$\Rb^d$.
For convenience, we slightly depart from the original definition in \cite{MV16} and
do not allow~$\alpha_i$ to be a zero measure. 

Let us introduce an interpretation of $\Xct$ as a quotient space of $\Xc = \Mc_{\leq 1} (\Nb \times \Rb^d)$.
For $\mu \in \Xc$, we can write $\mu(dk,dx)=\sum_{i\in \Nb}\alpha_i(dx) \delta_i(dk)$ and identify $\mu$ with the (ordered) sequence 
$\mu = (\alpha_i)_{i \in \Nb}$,
of subprobability measures on $\Rb^d$,  with
$\sum \|\alpha_i \| \leq 1$. Some of $\alpha_i$ may be equal to $\mathbf{0}$.
We define the $\Nb$-support of $\mu \in \Xc$~by
\beq\label{eq: N-support}
	S_\mu = \{ i \in \Nb : \norm{\alpha_i} >0 \}.
\eeq

The following definition explains when two measures from $\Xc$ are representatives of the same element of  $\Xct$:
\begin{definition}\label{def: equivalence relation}
Let $\mu = (\alpha_i), \nu = (\gamma_i) \in \Xc$.
We write $\mu \sim \nu$ if $|S_\mu| = |S_\nu|$ and there is a bijection $\sigma:  S_\mu \rightarrow S_\nu$ such that $\wt{\alpha}_{i} = \wt{\gamma}_{\sigma(i)}$ for all $i \in S_\mu$.
\end{definition}

Thus  $\mu = [\widetilde{\alpha_i}]_{i \in I}  \in\Xct$ can be represented or viewed as an element of $\Xc$ (a measure on $\Nb\times\Rb^d$), a sequence $(\alpha_i)_{i\in\Nb}$ of measures in $\Rb^d$
by taking $\alpha_i = \zero$ for all $i \notin I$.
 We will often not make a distinction between $\mu\in\Xct$ and its representative. We will write $\zero$ (instead of $\emptyset$) for the empty multiset of $\Xct$
since its sole representative is $\zero$.

\smallskip

In order to define the metric and convergence in $\Xct$, we need to specify test functions.
For an integer $k\geq 2$, let $\mathcal{F}_k$ be the space of continuous functions $f : (\Rb^d)^k \rightarrow \Rb$
which are \textit{translation invariant} and \textit{vanishing at infinity}, i.e.
\beq\label{eq: translation of test function}
	f(x_1+y, \cdots, x_k+ y) = f(x_1, \cdots, x_k) \quad \forall \, x_1, \cdots, x_k, y \in \Rb^d,
\eeq
\beq\nonumber
	\lim\limits_{\max\limits_{i\neq j} |x_i -x_j| \rightarrow \infty} f(x_1, \cdots, x_k) = 0.
\eeq

Note that $\mathcal{F}_k$, equipped with the uniform norm, is separable. Therefore, if we denote $\mathcal{F} = \bigcup\limits_{k \geq 2} \mathcal{F}_k$, we can choose a countable dense subset 
$\{f_r(x_1, \cdots, x_{k_r})\}_{r\in \Nb}$ of $\mathcal{F}$.
We also check that for any $f \in \mathcal{F}_k$ and $\mu=[\wt{\alpha}_i] \in \Xct$ , the functional
\beq \label{eq: functional for metric D}
	\Lambda(f, \mu) := \sum\limits_{i \in I} \int f(x_1, \cdots, x_k) \prod\limits_{j=1}^{k} \alpha_i(dx_j)
\eeq
is well-defined due to \eqref{eq: translation of test function}.
For any $\mu, \nu \in \mathcal{\wt{X}}$, we now define
\beq \label{eq: original metrization D}
	\mathbf{D}(\mu, \nu) = \sum\limits_{r=1}^{\infty} \frac{1}{2^r \big(1+\norm{f_r}\big)} |\Lambda(f_r, \mu) - \Lambda(f_r, \nu)|.
\eeq
Here $\|f\|$ denotes the uniform norm. We state a theorem proved in \cite{MV16}.
\begin{thm}[Theorem 3.1 and Theorem 3.2 in \cite{MV16}]
The metric space $(\Xct, \mathbf{D})$ is a compactification of~$\wt{\Mc}_1$.
\end{thm}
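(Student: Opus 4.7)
The statement packages three claims: (i) $\mathbf{D}$ is a genuine metric on $\widetilde{\mathcal{X}}$; (ii) $(\widetilde{\mathcal{X}},\mathbf{D})$ is compact; (iii) $\widetilde{\mathcal{M}}_1$ embeds as a dense subspace via $\alpha\mapsto[\widetilde{\alpha}]$, with the subspace topology agreeing with weak convergence modulo translations. I would handle them in this order.

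\textbf{Step 1 ($\mathbf{D}$ is a metric).} Positivity, symmetry, and triangle inequality are immediate from the definition as a weighted sum of pseudo-metrics $|\Lambda(f_r,\cdot)-\Lambda(f_r,\cdot)|$; the series converges because $|\Lambda(f,\mu)|\le\|f\|$ for every $f\in\mathcal{F}_k$ and $\mu\in\widetilde{\mathcal{X}}$ (by \eqref{eq: functional for metric D} and $\sum_i\|\alpha_i\|\le 1$). The substance is point separation: if $\mathbf{D}(\mu,\nu)=0$ then $\Lambda(f,\mu)=\Lambda(f,\nu)$ for every $f\in\mathcal{F}$ by density of $\{f_r\}$ and continuity of $\Lambda(\cdot,\mu)$ in the uniform norm. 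I would then argue that the family $\Lambda(\cdot,\mu)$ determines $\mu$ up to the equivalence of Definition~\ref{def: equivalence relation}: for $\mu=[\widetilde{\alpha}_i]$, choose test functions of the tensorized form $f(x_1,\dots,x_k)=\prod_{j=2}^k g_j(x_j-x_1)$ with $g_j\in C_c(\mathbb{R}^d)$; the resulting integrals reproduce, for each orbit $\widetilde{\alpha}_i$ and an arbitrary translate, correlation functions of $\alpha_i$, and by letting the $g_j$ range over a separating family one recovers each $\widetilde{\alpha}_i$ and its multiplicity.

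\textbf{Step 2 (Compactness).} I would proceed by a concentration-compactness extraction of bubbles. Given a sequence $(\mu_n)=((\alpha^n_i))\subset\widetilde{\mathcal{X}}$, first pass to a subsequence so that the total masses $\sum_i\|\alpha^n_i\|$ converge. If all mass escapes, the limit is $\mathbf{0}$ and we are done. Otherwise, for each $n$ pick a component of maximal mass and an essentially-mass-carrying ball $B^n_1\subset\mathbb{R}^d$; translate it to the origin, then use vague pre-compactness of $\mathcal{M}_{\le 1}$ to extract a limit orbit $\widetilde{\beta}_1$ along a sub-subsequence. Replace the relevant component by the portion not captured by $\widetilde{\beta}_1$ (the mass escaping to infinity) and iterate, producing a (finite or countable) family $(\widetilde{\beta}_k)$ with $\sum_k\|\beta_k\|\le 1$ by mass conservation. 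A diagonal argument over $k$ yields the candidate limit $\mu_\infty=[\widetilde{\beta}_k]\in\widetilde{\mathcal{X}}$. It remains to show $\Lambda(f_r,\mu_n)\to\Lambda(f_r,\mu_\infty)$: split the sum \eqref{eq: functional for metric D} according to each bubble; the contribution of each bubble converges by vague convergence of $\beta^n$ and continuity of $f_r$, while cross-bubble contributions vanish because the bubbles drift apart and $f_r$ vanishes at infinity in the $\max|x_i-x_j|\to\infty$ sense.

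\textbf{Step 3 (Embedding and density).} The map $\alpha\mapsto[\widetilde{\alpha}]$ is well-defined and injective because $\widetilde{\mathcal{M}}_1$ is the set of full-mass orbits. Continuity in both directions on $\widetilde{\mathcal{M}}_1$ follows because the collection of test functions $\Lambda(f,\cdot)$, restricted to $\mathcal{M}_1$, is tight enough to characterize weak convergence up to translation (by tensorized choices as in Step~1). For density, given $\mu=[\widetilde{\alpha}_i]_{i\in I}$ with $\sum_i\|\alpha_i\|=s\le 1$, pick representatives $\alpha_i$ and translation vectors $x^n_i\in\mathbb{R}^d$ with $\min_{i\ne j}|x^n_i-x^n_j|\to\infty$, and set
\[
\mu_n=\sum_{i\in I}(\alpha_i*\delta_{x^n_i})+(1-s)\,\delta_{y^n},
\]
where $y^n$ is chosen so that $|y^n-x^n_i|\to\infty$ for all $i$. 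Each $\mu_n$ lies in $\mathcal{M}_1$, and the vanishing-at-infinity property of every $f_r\in\mathcal{F}$ ensures that cross-terms and the Dirac tail contribute negligibly, so $\Lambda(f_r,\mu_n)\to\Lambda(f_r,\mu)$ and hence $\mathbf{D}(\mu_n,\mu)\to 0$.

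\textbf{Main obstacle.} The delicate point is the extraction in Step~2: ensuring that the iteratively extracted bubbles are genuinely translation-separated so that the sum $\sum_k\|\beta_k\|\le 1$ and the cross-bubble contributions to $\Lambda(f_r,\mu_n)$ vanish uniformly in $n$ along the subsequence. This requires using the vanishing-at-infinity property in a quantitative way together with a Prokhorov-type tightness-versus-escape dichotomy at each stage, and a careful diagonal argument to produce a single subsequence that works simultaneously for all bubbles and all test functions~$f_r$.
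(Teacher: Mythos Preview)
This theorem is not proved in the paper; it is quoted from \cite{MV16}. The paper does prove the analogous statement for its own metric $d$ (Theorem~\ref{prop:compactification}), and that proof follows a concentration-compactness extraction very much along the lines of your Step~2, together with a density argument in the spirit of your Step~3. So your overall strategy is the right one and tracks both \cite{MV16} and the paper's parallel argument.

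There is, however, a genuine error in your density argument (Step~3). You propose to approximate $\mu=[\widetilde{\alpha}_i]$ with total mass $s\le 1$ by
\[
\mu_n=\sum_i\alpha_i*\delta_{x_i^n}+(1-s)\,\delta_{y^n}
\]
and claim that the Dirac tail contributes negligibly to $\Lambda(f,\mu_n)$. It does not: for $f\in\mathcal{F}_k$ the single-atom self-interaction term is
\[
\int f(x_1,\ldots,x_k)\,\prod_{j=1}^k(1-s)\,\delta_{y^n}(dx_j)=(1-s)^k f(y^n,\ldots,y^n)=(1-s)^k f(0,\ldots,0),
\]
by translation invariance, and this is independent of $y^n$ and typically nonzero. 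Hence $\Lambda(f,\mu_n)\to\sum_i\Lambda(f,\alpha_i)+(1-s)^k f(0,\ldots,0)\neq\Lambda(f,\mu)$ in general. The fix, used both in \cite{MV16} and in the paper's proof of Theorem~\ref{prop:compactification}, is to replace the Dirac by a spread-out measure---e.g.\ a centered Gaussian with variance $N\to\infty$---whose self-interaction does vanish because pairs $(x_i,x_j)$ drawn from it are typically far apart and $f$ vanishes at infinity.

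Your Step~1 point-separation sketch is on the right track but also incomplete: recovering the \emph{multiset} $[\widetilde{\alpha}_i]$ from the aggregated values $\Lambda(f,\mu)=\sum_i\int f\,d\alpha_i^{\otimes k}$ requires more than recovering the correlation functions of a single component---you must disentangle the sum over $i$, which in \cite{MV16} is done via a careful argument that you have not supplied.
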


\subsection{Reinterpretation of the MV topology}
Due to the analogy with \cite{BC16}, the compact metric space $(\Xct, \mathbf{D})$ is expected to be suitable for studying localization for directed polymers on $\Nb \times \Rb^d$. However, one might have difficulties in extracting some information on two elements  $\mu=[\wt{\alpha}_i], \nu = [\wt{\gamma}_i] \in \Xct$ close to each other. More precisely, one would expect that if $\mathbf{D} ( \mu, \nu)$ is very small,  
one can match large parts of measures $\alpha_i$ and $\gamma_j$ by applying appropriate
translations to subsets of $\Rb^d$. Motivated by the approach taken in  \cite{BC16}, 
in the present paper, we  attempt at  expressing this idea more explicitly in the definition of an appropriate metric.
Similarly to having two definitions of the Wasserstein distance in terms of Lipschitz test functions and in terms of couplings, it would be natural and helpful to introduce an equivalent metric on $\Xct$ that is based on coupling. Adopting the ideas from \cite{BC16}, we  construct such an equivalent metric which allows us to obtain explicit estimates needed to show continuity of some functionals defined on $\Xct$. 

Before constructing the metric rigorously, we need to introduce some notations. 
We define a distance between two elements  $u=(i, x)$ and $v=(j, y)$ of $\Nb \times \Rb^d$ by
\beq \label{eq: distance on N times R^d}
	\euc{u- v} =\one_{\{i=j\}} \cdot \euc{x-y} + \one_{\{i \neq j\}} \cdot \infty.
\eeq
%In some contexts, it will be convenient to denote this distance by $\deuc(u,v)$ and we will also use the same notation $\deuc$ for the Euclidean norm in $\Rb^d$.
This definition is natural in the sense that we would like to record two concentrated regions getting away from each other on different copies of $\Rb^d$.
For $r>0$, we denote by $B_r(u)$ the open ball centered at $u$ with radius~$r$ in $\Nb \times \Rb^d$ and similarly by $B_r (x)$ in $\Rb^d$.
Notice that $B_r(u) = \{i\} \times B_r(x)$ by \eqref{eq: distance on N times R^d}.

The right-hand side in \eqref{eq: functional for metric D} can be expressed in terms of functions defined on $\Nb\times \Rb^d$ instead of $\Rb^d$. 
More precisely, for an integer $k \geq 2$, let $\mathcal{G}_k$ be the space of continuous
functions $g : (\Nb \times \Rb^d)^k \rightarrow \Rb$ that are translation-invariant and vanishing at infinity, i.e.
	\[ g(u_1+v, \cdots, u_k+ v) = g(u_1, \cdots, u_k) \quad \forall u_1, \cdots, u_k, u_1+v, \cdots, u_k+v \in \Nb \times \Rb^d, \]
\beq \label{eq: test functions vanish at infinity}
	\lim\limits_{\max\limits_{i\neq j} \euc{u_i -u_j} \rightarrow \infty} g(u_1, \cdots, u_k) = 0.
\eeq
For any $g \in \mathcal{G}_k$, $g \neq 0$ only if all $u_1, u_2, \cdots, u_k$ belong to the same copy of $\Rb^d$ due to \eqref{eq: test functions vanish at infinity}. 
Therefore, there is  a unique $f \in \mathcal{F}_k$ such that
	\[ g(u_1, \cdots, u_k) = 
	\begin{cases}
		f (x_1,  \cdots, x_k) & \text{if} \,\, u_j =(i, x_j) \,\,\text{for some}\,\, i \in \Nb,\\
		\qquad \,\,\, 0 & \text{otherwise.}\\
	\end{cases} \]
In other words, there is a natural bijection
\beq \label{eq: bijection between F_k and G_k}
	\sigma_k : \mathcal{F}_k \rightarrow \mathcal{G}_k.
\eeq
Then, considering $\mu$ as an element of $\Xc$, we have 
	\[ \Lambda(f, \mu) = \int (\sigma_k f) (u_1, u_2, \cdots, u_k) \prod_{j=1}^{k}\mu (du_j). \]
Another remark is that any continuous function $f: (\Rb^d)^{k-1}  \rightarrow \Rb$ vanishing at infinity can be identified with an element of $\mathcal{F}_k$ by mapping it to
\beq \label{eq: a remark for test function}
	\wt{f}(x_1, x_2, \cdots, x_{k}) = f(x_2-x_1, \cdots, x_{k} - x_1).
\eeq

For any $\alpha \in \Mc_{\leq 1} (\text{\,or\,\,} \Xc)$ and non-negative function $f$ which is integrable with respect to $\alpha$, 
we write $\bar{\alpha} = f \alpha$ if $\bar{\alpha}$ is defined as $\bar{\alpha}(A)=\int_A f \alpha(dx)$ 
for each measurable set $A$. Moreover, we say $\bar{\alpha}$ is a submeasure of $\alpha$ (denoted by $\bar{\alpha} \leq \alpha$) if $0\leq f \leq 1$.
For any signed measure $\mu$ on $\Rb^d$ or $\Nb \times \Rb^d$, denote by $\norm{\mu}$ the total variation of $\mu$.

\subsection{The Wasserstein distance} 
In this section, we recall the basics on the Wasserstein distance. 
Similar notions were first introduced to solve the Monge--Kantorovich transportation probem and it turned out that such distances can be used extensively in the variety of fields (see, e.g., \cite{Vil09}).

To any metric $d_{\Euc}$ on $\Rb^d$ generating the Euclidean topology, we can associate a transport distance on measures as follows.
For $\alpha, \gamma \in \Mc_{a}\, (a>0)$, let $\Pi(\alpha, \gamma)$ be the collection of Borel probability measures on $(\Rb^d)^2$ such that
the marginal distribution of the first argument is $\alpha/a$ and of the second argument is $\gamma/a$. 
Then, the Wasserstein distance between $\alpha$ and $\gamma$ is defined by

\beq \label{eq: Wasserstein metric} 
	W(\alpha, \gamma) = a  \inf_{\pi \in \Pi(\alpha, \gamma)} \int_{\Rb^2} d_{\Euc}(x, y) \pi(dx, dy).
\eeq
It is known that the infimum on $\Pi$ is achieved. In this paper, we choose to work with a bounded metric
	\[ d_{\Euc}(x, y) = \euc{x-y} \wedge 1, \]
so that $W$ metrizes the topology of weak convergence of $\Mc_{a}$.

For $\wt{\alpha}, \wt{\gamma} \in \wt{\Mc}_{a}$, we define
	\[ \wt{W}(\wt{\alpha}, \wt{\gamma}) = \inf\limits_{x \in \Rb^d} W(\alpha, \gamma *\delta_x).\]
Since the choice of representatives does not affect the value of $\wt{W}$, it is well-defined. One can check that $\wt{W}$ is a metric on $\wt{\Mc}_a$ and metrizes the weak topology of $\wt{\Mc}_a$. The latter is defined in the following sense: 
	\[ \wt{\alpha}_n \Rightarrow \wt{\alpha} \text{\,\, in \,\,} \wt{\Mc}_a \quad\Longleftrightarrow 
	\quad\exists (x_n)_{n \in \Nb} \text{\,\,in }\Rb^d \,\,\mathrm{such\,\,that\,\,} \alpha_n *\delta_{x_n} \Rightarrow \alpha \text{\,\,in\,\,} \Mc_a. \]
\par A result of \cite{PR14} allows us to apply the Wasserstein distance to $\alpha, \gamma \in \Mc_{\leq a}$ with different masses. 
More precisely, the \textit{generalized Wasserstein distance} $\hW$ can be defined by
\beq\label{def: generalized Wasserstein distance}
	\hW (\alpha, \gamma) 
	= \inf\limits_{\substack{\bar{\alpha} \leq \alpha, \bar{\gamma} \leq \gamma \\ \|\bar{\alpha}\|=\|\bar{\gamma}\|}} \Big(W(\bar{\alpha},
	\bar{\gamma}) + \norm{\alpha - \bar{\alpha}} + \norm{\gamma- \bar{\gamma}}\Big)
\eeq
and it is proved in \cite{PR14} that the infimum on the right hand side is achieved.

The result known as the \textit{Kantorovich duality} states that for any $\alpha, \gamma \in \Mc_{\leq 1}$ with the same mass,
\beq\label{eq: Kantorovich duality}
	W(\alpha, \gamma) = \sup\limits_{f} \Big(\int_{\Rb^d} f(x) \alpha(dx) - \int_{\Rb^d} f(y) \gamma(dy)\Big),
\eeq
where the supremum is taken over all 1-Lipschitz continuous functions $f:(\Rb^d, d_{\Euc}) \rightarrow (\Rb^d, |\cdot|)$, i.e.,
	\[ |f(x)-f(y)| \leq \euc{x-y}, \quad x, y \in \Rb^d, \qquad \sup f - \inf f \leq 1. \]
It follows from~\eqref{eq: Kantorovich duality} that for for any measures $\mu=\mu_1+\mu_2$ and $\nu=\nu_1+\nu_2$ 
with $\norm{\mu}=\norm{\nu}$, $\norm{\mu_1}=\norm{\nu_1}$ and $\norm{\mu_2}=\norm{\nu_2}$, one has
\beq\label{ineq: Wsplit}
	W(\mu,\nu)\le W(\mu_1,\nu_1)+W(\mu_2,\nu_2).
\eeq

\subsection{Construction of a metric on $\Xct$}
We are now ready to define a metric on $\Xct$. 
From now on, for any $\mu = [\wt{\alpha}_i]_{i \in I} \in \Xct$, we will abuse the notation $\mu$ and use it for both the element of $\Xct$ 
and representatives chosen from $\Xc$. 
When $\mu$ is used in integration, we mean that an explicit representative, such as $(\alpha_i)_{i \in \Nb}$, is chosen where $\alpha_i = \mathbf{0}$ for all $i \notin I$.

Let $\mu=[\wt{\alpha}_i], \nu = [\wt{\gamma}_i] \in \mathcal{\wt{X}}$ be given.
We first introduce a family of functionals estimating the mass of the heaviest region for a measure in $\Xc$.
For $r \geq 0$, we define a function $I_r$ on $\Xc$ by
\beq \label{eq: concentration function}
	I_r(\mu) = \sup\limits_{i \in \Nb,  x \in \Rb^d} \int_{\Rb^d} f_r (x-y) \alpha_i (dy)
	=\sup_{ u \in \Nb \times \Rb^d} \int_{\Nb \times \Rb^d} g_r(u-v) \mu(dv),
\eeq
where 
	\[ f_{r} (x)= 
	\begin{cases} 
		\qquad 1, &  \euc{x} \le  r  \\
   		\qquad 0, & \euc{x} > r+1, \\
   		r+1- \euc{x}, &  \euc{x}\in(r, r+1], 
	\end{cases} \]
and $\wt{g}_r = \sigma_2 \wt{f}_r$ (See~\eqref{eq: bijection between F_k and G_k} and \eqref{eq: a remark for test function}).
Note that $f_r$ is 1-Lipschitz continuous with respect to~$d_{\Euc}$.
We collect some useful properties of $I_r$:
\begin{enumerate}[label=$\bullet$, nosep]
	\item $I_r(\mu)$ is comparable with the mass of the heaviest ball of radius $r$ under $\mu$, i.e.,
		\beq\label{eq: relation between I_r and the heaviest ball}
			\sup\limits_{u \in \Nb \times \Rb^d} \mu \big(B_{r}(u)\big) \leq I_r (\mu) \leq \sup\limits_{u \in \Nb \times \Rb^d}  \mu \big(B_{r+1}(u)\big).
		\eeq
	\item $I_r$ is sub-additive, i.e., $I_r(\mu+\nu) \leq I_r(\mu)+ I_r(\nu)$.
	\item $I_r$ is monotone, i.e., if $\mu \leq \nu$, then $I_r(\mu) \leq I_r(\nu)$.
	\item Since $\Mc_{\leq 1}$ is naturally embedded in $\Xc$, we can define $I_r (\alpha)$ for $\alpha \in \Mc_{\leq 1}$ in the same way. 
For any $\alpha, \gamma \in \Mc_{\leq 1}$ with the same mass, \eqref{eq: Kantorovich duality} implies
\beq\label{eq: relation between concentration function and Wasserstein metric}
	|I_r(\alpha)- I_r(\gamma)| \leq \sup\limits_{x \in \Rb^d} \Big|\int f_{r} (x, y) d\alpha (y) - \int f_{r} (x, y) d\gamma (y)\Big| \leq W(\alpha, \gamma).
\eeq
\end{enumerate}

One can check that the choice of the representative of an element in $\Xct$ does not change the value of $I_r (\mu)$ 
so $I_r$ is also well-defined on $\Xct$.

\begin{definition}\label{def: matching}
\topsep=0pt
For any $\mu=(\alpha_i), \nu=(\gamma_i) \in \Xc$,
let $P_{\mu, \nu}$ be the collection of sets $ \{(\mu_k, \nu_k)\}_{k=1}^{n}$  of pairs of submeasures of $\mu, \nu$ such that
\begin{enumerate}[label=\rm(\arabic*), nosep]
	\item For each $k$, \,$\norm{\mu_k}= \norm{\nu_k}>0$.
%	\item \sout{For each $k$, there are $i,j\in\Nb$ such that $\mu_k \leq \alpha_i$ and $\nu_k \leq \gamma_j$.}
	\item For each $k$, $|S_{\mu_k}|= |S_{\nu_k}| = 1$, i.e. each $\mu_k$ and $\nu_k$ has exactly one layer of $\Rb^d$ with positive mass. 
	(See \eqref{eq: N-support} for the definition of $S_{\mu}$).
	\item Collections $\big\{\supp(\mu_k)\big\}_{k=1}^{n}$ and $\big\{\supp(\nu_k)\big\}_{k=1}^{n}$ are each composed of mutually disjoint sets.
\end{enumerate}
Then, an element in $P_{\mu, \nu}$ is called a \textit{$(\mu, \nu)$-matching} (or simply a matching when there is no confusion).
We have the empty matching $\emptyset$ included in any~$P_{\mu, \nu}$. 

For $\phi  =  \{(\mu_k, \nu_k)\}_{k=1}^{n} \in P_{\mu, \nu}$, we define
	\[ \sep(\phi) := \inf_ {1\leq k_1 < k_2 \leq n }\Big\{ \deuc \big(\supp(\mu_{k_1}), \supp(\mu_{k_2})\big) \}
	\wedge  \inf_ {1\leq k_1 < k_2 \leq n} \Big\{ \deuc \big(\supp(\nu_{k_1}), \supp(\nu_{k_2})\big) \Big\},\]
where $\deuc (A, B) = \inf \{ |u-v| : u \in A,\ v \in B\}$ for $A, B \subset \Nb \times \Rb^d$. We set $\sep(\emptyset) = \infty$.
\end{definition}

\begin{rmk} \rm
From condition (2) in Definition~\ref{def: matching}, we can identify $\mu_k$ and $\nu_k$ as subprobability measures on $\Rb^d$ if needed.
The quantitity $\sep (\phi)$ is the degree of separation among the supports of submeasures in the matching. 
We see that $\deuc \big(\supp(\mu_{k_1}), \supp(\mu_{k_2})\big) < \infty$ only if 
$\mu_{k_1}$ and $\mu_{k_2}$ belong to the same layer of $\mu$.
If the supports of distinct $\mu_k$ belong to different layers of $\mu$
(i.e., $\mu_{k_1} \leq \alpha_{j_1}$ and $\mu_{k_2} \leq \alpha_{j_2}$ imply $j_1 \neq j_2$ for any $k_1 \neq k_2$) and the same holds for $\nu$, then we have $\sep (\phi)= \infty$ due to \eqref{eq: distance on N times R^d}.
\end{rmk}

\begin{definition}\label{def: triples}
Let $\mu, \nu \in \Xc$. A triple  $(r, \phi, \vec{x})$ is called a $(\mu, \nu)$-\textit{triple} if
$r \geq 0$, $\phi =  \{(\mu_k, \nu_k)\}_{k=1}^{n}  \in P_{\mu, \nu}$, $\sep(\phi) > 2r$, and $\vec{x}= (x_1, \cdots, x_n) \in (\Rb^d)^n$.
For any $(\mu, \nu)$-triple $(r, \phi, \vec{x})$, we define
\beq\label{eq: psudo metric in mvspace}
	d_{r, \phi, \vec{x}} (\mu, \nu) = \sum\limits_{k=1}^{n} W (\mu_k, \nu_k *\delta_{x_k}) 
	+ I_r\Big(\mu - \sum\limits_{k=1}^{n} \mu_k\Big)+  I_r\Big(\nu- \sum\limits_{k=1}^{n} \nu_k\Big)+ 2^{-r}.
\eeq
\end{definition}

\begin{rmk}\label{rmk:interpretation of mu_k and nu_k} \rm
We see that, for the empty matching,
	$d_{r, \emptyset, \vec{x}} (\mu, \nu) = I_r(\mu)+  I_r(\nu)+ 2^{-r}$
does not depend on $\vec{x}$.
For any non-empty matching, $\mu_k$ and $\nu_k$ are interpreted in two different ways in the right-hand side of \eqref{eq: psudo metric in mvspace}.

While they are treated as elements of $\Mc_{\leq 1}$ in the Wasserstein metric term, they are viewed as submeasures of $\mu$ and $\nu$, respectively, in $\Xc = \Mc_{\leq 1} (\Nb \times \Rb^d)$ in the $I_r$ terms. 

Let us see how this works in a specific example. For simplicity, we assume $d=1$.
Let $\mu = (\alpha_1, \alpha_2, \alpha_3, \mathbf{0}, \mathbf{0}, \cdots)$ and $\nu = (\gamma_1, \gamma_2, \gamma_3, \gamma_4, \mathbf{0}, \mathbf{0}, \cdots)$ such that
\[  \alpha_1 =  \frac{1}{8} \delta_0 + \frac{1}{8} U_{(8, 9)}, \quad \alpha_2 = \frac{1}{4} \delta_{1} + \frac{1}{6} N_{(1, 4)}, \quad \alpha_3 = \frac{1}{4}N_{(0, 1)},
\]
\[\gamma_1 = \frac{1}{4}\delta_{-1} + \frac{1}{10}\delta_{10}, \quad \gamma_2 = \frac{1}{6} N_{(2, 1)}, \quad \gamma_3 = \frac{1}{6} U_{(3, 6)}, 
\quad \gamma_4 = \frac{1}{8} N_{(3, 3)},
\]
where $U_{(a, b)}$ is the uniform probability measure on the interval $(a, b)$ and $N_{(a, b)}$ is the Gaussian measure with mean $a$ and variance $b$.
We can choose a $(\mu, \nu)$-matching $\phi$ as follows: 
\[\mu_1 = \frac{1}{10} \delta_0 \text{ in } \alpha_1,\,\, \mu_2 = \frac{1}{8} U_{(8, 9)} \text{ in } \alpha_1,\,\,
\mu_3 = \frac{1}{4} \delta_{1}  \text{ in } \alpha_2,\,\, \mu_4 = \frac{1}{6}N_{(0, 1)}  \text{ in } \alpha_3 ,\]
\[ \nu_1 = \frac{1}{10} \delta_{10} \text{ in }  \gamma_1,\,\, \nu_2 = \frac{1}{8} U_{(3, 6)} \text{ in }  \gamma_3,\,\,
\nu_3 = \frac{1}{4} \delta_{-1} \text{ in }  \gamma_1,\,\, \nu_4 = \frac{1}{6} N_{(2, 1)} \text{ in } \gamma_2. \]
Since $\deuc (\supp(\mu_1), \supp(\mu_2)) = 8, \, \deuc (\supp(\nu_1), \supp(\nu_3)) = 11$ and the distances of any other pairs are infinity, 
we obtain $\sep(\phi) = 8$. So we can let $r$ to be any number less than $4$, let say $r=3$, so that they meet the condition of a triple.
With the choice of $x_1 = -10, x_2 = 4, x_3=2, x_4 =-2$, one can check $W (\mu_k, \nu_k *\delta_{x_k}) = 0$ for $k = 1, 3, 4$ and hence
	\[\sum\limits_{k=1}^{4} W (\mu_k, \nu_k *\delta_{x_k}) = W \Big(\frac{1}{8} U_{(8, 9)}, \frac{1}{8} U_{(7, 10)} \Big).\]
When it comes to the $I_r$ terms, we view measures $\mu_k$ and $\nu_k$ as elements of $\Xc$:
	\[\mu_1 = (\frac{1}{10} \delta_{0}, \mathbf{0}, \mathbf{0}, \cdots), \,\, \mu_2 = (\frac{1}{8} U_{(8, 9)}, \mathbf{0}, \mathbf{0}, \cdots),
	\,\, \mu_3 = (\mathbf{0}, \frac{1}{4} \delta_{1}, \mathbf{0}, \cdots), \,\, \mu_4 = (\mathbf{0}, \mathbf{0}, \frac{1}{6} N_{(0, 1)}, \mathbf{0}, \cdots), \]
	\[\nu_1 = (\frac{1}{10} \delta_{10}, \mathbf{0}, \mathbf{0}, \cdots), \,\, \nu_2 = (\mathbf{0}, \mathbf{0}, \frac{1}{8} U_{(3, 6)}, \mathbf{0}, \cdots),
	\,\, \nu_3 = (\frac{1}{4} \delta_{-1}, \mathbf{0}, \mathbf{0},  \cdots), \,\,
	\nu_4 = (\mathbf{0}, \frac{1}{6} N_{(2, 1)}, \mathbf{0}, \cdots). \]
Therefore, we have
	\[\mu - \sum\limits_{k=1}^{4} \mu_k  = (\frac{1}{40} \delta_{0}, \frac{1}{6} N_{(1, 4)}, \frac{1}{24} N_{(0, 1)}, \mathbf{0}, \mathbf{0}, \cdots ), \quad
	\nu - \sum\limits_{k=1}^{4} \nu_k = (\mathbf{0}, \mathbf{0}, \frac{1}{24} U_{(3, 6)}, \frac{1}{8} N_{3, 3}, \mathbf{0}, \mathbf{0}, \cdots).\]
%\end{comment}
\end{rmk}

\bigskip

We can now define
\beq\label{def: new metrization on X}
d(\mu, \nu) = \inf\limits_{r, \phi, \vec{x}} d_{r, \phi, \vec{x}} (\mu, \nu),
\eeq
where the infimum is taken over all $(\mu, \nu)$-triples. One can check that the choice of representatives of $\mu,\nu \in \Xct$ does not affect the value of $d(\mu, \nu)$ so it is well-defined in $\Xct$.
One can readily check that 
\beq \label{eq: upper bound of d}
	 d(\mu, \nu) \leq 2,\quad \mu, \nu \in \Xct, 
\eeq
by choosing the empty matching and letting $r \rightarrow \infty$ in a $(\mu, \nu)$-triple.

Let $\phi^{-1} := \{(\nu_k, \mu_k)\}_{k=1}^{n} \in P_{\nu, \mu}$. Then, we see that $\sep(\phi)=\sep(\phi^{-1})$ and  hence
	\[ d_{r, \phi, \vec{x}} (\mu, \nu) = d_{r, \phi^{-1}, -\vec{x}} (\nu, \mu), \]
which implies that $d$ is symmetric. With two propositions below, we prove that $d$ is a metric on $\Xct$.

\begin{prop}
		$d(\mu, \nu) =0$ if and only if $\mu = \nu$.
\end{prop}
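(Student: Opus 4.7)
I treat the two directions separately.

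\emph{Easy direction.} If $\mu=\nu\in\Xct$, fix a representative $(\alpha_i)_{i\in\Nb}\in\Xc$. Given $\eps>0$, choose $n_0$ with $\sum_{i>n_0}\|\alpha_i\|<\eps/3$ and $r$ with $2^{-r}<\eps/3$, and form the matching $\phi=\{(\alpha_k,\alpha_k):1\le k\le n_0,\,\|\alpha_k\|>0\}$ with all shifts $x_k=0$. Its matched pieces live on distinct layers, so by~\eqref{eq: distance on N times R^d} we have $\sep(\phi)=\infty>2r$; each Wasserstein term is $0$, both residuals equal $\sum_{i>n_0}\alpha_i$ and satisfy $I_r(\cdot)\le\|\cdot\|<\eps/3$, and $2^{-r}<\eps/3$. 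Hence $d_{r,\phi,\vec{x}}(\mu,\nu)<\eps$, giving $d(\mu,\nu)=0$.

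\emph{Hard direction.} I show that $d(\mu,\nu)=0$ implies $\Lambda(f,\mu)=\Lambda(f,\nu)$ for every $f\in\bigcup_{k\ge 2}\mathcal{F}_k$; then~\eqref{eq: original metrization D} gives $\mathbf{D}(\mu,\nu)=0$, and $\mu=\nu$ follows since $\mathbf{D}$ is a metric on $\Xct$. Fix $f\in\mathcal{F}_k$ and set $g=\sigma_k f$, so $\Lambda(f,\mu)=\int g\prod_{j=1}^k\mu(du_j)$. Pick $(\mu,\nu)$-triples $(r_n,\phi_n,\vec{x}_n)$ with $d_{r_n,\phi_n,\vec{x}_n}(\mu,\nu)\to 0$; then $r_n\to\infty$, each $W(\mu^n_\ell,\nu^n_\ell*\delta_{x^n_\ell})\to 0$, and $I_{r_n}(\rho^\mu_n), I_{r_n}(\rho^\nu_n)\to 0$ for the residuals $\rho^\mu_n=\mu-\sum_\ell\mu^n_\ell$ and $\rho^\nu_n=\nu-\sum_\ell\nu^n_\ell$. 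Expanding $\Lambda(f,\mu)$ through $\mu=\sum_\ell\mu^n_\ell+\rho^\mu_n$ produces three types of terms. First, diagonal matched terms $\int g\,d(\mu^n_\ell)^{\otimes k}$: by uniform continuity of $f$ (it is continuous and vanishing at infinity), Wasserstein convergence gives $\int g\,d(\mu^n_\ell)^{\otimes k}=\int g\,d(\nu^n_\ell*\delta_{x^n_\ell})^{\otimes k}+o(1)$, and translation invariance of $f$ reduces this to $\int g\,d(\nu^n_\ell)^{\otimes k}+o(1)$. Second, off-diagonal matched terms involving two distinct matched pieces: if they lie on different layers then $g=0$, and if on a common layer their supports are separated by $\sep(\phi_n)>2r_n$, whence $|g|\le\sup\{|f|:\max_{i\neq j}|x_i-x_j|>2r_n\}\to 0$. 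Third, mixed terms containing at least one $\rho^\mu_n$-factor: fixing $\eps>0$ and $R_\eps$ with $|f|<\eps$ outside $\{\max_{i\neq j}|x_i-x_j|\le R_\eps\}$, integrating out a designated $\rho^\mu_n$-variable and bounding $\sup_u\rho^\mu_n(B_{R_\eps}(u))\le I_{R_\eps}(\rho^\mu_n)\le I_{r_n}(\rho^\mu_n)$ (via~\eqref{eq: relation between I_r and the heaviest ball} and monotonicity of $I_r$ in $r$, valid once $r_n>R_\eps$) yields a bound $\|g\|_\infty I_{r_n}(\rho^\mu_n)\|\mu\|^{k-1}+\eps\|\mu\|^k\to\eps\|\mu\|^k$. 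Running the same expansion for $\nu$, pairing terms, and letting $\eps\downarrow 0$ yields $\Lambda(f,\mu)=\Lambda(f,\nu)$.

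\textbf{Main obstacle.} The delicate point is the combined cross-term analysis: one must exploit that $g=\sigma_k f$ vanishes across layers, that distinct matched pieces on a common layer have supports more than $2r_n$ apart, and that $I_{r_n}$-smallness of the residuals transfers---through the monotonicity of $I_r$ and the estimate $\sup_u\mu(B_r(u))\le I_r(\mu)$---to smallness of residual mass on balls of the $f$-dependent radius $R_\eps$, which requires $r_n$ to overtake $R_\eps$ (automatic since $r_n\to\infty$). The cleanest structural ingredient is the identity $\int g\,d(\nu^n_\ell*\delta_{x^n_\ell})^{\otimes k}=\int g\,d(\nu^n_\ell)^{\otimes k}$ from translation invariance, which makes the shifts $x^n_\ell$ disappear from the limiting expression.
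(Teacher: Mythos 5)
Your argument is correct in substance but takes a genuinely different route from the paper. The paper proves the implication $d(\mu,\nu)=0\Rightarrow\mu=\nu$ directly and self-containedly: it orders the orbits by mass, shows that any near-optimal matching must capture all but $2\epsilon$ of the heaviest orbit $\alpha_1$ inside a fixed ball, identifies a single layer $\gamma_q$ of $\nu$ with $\|\gamma_q\|=\|\alpha_1\|$ that receives this mass, deduces $\wt{W}(\wt{\alpha}_1,\wt{\gamma}_1)=0$ via the splitting inequality~\eqref{ineq: Wsplit}, and then peels off matched orbits inductively. You instead show $d(\mu,\nu)=0\Rightarrow\Lambda(f,\mu)=\Lambda(f,\nu)$ for all test functions and invoke the Mukherjee--Varadhan theorem that $\mathbf{D}$ is a metric on $\Xct$. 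Your computation is essentially the one the paper carries out later when proving that the identity map $(\Xct,d)\to(\Xct,\mathbf{D})$ is continuous (the core/sparse decomposition, the cross-layer vanishing of $g=\sigma_k f$, the separation bound for off-diagonal matched terms, and the $I_{r}$ control of residual mass on balls of radius $R_\eps$ all reappear there). What the paper's route buys is independence from the MV metrization result and explicit structural information about how orbits of $\mu$ and $\nu$ must align; what your route buys is brevity, at the cost of leaning on the external fact that $\mathbf{D}$ separates points and of duplicating work done in the later equivalence proposition.

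One step needs tightening. For the diagonal matched terms you assert $\int g\,d(\mu^n_\ell)^{\otimes k}=\int g\,d(\nu^n_\ell*\delta_{x^n_\ell})^{\otimes k}+o(1)$ "by uniform continuity of $f$" and then sum over $\ell$. Since the number of matched pairs $n$ is not bounded along the sequence of triples, termwise $o(1)$ does not give a vanishing sum. The fix is the one the paper uses: first reduce (via the Portmanteau theorem or uniform approximation) to $1$-Lipschitz $f$, and then bound each discrepancy by $W\bigl(\mu^{n\,\otimes k}_\ell,(\nu^n_\ell*\delta_{x^n_\ell})^{\otimes k}\bigr)\le k\,W(\mu^n_\ell,\nu^n_\ell*\delta_{x^n_\ell})$ as in~\eqref{eq: Wasserstein metric between product measures} and~\eqref{eq: Kantorovich duality}, so that the sum over $\ell$ is controlled by $k\,d_{r_n,\phi_n,\vec{x}_n}(\mu,\nu)\to 0$. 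With that modification the proof goes through.
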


\begin{proof}
Since the \textit{``if''} part is obvious, it suffices to prove the \textit{``only if''} part.
Let $d(\mu, \nu) =0$ and $(\alpha_i)_{i \in \Nb}$, $(\gamma_i)_{i \in \Nb}$ be representatives of $\mu, \nu$, respectively. 
We may assume $\norm{\alpha_i} \geq \norm{\alpha_{i+1}}$ and $\norm{\gamma_i} \geq \norm{\gamma_{i+1}}$ for all $i$ by rearranging the order if needed.
For each $m \in \Nb$, there is a $(\mu, \nu)$-triple $\big(r_m, \phi_m = \{(\mu_{m, k},\nu_{m, k})\}_{k=1}^{n_m}, \vec{x}_m = (x_{m, 1}, \cdots, x_{m, n_m})\big)$ such that 
	\[ a_m:=d_{r_m, \phi_m, \vec{x}_m}(\mu, \nu)  < \frac{1}{m}. \]
Note that $r_m \rightarrow \infty$.

Suppose $\alpha_1 = \mathbf{0}$ (i.e. $\mu = \mathbf{0}$). If $\norm{\gamma_1} >\delta>0$, since the empty matching is the only option, we have
	\[ a_m > I_{r_m}(\nu) \geq I_{r_m}(\gamma_1) > \delta \]
for all sufficiently large $m$, which is a contradiction. Hence, $\norm{\gamma_1}=0$ and $\mu = \nu = \mathbf{0} $.

Now suppose $\norm{\alpha_1}>0$. By the same argument as above, we have $\norm{\gamma_1}>0$. 
We may assume 
\beq
\label{eq: alpha_1_ge_gamma_1_in_norm}
\norm{\alpha_1} \geq \norm{\gamma_1},
\eeq
and let $p \in \Nb$ be an integer 
such that $\norm{\gamma_1}= \cdots = \norm{\gamma_p} >\norm{\gamma_{p+1}}$.

Since $a_m$ converges to 0, there is at least one integer $l=l(m)$ such that $\mu_{m, l} \leq \alpha_1$ for all sufficiently large $m$. In fact, if this does not hold, then
	\[ a_m > I_{r_m}\Big(\mu-\sum\limits_{k} \mu_{m, k}\Big) \geq I_{r_m}(\alpha_1) \rightarrow \norm{\alpha_1}>0, \]
a contradiction. 
By rearranging the order of pairs in $\phi_m$, we may assume that $\mu_{m, 1} \leq \alpha_1$ and it has the biggest mass among $(\mu_{m, k}: \mu_{m, k} \leq  \alpha_1 )_k$.

\par For any $\epsilon\in(0,\norm{\alpha_1}/4)$, let us choose $R=R(\epsilon)$ such that $\alpha_1(B_R(0)^c) <\epsilon$. Then, for all $m$ satisfying $r_m > R$, there is at most one sub-measure $\mu_{m, j(m)} \leq \alpha_1$ whose support has an overlap with $B_R(0)$ since $\sep(\phi_m)>2r_m>2R$. 
If $\mu_{m, j(m)}(B_R(0)) \leq \alpha_1(B_R(0))- \epsilon$ for infinitely many $m$, then, for these $m$ we have
	\[ a_m \geq I_{r_m}\Big(\mu - \sum\limits_{k} \mu_{m, k}\Big) \geq I_{r_m}\big(\one_{B_R(0)} \alpha_1 - \one_{B_R(0)}\mu_{m, j(m)}\big) \geq \epsilon, \]
which implies that $a_m$ does not converge to 0. Therefore, 
\beq \label{eq: lower bound of submeasure in matching}
	\|\mu_{m, j(m)}\| >\alpha_1 (B_R(0)) - \epsilon > \|\alpha_1\| - 2\epsilon
\eeq
for all sufficiently large $m$, and for such $m$, $j(m)=1$ by the definition of $\mu_{m ,1}$.

\par We claim that there is $q \in \Nb$ such that $\nu_{m, 1} \leq \gamma_q$ for infinitely many $m$. 
To see this, let $q_m$ be an integer such that $\nu_{m, 1} \leq \gamma_{q_m}$. If there is no such an integer $q$ as claimed above, 
we have $q_m \rightarrow \infty$ as $m \rightarrow \infty$.
It follows that $\norm{\gamma_{q_m}} \rightarrow 0$. On the other hand, for all sufficiently large $m$, 
$\|\gamma_{q_m}\| \geq \|\nu_{m, 1}\| = \|\mu_{m, 1}\| > \|\alpha_1\| -2 \epsilon $ by \eqref{eq: lower bound of submeasure in matching}, 
which is a contradiction. Hence, the claim is proved and, moreover, we obtain 
	\[ \|\gamma_{q}\| > \|\alpha_1\| -2 \epsilon. \]
Here, $q=q(\epsilon)$ may depend on $\epsilon$.
However, since $\|\gamma_{q(\epsilon)}\| > \|\alpha_1\| -2 \epsilon \geq \|\alpha_1\|/2$ for all $\epsilon\in(0,\|\alpha_1\|/4)$ and, given $\nu$, 
there are at most $\big[\frac{2}{\norm{\alpha_1}}\big]$ (Here, $[\,\cdot\,]$ denotes the integer part) indices $i$ such that
$\|\gamma_i\|\ge\norm{\alpha_1}/2$,
there is $q \in \Nb$, independent of $\epsilon$, such that $q = q(\frac{1}{n})$ for infinitely many $n$. 
For such $q$, we have $\| \gamma_{q} \| \geq \| \alpha_1 \|$.
Combining this with \eqref{eq: alpha_1_ge_gamma_1_in_norm} we obtain $\| \alpha_1 \| = \| \gamma_q \|$, so $q \leq p$.
By interchanging $\gamma_1$ and $\gamma_q$, we may assume $q=1$.
\par Let small $\epsilon >0$ be given. We choose $R$ as above and $R'=R'(\epsilon)$ such that $\gamma_1(B(0, R')^c) <\epsilon$. 
We can obtain $\| \nu_{m, 1} \| > \| \gamma_1 \|- 2\epsilon $ for all sufficiently large $m$ by applying the same argument used for~$\alpha_1$.
Then, for all sufficiently large $m$, 
\begin{align*}
		\wt{W}(\wt{\alpha}_1, \wt{\gamma}_1) \leq W(\mu_{m, 1}, \nu_{m, 1}*\delta_{x_{m, 1}}) + W\big(\alpha_1 -\mu_{m, 1}, (\gamma_1 -\nu_{m ,1})*\delta_{x_{m, 1}}\big) 
		\leq a_m +2 \epsilon.
\end{align*}
We used \eqref{ineq: Wsplit} in the first inequality.
Letting $m\to\infty$ first and then $\epsilon \downarrow 0$, we have $\wt{W}(\wt{\alpha}_1, \wt{\gamma}_1)=0$, i.e. $\wt{\alpha}_1 = \wt{\gamma}_1$.
Peeling off $\alpha_1$ and $\gamma_1$ from $\mu$ and $\nu$ and repeating the same process to obtain $\wt{\alpha}_i = \wt{\gamma}_i$ for all $i$, we complete the proof.
\end{proof}

\begin{prop}
		$d(\mu, \nu) \leq d(\mu, \eta) + d(\eta, \nu)$.
\end{prop}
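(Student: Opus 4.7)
The plan is to fix $\eps>0$, pick $\eps$-optimal triples $(r_1,\phi_1,\vec x)$ for $d(\mu,\eta)$ and $(r_2,\phi_2,\vec y)$ for $d(\eta,\nu)$, with $\phi_1=\{(\mu_k,\eta_k)\}_{k=1}^m$ and $\phi_2=\{(\eta'_l,\nu_l)\}_{l=1}^n$, and then to assemble from these a $(\mu,\nu)$-triple whose cost exceeds the sum of the two near-optimal values by at most $O(\eps)$; letting $\eps\downarrow 0$ will yield the triangle inequality.

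The natural common refinement goes through $\eta$. Setting $\eta_{k,l}=\eta_k\wedge\eta'_l$ (nonzero only when $\eta_k,\eta'_l$ live on the same layer of $\eta$), the disjoint-support conditions in $\phi_1,\phi_2$ imply that for each fixed $k$ the family $\{\eta_{k,l}\}_l$ consists of disjointly supported submeasures of $\eta_k$ sitting inside distinct $\eta'_l$'s, hence pairwise separated by more than $\sep(\phi_2)>2r_2$; symmetrically for fixed $l$. A short density comparison yields the key pointwise bound
\[
\sum_k\Big(\eta_k-\sum_l\eta_{k,l}\Big)\le \eta-\sum_l\eta'_l,
\]
together with its symmetric version. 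Using optimal couplings $\pi_k$ for $W(\mu_k,\eta_k*\delta_{x_k})$ and $\pi'_l$ for $W(\eta'_l,\nu_l*\delta_{y_l})$, I disintegrate to produce submeasures $\mu_{k,l}\le\mu_k$ and $\nu_{k,l}\le\nu_l$ with $\|\mu_{k,l}\|=\|\nu_{k,l}\|=\|\eta_{k,l}\|$. Since the target pieces $\{\eta_{k,l}*\delta_{x_k}\}_l$ have disjoint supports inside the single layer of $\eta_k*\delta_{x_k}$, restricting $\pi_k$ to each target region gives
\[
\sum_l W(\mu_{k,l},\eta_{k,l}*\delta_{x_k})+W(\mu_k^{\mathrm{res}},\eta_k^{\mathrm{res}}*\delta_{x_k})\le W(\mu_k,\eta_k*\delta_{x_k}),
\]
where $\mu_k^{\mathrm{res}}=\mu_k-\sum_l\mu_{k,l}$ and $\eta_k^{\mathrm{res}}=\eta_k-\sum_l\eta_{k,l}$; the analogous inequality holds on the $\nu$-side.

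I will then take $\phi$ to be the collection of pairs $(\mu_{k,l},\nu_{k,l})$ with translations $z_{k,l}=x_k+y_l$ and choose $r=r_1\wedge r_2$. The Wasserstein total for $\phi$ is controlled by the double triangle inequality $W(\mu_{k,l},\nu_{k,l}*\delta_{z_{k,l}})\le W(\mu_{k,l},\eta_{k,l}*\delta_{x_k})+W(\eta_{k,l},\nu_{k,l}*\delta_{y_l})$ (using translation-invariance of $W$ under $d_{\Euc}$); the residual $\mu-\sum_{k,l}\mu_{k,l}=(\mu-\sum_k\mu_k)+\sum_k\mu_k^{\mathrm{res}}$ is handled via sub-additivity of $I_r$, the $>2r_1$-separation of the $\mu_k^{\mathrm{res}}$'s (so that $I_r$ of their sum equals the maximum), \eqref{eq: relation between concentration function and Wasserstein metric} to pass from $\mu_k^{\mathrm{res}}$ to $\eta_k^{\mathrm{res}}*\delta_{x_k}$, and the key pointwise bound to pass from $\eta_k^{\mathrm{res}}$ to $\eta-\sum_l\eta'_l$. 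The slack $\sum_k W(\mu_k^{\mathrm{res}},\eta_k^{\mathrm{res}}*\delta_{x_k})$ introduced in the residual estimate exactly matches the gap between $\sum_{k,l}W(\mu_{k,l},\eta_{k,l}*\delta_{x_k})$ and $W_{\mu,\eta}:=\sum_k W(\mu_k,\eta_k*\delta_{x_k})$ from the restriction inequality above, and therefore cancels; after the symmetric cancellation on the $\nu$-side, all Wasserstein bookkeeping telescopes to
\[
d_{r,\phi,\vec z}(\mu,\nu)\le W_{\mu,\eta}+W_{\eta,\nu}+I_{r_1}(\mu^{\mathrm{res}}_1)+I_{r_1}(\eta^{\mathrm{res}}_1)+I_{r_2}(\eta^{\mathrm{res}}_2)+I_{r_2}(\nu^{\mathrm{res}}_2)+2^{-r},
\]
and $2^{-r}=\max(2^{-r_1},2^{-r_2})\le 2^{-r_1}+2^{-r_2}$ gives $d_{r,\phi,\vec z}(\mu,\nu)\le d_{r_1,\phi_1,\vec x}(\mu,\eta)+d_{r_2,\phi_2,\vec y}(\eta,\nu)$.

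The hard part will be verifying condition~(3) of Definition~\ref{def: matching} for $\phi$: while distinct-$k$ pairs inherit separation $>2r_1$ from $\phi_1$ (and distinct-$l$ pairs inherit $>2r_2$ from $\phi_2$), for fixed $k$ the submeasures $\{\mu_{k,l}\}_l$ all sit inside $\supp\mu_k$ and their supports may overlap, and symmetrically for $\{\nu_{k,l}\}_k$ inside $\supp\nu_l$. To make $\phi$ a legitimate matching I would replace each $\mu_{k,l}$ by a restriction $\mu_k|_{A_{k,l}}$ to a measurable partition $\{A_{k,l}\}_l$ of $\supp\mu_k$ with $\mu_k(A_{k,l})=\|\mu_{k,l}\|$, selected via the disintegration of $\pi_k$ by assigning each point of $\supp\mu_k$ to the index $l$ toward whose target region its mass is predominantly transported, and symmetrically on the $\nu$-side. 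Bounding the rearrangement cost $\sum_l W(\mu_k|_{A_{k,l}},\mu_{k,l})$ is the technical heart of the argument: here one exploits the $>2r_2$-separation of the target regions within the layer of $\eta_k*\delta_{x_k}$ to show that this cost is absorbed into the Wasserstein-slack cancellation already identified, at which point letting $\eps\downarrow 0$ completes the proof.
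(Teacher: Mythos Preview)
Your strategy matches the paper's almost step for step: overlap measures $\eta_{k,l}=\eta_k\wedge\eta'_l$; pullback through the optimal couplings $\pi_k,\pi'_l$ to define $\mu_{k,l},\nu_{k,l}$; the splitting identity $W(\mu_k,\eta_k*\delta_{x_k})=\sum_l W(\mu_{k,l},\eta_{k,l}*\delta_{x_k})+W(\mu_k^{\mathrm{res}},\eta_k^{\mathrm{res}}*\delta_{x_k})$ (which the paper isolates as an inline lemma); the density bound $\sum_k\big(\eta_k-\sum_l\eta_{k,l}\big)\le\eta-\sum_l\eta'_l$; and the $I_r$ bookkeeping that cancels the pulled-back Wasserstein pieces against the slack. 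The paper makes the same choice $r=r_1\wedge r_2$ and the same composite shifts $x_{1,j_1(l)}+x_{2,j_2(l)}$.

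Where you diverge is condition~(3). The paper does not regard this as an obstacle: it sets $\phi=\{(\bar\mu_l,\bar\nu_l)\}$, asserts $\sep(\phi)\ge\min(\sep(\phi_1),\sep(\phi_2))$ in a single line, and proceeds, without ever discussing the possibility that $\{\bar\mu_l:j_1(l)=k\}$ might overlap inside a single $\mu_k$. So the issue you flag is simply not addressed in the paper's proof. Your proposed repair, however, does not close it either. Replacing $\mu_{k,l}$ by $\mu_k|_{A_{k,l}}$ for a measurable partition $\{A_{k,l}\}$ of $\supp\mu_k$ produces pieces whose \emph{closures} will generically touch, so the supports are still not disjoint and $\sep(\phi)$ is still $0$---you do not recover $\sep(\phi)>2r$. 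And your claim that the rearrangement cost $\sum_l W(\mu_k|_{A_{k,l}},\mu_{k,l})$ is absorbed into the Wasserstein slack is double-counting: that slack $W(\mu_k^{\mathrm{res}},\eta_k^{\mathrm{res}}*\delta_{x_k})$ is already fully consumed in bounding $I_r\big(\sum_k\mu_k^{\mathrm{res}}\big)$ and cannot be used a second time.
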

	
\begin{proof}
Throughout the proof, we will identify $\mu, \eta$ and $\nu$ as elements of $\Xc$ by choosing their representatives.
Let $\epsilon > 0$ be given. We can choose triples
	\[ \Big(r_1, \phi_1= \big\{(\mu_k, \eta_{1, k})\big\}_{k=1}^{n_1}, \vec{x}_1 = \big(x_{1, k} \big)_{k=1}^{n_1}\Big),\ 
	\Big(r_2, \phi_2 = \big\{ (\eta_{2, k}, \nu_k)\big\}_{k=1} ^{n_2}, \vec{x}_2= \big(x_{2,k}\big)_{k=1}^{n_2}\Big) \]
such that 
	\[ d_{r_1, \phi_1, \vec{x}_1}(\mu, \eta) < d(\mu, \eta)+ \epsilon, 
	\quad d_{r_2, \phi_2, \vec{x}_2}(\eta, \nu) < d(\eta, \nu)+ \epsilon. \]

We say that $\eta_{1, k}$ and $\eta_{2, l}$ overlap,
if the measure $\bar{\eta}_{k, l} := \min(f, g) \eta$ has non-zero mass, where
$f$ and $g$ are the Radon-Nikodym derivatives of $\eta_{1, k}$ and $\eta_{2, l}$ with respect to $\eta$.
We collect such overlap measures between $\{\eta_{1, k}\}$ and  $\{\eta_{2, k}\}$ and relabel them as $\{\bar{\eta}_k\}_{k=1} ^{n}$. 

For each $l\in\{1,2,\ldots,n\}$, there are $j_1=j_1 (l)$ and $j_2=j_2(l)$ such that $\bar{\eta}_l = f_{j_1}\eta_{1, j_1}$ and $\bar{\eta}_l = f_{j_2}\eta_{2, j_2}$ for some measurable $f_{j_1}$ and $f_{j_2}$ with $0 \leq f_{j_1}, f_{j_2} \leq 1$.
In other words, $\bar{\eta}_l$ is an overlap measure of $\eta_{1, j_1(l)}$ and $\eta_{2, j_2(l)}$.
We remark that $j_a$ can be understood as a function which maps $\{1, 2, \cdots, n\}$ to $\{1, 2, \cdots, n_a\}$ for $a = 1, 2$.

Let us fix $l\in\{1,2,\ldots,n\}$ so that we can shorten $j_1(l)$ as $j_1$ for the moment. 
Let us denote by~$\pi_{j_1}$ the optimal coupling between $\mu_{j_1}$ and $\eta_{1, j_1} *\delta_{x_{1, j_1}}$: 
	\[ W\big(\mu_{j_1}, \eta_{j_1}*\delta_{x_{1, j_1}}\big) = \norm{\mu_{j_1}} \int |x-y|\wedge 1 \,\, \pi_{j_1} (dx, dy). \]
Then, there is a submeasure $\bar{\mu}_l$ of $\mu_{j_1}$ such that $\bar{\mu}_l$ is coupled to 
$\bar{\eta}_l*\delta_{x_{1, j_1}}$ (which is a submeasure of $\eta_{1, j_1} *\delta_{x_{1, j_1}}$) by $\pi_{j_1}$.
More precisely, we can define $\bar{\pi}_l (dx, dy) : = f_{j_1} (y) \pi_{j_1}(dx, dy)$, 
notice that $\bar{\eta}_l*\delta_{x_{1, j_1}}(\cdot)=\norm{\mu_{j_1}} \int \bar{\pi}_l (dx,\cdot)$, and define
$\bar{\mu}_l(dy)=\norm{\mu_{j_1}} \int \bar{\pi}_l (\cdot,dy)$.
The identity
\beq\label{eq: split of Wasserstein metric}
	W\big(\mu_{j_1}, \eta_{1, j_1}*\delta_{x_{1, j_1}}\big) 
	= W\big(\mu_{j_1} - \bar{\mu}_l, (\eta_{1, j_1} -\bar{\eta}_l)*\delta_{x_{1, j_1}}\big) 
	+ W\big(\bar{\mu}_l, \bar{\eta}_l*\delta_{x_{1, j_1}}\big)
\eeq
is a specific case of the following lemma:
\begin{lem} 
Let $\alpha$ and $\gamma$ be measures on $\Rb^d$ with equal total masses and let $\pi$ be the optimal coupling between
them. That is, 
	\[ \alpha(\cdot)=\|\alpha\|\int \pi(\cdot,dy), \,\, \gamma(\cdot)=\|\alpha\|\int \pi(dx,\cdot), \,\,  
	W(\alpha, \gamma) = \|\alpha\| \int |x-y|\wedge 1\,\, \pi(dx, dy). \]
Let $\bar \gamma = f \gamma$ for some measurable $f$ satisfying $0\le f(y)\le 1$ for all $y$.
We define then $\bar \pi(dx,dy)=f(y)\pi(dx,dy)$, notice that $\bar \gamma (\cdot)= \|\alpha\| \int \bar\pi(dx,\cdot)$,
and define $\bar \alpha(\cdot)=\|\alpha\|\int \bar\pi(\cdot,dy)$. Then, we have
	\[ W(\alpha, \gamma)=W(\alpha-\bar{\alpha}, \gamma-\bar{\gamma})+ W(\bar{\alpha}, \bar{\gamma}). \]
\end{lem}

\begin{subproof}[Proof of lemma]
Choosing couplings
	\[ \frac{\norm{\alpha}}{\norm{\alpha-\bar{\alpha}}}(1-f(y)) \pi (dx, dy), \qquad \frac{\norm{\alpha}}{\norm{\bar{\alpha}}} f(y) \pi (dx, dy) \] 
for ($\alpha-\bar{\alpha}, \gamma-\bar{\gamma})$ and $(\bar{\alpha}, \bar{\gamma})$, respectively, gives
	\[ W(\alpha, \gamma) \geq W(\alpha-\bar{\alpha}, \gamma-\bar{\gamma})+ W(\bar{\alpha}, \bar{\gamma}). \]
The reverse inequality follows from \eqref{ineq: Wsplit}.
\end{subproof}

For each $1 \leq k \leq n_1$, applying \eqref{eq: split of Wasserstein metric} inductively to all $l \in j_1 ^{-1}(k) = \{m : j_1(m) = k \}$, we obtain
\beq \label{eq: split of Wasserstein metric, second version}
	W\big(\mu_k, \eta_{1, k}*\delta_{x_{1, k}}\big) = 
	\sum_{\sss{l \in j_1 ^{-1}(k)}} W\big(\bar{\mu}_l, \bar{\eta}_l*\delta_{x_{1, k}}\big)+
	W\bigg(\mu_k - \sum_{\sss{l \in j_1 ^{-1}(k)}} \bar{\mu}_l,\  
	\Big(\eta_{1, k} -\sum_{\sss{l \in j_1 ^{-1}(k)}}\bar{\eta}_l\Big)*\delta_{x_{1, k}}\bigg)
\eeq
so that the optimal coupling between $\mu_k$ and $\eta_{1, k} *\delta_{x_{1, k}}$ can be split into overlapping parts of $\eta_{1, k}$ and the remaining part.
Repeating the same process for $\nu$ in place of $\mu$, we can define $\bar{\nu}_k$.

Now let us define $r =\min(r_1, r_2)$, $\phi = \{(\bar{\mu}_k, \bar{\nu}_k)\}_{k=1}^{n}$, and $\vec{x} = (x_{1, j_1(k)}  + x_{2, j_2(k)}) \in \Rb^n$. Since
 \[\sep(\phi) \geq \min(\sep(\phi_1),\sep(\phi_2)) > \min(2r_1, 2r_2)=2r,\]
$(r, \phi, \vec{x})$ is a $(\mu, \nu)$-triple.

From the subadditivity property of $I_r$ and the following inequality
\begin{align*}
	W \big(\bar{\mu}_k, \bar{\nu}_k  * \delta_{x_k}\big)  
	&= W \big(\bar{\mu}_k * \delta_{-x_{1, j_{1}(k)}}, \bar{\nu}_k  * \delta_{x_{2, j_{2}(k)}}\big)
	\\& \leq W \big(\bar{\mu}_k * \delta_{-x_{1, j_{1}(k)}}, \bar{\eta}_k\big) + W \Big(\bar{\eta}_k, \bar{\nu}_k * \delta_{x_{2, j_{2}(k)}}\Big)
	\\& = W \big(\bar{\mu}_k , \bar{\eta}_k * \delta_{x_{1, j_{1}(k)}}\big) +  W \Big(\bar{\eta}_k, \bar{\nu}_k  * \delta_{x_{2, j_{2}(k)}}\Big),
\end{align*}
we have that
\begin{align*}
	d(\mu, \nu) &\leq d_{r, \phi, \vec{x}} (\mu, \nu) = \sum_{k=1} ^{n} W \big(\bar{\mu}_k, \bar{\nu}_k  * \delta_{x_k}\big) 
	+ I_r\Big(\mu-\sum_{k=1} ^{n} \bar{\mu}_k\Big) + I_r\Big(\nu-\sum_{k=1} ^{n} \bar{\nu}_k\Big) + 2^{-r}
	\\& \leq \sum\limits_{k=1} ^{n} W \big(\bar{\mu}_k, \bar{\eta}_k*\delta_{x_{1, j_1(k)}}\big) 
	+ I_r\Big(\mu-\sum_{k=1} ^{n_1} \mu_k\Big) + I_r\Big(\sum_{k=1} ^{n_1} \mu_k - \sum_{k=1} ^{n} \bar {\mu}_k\Big)
	\\& \,\,+ \sum\limits_{k=1} ^{n} W \big(\bar{\eta}_k, \bar{\nu}_k * \delta_{x_ {2, j_2(k)}}\big)+  I_r\Big(\nu-\sum_{k=1} ^{n_2} \nu_k\Big) 
		+ I_r\Big(\sum_{k=1} ^{n_2} \nu_k - \sum_{k=1} ^{n} \bar {\nu}_k\Big)+2^{-r}.
		\numberthis\label{eq: division of d(mu, nu) into seven parts}
\end{align*}

We claim that 
\beq \label{eq: inclusion-exclusion for overlapping submeasures}
	\sum_{k=1} ^{n_1} \eta_{1, k} - \sum_{k=1} ^{n} \bar {\eta}_k \leq \eta - \sum_{k=1} ^{n_2} \eta_{2, k}.
\eeq 
To see this, let $f_k, g_k$ be the Radon-Nikodym derivatives of  $\eta_{1, k}, \eta_{2, k}$ with respect to $\eta$. 
Since $\big\{\supp(\eta_{1, k})\big\}$, $\big\{\supp(\eta_{2, k} )\big\}$ are disjoint, respectively, 
we have $\sum f_k \leq 1$ and $\sum g_k \leq 1$ pointwise. Therefore,
	\[ \sum_{k=1} ^{n_1} \eta_{1, k} + \sum_{k=1} ^{n_2} \eta_{2, k}- \sum_{k=1} ^{n} \bar {\eta}_k 
	= \Big(\sum_{k=1} ^{n_1} f_k \vee \sum_{k=1} ^{n_2} g_k\Big) \eta \leq \eta,\]
which proves the claim. Note that \eqref{eq: inclusion-exclusion for overlapping submeasures} can be rewritten as 
$\sum \eta_{2, k}  - \sum \bar {\eta}_k \leq \eta - \sum \eta_{1, k}$.

We next observe that
\begin{align}
	 &I_r\Big(\sum\limits_{k=1}^{n_1} \mu_k - \sum\limits_{k=1}^{n} \bar {\mu}_k\Big) 
	 =  I_r\bigg(\sum\limits_{k=1}^{n_1} \Big(\mu_k - \sum\limits_{\sss{l \in j_1 ^{-1} (k)}} \bar {\mu}_l\Big)\bigg)
	 \label{eq: concentration function evaluated at non-overlapping part of mu}
	 \\& =\sup\limits_{\scr{u \in \Nb\times \Rb^d}} \sum\limits_{k=1}^{n_1}  \int g_r(u, v) \,\,\Big(\mu_k - \sum\limits_{\sss{l \in j_1 ^{-1} (k)}} \bar {\mu}_l\Big)(dv)
	 = \sup_{1 \leq k \leq n_1} I_r\Big(\mu_k - \sum\limits_{\sss{l \in j_1 ^{-1} (k)}} \bar {\mu}_l\Big).\nonumber
\end{align}
In the last identity, we used the fact that since $\sep(\phi_1) > 2 r_1  \geq  2r$, 
the support of $g_r(u, \cdot)$ cannot intersect with $\supp(\mu_k)$ and $\supp(\mu_m)$ at the same time for any $k \neq m$.
Similarly, we have 
\beq \label{eq: concentration function evaluated at nonoverlapping part of eta}
	I_r \Big(\sum_{k=1}^{n_1} \eta_{1, k} - \sum_{k=1}^{n} \bar {\eta}_k\Big) 
	= \sup\limits_k \,  I_r\Big(\eta_{1, k}  - \sum\limits_{\sss{l \in j_1 ^{-1} (k)}} \bar {\eta}_l\Big).
\eeq

We now see that
\begin{align*}
	 I_r\Big(\sum_{k=1} ^{n_1} \mu_k & - \sum_{k=1} ^{n} \bar{\mu}_k\Big)
	 \leq I_r\Big(\sum_{k=1}^{n_1} \mu_k-\sum_{k=1}^{n} \bar {\mu}_k\Big) 
	- I_r\Big(\sum_{k=1}^{n_1} \eta_{1, k}-\sum_{k=1}^{n} \bar {\eta}_k\Big) + I_r \Big(\eta-\sum_{k=1}^{n_2} \eta_{2, k} \Big)
	\\& = \sup_{1 \leq k \leq n_1} I_r\Big(\mu_k - \sum_{\sss{l \in j_1 ^{-1} (k)}} \bar {\mu}_l\Big)
		-\sup_{1 \leq k \leq n_1} I_r\Big(\eta_{1, k}-\sum_{\sss{l \in j_1 ^{-1} (k)}} \bar {\eta}_l\Big) 
		+ I_r \Big( \eta - \sum_{k=1}^{n_2} \eta_{2, k}\Big)
	\\& \leq  \sup_{1 \leq k \leq n_1} \bigg[I_r\Big(\mu_k-\sum\limits_{\sss{l \in j_1 ^{-1} (k)}} \bar {\mu_l}\Big) 
		-I_r\Big(\eta_{1, k} -\sum_{\sss{l \in j_1 ^{-1} (k)}} \bar {\eta}_l\Big)\bigg]+I_r \Big(\eta-\sum_{k=1}^{n_2} \eta_{2, k} \Big)
	\\& \leq \sup_{1 \leq k \leq n_1} W\bigg(\mu_k - \sum_{\sss{l \in j_1 ^{-1} (k)}} \bar {\mu}_l, 
		\Big(\eta_{1, k} - \sum_{\sss{l \in j_1 ^{-1} (k)}} \bar {\eta}_l\Big)*\delta_{x_{1, k}}\bigg) 
		+ I_r \Big(\eta - \sum_{k=1}^{n_2} \eta_{2, k} \Big)
	\\& = \sup_{1 \leq k \leq n_1} \Big[W\big(\mu_k, \eta_{1, k}*\delta_{x_{1,k}}\big) 
		- \sum_{\sss{l \in j_1 ^{-1} (k)}} W\big(\bar {\mu}_l, \bar {\eta}_l *\delta_{x_{1, k}}\big)\Big] 
		+ I_r \Big(\eta - \sum_{k=1}^{n_2} \eta_{2, k}\Big)
	\\& \leq \sum_{k=1}^{n_1} W\big(\mu_k, \eta_{1, k}*\delta_{x_{1, k}}\big) 
		- \sum_{k=1}^{n} W\big(\bar{\mu}_k, \bar{\eta}_k *\delta_{x_{1, j_1(k)}}\big) + I_{r_2} \Big(\eta - \sum_{k=1}^{n_2} \eta_{2, k} \Big), 
	\numberthis\label{eq: conversion of nonoverlapping match of mu into (mu, eta) distance terms} 
\end{align*}
where, along with monotonicity of $I_r$, we used \eqref{eq: inclusion-exclusion for overlapping submeasures} in the first line,~\eqref{eq: concentration function evaluated at non-overlapping part of mu} and~\eqref{eq: concentration function evaluated at nonoverlapping part of eta} in the second line, 
shift-invariance of $I_r$ and~\eqref{eq: relation between concentration function and Wasserstein metric} in line 4, \eqref{eq: split of Wasserstein metric, second version} in line 5, 
and in the last line we replaced the maximal term by the sum of all terms.
For the same reason, we obtain
\beq \label{eq: conversion of nonoverlapping match of nu into (eta, nu) distance terms}
	\sum_{k=1}^{n} W \big(\bar{\eta}_k, \bar{\nu}_k *\delta_{x_{2, j_2(k)}}\big)
	+ I_r\Big(\sum_{k=1}^{n_2} \nu_k - \sum_{k=1}^{n} \bar {\nu}_k\Big) 
	\leq \sum_{k=1}^{n_2} W\big(\eta_{2, k}, \nu_k *\delta_{x_{2, k}}\big) +  I_{r_1} \Big(\eta - \sum_{k=1}^{n_1} \eta_{1, k} \Big).
\eeq
Collecting \eqref{eq: division of d(mu, nu) into seven parts},~\eqref{eq: conversion of nonoverlapping match of mu into (mu, eta) distance terms} 
and~\eqref{eq: conversion of nonoverlapping match of nu into (eta, nu) distance terms}, we have 
\begin{align*}
	d(\mu, \nu) &\leq \sum_{k=1}^{n_1} W\big(\mu_k, \eta_{1, k} *\delta_{x_{1, k}}\big) 
	+ I_{r_1}\Big(\mu -\sum_{k=1}^{n_1} \mu_k\Big)+ I_{r_1} \Big(\eta - \sum_{k=1}^{n_1} \eta_{1, k} \Big)
	\\&  + \sum_{k=1}^{n_2} W\big(\eta_{2, k}, \nu_k*\delta_{x_{2, k}}\big)+ I_{r_2}\Big(\nu -\sum_{k=1}^{n_2} \nu_k\Big) 
	+ I_{r_2} \Big(\eta  - \sum_{k=1}^{n_2} \eta_{2, k} \Big) + 2^{-r}
	\\& \leq d_{r_1, \phi_1, \vec{x}_1} (\mu, \eta) + d_{r_2, \phi_2, \vec{x}_2 } (\eta, \nu) < d(\mu, \eta) + d(\eta, \nu) +2\epsilon.
\end{align*} 
Letting $\epsilon \downarrow 0$ completes the proof.
\end{proof}

Having proved that $d$ is a metric on $\Xct$, we can now study properties of the metric space $(\Xct, d)$.

\subsection{Compactness and equivalence to the MV topology}
In this section, we prove that  $(\Xct,d)$ is compact and equivalent to the original MV space $(\Xct, \mathbf{D})$.
We recall that $\wt{\Mc}_1$ is naturally embedded in $\Xct$ since we can identify any $\wt{\alpha} \in \wt{\Mc}_1$ 
with the element $[\wt{\alpha}] \in \Xct$ having representative $(\alpha, \zero, \zero, \cdots)$.

\begin{thm} \label{prop:compactification}
 The space $(\Xct, d)$ is a compactification of $\wt{\Mc}_1$, i.e.,
\begin{enumerate}[label=\rm(\alph*), nolistsep]
 	\item the collection of orbits $\wt{\Mc}_1$ is dense in $(\Xct, d)$;
 	\item for any sequence $(\mu_n)_{n \in \Nb}$ in $\wt{\Mc}_1$, there is a subsequence convergent in $(\Xct,d)$.
\end{enumerate}
\end{thm}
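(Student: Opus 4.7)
The plan is to prove (a) and (b) directly, leveraging the flexibility provided by translations and by the $I_r$ residual terms in the definition of $d$.

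\textbf{Step 1 (density).} Fix $\mu = [\wt{\alpha}_i]_{i \in I} \in \Xct$ and $\eps > 0$. First I truncate: choose $N$ so large that $\sum_{i>N}\|\alpha_i\| < \eps$, and for each $i \le N$ choose $R_i$ so large that $\alpha_i'(\cdot) := \alpha_i\big|_{B_{R_i}(0)}(\cdot)$ satisfies $\|\alpha_i - \alpha_i'\| < \eps/N$. Next I fix $r$ with $2^{-r} < \eps$, pick points $x_1, \dots, x_N \in \Rb^d$ whose pairwise distances exceed $2r + 2\max_i R_i$ so that the translated supports $B_{R_i}(x_i)$ are mutually $2r$-separated, and spread the mass deficit $m := 1 - \sum_{i \le N}\|\alpha_i'\|$ across $M \gg 1/\eps$ widely separated Dirac masses $\{(m/M)\delta_{y_j}\}$. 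Set $\alpha := \sum_{i=1}^N \alpha_i' * \delta_{x_i} + (m/M)\sum_j \delta_{y_j} \in \Mc_1$. The natural matching $\phi := \{(\alpha_i', \alpha_i' * \delta_{x_i})\}_{i=1}^N$ with shifts $(-x_i)_{i=1}^N$ has $\sep(\phi) > 2r$, vanishing Wasserstein terms, and residual concentration
\[
I_r\Big(\mu - \sum_{i=1}^N \alpha_i'\Big) \le \max\Big(\max_{i \le N}\|\alpha_i - \alpha_i'\|,\, \sup_{i>N}\|\alpha_i\|\Big) \le \eps,
\]
while the residual $I_r$ on the $\alpha$-side is bounded by $m/M < \eps$ by the separation of the $y_j$. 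Hence $d(\mu, \alpha) < 4\eps$, establishing density.

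\textbf{Step 2 (sequential compactness).} Given $(\mu_n) \subset \wt{\Mc}_1$ with representatives $\alpha_n \in \Mc_1$, I run an iterative concentration-compactness / profile-decomposition argument. At step $k \ge 1$, with residual measure $\rho_n^{(k)}$ in hand (so $\rho_n^{(1)} := \alpha_n$), consider $c_k := \limsup_n I_k(\rho_n^{(k)})$. If $c_k = 0$ for all sufficiently large $k$ along the running subsequence, I stop. Otherwise I choose the smallest scale $r_k$ and (along a subsequence) a center $x_{n,k} \in \Rb^d$ with $\rho_n^{(k)}(B_{r_k+1}(x_{n,k})) \ge c_k/2$. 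By vague sequential compactness of $\Mc_{\le 1}$, I pass to a further subsequence along which $\rho_n^{(k)} * \delta_{-x_{n,k}}$ converges vaguely to some $\beta_k \in \Mc_{\le 1} \setminus \{\zero\}$, with $\|\beta_k\| \ge c_k/2$. I then peel off this concentrated bubble, setting $\rho_n^{(k+1)} := \rho_n^{(k)} - \theta_{n,k}$ where $\theta_{n,k}$ is a submeasure of $\rho_n^{(k)}$ localized near $x_{n,k}$ approximating $\beta_k * \delta_{x_{n,k}}$ arbitrarily well in Wasserstein distance. A standard argument shows that the centers satisfy $|x_{n,k} - x_{n,l}| \to \infty$ for $k \ne l$ along the chosen subsequences. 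Because $\sum_k \|\beta_k\| \le 1$, I obtain a candidate limit $\mu_\infty := [\wt{\beta}_k]_k \in \Xct$ and, by a diagonal extraction, a single subsequence $(n_j)$ along which all the $\beta_k$'s arise.

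\textbf{Step 3 (verifying convergence in $d$).} To finish, I show $d(\alpha_{n_j}, \mu_\infty) \to 0$. Given $\eps > 0$, I truncate $\mu_\infty$ to its first $K$ bubbles as in Step 1 and use the matching
$\phi_j := \{(\theta_{n_j,k}, \beta_k \text{ in layer } k)\}_{k=1}^K$ with shifts $(-x_{n_j,k})_{k=1}^K$.
The Wasserstein terms tend to $0$ as $j \to \infty$ by construction of $\theta_{n_j,k}$ and $\beta_k$. The separation $\sep(\phi_j)$ exceeds $2r$ for any fixed $r$ and all large $j$ because $|x_{n_j,k} - x_{n_j,l}| \to \infty$ on the $\alpha_{n_j}$-side and the layers are distinct on the $\mu_\infty$-side. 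The residual $I_r$ on the $\mu_\infty$-side is controlled by the tail $\sum_{k>K}\|\beta_k\|$; the residual $I_r$ on the $\alpha_{n_j}$-side equals $I_r(\rho_{n_j}^{(K+1)})$, which, upon choosing $K$ so that the iterative scheme either terminated or produced $c_{K+1} < \eps$, is smaller than $\eps$ for all large $j$.

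The main obstacle is the bookkeeping in Step~2: ensuring the center sequences $(x_{n,k})_n$ drift apart between different~$k$, that the peel-off operation $\rho_n^{(k)} \mapsto \rho_n^{(k+1)}$ does not spoil the vague convergence at subsequent steps, and that the diagonal subsequence simultaneously captures all bubbles. This is handled by choosing $\theta_{n,k}$ supported on a ball of radius slightly larger than $r_k$ around $x_{n,k}$, together with a standard dichotomy argument showing that two persistent concentration centers in the same $\rho_n^{(k)}$ must either coincide (giving a single profile) or escape each other (giving distinct profiles).
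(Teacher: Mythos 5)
Your overall architecture coincides with the paper's: part (a) by truncating the layers, restricting to large balls, translating the pieces far apart, and diluting the mass deficit into a measure with small $I_r$ (the paper uses a flat Gaussian where you use many well-separated Dirac masses -- both work); part (b) by a concentration-compactness extraction of profiles via vague compactness of $\Mc_{\leq 1}$, followed by an explicit matching between the truncated profiles and the peeled pieces. The paper delegates the peeling step to Lemma 2.2 of Mukherjee--Varadhan and otherwise runs the same verification you describe in Step 3.

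The one place where your sketch, as written, would fail is the resolution you propose for the peeling. You require $\theta_{n,k}$ to be supported on a ball of radius slightly larger than $r_k$ around $x_{n,k}$ \emph{and} to approximate $\beta_k * \delta_{x_{n,k}}$ arbitrarily well in Wasserstein distance; these are incompatible whenever $\beta_k$ carries mass outside $B_{r_k+1}(0)$, which it generally does. If you instead match $\theta_{n_j,k}$ against the truncation of $\beta_k$ to $B_{r_k+1}(0)$, the untruncated tail of $\beta_k$ is dumped into the residual $I_r$ term on the $\mu_\infty$ side of the matching, and that tail need not have small $I_r$ (it could be a lump of mass just outside the cutoff ball). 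The standard fix, which is exactly the content of the cited decomposition lemma, is to let the support radius of $\theta_{n,k}$ grow with $n$ slowly enough that the remainder $\rho_n^{(k+1)} * \delta_{-x_{n,k}}$ still converges vaguely to $0$ while $\theta_{n,k} * \delta_{-x_{n,k}}$ converges \emph{weakly} to $\beta_k$ (so no mass is lost). Then in Step 3 the cutoff radius $r$ is chosen after $\eps$ and $K$, large enough that each $\beta_k$, $k \leq K$, has all but $\eps/K$ of its mass inside the cutoff ball, and the verification goes through as in the paper. (A minor further point: your matched pairs need not have equal masses, so you should either use the generalized Wasserstein distance $\hW$ and the g-matching formulation of $d$, or trim the heavier member of each pair.)
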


\begin{proof}
This proof is similar to that of Theorem 3.2 in \cite{MV16}.

(a) Let $\mu \in \Xct$ be given and $(\alpha_i)_{i \in \Nb} \in \Xc$ be a representative of $\mu$. 
We may assume $\norm{\alpha_i} \geq \norm{\alpha_{i+1}}$ for all $i \geq 1$.
For each $m \in \Nb$, there are $n=n(m)$ and $R=R(m)$ such that 
	\[\|\alpha_{n+1}\| < \frac{1}{m}, \quad \sum_{j =1}^{n} \alpha_j \big(B_R(0)^c \big) < \frac{1}{m}. \]
We may assume $2^{-R} <1/m$.
We denote by $\ker_{N}$ the product measure on $\Rb^d$ with centered Gaussian marginals of variance $N$.
We can choose $N=N(m)$ and $\vec{x} = \vec{x} (m) =(x_1, \cdots, x_n) \in (\Rb^d)^n$ such that $I_R(\ker_{N})<1/m$ and 
$\max\limits_{i \neq j} \euc{x_i - x_j} > 4R$.
Let
	\[ \mu_{m}= \mu_{n, N} := \sum_{j=1}^{n} \alpha_j *\delta_{x_j} +\Big(1-\sum_{j=1} ^{n} \norm{\alpha_j}\Big) \ker_{N} \,\in \Mc_1.\]
Let us write $B$ and $B_j$ for $B_R(0)$ and $B_R(x_j)$.
One can consider a $(\mu_m, \mu)$-matching given by
	\[\phi = \{(\mu_{1,j}, \mu_{2, j})\}_{j=1}^{n}, \quad
	\mu_{1, j} = (\one_{B_j}(\alpha_j * \delta_{x_j}), \zero, \zero, \cdots), \,\,
	\mu_{2, j} = (\underbrace{\zero, \cdots, \zero}_{j-1}, \one_{B} \alpha_j, \zero, \zero, \cdots),\]
We observe that 
	\[\deuc \big(\supp(\mu_{1,k}), \supp(\mu_{1, l}) \big) > 2R, \,\,\deuc \big(\supp(\mu_{2,k}), \supp(\mu_{2, l}) \big) = \infty
	\quad \text{for all } k \neq l,\] 
which implies $\sep(\phi) > 2R$. Therefore, $(R, \phi, \vec{x})$ is a $(\mu_m, \mu)$-triple and hence
\begin{align*}
	 &d(\mu_m, \mu) \leq d_{R, \phi, \vec{x}} (\mu_m, \mu) 
	\\& = \sum_{j=1}^{n} W\Big(\one_{B_j} (\alpha_j*\delta_{x_j}), \big(\one_{B} \alpha_j \big)*\delta_{x_j} \Big)
	+  I_R\Big(\mu_m - \sum_{j=1}^{n} \mu_{1, k}\Big)
	+ I_R\Big(\mu- \sum_{j=1}^{n} \mu_{2, k} \Big) +2^{-R}.
\end{align*}
Since $\one_{B_j} (\alpha_j*\delta_{x_j}) = \big(\one_{B} \alpha_j \big)*\delta_{x_j}$, all Wasserstein terms above vanish.
We estimate $I_R$ terms.
	\[  I_R\Big(\mu_m - \sum_{j=1}^{n} \mu_{1, k}\Big) 
	\leq I_R\Big(\sum_{j=1}^{n} \one_{B_j ^c}(\alpha_j * \delta_{x_j}) \Big) 
	+ I_R \Big(\big(1-\sum_{j=1} ^{n} \norm{\alpha_j}\big) \ker_{N}\Big) < \frac{2}{m}. \] 
We can decompose $\mu- \sum_{j=1}^{n} \mu_{2, k} $ into two parts:
	\[\mu- \sum_{j=1}^{n} \mu_{2, k}  :=  \mu_1 ^s + \mu_2 ^s, \quad 
	\mu_1 ^s = (\one_{B^c}\alpha_1, \cdots, \one_{B^c}\alpha_n, \zero, \zero, \cdots), 
	\mu_2 ^s = (\underbrace{\zero, \cdots,\zero}_n, \alpha_{n+1}, \alpha_{n+2}, \cdots).\]
It is easy to check that 
	\[I_R \Big(\mu- \sum_{j=1}^{n} \mu_{2, k}  \Big) \leq  I_R (\mu_1 ^s) +  I_R (\mu_2 ^s) < \frac{2}{m}. \] 
Collecting all estimates gives $d(\mu_m, \mu) < 5/m$, which implies that $\mu_m \rightarrow \mu$ as $m \rightarrow \infty$.
\vspace{0.2in}

(b) We now show that for any $(\wt{\mu}_n)_{n \in \Nb}$ in $\wt{\Mc}_1$, there is a subsequence that converges to  $\mu \in \Xct$.
Since $I_r$ is bounded by $1$, by passing to a subsequence, we may assume that for every $r>0$, there is $q_0(r)\ge 0$ such that
	\[ \lim\limits_{n \rightarrow \infty} I_{r} (\mu_n)= q_0 (r). \]

Let $\mu_{n, 0} =\mu_{n}$. For each $m \in \Nb$, 
we can choose inductively a subsequence $(\mu_{n, m})_{n\geq1}$ of  $(\mu_{n, m-1})_{n \geq 1}$ such that the limit
	\[ \lim\limits_{n \rightarrow \infty} I_{m} (\mu_{n, m}) = q_0 (m) \]
exists. Since $I_m(\mu_{n, m}) \leq I_{m+1}(\mu_{n, m})$, $q_0 (m)$ is non-decreasing in $m$. 
Therefore, $q_0 := \lim\limits_{m \rightarrow \infty} q_0 (m)$ is well-defined and one has
	\[\lim\limits_{n \rightarrow \infty} I_{m}(\mu_{n, n}) = q_0 (m), \quad \lim\limits_{r \rightarrow \infty}\lim\limits_{n \rightarrow \infty} I_r(\mu_{n, n}) = q_0. \]
For simplicity of notation, we write $\mu_n$ for $\mu_{n, n}$ from now on.	

If $q_0 =0$ (i.e. $q_0 (r) =0$ for all $r>0$), then for any $r>0$, by choosing the empty matching $\emptyset$, we have
\begin{align*}
	\limsup\limits_{n \rightarrow \infty} d(\mu_n, \mathbf{0}) &\leq \limsup\limits_{n \rightarrow \infty}  d_{r, \emptyset,0} (\mu_n, \mathbf{0})
	= \limsup\limits_{n \rightarrow \infty}I_r (\mu_n) +I_r (\mathbf{0})+2^{-r} = 2^{-r}.
\end{align*}
Letting $r \rightarrow \infty$, we obtain that $\wt{\mu}_n$ converges to $\mathbf{0}$ in $(\Xct, d)$.

If $q_0 > 0$, by choosing a suitable sequence $(a_{n, 1})_{n \in \Nb}$ in $\Rb^d$, we have for some $r>1$, 
	\[ \mu_n * \delta_{a_{n, 1}}\big(B_r(0)\big) \geq I_{r-1} (\mu_n) \geq \frac{q_0}{2} \]
for all sufficiently large $n$. 
Due to the compactness of $\Mc_1$ in the vague topology, by taking a subsequence if needed, 
we may assume 
	\[ \lambda_n:=\mu_n * \delta_{a_{n, 1}} \hookrightarrow \alpha_1. \]
Note that $\norm{\alpha_1} \geq q_0/2$.
By Lemma 2.2 in \cite{MV16}, there is a decomposition $\mu_n= \alpha_{n, 1} + \beta_{n,1}$ such that
	\[ \alpha_{n, 1} *\delta_{a_{n,1}}\Rightarrow \alpha_1, \quad \beta_{n, 1} *\delta_{a_{n, 1}} \hookrightarrow 0. \]
In particular, it is proved in Theorem 3.2 of \cite{MV16} 
that if $q_0=1$, then $\beta_{n,1}$ can be taken to be $\mathbf{0}$, so we have $\wt{\mu}_n \rightarrow \wt{\alpha}_1$ in $(\Xct, d)$.

If $0<q_0<1$, we can repeat this iteratively. More precisely, for each $k \in \Nb$, we can define 
$q_k = \lim\limits_{r \rightarrow \infty} \lim\limits_{n \rightarrow \infty} I_r(\beta_{n, k})$ in the same way as $q_0$. Then, there is a a sequence $(a_{n, k+1})_{n \in \Nb}$ in $\Rb^d$ such that
	\[ \beta_{n, k} = \alpha_{n, k+1} + \beta_{n, k+1} \]
and
	\[ \|\alpha_{k+1}\| \geq \frac{q_k}{2}, \quad \alpha_{n, k+1} *\delta_{a_{n,k+1}}\Rightarrow \alpha_1, \quad \beta_{n, k+1} *\delta_{a_{n, k+1}} \hookrightarrow 0.\]
If there is $k \in \Nb $ such that $ q_k = 0$,  then we have the following decomposition for $\mu_n$:
	\[ \mu_n = \sum\limits_{j=1}^{k} \alpha_{n, j} + \beta_{n, k},\]
where 
 	\[ \lim\limits_{r\rightarrow \infty}\lim\limits_{n\rightarrow \infty} I_{r}(\beta_{n, k}) = 0, \]
	\beq\label{eq: convergence of components consisting of mu-decomposition} 
		\alpha_{n, j} *\delta_{a_{n, j}}\Rightarrow \alpha_j,  \quad
		\beta_{n, j} *\delta_{a_{n, j}}\hookrightarrow 0,\quad 1\leq j \leq k, 
	\eeq
	\beq\label{eq: alpha_i's are far away from each other}
		 \lim\limits_{n \rightarrow \infty} \euc{a_{n,i}- a_{n, j}} = \infty,\quad i \neq j.
	\eeq
To see \eqref{eq: alpha_i's are far away from each other}, let us assume that it is not true. 
By taking a subsequence, we may assume the limit $b :=\lim\limits_{n \rightarrow \infty} (a_{n,i}- a_{n, j})$ exists for some $i>j$. We observe that
\beq\label{ineq: beta_j and alpha_i}
	\beta_{n, i} * \delta_{a_{n, i}} \geq \alpha_{n,j}* \delta_{a_{n, i}} = (\alpha_{n,j}* \delta_{a_{n, j}}) * \delta_{a_{n,i}- a_{n, j}}.
\eeq
Since $\beta_{n, i} *\delta_{a_{n, i}}\hookrightarrow 0$, 
\beq\label{eq: convergence of beta_n, i} 
	\lim\limits_{n \rightarrow \infty} \beta_{n, i} * \delta_{a_{n, i}}(K) = 0
\eeq
for any compact set $K$ in $\Rb^d$.
On the other hand, it follows from $\alpha_{n, j} *\delta_{a_{n, j}}\Rightarrow \alpha_j$ and $\|\alpha_j\| \geq q_{j-1}/2$ that 
there is a compact set $K_j$ such that $\alpha_{n, j} * \delta_{a_{n, j}}(K_j) \geq  q_{j-1} /3$ for all sufficiently large $n$. Let
	\[ K_j ' = \{x \in \Rb^d: x-b \in B_1(y) \text{\, for some\,\,} y \in K_j \}. \]
Then, one can readily check that $\alpha_{n, j} * \delta_{a_{n, i}}(K_j ') \geq  q_{j-1} /3$ for all sufficiently large $n$. 
Combining this with \eqref{eq: convergence of beta_n, i}  gives contradiction to \eqref{ineq: beta_j and alpha_i}.

We claim that $\mu_n \rightarrow \mu= [\wt{\alpha}_1, \cdots , \wt{\alpha}_k ]$ in $(\Xct,d)$. The argument is the same as in the proof of \eqref{eq:mu-tilde-n-conv-to-mu} below, and we omit it here.

If $q_k > 0$ for every $k \in \Nb$, then
there are $(\alpha_{n, j}), (\beta_{n, j})$ in $\Mc_{\leq 1}$ and $(a_{n, j})$ in $\Rb^d$
such that for all $n,k \in \Nb$,  \eqref{eq: convergence of components consisting of mu-decomposition}, \eqref{eq: alpha_i's are far away from each other} hold and
	\[ \mu_n = \sum\limits_{j=1}^{k} \alpha_{n, j} + \beta_{n, k}. \]
Since $\norm{\alpha_j} \geq q_{j-1} /2$ and $\sum\limits_{j \in \Nb} \| \alpha_j \| \leq 1$, we have $q_j \rightarrow 0$.  
We claim that 
\beq \label{eq:mu-tilde-n-conv-to-mu}
	\wt{\mu}_n \rightarrow \mu := [\wt{\alpha}_j]_{j \in \Nb}\quad \text{\rm in}\ (\Xct, d).
\eeq
Let $\epsilon>0$ be given. We first choose $k=k(\epsilon) \in \Nb$ such that $q_k < \epsilon$. 
There is $r=r(\epsilon)$ such that
	\[ \sum\limits_{j=1}^k \alpha_{j}\big(B_r(0)^c\big)<\epsilon, \quad \sum\limits_{j=1}^k \alpha_{n, j}\big(B_r(0)^c\big)<\epsilon \]
for all sufficiently large $n \geq N_1$. We may assume $2^{-r} < \epsilon$.
We can also find $N_2$ such that $\inf\limits_{\substack{i\neq j \\ i, j \leq k}}\euc{a_{n,i}- a_{n, j}}> 2r $ for all $n \geq N_2$. 
Recalling the definition of $f_r$ in~\eqref{eq: concentration function}, we choose a $(\mu, \mu_n)$-matching $\phi = \Big\{\big(f_{r}\alpha_{j}, f_{r}(\cdot +a_{n, j}) \alpha_{n, j}\big) \Big\}_{j=1}^k$
and $\vec{x}_n =(a_{n, 1}, \cdots, a_{n, k})$.
For all $n \geq \max(N_1, N_2)$,
\begin{align*}
	d(\mu, \mu_n) &\leq d_{r, \phi, \vec{x}_n} (\mu, \mu_n) 
	= \sum\limits_{j=1}^{k} W\Big(f_r \alpha_{j}, f_r (\alpha_{n, j}*\delta_{a_{n, j}})\Big)
	\\&+ I_r \Big(\mu - \sum\limits_{j=1}^k  f_r \alpha_j \Big) +I_r \Big(\sum\limits_{j=1}^{k}\big(1-f_r(\cdot + a_{n, j})\big)\alpha_{n, j} + \beta_{n, k} \Big) +2^{-r}.
	\numberthis\label{eq: estimate for d(mu, mu_n)}
\end{align*}
By \eqref{eq: convergence of components consisting of mu-decomposition}, we have  for each $1 \leq j \leq k$,
\beq\label{eq: estimate for W(alpah_j, alpha_n,j)}
	\lim\limits_{n\rightarrow \infty} W\Big(f_r \alpha_{j}, f_r (\alpha_{n, j}*\delta_{a_{n, j}})\Big) = 0.
\eeq
It follows from the subadditivity of $I_r$ that
\begin{align*}
	I_r \Big(\mu - \sum\limits_{j=1}^k  f_r \alpha_j \Big) &\leq \sum\limits_{j=1}^{k} I_r \Big((1-f_r)\alpha_j \Big) + I_r \Big([\wt{\alpha}_{j+k}]_{j \in \Nb}\Big) 
	\\& \leq \sum\limits_{j=1}^k \alpha_{n, j}\big(B_r(0)^c\big) + q_k < 2\epsilon.\numberthis\label{eq: estimate for the first I_r term} 
\end{align*}
Similarly, we can also obtain
\beq\label{eq: estimate for the second I_r term} 
	I_r \Big(\sum\limits_{j=1}^{k}\big(1-f_r(\cdot + a_{n, j})\big)\alpha_{n, j} + \beta_{n, k} \Big) 
	\le \sum\limits_{j=1}^{k} I_r \Big(\big(1-f_r(\cdot + a_{n, j})\big)\alpha_{n, j}\Big) +I_r( \beta_{n, k} )
	< 2 \epsilon.
\eeq
Plugging~\eqref{eq: estimate for W(alpah_j, alpha_n,j)}, \eqref{eq: estimate for the first I_r term} and \eqref{eq: estimate for the second I_r term} 
into~\eqref{eq: estimate for d(mu, mu_n)}, one has $\limsup\limits_{n \rightarrow \infty} d(\mu, \mu_n) < 5 \epsilon$, completing the proof.
\end{proof}

In our proof of the equivalence between the MV topology and the topology defined by our metric~$d$,
we will use the following theorem:

\begin{thm}[Theorem 26.6 in \cite{Mun00}] \label{thm: homeomorphism}
Let $X,Y$ be two topological spaces and let $f : X \rightarrow Y$ be a bijective continuous function. If $X$ is compact and $Y$ is Hausdorff, then $f$ is homeomorphism.
\end{thm}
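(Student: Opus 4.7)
The plan is to reduce the statement that $f$ is a homeomorphism to showing that $f^{-1}\colon Y\to X$ is continuous, which in turn is equivalent to showing that $f$ is a closed map (sends closed sets to closed sets). Indeed, if $C\subset X$ is closed, then $(f^{-1})^{-1}(C)=f(C)$, so the preimage characterization of continuity for $f^{-1}$ is exactly the statement that $f(C)$ is closed in $Y$ whenever $C$ is closed in $X$.

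With this reformulation in hand, I would proceed by chaining together three standard facts from point-set topology. First, closed subsets of a compact space are compact: if $C\subset X$ is closed and $X$ is compact, then any open cover of $C$ can be extended by $X\setminus C$ to an open cover of $X$, from which a finite subcover is extracted; discarding $X\setminus C$ gives a finite subcover of $C$. Second, continuous images of compact sets are compact: given a continuous $f$ and an open cover $\{V_\alpha\}$ of $f(C)$, the preimages $\{f^{-1}(V_\alpha)\}$ form an open cover of $C$, a finite subcover of which maps forward to a finite subcover of $f(C)$. Third, compact subsets of a Hausdorff space are closed: for any point $y\notin f(C)$, use the Hausdorff property at each $y'\in f(C)$ to separate $y$ and $y'$ by disjoint open sets, apply compactness of $f(C)$ to pass to a finite subcover, and intersect the resulting neighborhoods of $y$ to produce an open neighborhood of $y$ disjoint from $f(C)$; thus the complement of $f(C)$ is open.

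Combining these three facts, for any closed $C\subset X$ we get $C$ compact (Fact 1), then $f(C)$ compact (Fact 2), then $f(C)$ closed in $Y$ (Fact 3). Hence $f$ is a closed map, $f^{-1}$ is continuous, and since $f$ is assumed continuous and bijective, $f$ is a homeomorphism.

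None of the steps is really an obstacle, as this is classical material. The only subtle ingredient is the use of the Hausdorff property in the last step: without it, compact sets need not be closed, and the conclusion fails (a bijective continuous map from a compact space to a non-Hausdorff space of the same cardinality, say equipped with the indiscrete topology, need not be a homeomorphism). In the intended application to $(\Xct, d)$ and $(\Xct, \mathbf{D})$ with $f$ taken as the identity map, compactness of the domain will come from Theorem~\ref{prop:compactification} and the Hausdorff property of the codomain from the fact that $\mathbf{D}$ is a metric, so both hypotheses are indeed available.
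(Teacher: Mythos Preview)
Your proof is correct and is exactly the standard argument (closed subsets of compact spaces are compact; continuous images of compact sets are compact; compact subsets of Hausdorff spaces are closed; hence $f$ is closed and $f^{-1}$ is continuous). The paper does not give its own proof of this statement---it is quoted as Theorem 26.6 from Munkres's textbook without proof---so there is nothing to compare against beyond noting that your argument matches the classical one found there.
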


Before proving the equivalence statement, we recall the original MV metrization $\mathbf{D}$ defined by~\eqref{eq: original metrization D} 
and the functional $\Lambda$ defined by \eqref{eq: functional for metric D}.

\begin{prop}
	$(\Xct, d)$ is equivalent to $(\Xct, \mathbf{D})$.
\end{prop}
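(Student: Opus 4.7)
The plan is to invoke Theorem~\ref{thm: homeomorphism} applied to the identity map on $\Xct$. Both metrics induce Hausdorff topologies, and Theorem~\ref{prop:compactification} already gives compactness of $(\Xct, d)$, so it is enough to show that $d(\mu_n, \mu)\to 0$ implies $\mathbf{D}(\mu_n, \mu)\to 0$. In view of the definition~\eqref{eq: original metrization D} and the countability of the family $\{f_r\}$, this reduces to checking that for each fixed $f\in\mathcal{F}_k$ with $k\geq 2$, the functional $\mu\mapsto\Lambda(f,\mu)$ is $d$-continuous on $\Xct$.

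Assume $d(\mu_n,\mu)\to 0$ and pick a near-optimal triple $(r_n,\phi_n,\vec{x}_n)$ with matching $\phi_n=\{(\mu_{n,j},\nu_{n,j})\}_j$ realizing $d_{r_n,\phi_n,\vec{x}_n}(\mu_n,\mu)\leq d(\mu_n,\mu)+1/n$. Inspecting the summands in~\eqref{eq: psudo metric in mvspace}, the $2^{-r_n}$ term forces $r_n\to\infty$, the Wasserstein sum $\sum_j W(\mu_{n,j},\nu_{n,j}\ast\delta_{x_{n,j}})$ tends to $0$, and the remainders $\rho_n:=\mu_n-\sum_j\mu_{n,j}$ and $\sigma_n:=\mu-\sum_j\nu_{n,j}$ satisfy $I_{r_n}(\rho_n),I_{r_n}(\sigma_n)\to 0$. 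Setting $A_n=\sum_j\mu_{n,j}$ and $B_n=\sum_j\nu_{n,j}$, I would split $\Lambda(f,\mu_n)-\Lambda(f,\mu)$ into the three brackets $[\Lambda(f,\mu_n)-\Lambda(f,A_n)]$, $[\Lambda(f,A_n)-\Lambda(f,B_n)]$, and $[\Lambda(f,B_n)-\Lambda(f,\mu)]$, and bound each separately.

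For the first bracket, multilinearity of the $k$-fold integral that defines $\Lambda$ in its measure argument yields a sum of $2^k-1$ mixed integrals, each containing at least one copy of $\rho_n$. Since $f$ vanishes at infinity, given $\varepsilon>0$ one picks $R$ so that $|f|\leq\varepsilon$ whenever $\max_{i\neq l}|x_i-x_l|>R$; on the complementary region all $k$ points lie in a common ball of radius $R$, and by conditioning on a slot different from some prescribed $\rho_n$-slot (possible because $k\geq 2$) one gets the bound $\|f\|_\infty I_R(\rho_n)$ through the inequality $\rho_n(B_R(u))\leq I_R(\rho_n)$ from~\eqref{eq: relation between I_r and the heaviest ball}. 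Monotonicity of $I_r$ in $r$ gives $I_R(\rho_n)\leq I_{r_n}(\rho_n)\to 0$ eventually, so the first bracket vanishes; the third is symmetric. The middle bracket I would further split into diagonal contributions (one per matched pair) and cross terms; the cross terms involve $k$-tuples drawn from two or more distinct $\mu_{n,j}$'s (or $\nu_{n,j}$'s), whose maximal pairwise distance exceeds $2 r_n$ by the definition of $\sep$, so they tend to $0$ as $r_n\to\infty$. Each diagonal contribution $\Lambda(f,\mu_{n,j})-\Lambda(f,\nu_{n,j}\ast\delta_{x_{n,j}})$ is handled by a standard optimal-coupling-plus-Markov argument, using uniform continuity of $f$ (which follows from continuity, translation invariance, and vanishing at infinity) to convert the Wasserstein closeness into a bound on $|\Lambda|$; translation invariance of $f$ identifies $\Lambda(f,\nu_{n,j}\ast\delta_{x_{n,j}})$ with $\Lambda(f,\nu_{n,j})$, and summing over $j$ gives a bound that vanishes because $\sum_j W(\mu_{n,j},\nu_{n,j}\ast\delta_{x_{n,j}})\to 0$.

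The main difficulty is the bookkeeping in the first bracket: a naive Fubini bound on a mixed integral involving both $A_n$ and $\rho_n$ would produce factors like $I_R(A_n)$ or $\|\rho_n\|$ that need not be small. The trick is to arrange the conditioning so that the only slot not appearing in the $(k-1)$-fold product of ball-masses is a non-$\rho_n$ slot, producing exactly one $I_R(\rho_n)$ factor; this is precisely why the $k\geq 2$ built into the definition of $\mathcal{F}_k$ is essential.
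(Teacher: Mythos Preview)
Your proposal is correct and follows essentially the same approach as the paper: invoke Theorem~\ref{thm: homeomorphism} using the compactness from Theorem~\ref{prop:compactification}, reduce to $d$-continuity of $\mu\mapsto\Lambda(f,\mu)$ for each $f\in\mathcal{F}_k$, and then split along a near-optimal matching into matched diagonal pieces (controlled by the Wasserstein sum), cross pieces (controlled by $\sep(\phi)>2r$ and vanishing at infinity), and residual pieces (controlled by the $I_r$ terms and vanishing at infinity). The paper first reduces via Portmanteau to Lipschitz $f$ and then does the residual estimate layer by layer with a binomial/mean-value calculation rather than your single $k$-fold integral over $\Nb\times\Rb^d$, but these are organizational rather than substantive differences.
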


\begin{proof}
We fix $k \geq 2$. Since $(\Xct, d)$ is compact by Proposition~\ref{prop:compactification} and $(\Xct, \mathbf{D})$ is Hausdorff being a metric space, 
it suffices, due to Theorem~\ref{thm: homeomorphism}, to show the continuity of the identity map $e: (\Xct, d) \rightarrow (\Xct, \mathbf{D})$.

By the Portmanteau Theorem, $\Lambda (f, \mu^{(n)}) \rightarrow \Lambda (f, \mu)$ for all $f \in \mathcal{F}_k$ is equivalent to 
$\Lambda (f, \mu^{(n)}) \rightarrow \Lambda (f, \mu)$ for all bounded, Lipschitz continuous $f \in \mathcal{F}_k$. 
Therefore, it suffices to show that for given $\epsilon>0$ and bounded Lipschitz continuous $f \in \mathcal{F}_k$, there is $\delta = \delta(\epsilon, f)>0$
such that
	\[ d(\mu, \nu) < \delta \quad \Rightarrow \quad  |\Lambda(f, \mu) - \Lambda(f, \nu)| < \epsilon. \]
We may assume $0\leq f \leq 1$ and $f$ is 1-Lipschitz continuous. Let us choose $M=M(\epsilon, f)>0$ such that 
$f(x_1, \cdots, x_k) < \epsilon/4$ whenever $\max\limits_{i\neq j} \euc{x_i -x_j} \geq M$
and let
	\[ \delta = \min \Big(\frac{\epsilon}{4k}, 2^{-M} \Big). \]
Let us assume that $d(\mu,\nu)<\delta$. Then there is a $(\mu,\nu)$-triple 
\beq\label{eq:mu-nu-triple}
	\Big(r, \phi = \{(\mu_j, \nu_j)\}_{j=1}^{n}, \vec{y} = (y_1, \cdots, y_n)\Big)
\eeq
such that $d_{r, \phi, \vec{y}} (\mu, \nu) < \delta$.
 Let 
	\[ \mu^{s} = (\alpha^{s}_j ) = \mu - \sum_{j=1}^{n} \mu_j , \quad \nu^{s} =(\gamma^{s}_j) = \nu - \sum_{j=1}^{n} \nu_j. \]
Notice that since $r > M$, we have $\sup\limits_{u \in \Nb\times \Rb^d} \mu^{s} \big(B_M(u)\big) \leq I_r (\mu^{s}) < \delta$.

 Let $\alpha_j ^{c} = \alpha_j - \alpha_j ^{s} = \sum_{l: \mu_l \leq \alpha_j} \mu_l$ for each $j$. 
We divide each term of $\Lambda(f,\mu)$ into a core part and a sparse part:
 \begin{align*}
	&\int f(x_1, \cdots, x_k) \prod_{i=1}^{k} \alpha_j(dx_i) = \int f(x_1, \cdots, x_k) \prod_{i=1}^{k} (\alpha^{c}_j + \alpha^{s}_j) (dx_i)
	\\& = \int f(x_1, \cdots, x_k) \prod_{i=1}^{k} \alpha^{c}_j(dx_i) +  \sum_{\vec{t} \in \{c, \,s\}^k \setminus \{c\}^k}\int f(x_1, \cdots, x_k)  \prod_{i=1}^{k} \alpha^{t_i}_j(dx_i).
	\numberthis\label{eq: split of the integration into core part and sparse part}
 \end{align*}
 
Let $A = \{x \in \Rb^d : \max\limits_{i\neq j} \euc{x_i -x_j} \geq M\}$ and $|t|$ be the number of occurrences of $s$ in $\vec{t}$. 
 For any $\vec{t} \in \{c, \,s\}^k \setminus \{c\}^k$,
 there is a number $l$ such that $t_l = s$, so
\begin{align*}
	&\int f(x_1, \cdots, x_k)  \prod \alpha^{t_i}_j(dx_i)	 = \int_{A} f(x) \prod \alpha^{t_i}_j(dx_i) +  \int_{A^c} f(x) \prod \alpha^{t_i}_j(dx_i)
	\\& < \frac{\epsilon}{4}  \|\alpha^{c}_j\|^{k-|t|} \|\alpha^{s}_j\|^{|t|} + \int_{|x_l - x_{l-1}| < M} \prod \alpha^{t_i}_j(dx_i) 
	\\& \leq \frac{\epsilon}{4} \|\alpha^{c}_j\|^{k-|t|}\|\alpha^{s}_j\|^{|t|} 
	+ \sup\limits_{x \in \Rb^d} \alpha^{s}_l \big(B_M(x)\big) \|\alpha^{c}_j\|^{k-|t|} \|\alpha^{s}_j\|^{|t|-1}
	\\& \leq \frac{\epsilon}{4} \|\alpha^{c}_j\|^{k-|t|}\|\alpha^{s}_j\|^{|t|} + \delta \|\alpha^{c}_j\|^{k-|t|} \|\alpha^{s}_j\|^{|t|-1}.
	\numberthis\label{eq: split of the sparse part into two subparts}
\end{align*}
From the binomial theorem, we have
\beq\label{eq: estimate for sparse subpart1}
	\sum\limits_{\vec{t} \in \{c, \,s\}^k \setminus \{c\}^k} \|\alpha^{c}_j\|^{k-|t|}\|\alpha^{s}_j\|^{|t|} 
	\leq \sum\limits_{\vec{t} \in \{c, \,s\}^k} \|\alpha^{c}_j\|^{k-|t|}\|\alpha^{s}_j\|^{|t|} 
	 \leq \|\alpha_j\|^k \leq  \|\alpha_j\|
\eeq
and, by the mean value theorem,
\beq\label{eq: estimate for sparse subpart2}
	\sum\limits_{\vec{t} \in \{c, \,s\}^k \setminus \{c\}^k} \|\alpha^{c}_j\|^{k-|t|}\|\alpha^{s}_j\|^{|t|-1}
	= \frac{\big(\|\alpha^{c}_j\| + \|\alpha^{s}_j\|\big)^k - \|\alpha^{c}_j\|^{k}}{\|\alpha^{s}_j\|} = kp^{k-1} \leq k \|\alpha_j\|
\eeq
for some $p\in \big[\|\alpha^{c}_j\|, \|\alpha_j\|\big]$. Combining \eqref{eq: estimate for sparse subpart1} and \eqref{eq: estimate for sparse subpart2} 
with~\eqref{eq: split of the sparse part into two subparts} gives
\beq \label{eq: estimate for the entire sparse part}
	\sum_{\vec{t} \in \{c, \,s\}^k \setminus \{c\}^k}\int f(x_1, \cdots, x_k)  \prod_{i=1}^{k} \alpha^{t_i}_j(dx_i) \leq \big(\frac{\epsilon}{4}+ k\delta \big) \|\alpha_j\|.
\eeq
For the core part, 
\begin{align*}
	&\int f(x_1, \cdots, x_k) \prod_{i=1}^{k} \alpha^{c}_j(dx_i) 
	= \int f(x_1, \cdots, x_k) \prod_{i=1}^{k} \Big(\sum\limits_{l: \mu_l \leq \alpha_j} \mu_l \Big)(dx_i)
	\\& \leq \sum\limits_{l: \mu_l \leq \alpha_j} \int f(x_1, \cdots, x_k) \prod_{i=1}^{k} \mu_l (dx_i)  + \frac{\epsilon}{4} \| \alpha_j ^c \|^k,
	\numberthis\label{eq: estimate for the core part}
\end{align*}
where we used in the inequality the fact that $\sep(\phi) \geq 2r >M$, so $|f|<\epsilon/4$ on the support of the off-diagonal products of $\mu_l$'s.
Substituting~\eqref{eq: estimate for the entire sparse part} and~\eqref{eq: estimate for the core part} 
into~\eqref{eq: split of the integration into core part and sparse part} 
and summing over all $j$, we obtain
\beq \label{eq: upper bound for Lambda(f, mu)}
	\Lambda(f, \mu)  \leq \sum_{j=1}^{n} \Lambda(f, \mu_j) + \frac{\epsilon}{2} + k \delta.
\eeq
On the other hand, it follows from the non-negativity of $f$ that
\beq\label{eq: lower bound for Lambda(f, mu)}
	\Lambda(f, \mu) \geq \Lambda(f, \sum\limits_{j=1}^{n} \mu_j) \geq \sum\limits_{j=1}^{n} \Lambda(f, \mu_j).
\eeq
Estimates similar to~\eqref{eq: upper bound for Lambda(f, mu)} and~\eqref{eq: lower bound for Lambda(f, mu)} also hold true for $\Lambda(f, \nu)$.

We now give an upper bound for $W(\alpha^{\otimes k}, \gamma^{\otimes k})$ in terms of $W(\alpha, \gamma)$.
Let $\pi$ be the optimal coupling of $(\alpha, \gamma)$.
Then, $\pi^{\otimes k}$ is a coupling of $(\alpha^{\otimes k}, \gamma^{\otimes k})$ and
\begin{align*}
	W(\alpha^{\otimes k}, \gamma^{\otimes k}) &\leq \int (\euc{\vec{x} - \vec{y}}\wedge 1) \pi^{\otimes k} (d\vec{x}, d\vec{y})
	\leq \int \sum\limits_{j=1}^{k} (\euc{x_j - y_j} \wedge 1) \pi^{\otimes k} (d\vec{x}, d\vec{y}) 
	\\& = \sum\limits_{j=1}^{k} \int (\euc{x_j - y_j} \wedge 1)  \pi(dx_j, dy_j) = kW(\alpha, \gamma).\numberthis\label{eq: Wasserstein metric between product measures}
\end{align*}
Combining~\eqref{eq: upper bound for Lambda(f, mu)},~\eqref{eq: lower bound for Lambda(f, mu)} and~\eqref{eq: Wasserstein metric between product measures}, 
we conclude that
\begin{align*}
	|\Lambda(f, \mu) - \Lambda(f, \nu)| &< \sum\limits_{j=1}^{n} |\Lambda(f, \mu_j) - \Lambda(f, \nu_j)| +\frac{\epsilon}{2} +  k \delta 
	\\& \leq \sum\limits_{j=1}^{n} W\Big(\mu_j ^{\otimes k}, (\nu_j*\delta_{y_j})^{\otimes k}\Big) +\frac{\epsilon}{2} +  k \delta 
	\\& \leq k \sum\limits_{j=1}^{n} W \big(\mu_j , \nu_j*\delta_{y_j} \big) +\frac{\epsilon}{4} +  k \delta  \leq 2k\delta +\frac{\epsilon}{2} \leq \epsilon,
\end{align*}
where the inequality in the second line follows from the translational invariance of $\Lambda$ and \eqref{eq: Kantorovich duality}
(we recall that $\vec{y} = (y_1, \cdots, y_n)$ was introduced in \eqref{eq:mu-nu-triple} as an element of the $(\mu, \nu)$-triple).
\end{proof}

\section{The update map}\label{sec: update map}
In this section, following~\cite{BC16}, we define an ``update map'' $\Tc$
which maps the law of the polymer endpoint distribution of length $n$ to that of length $n+1$, and prove that $\Tc$ is continuous. 
As in Section~\ref{sec: main results}, the endpoint distribution for the polymer of length $n$ is denoted by 
	\[ \rho_n (dx)=\mathscr{M}_n (\omega_n \in dx). \]
Notice that $\rho_n$ is a random measure on the probability space $(\Omega_e, \mathscr{G}, \Pb)$ of random environment.
We denote by $\Pc(\Xct)$ the space of Borel probability measures on $\Xct$ and endow the space $\Pc{(\Xct})$ with the Wasserstein metric $\Wc$:
\beq\label{def: Wasserstein metric on P(X)}
	\Wc(\xi_1, \xi_2) : = \inf \int_{\Xct\times \Xct } d(\mu, \nu) \pi(d\mu, d\nu),
\eeq
where the infimum is taken over all couplings $\pi$ of $(\xi_1, \xi_2)$.

\subsection{The conditional update map}
In this section, we define a ``conditional'' update map $T:\Xct \rightarrow \Pc(\Xct)$ that maps $\rho_n$ 
to the law of $\rho_{n+1}$ given $\mathscr{G}_n$.
We recall that $P$, $P^x$ and $\lambda$ were defined in~\eqref{def: single step measure} and below it.
For each $n \geq 1$, let us define
\beq\label{def: finite dimensional distribution of P}
	 P_n ^x \,(\text{or } P_n) = \text{ the law of } (\omega_1, \cdots, \omega_n) \text{ under } P^x \,( \text{or } P).
\eeq
We observe that
\begin{align*}
	\rho_{n+1} (dx) &= \frac{1}{Z_{n+1}} E\Big[\exp\Big(\beta \sum\limits_{i=1}^{n+1} X(i, \omega_i)\Big) \one_{\{\omega_{n+1} \in dx\}}\Big]
	\\&= \frac{1}{Z_{n+1}} \int_{(\Rb^d)^{n}} \exp\Big(\beta \sum\limits_{i=1}^{n} X(i, y_i) +\beta X(n+1, x) \Big) P_{n+1}(dy_1, \cdots, dy_n, dx)
	\\&= \frac{1}{Z_{n+1}} \int_{(\Rb^d)^{n}} \exp\Big(\beta \sum\limits_{i=1}^{n} X(i, y_i) +\beta X(n+1, x) \Big) 
	P_{n}(dy_1, \cdots, dy_n)\ker \big(d(x-y_n)\big)
	\\& =\frac{Z_n}{Z_{n+1}} \int_{\Rb^d} e^{\beta X(n+1, x)} \rho_n (dy_n) \ker \big(d(x-y_n)\big)
	\\& = \frac{Z_n}{Z_{n+1}} e^{\beta X(n+1, x)} \rho_n*\ker(dx).
\end{align*}
Integrating over $x$ on the both sides gives
	\[ \frac{Z_{n+1}}{Z_n} =  \int_{\Rb^d} e^{\beta X(n+1, x)} \rho_n*\ker(dx). \]
Since $X(n+1, \cdot \,)$ is independent of $\mathscr{G}_n$, the law of $\rho_{n+1} $ given $\mathscr{G}_n$ is equal to the law of 
\beq\label{eq: law of endpoint distribution of length n+1 given G_n}
	\rho(dx) := \frac{e^{\beta Y(x)} \rho_n*\ker(dx)}
	{\int_{\Rb^d} e^{\beta Y(z)} \rho_n*\ker(dz)},
\eeq
where $Y(\cdot) \overset{d}{=} X(\cdot)$ and $Y$ is independent of $\mathscr{G}_n$.

In general, for $\mu=(\alpha_i) \in \Xc$, we can consider a $\Xc$-valued random variable
$\hat{\mu} = (\hat{\alpha}_i) $ given by
\beq\label{eq: conditional update map}
	\hat{\alpha}_i (dx) : 
		 = \frac{e^{\beta Y(i, x)}\,\alpha_i * \ker (dx)}
		 {\sum\limits_{j=1}^{\infty} \int_{\Rb^d} e^{\beta Y(j, z)} \alpha_j * \ker (dz) + (1 - \norm{\mu})e^{c(\beta)}}, 
\eeq
where $\big(Y(u)\big)_{u \in \Nb \times \Rb^d} \overset{d}{=} \big(X(u)\big)_{u \in \Nb \times \Rb^d}$ 
and $c(\cdot)$ is defined in \eqref{def: logarithmic moment generating function}.

Notice that~\eqref{eq: conditional update map} is a generalization of~\eqref{eq: law of endpoint distribution of length n+1 given G_n} 
because $\|\rho_n\|=1$ for all $n\in\Nb$. 
The additional term in the denominator allows us to define $\hat{\mu}$ when $\mu = \zero $ and we will see later that this term makes the (conditional) update map continuous.

For any $\mu=(\alpha_i)_{i \in \Nb} \in \Xc$ and $\gamma \in \Mc_{\leq 1}$,  we will write
\beq \label{eq: convolution in X}
	\mu * \gamma  := (\alpha_i * \gamma)_{i \in \Nb}.
\eeq
One can check that the convolution is also well-defined for $\mu \in \Xct$.
The measure $\hat{\mu}$ can be now expressed in terms of \eqref{eq: convolution in X} and integration on $\Nb \times \Rb^d$ as follows:
\beq\label{eq: conditional update map, 2nd version}
	\hat{\mu} (du) : = \frac{e^{\beta Y(u)}\, \mu * \ker (du)}
	{ \int_{\Nb\times \Rb^d} e^{\beta Y(w)} \mu *\ker (dw) + (1-\norm{\mu})e^{c(\beta)}}.
\eeq
We now would like to have \eqref{eq: conditional update map, 2nd version} to be well-defined on $\Xct$. 
For a fixed environment though, $\hat{\mu}$ does depend  on the choice of the representative of $\mu$.
However, the next proposition claims that the law of $\hat{\mu}$ is independent of the choice of representative.
We recall the equivalence relation $\sim$ on $\Xc$ introduced in Definition~\ref{def: equivalence relation}.
\begin{prop} \label{prop: T is well-defined}
For $\mu_1, \mu_2 \in \Xc$ with $\mu_1 \sim \mu_2$, define $\hat{\mu}_1$, $\hat{\mu}_2$ as in \eqref{eq: conditional update map, 2nd version}. 
Then, $\hat{\mu}_1 \overset{d}{=} \hat{\mu}_2$ as $\Xct$-valued random variables.
\end{prop}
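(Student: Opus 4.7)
The plan is to exhibit an environment-level coupling that realizes the equivalence $\mu_1\sim\mu_2$ pointwise. By Definition~\ref{def: equivalence relation} there are a bijection $\sigma:S_{\mu_1}\to S_{\mu_2}$ and translations $(x_i)_{i\in S_{\mu_1}}\subset\Rb^d$ with $\gamma_{\sigma(i)}=\alpha_i*\delta_{x_i}$; in particular $\|\mu_1\|=\|\mu_2\|$. Since $\alpha_i=0$ for $i\notin S_{\mu_1}$ and $\gamma_j=0$ for $j\notin S_{\mu_2}$, the zero layers contribute nothing to~\eqref{eq: conditional update map, 2nd version}: only $(Y(i,\cdot))_{i\in S_{\mu_1}}$ enters $\hat\mu_1$ and only $(Y(j,\cdot))_{j\in S_{\mu_2}}$ enters $\hat\mu_2$.

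Given an environment $Y$ used to build $\hat\mu_2$, I would introduce an auxiliary environment $Y'$ by
\[
Y'(i,y):=Y(\sigma(i),\,y+x_i),\qquad i\in S_{\mu_1}
\]
(with $Y'$ defined arbitrarily off $S_{\mu_1}$). Because the fields $(Y(j,\cdot))_{j\in\Nb}$ are i.i.d.\ and each is spatially stationary on $\Rb^d$, the injectivity of $\sigma$ on $S_{\mu_1}$ yields the distributional identity $(Y'(i,\cdot))_{i\in S_{\mu_1}}\overset{d}{=}(Y(i,\cdot))_{i\in S_{\mu_1}}$. Passing through the update map and using the fact (observed above) that zero layers do not enter the construction, this gives $\hat\mu_1[Y']\overset{d}{=}\hat\mu_1[Y]$ as $\Xct$-valued random variables.

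It remains to establish the deterministic identity $\hat\mu_1[Y']=\hat\mu_2[Y]$ in $\Xct$. The substitution $u=y+x_i$ in each layer, together with $(\alpha_i*\ker)*\delta_{x_i}=\gamma_{\sigma(i)}*\ker$ and $\|\mu_1\|=\|\mu_2\|$, converts the denominator of $\hat\mu_1[Y']$ into
\[
\sum_{j\in S_{\mu_2}}\int e^{\beta Y(j,u)}\,(\gamma_j*\ker)(du)+\bigl(1-\|\mu_2\|\bigr)e^{c(\beta)},
\]
which is precisely the denominator of $\hat\mu_2[Y]$. The same substitution shows that the layer-$i$ measure of $\hat\mu_1[Y']$ is the translate by $-x_i$ of the layer-$\sigma(i)$ measure of $\hat\mu_2[Y]$. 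Quotienting $\Mc_{\leq 1}$ by translations erases the shifts, and passing to the multiset indexed by $i$ (equivalently by $\sigma(i)$) erases the relabeling, so $\hat\mu_1[Y']$ and $\hat\mu_2[Y]$ represent the same element of $\Xct$. Combining the deterministic equality with $\hat\mu_1[Y']\overset{d}{=}\hat\mu_1[Y]$ concludes $\hat\mu_1\overset{d}{=}\hat\mu_2$ in $\Xct$.

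The main obstacle is purely notational: one must keep track, in lockstep, of how the global normalizing denominator and the per-layer numerators transform under the simultaneous shift-and-relabel action $(\sigma,(x_i))$, and confirm that the additive $(1-\|\mu\|)e^{c(\beta)}$ correction is invariant thanks to $\|\mu_1\|=\|\mu_2\|$. All other potential subtleties (zero layers, finite versus infinite supports, the fact that $\sigma$ need not extend to a bijection of $\Nb$) are mild, because the update map depends only on layers in the support.
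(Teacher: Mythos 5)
Your proposal is correct and follows essentially the same route as the paper: both arguments couple the environments via the shift-and-relabel action $(\sigma,(x_i))$, use the i.i.d.\ structure across layers plus spatial stationarity to see that this action preserves the law of the field, and then verify by direct substitution that the numerators and the common denominator (including the $(1-\norm{\mu})e^{c(\beta)}$ term, using $\norm{\mu_1}=\norm{\mu_2}$) transform so that the updated measures lie in the same orbits. The only cosmetic difference is that the paper completes $Y_2$ to a full environment with an auxiliary independent field $W$ on the non-support layers, whereas you observe that those layers never enter the update map and so may be left undefined.
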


\begin{proof}
It suffices to find a coupling of $(Y_1, Y_2)$ such that
\begin{enumerate}[label=$\bullet$, nolistsep]
	\item $Y_1, Y_2$ are random fields with the same law as $X$,
	\item $Y_i$ is used to define $\hat{\mu}_i$ in \eqref{eq: conditional update map, 2nd version},
	\item $\hat{\mu}_1 = \hat{\mu}_2$ in $\Xct$.
\end{enumerate}
Let $\mu_1=(\alpha_i)$ and $\mu_2 = (\gamma_i)$. 
From Definition~\ref{def: equivalence relation}, there are a sequence $\{x_i\}$ in $\Rb^d$ and a bijection $\sigma: S_{\mu_2} \rightarrow S_{\mu_1}$ such that
$\gamma_i  = \alpha_{\sigma (i)}* \delta_{x_i}$ for all $i \in S_{\mu_2}$. 
Let $Y_1, W$ be independent random fields with the same law as $X$ and set 
	\[Y_2 (i, x) = 
	\begin{cases}
		Y_1 (\sigma (i), x - x_i) & \text{if}\,\, i \in S_{\mu_2},\\
		W(i, x) & \text{otherwise}.
	\end{cases} \]
Then, we have for any $i \in S_{\mu_2}$,
\begin{align*}
		\hat{\gamma}_i(dx)
		& =\frac{e^{\beta Y_2 (i, x)}\,\gamma_i * \ker (dx)}
		 {\sum\limits_{j=1}^{\infty} \int_{\Rb^d} e^{\beta Y_2(j, z)} \gamma_j * \ker (dz) + (1 - \norm{\mu_2})e^{c(\beta)}}
		\\& =\frac{e^{\beta Y_1(\sigma (i),\,x-x_i)} \alpha_{\sigma (i)} *\delta_{x_i} *\ker(dx)}
		{\sum\limits_{j \in S_{\mu_2}} \int_{\Rb^d} e^{\beta Y_1(\sigma (j), z- x_j)} \alpha_{\sigma (j)} *\delta_{x_j} *\ker(dz) 
		+ (1-\norm{\mu_1})e^{c(\beta)}}
		\\&=\frac{e^{\beta Y_1 (\sigma (i),\, x)} \alpha_{\sigma (i)} *\ker(dx)}
		{\sum\limits_{j=1}^{\infty} \int_{\Rb^d} e^{\beta Y_1 (j, z)} \alpha_j * \ker(dz)
		+ (1-\norm{\mu_1})e^{c(\beta)} }
		\\&=\hat{\alpha}_{\sigma (i)} *\delta_{x_i} (dx),
\end{align*}
	which implies that $\hat{\alpha}_{\sigma (i)}$ and $\hat{\gamma}_i$ belong to the same orbit. 
\end{proof}

Proposition~\ref{prop: T is well-defined} allows us to define the update map $T : \Xct \rightarrow \Pc{(\Xct})$ by
	\[ T: \mu \,\,\mapsto\,\, \text{law of\,\,\,} \hat{\mu}. \]
Since $\wt{\Mc}_1$ is naturally embedded in $\Xct$, we can identify the endpoint distribution $\rho_n$ with a random element of $\Xct$.
As we discussed before, we have
	\[ T \mu (d \nu) = \mathbf{P}(\rho_{n+1} \in d\nu \,|\, \rho_n = \mu), \]
or equivalently,
\beq\label{eq: interpretation of conditional map as a transition kernel}
	T \rho_n (d\nu) \overset{a.s.}{=} \mathbf{P}(\rho_{n+1} \in d\nu \,|\, \rho_n) \overset{a.s.}{=} \mathbf{P}(\rho_{n+1} \in d\nu \,|\, \mathscr{G}_n).
\eeq
Therefore, $T \mu (d \nu):= \Gamma(\mu, d\nu)$ can be understood as a transition kernel for the Markov chain $(\rho_n)_{n \geq 0}$ on $\Xct$. 

\subsection{Construction of $d$ revisited}
Before proceeding to prove the continuity of the conditional update map, we explore an alternative construction 
of the new metric $d$ on $\Xct$, which was defined in~\eqref{def: new metrization on X}.
For any $\mu, \nu \in \Xc$, we call $\varphi := \{(\mu_k, \nu_k)\}_{k=1}^{n} $ a $(\mu, \nu)$-\textit{g-matching} (standing for \textit{generalized matching}) 
if it is a $(\mu, \nu)$-matching for which the first condition of the matching is relaxed (see Definition~\ref{def: matching}).
That is, paired submeasures $\mu_k$ and $\nu_k$ don't need to have the same mass in a g-matching. 
We can define $\sep(\varphi)$ and a \textit{g-triple} $(r, \varphi, \vec{x})$ in the same way as for matchings (see Definition~\ref{def: triples}). We denote the set of all $(\mu, \nu)$-g-matchings by $G_{\mu, \nu}$.
Given a $(\mu, \nu)$-g-triple $(r, \varphi, \vec{x})$, we define
	\[ d_{r, \varphi, \vec{x}} (\mu, \nu) := \sum\limits_{k=1}^{n} \hW (\mu_k, \nu_k *\delta_{x_k}) 
		+ I_r\Big(\mu - \sum\limits_{k=1}^{n} \mu_k\Big)+  I_r\Big(\nu- \sum\limits_{k=1}^{n} \nu_k\Big)+ 2^{-r}, \]
where $\hW$ is the generalized Wasserstein distance, defined in \eqref{def: generalized Wasserstein distance}.
We claim that
	\[ d_g(\mu, \nu) := \inf_{\substack{r, \varphi, \vec{x} \\ \varphi \in G_{\mu, \nu}}} d_{r, \varphi, \vec{x}}(\mu, \nu) 
	= \inf_{\substack{r, \phi, \vec{x}\\\phi \in P_{\mu, \nu}}} d_{r, \phi, \vec{x}} (\mu, \nu)=d(\mu, \nu). \]
The inequality $d(\mu,\nu)\ge d_g(\mu,\nu)$ is obvious because every $(\mu, \nu)$-matching is a $(\mu, \nu)$-g-matching. 
To see the reverse inequality, let us take any $\epsilon>0$ and choose a g-triple $\big(r, \varphi = \{ (\mu_k, \nu_k) \}_{k=1}^{n}, \vec{x}\big)$ such that $d_{r, \varphi , \vec{x}}(\mu, \nu) <d_g(\mu, \nu) +\epsilon$.
For each pair $(\mu_k, \nu_k)$, there exist submeasures $\bar{\mu}_k\leq \mu_k$ and $\bar{\nu}_k \leq \nu_k$ such that they have the same mass and  
	\[ \hW (\mu_k, \nu_k) = W(\bar{\mu}_k, \bar{\nu}_k)+\norm{\mu_k - \bar{\mu}_k} +  \norm{\nu_k - \bar{\nu}_k}. \]
Now we consider a triple $\big(r, \phi = \{(\bar{\mu}_k, \bar{\nu}_k)\}, \vec{x}\big)$. Then we have
\beq\label{eq: extract of matching from g-matching}
	d_{r, \phi, \vec{x}} (\mu, \nu) = \sum\limits_{k=1}^{n} W(\bar{\mu}_k, \bar{\nu}_k) 
	+ I_r \Big(\mu- \sum\limits_{k=1}^{n} \bar{\mu}_k\Big) +  I_r \Big(\nu- \sum\limits_{k=1}^{n} \bar{\nu}_k\Big) + 2^{-r}
\eeq
and by the subadditivity of $I_r$,
\beq\label{eq: estimate for concentration function from g-matching metric}
	I_r \Big(\mu- \sum\limits_{k=1}^{n} \bar{\mu}_k\Big) 
	\leq I_r \Big(\mu- \sum\limits_{k=1}^{n} \mu_k\Big) + I_r \Big(\sum\limits_{k=1}^{n} \mu_k - \bar{\mu}_k\Big) 
	\leq  I_r \Big(\mu- \sum\limits_{k=1}^{n} \mu_k\Big) +\sum\limits_{k=1}^{n} \norm{\mu_k - \bar{\mu}_k}.
\eeq
Plugging~\eqref{eq: estimate for concentration function from g-matching metric} into~\eqref{eq: extract of matching from g-matching}, we obtain that
	\[ d_{r, \phi, \vec{x}} (\mu, \nu) \leq d_{r, \varphi , \vec{x}}(\mu, \nu) < d_g(\mu, \nu) +\epsilon, \]
which completes the proof.

\subsection{Continuity of the conditional update map} \label{sec: continuity-cond-update} 
In this section, we prove that $T:(\Xct, d) \rightarrow \big(\Pc{(\Xct}), \Wc\big)$ is continuous. 
In our proof of continuity, we follow the general strategy used in~\cite{BC16}.
Some elements of our proof in the general continuous setting have appeared in \cite{BM18} for a Gaussian setting.
We begin with a useful lemma.

\begin{lem}\label{lem: estimate for denominator of conditional update map}
Let
\beq \label{eq: denominator of conditional update map}
	A =\int_{\Nb \times \Rb^d}  e^{\beta Y(u)} \mu * \ker (du) + \big(1-\norm{\mu}\big)e^{c(\beta)},
\eeq
where $Y \overset{d}{=} X$. Then, for any $p>0$,
	\[ \Eb A^{-p} \leq 2^p e^{ c(-p \beta)}. \]
\end{lem}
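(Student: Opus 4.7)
My plan is to recast $A$ as the integral of a strictly positive function against a probability measure, so that Jensen's inequality for $x \mapsto x^{-p}$ applies in one line.

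Concretely, I will set $m := \mu*\ker$, so that $\norm{m}=\norm{\mu}$ since $\ker$ is a probability measure. To absorb the constant term $(1-\norm{\mu})e^{c(\beta)}$ into the same integral, I will adjoin a single auxiliary atom $u_\star$ to $\Nb\times\Rb^d$ and work with the probability measure
$$\bar m := m + (1-\norm{m})\delta_{u_\star}$$
on the extended space, together with the positive integrand $Z(u):=e^{\beta Y(u)}$ for $u\in\Nb\times\Rb^d$ and $Z(u_\star):=e^{c(\beta)}$. By construction $A=\int Z\,d\bar m$, and $A>0$ almost surely (if $\norm{\mu}<1$ this is immediate, and if $\norm{\mu}=1$ it holds because $e^{\beta Y}$ is everywhere positive). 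Applying Jensen's inequality pathwise to the convex function $x\mapsto x^{-p}$ then gives
$$A^{-p}=\Big(\int Z\,d\bar m\Big)^{-p}\le \int Z^{-p}\,d\bar m.$$

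Taking $\Pb$-expectation, using Fubini, and invoking the identity $\Eb e^{-p\beta Y(u)}=e^{c(-p\beta)}$ from~\eqref{def: logarithmic moment generating function}, I will arrive at
$$\Eb A^{-p}\le \norm{m}\,e^{c(-p\beta)}+(1-\norm{m})\,e^{-pc(\beta)}.$$
To finish, I will note that Jensen's inequality applied directly to $X(0)$ gives $c(\kappa)\ge \kappa\,\Eb X(0)$ for every $\kappa$, so $c(-p\beta)\ge -p\beta\,\Eb X(0)\ge -p\,c(\beta)$; hence $e^{-pc(\beta)}\le e^{c(-p\beta)}$, and consequently $\Eb A^{-p}\le e^{c(-p\beta)}$, which is in fact stronger than the claimed $2^p e^{c(-p\beta)}$.

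There is no real obstacle here; the only mildly non-obvious step is the reinterpretation of the $(1-\norm{\mu})e^{c(\beta)}$ summand as mass carried by an auxiliary atom, which is exactly what enables a single clean application of Jensen's inequality. For $p\in(0,2]$ the assumption~\eqref{def: logarithmic moment generating function} guarantees $c(-p\beta)<\infty$; for larger $p$ the right-hand side is already $+\infty$ and the bound is vacuous.
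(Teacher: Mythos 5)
Your proof is correct, and it takes a genuinely different route from the paper's. The paper splits into two cases according to whether $\norm{\mu}\le 1/2$ or $\norm{\mu}>1/2$: in the first case it simply bounds $A$ from below by the constant term $(1-\norm{\mu})e^{c(\beta)}\ge \tfrac12 e^{c(\beta)}$ and uses $e^{-pc(\beta)}\le \Eb e^{-p\beta Y(1,0)}$, while in the second it discards the constant term and applies Jensen to the normalized measure $\mu*\ker/\norm{\mu}$; the factor $2^p$ in the stated bound is exactly the price of the normalization $\norm{\mu}^{-p}\le 2^p$ in either case. You instead absorb the constant term as mass $1-\norm{\mu}$ on an auxiliary atom, so that $A$ is an integral of a positive function against a genuine probability measure, and a single application of Jensen plus the elementary comparison $-p\,c(\beta)\le -p\beta\,\Eb X(0)\le c(-p\beta)$ yields $\Eb A^{-p}\le e^{c(-p\beta)}$ — a sharper bound with no case split and no factor $2^p$. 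Both arguments are sound and equally short; yours is cleaner and strictly stronger, while the paper's avoids the (mild) extra observations of extending the state space and comparing $e^{-pc(\beta)}$ with $e^{c(-p\beta)}$. Since the lemma is only ever invoked up to a multiplicative constant (with $p=1,2$, for which \eqref{def: logarithmic moment generating function} guarantees finiteness of $c(-p\beta)$), the improvement is cosmetic, but your version would be a perfectly acceptable replacement.
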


\begin{proof}
We consider two cases. If $\norm{\mu} \leq 1/2$, then by Jensen inequality,
	\[ \Eb A^{-p} \leq  \Big((1-\norm{\mu})e^{c(\beta)}\Big)^{-p} 
	\leq 2^p \big(\Eb e^{\beta Y(1, 0)} \big)^{-p} 
	\leq 2^p \Eb e^{-p \beta Y(1, 0)} = 2^p e^{ c(-p \beta)}. \]
If $\norm{\mu} > 1/2$, then again by Jensen's inequality,
\begin{align*}
	 \Eb A^{-p} &\leq  \Eb \Big(\int_{\Nb \times \Rb^d}  e^{\beta Y(u)} \mu *\ker (du) \Big) ^{-p} 
	\\& \leq \norm{\mu}^{-p} \Eb \int_{\Nb \times \Rb^d}  e^{-p\beta Y(u)} \frac{\mu *\ker}{\norm{\mu}} (du) 
	\\& = \norm{\mu}^{-p} e^{c(-p\beta)} \int_{\Nb \times \Rb^d}\frac{\mu *\ker}{\norm{\mu}} (du)
	 \leq 2^p e^{ c(-p \beta)}.
\end{align*} 
\end{proof}

\begin{prop} \label{prop: continuity of conditional update map}
$T: (\Xct, d) \rightarrow (\Pc(\Xct), \Wc)$ is continuous. That is, 
for any $\epsilon >0$, there is $\delta = \delta(\epsilon)>0$ such that for $\mu, \nu \in \Xct,$
	\[ d(\mu, \nu) <\delta \quad \Rightarrow \quad \Wc (T\mu, T \nu)< \epsilon. \]
\end{prop}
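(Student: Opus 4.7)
The plan is to use the coupling characterization $\Wc(T\mu,T\nu)\le\Eb[d(\hat\mu,\hat\nu)]$ and construct a coupling of $\hat\mu\sim T\mu$ and $\hat\nu\sim T\nu$ for which the right-hand side is small. Fix $\eps>0$ and assume $d(\mu,\nu)<\delta$ with $\delta$ to be chosen. Pick a $(\mu,\nu)$-triple $(r,\phi=\{(\mu_k,\nu_k)\}_{k=1}^n,\vec{x})$ with $d_{r,\phi,\vec{x}}(\mu,\nu)<\delta$. The strategy is to lift it to a $(\hat\mu,\hat\nu)$-g-triple with small $d$-value in expectation. Since $\ker$ need not have compact support, first truncate $\ker=\ker_L+\ker'_L$ with $\supp\ker_L\subset B_L(\zero)$ and $\norm{\ker'_L}<\eta$, so that convolution with $\ker_L$ spreads supports by at most $L$.

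The central ingredient is a coupling of random environments. Choose $r$ large enough that, for each $k$, the $(L+M)$-neighborhood $U_k$ of $\supp\nu_k$ in its own layer of $\Nb\times\Rb^d$ is disjoint from the other $U_{k'}$, and that the $(L+M)$-neighborhoods of $\supp\mu_k$ in their layers are likewise mutually disjoint (here $M$ is the dependence range of $X$). Let $Y$ be a random field with the law of $X$, used to define $\hat\mu$ via~\eqref{eq: conditional update map, 2nd version}. Build a second field $Y'\stackrel{d}{=}X$ by requiring that, for each $k$, its value at $u=(j_k,y)\in U_k$ equals $Y(i_k,y+x_k)$, where $i_k,j_k$ are the layers of $\mu_k,\nu_k$, and that its values outside $\bigcup_k U_k$ follow a regular conditional distribution of $X$ given the values already fixed on $\bigcup_k U_k$ (cf.\ Remark~\ref{rem:regular_conditional_prob}). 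Stationarity of $X$ makes each translated piece marginally correct; $M$-dependence combined with disjointness of the $U_k$ in the $\nu$-layers and of their images $U_k+x_k$ in the $\mu$-layers makes the pieces jointly distributed as independent translates; and the conditional distribution splices the remaining part correctly. Use $Y'$ to define $\hat\nu$.

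Set $\hat\mu_k(du):=\frac{e^{\beta Y(u)}\,\mu_k*\ker_L(du)}{A_\mu}$ and $\hat\nu_k(du):=\frac{e^{\beta Y'(u)}\,\nu_k*\ker_L(du)}{A_\nu}$, where $A_\mu,A_\nu$ are the denominators from~\eqref{eq: denominator of conditional update map}. These are submeasures of $\hat\mu,\hat\nu$ whose supports are pairwise separated by more than $2(r-L)$, so $(r-L,\varphi=\{(\hat\mu_k,\hat\nu_k)\},\vec{x})$ is a valid $(\hat\mu,\hat\nu)$-g-triple. By the coupling, the translate $\hat\nu_k*\delta_{x_k}$, viewed as a measure on $\Rb^d$, has density $e^{\beta Y(i_k,y)}\,(\nu_k*\delta_{x_k}*\ker_L)(dy)/A_\nu$, matching the form of $\hat\mu_k$. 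Inserting the hybrid $\frac{e^{\beta Y}(\nu_k*\delta_{x_k}*\ker_L)}{A_\mu}$ and applying the triangle inequality splits $\hW(\hat\mu_k,\hat\nu_k*\delta_{x_k})$ into a numerator-difference piece controlled by $W(\mu_k,\nu_k*\delta_{x_k})$ (weighted by a random factor involving $e^{\beta Y}/A_\mu$ on a bounded region) and a denominator-difference piece proportional to $|A_\mu^{-1}-A_\nu^{-1}|$. The $I_{r-L}$ contributions from the leftovers $\hat\mu-\sum_k\hat\mu_k$ and $\hat\nu-\sum_k\hat\nu_k$ reduce, via the subadditivity and monotonicity of $I_{r-L}$ and~\eqref{eq: relation between I_r and the heaviest ball}, to $I_r(\mu-\sum_k\mu_k)$, $I_r(\nu-\sum_k\nu_k)$, and $O(\eta)$ mass terms, each again carrying a random weight built from $e^{\beta Y}$ and $A_\mu^{-1}$ or $A_\nu^{-1}$.

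Upon taking expectations, Lemma~\ref{lem: estimate for denominator of conditional update map} bounds $\Eb[A_\mu^{-p}]$ and $\Eb[A_\nu^{-p}]$ uniformly, while the moment condition~\eqref{def: logarithmic moment generating function} controls $\Eb[e^{q\beta Y(u)}]$ for $q\le 2$; Cauchy--Schwarz then yields $\Eb[d(\hat\mu,\hat\nu)]\le C(\beta)(\delta+\eta+2^{-(r-L)})$ for a constant depending only on $\beta$ and the law of $X$. Choosing $\eta$ small, then $r$ large, then $\delta$ small makes the right-hand side less than $\eps$. The hardest part of the argument is the construction of the coupled field $Y'$: simultaneously translating the single field $Y$ by several distinct shifts $x_k$ across possibly distinct layers and splicing in a conditionally independent tail while preserving the law of $X$, is precisely where $M$-dependence, stationarity, and the geometric separations built into the $(\mu,\nu)$-triple are indispensable. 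A secondary obstacle is that $Y$ is unbounded, forcing all the bounds to close in expectation via Lemma~\ref{lem: estimate for denominator of conditional update map} rather than pointwise.
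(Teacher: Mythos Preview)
Your overall architecture is close to the paper's: couple $\hat\mu$ and $\hat\nu$ via a common (or suitably coupled) environment, lift the $(\mu,\nu)$-triple to a $(\hat\mu,\hat\nu)$-g-triple, and close the estimate in expectation using Lemma~\ref{lem: estimate for denominator of conditional update map} and Cauchy--Schwarz. The truncation $\ker=\ker_L+\ker'_L$ and the environment-coupling via $M$-dependence are also the right moves (the paper uses essentially the same device in its Part~2).

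However, there is a genuine gap in your core analytic estimate. You claim that the numerator-difference piece $\hW\big(e^{\beta Y}\mu_k*\ker_L/A_\mu,\, e^{\beta Y}(\nu_k*\delta_{x_k})*\ker_L/A_\mu\big)$ is ``controlled by $W(\mu_k,\nu_k*\delta_{x_k})$ weighted by a random factor'', and you arrive at a final bound of the form $C(\beta)(\delta+\eta+2^{-(r-L)})$. This is not correct: multiplication by $e^{\beta Y}$ is not a Lipschitz operation on measures, even after averaging. Concretely, if $\pi'_k$ is the optimal coupling of $\mu_k*\ker_L$ and $(\nu_k*\delta_{x_k})*\ker_L$, the best one can say is
\[
\hW(e^{\beta Y}\mu'_k,e^{\beta Y}\nu'_k)\le \|\mu_k\|\!\int d_{\Euc}(x,y)\,e^{\beta Y(x)}\,\pi'_k(dx,dy)
+\|\mu_k\|\!\int |e^{\beta Y(x)}-e^{\beta Y(y)}|\,\pi'_k(dx,dy).
\]
The second integral is \emph{not} bounded by a constant times $W(\mu_k,\nu_k*\delta_{x_k})$: it involves the $L_2$-modulus of continuity of $x\mapsto e^{\beta Y(x)}$, which goes to zero as $|x-y|\to 0$ but at a rate depending on the law of $Y$, not just on $\beta$. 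The paper handles this by introducing an auxiliary scale $\delta_1=\delta_1(\eps)$ with $\Eb(e^{\beta Y(x)}-e^{\beta Y(0)})^2<\eps^2$ for $|x|<\delta_1$, splitting the $\pi'_k$-integral on $\{|x-y|<\delta_1\}$ and its complement, and obtaining a bound of the form $C(\beta)(\sqrt{\delta/\delta_1}+\eps)$ rather than $C(\beta)\delta$. The same issue recurs in your denominator estimate for $|A_\mu-A_\nu|$, since (as the paper shows) the matched contribution to $\Eb(A_\mu-A_\nu)^2$ is again controlled by $\sum_k\Eb\hW(\mu_k^*,\nu_k^*)^2$.

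In short, your claimed bound $C(\beta)(\delta+\eta+2^{-(r-L)})$ is false as stated; the missing ingredient is the $L_2$-continuity scale $\delta_1(\eps)$, and without it the argument does not close. Once you insert this (choosing $\delta$ in terms of $\delta_1(\eps)$ at the end), your one-step coupling approach can be made to work and is somewhat more direct than the paper's two-part reduction.
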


\begin{proof}
Given representatives $\mu = (\alpha_j), \nu = (\gamma_j) \in \Xc$,
let $\hat{\mu} = (\hat{\alpha}_j), \hat{\nu}=(\hat{\gamma})$ be $\Xc$-valued random variables with laws $T\mu, T\nu$, respectively. Since
\beq \label{eq: Wasserstein distance between Tmu Tnu}
	\Wc(T\mu, T\nu) = \min \Eb d(\hat{\mu}, \hat{\nu}),
\eeq
where the minimum is taken over all couplings of $(\hat{\mu}, \hat{\nu})$, our goal is to construct a coupling which makes 
$\Eb d(\hat{\mu}, \hat{\nu}) $ as small as possible.

Let $\eps \in (0, 1)$ be given. We will determine $\delta = \delta(\epsilon)>0$ later.
If $d(\mu, \nu) <\delta$, there is a triple $\big(r, \phi = \{(\mu_k, \nu_k)\}_{k=1}^{n}, \vec{x}= (x_1, \cdots, x_n)\big)$ such that
\beq \label{eq: setting (mu, nu)-triple}
	d_{r, \phi, \vec{x}} (\mu, \nu) <\delta.
\eeq
Recalling that $\rdep$ is the radius of dependence of the random field $X(1, \cdot)$,
we may assume $\delta < 2^{-{\rdep}}$ so that $r>\rdep$. 
\vspace{0.1in}

\noindent \textbf{(Part 1)} We first assume that for each $k$, there is at most one $j =j(k)$ such that $S_{\mu_j} = \{k\}$ and the same condition holds for $\nu$.
By rearranging the order of $(\alpha_j)$, $(\gamma_j)$ and translating each $\gamma_i$ if needed, 
we may assume $S_{\mu_k} = S_{\nu_k} = \{k\}$ for $1 \leq k \leq n$ and $\vec{x} = 0$.

Let us use the same environment $Y$ to define $ \hat{\alpha}_j$, $ \hat{\gamma}_j$ as in~\eqref{eq: conditional update map}.
In addition to $A$ defined in~\eqref{eq: denominator of conditional update map}, we introduce
\beq\label{eq: denominator of nuhat}
	B = \int_{\Nb\times \Rb^d} e^{\beta Y(u)} \nu *\ker (du) 
	+ \big(1-\norm{\nu}\big)e^{c(\beta)},
\eeq
	\[ \alpha^{\ast} _i(dx) = e^{\beta Y(i, x)} \alpha_i *\ker (dx), \]
	\[ \gamma^{\ast} _i(dx) = e^{\beta Y(i, x)} \gamma_i *\ker (dx).\]
Then, we can write $\hat{\alpha}_i = \alpha^{\ast} _i(dx)/A$, $\hat{\gamma}_i = \gamma^{\ast} _i(dx)/B$ due to \eqref{eq: conditional update map}. Similarly, let us define
\beq\label{eq: numerator of muhat}
	 \mu^{\ast} _k(dx) = e^{\beta Y(k, x)}\mu_k * \ker (dx), \quad \hat{\mu}_k  = \mu^{\ast}_k / A,
\eeq
	\[ \nu^{\ast} _k(dx) =  e^{\beta Y(k, x)}\nu_k * \ker (dx), \quad \hat{\nu}_k  = \nu^{\ast}_k / B. \]
Choosing a $(\hat{\mu}, \hat{\nu})$-g-triple $\big(r, \varphi = \{\hat{\mu}_k, \hat{\nu}_k\}_{k=1} ^{n}, 0\big)$, we have 
\beq\label{eq: gmatching of muhat and nuhat}
	\Eb \, d(\hat{\mu}, \hat{\nu}) \leq 
	\Eb \Big[ \sum_{k=1}^{n} \hW(\hat{\mu}_k, \hat{\nu}_k ) + I_r\Big(\hat{\mu}-\sum_{k=1}^{n} \hat{\mu}_k\Big)
	+ I_r \Big(\hat{\nu}-\sum_{k=1}^{n} \hat{\nu}_k \Big) +2^{-r}\Big],
\eeq
\vspace{0.1in}

\noindent \textbf{(Part 1.1)} Upper bound for $\Eb \sum \hW(\hat{\mu}_k, \hat{\nu}_k )$

We first observe the generalized Wasserstein distance terms.
\begin{align*}
	\Eb\hW(\hat{\mu}_k, \hat{\nu}_k) &= \Eb \hW\left(\frac{\mu^{\ast}_k}{A}, \frac{\nu^{\ast}_k}{B}\right)
	 \leq \Eb \,\hW \left( \frac{\mu^{\ast}_k}{A}, \frac{\nu^{\ast}_k}{A} \right) + \Eb \, \hW \left( \frac{\nu^{\ast}_k}{A}, \frac{\nu^{\ast}_k}{B} \right)
	 \\& \leq \Eb \frac{1}{A} \hW(\mu^{\ast}_k, \nu^{\ast}_k) + \Eb \left[\frac{|A-B|}{AB}\norm{\nu^{\ast}_k}\right].
	 \numberthis\label{eq: estimate for W(muhat_k, nuhat_k)}
\end{align*}
Summing over $k$ gives
\begin{align*}
	 \Eb \sum\limits_{k=1}^{n} \hW(\hat{\mu}_k, \hat{\nu}_k ) 
	 &\leq  \Eb\frac{1}{A} \sum\limits_{k=1}^{n}\hat W(\mu ^{\ast} _k, \nu_k ^{\ast}) 
	 +\Eb \left[\frac{|A-B|}{A}\sum\limits_{k=1}^{n} \frac{\norm{\nu^{\ast}_k}}{B}\right]
	 \\& \leq \Eb\frac{1}{A} \sum\limits_{k=1}^{n}\hat W(\mu ^{\ast} _k, \nu_k ^{\ast}) +\Eb\frac{|A-B|}{A}
	 \\& \leq (\Eb A^{-2})^{1/2} \bigg(\sum\limits_{k=1}^{n} \big[\Eb \hW(\mu_k ^{\ast}, \nu_k ^{\ast})^2\big]^{1/2} 
	 +  \big[\Eb (A-B)^2 \big]^{1/2} \bigg).\numberthis\label{eq: estimate for W(muhat_k, nuhat_k), continued}
\end{align*}
We need to estimate all the terms on the right-hand side of~\eqref{eq: estimate for W(muhat_k, nuhat_k), continued}. 
We start with $\Eb \hW(\mu^{\ast}_k, \nu^{\ast}_k )$.

\vspace{0.1in}
\noindent \textbf{(Part 1.1.1)} Upper bound for $\Eb \big[\hW(\mu_k ^{\ast}, \nu_k ^{\ast})^2\big]^{1/2} $

We observe that the stationary process $(e^{2\beta Y(x)})_{x \in \Rb^d}$ is uniformly integrable and by the path continuity, a
$\lim\limits_{\euc{x} \rightarrow 0} e^{\beta Y(x)} = e^{\beta Y(0)}$ almost surely.
It follows that $(e^{\beta Y(x)})_{x \in \Rb^d}$ is $L_2$-continuous, i.e.,
	\[ \lim\limits_{\euc{x} \rightarrow 0} \Eb\big(e^{\beta Y(x)} - e^{\beta Y(0)}\big)^2 = 0.\]
Therefore, there is $\delta_1 = \delta_1 (\epsilon) \in (0, 1)$ such that
	\[ \euc{x} < \delta_1  \quad \Rightarrow \quad \Eb(e^{\beta Y(x)} - e^{\beta Y(0)})^2 <  \epsilon^2. \]
Let  $\mu'_k=\mu_k * \ker$, $\nu'_k=\nu_k *\ker$. Let $\pi_k$ 
be the optimal coupling of $(\mu_k, \nu_k)$ and 
$\pi'_k$ be the optimal coupling of $(\mu'_k,\nu'_k)$. 
One can check that
	\[ W(\mu'_k, \nu'_k) \leq W(\mu_k, \nu_k) \]
by considering the following coupling of $(\mu'_k, \nu'_k)$:
	\[ \pi_k ''(dx,dy)=\int_{w \in \Rb^d} \ker(dw) \pi_k \big(d(x-w), d(y-w)\big). \]
Since $\norm{\mu_k} (e^{\beta Y(x)} \wedge e^{\beta Y(y)}) \pi'_k$ is a unnormalized sub-coupling of $(\mu^{\ast}_k, \nu^{\ast}_k)$, i.e.,
	\[ \norm{\mu_k} \int e^{\beta Y(\cdot)} \wedge e^{\beta Y(y)} \pi'_k (\cdot, dy)
	\leq \mu^{\ast}_k(\cdot), \quad 
	\norm{\mu_k} \int e^{\beta Y(x)} \wedge e^{\beta Y(\cdot)} \pi'_k (dx, \cdot)
	\leq \nu^{\ast}_k(\cdot), \]
we can use it to estimate
\begin{align*}
	&\hW(\mu^{\ast}_k, \nu^{\ast}_k)
	\leq \|\mu_k\| \int d_{\Euc}(x, y) (e^{\beta Y(x)} \wedge e^{\beta Y(y)}) \pi'_k (dx, dy) 
	\\&+ \|\mu_k\| \int \Big(e^{\beta Y(x)} - e^{\beta Y(x)} \wedge e^{\beta Y(y)}\Big) \pi'_k (dx, dy) 
	+ \|\mu_k\| \int \Big(e^{\beta Y(y)} - e^{\beta Y(x)} \wedge e^{\beta Y(y)} \Big) \pi'_k (dx, dy)
	\\& = \|\mu_k\|  \int d_{\Euc}(x, y) (e^{\beta Y(x)} \wedge e^{\beta Y(y)}) \pi'_k (dx, dy) + \|\mu_k\| \int |e^{\beta Y(x)} - e^{\beta Y(y)}| \pi'_k (dx, dy)
	\\& \leq \|\mu_k\| \int d_{\Euc}(x, y) e^{\beta Y(x)} \pi'_k (dx, dy) +  \|\mu_k\| \int |e^{\beta Y(x)} - e^{\beta Y(y)}| \pi'_k (dx, dy).
\end{align*}
Therefore,
\begin{align*}
	&\Eb \hW(\mu^{\ast}_k, \nu^{\ast}_k )^2
	 \\& \leq 2\|\mu_k\|^2 \left[\Eb\left(\int d_{\Euc}(x, y) e^{\beta Y(x)} \pi'_k (dx, dy)\right)^2 
	 + \Eb \left(\int |e^{\beta Y(x)} - e^{\beta Y(y)}| \pi'_k (dx, dy) \right)^2\right]
	\\& \leq 2 \|\mu_k\|^2 \left[ e^{c(2\beta)} \int d_{\Euc}(x, y)^2 \, \pi'_k(dx, dy) + \int \Eb  |e^{\beta Y(x)} - e^{\beta Y(y)}|^2 
	(\one_{\euc{x-y} \geq \delta_1} + \one_{\euc{x-y} <  \delta_1} ) \pi'_k (dx, dy) \right]
	\\& \leq 2 \|\mu_k\|^2 \left [ e^{c(2\beta)} \frac{W(\mu_k, \nu_k)}{ \|\mu_k\|}
	+ 4 e^{c(2\beta)}\frac{W(\mu_k, \nu_k)}{ \|\mu_k\| \delta_1} + \epsilon^2 \right].
\end{align*}
Taking the square root and summing over $k$, we conclude that there is a constant $C_0 = C_0 (\beta)>0$ such that
\beq\label{eq: estimate for sum of W(mu*, nu*)}	
	\sum\limits_{k=1}^{n}[\Eb \hW(\mu^{\ast}_k, \nu^{\ast}_k )^2]^{1/2} 
	\leq C_0 \Big(\sqrt{\frac{\delta}{\delta_1}} +\epsilon \Big).
\eeq

\vspace{0.1in}
\noindent\textbf{(Part 1.1.2)} Upper bound for $\Eb (A-B)^2$

To estimate $(A-B)^2$, let us define
\begin{gather*}
	V(u)= e^{\beta Y(u)} - e^{c(\beta)},\\
	\mu^{s} = (\alpha^{s}_j) := \mu - \sum\limits_{j=1}^{n} \mu_j ,\,\nu^{s} =(\gamma^{s}_j) := \nu - \sum\limits_{j=1}^{n} \nu_j.
	\numberthis\label{eq: sparse part of mu and nu}
\end{gather*}
One can write
	\[ A - e^{c(\beta)} = \int_{\Nb \times \Rb^d} V(u) \mu*\ker (du),\quad 
	B - e^{c(\beta)} = \int_{\Nb \times \Rb^d} V(u) \nu*\ker (du), \]
and therefore
\begin{align*}
	(A-B)^2&
	 =  \left(\int_{\Nb \times \Rb^d} V(u) \mu * \ker (du) 
	- \int_{\Nb \times \Rb^d}V(u) \nu * \ker (du) \right) ^2 
	\\& \leq 3\left[ \sum\limits_{k=1}^{n} \left( \int_{\Rb^d} V(k, x) \mu_k *\ker (dx)
	- \int_{\Rb^d} V(k, x) \nu_k * \ker (dx) \right) \right]^2
	\\&+3\Big(\int_{\Nb \times \Rb^d} V(u) \mu^{s} *\ker (du)\Big)^2
	+3\Big( \int_{\Nb \times \Rb^d} V(u) \nu^{s} *\ker (du)\Big)^2. \numberthis\label{eq: split of (A-B)^2}
\end{align*}
By the independence of $\big\{V(i, \cdot)\big\}_{i\in\Nb}$, we see that
\begin{align*}
	&\Eb \left[\sum\limits_{k=1}^{n} \Big(\int_{\Rb^d} V(k, x) \mu_k *\ker (dx)
	- \int_{\Rb^d} V(k, x) \nu_k * \ker (dx) \Big)\right]^2
	\\& = \sum\limits_{k=1}^{n} \Eb  \left( \int_{\Rb^d} V(k, x) \mu_k *\ker (dx)
	- \int_{\Rb^d} V(k, x) \nu_k * \ker (dx) \right)^2
	\\& =\sum\limits_{k=1}^{n}  \Eb \big(\|\mu^{\ast}_k\|- \|\nu^{\ast}_k \| \big) ^2 
	\leq \sum\limits_{k=1}^{n} \Eb \hW (\mu^{\ast}_k, \nu^{\ast}_k )^2.
	\numberthis\label{eq: estimate for the core part in (A-B)^2}
\end{align*}
Denote $c_{\beta}(x) = \Eb \Big[ \big(e^{\beta X(x)}-e^{c(\beta)} \big) \big(e^{\beta X(0)}-e^{c(\beta)} \big)\Big] $.
Notice that $c_{\beta} (x) \leq e^{c(2 \beta)}$ for all $x \in \Rb^d$.
Let us now give an upper bound for the second term in~\eqref{eq: split of (A-B)^2}:
\begin{align*}
	& \Eb \Big(\int_{\Nb \times \Rb^d}  V(u)\mu^{s} * \ker (du)\Big)^2 
	 = \sum\limits_{j=1}^{\infty} \Eb  \Big(\int_{\Rb^d}  V(j, x)\alpha_j^{s}*\ker (dx) \Big)^2 
	\\& = \sum\limits_{j=1}^{\infty} \Eb  \int_{(\Rb^d)^2}  V(j, x) V(j, \tilde{x}) {(\alpha_j^{s}*\ker)}^{\otimes 2}(dx, d\tilde{x})
	 \\& = \sum\limits_{j=1}^{\infty} \int_{(\Rb^d)^2}  c_{\beta}(x-\tilde{x}) {(\alpha_j^{s}*\ker)}^{\otimes 2}(dx, d\tilde{x}).
	 \numberthis\label{eq: estimate for the sparse part in (A-B)^2}
\end{align*}
One can see that for any $\alpha \in \Mc_{\leq 1}$ and $r \geq 0$,
\begin{align*}
	I_r (\alpha * \ker) &= \sup_{x_0 \in \Rb^d} \iint f_r (x-x_0) \alpha(dy)\ker(d(x-y)) 
	= \sup_{x_0 \in \Rb^d} \iint f_r (x+y-x_0) \alpha(dy)\ker(dx)
	\\& \leq \int \bigg(\sup_{x_0 \in \Rb^d} \int f_r (x+y-x_0) \alpha(dy)\bigg)\ker(dx) = I_r (\alpha).
	\numberthis\label{eq: I_r of convolution measure}
\end{align*}
We recall that $\rdep$ is the radius of dependence of the potential and $r>\rdep$.
Therefore,
\begin{align*}
	 &\int_{(\Rb^d)^2} c_{\beta}(x-\tilde{x}) {(\alpha_j^{s}*\ker)}^{\otimes 2} (dx, d\tilde{x})
	= \int_{\euc{x-\tilde{x}} \leq \rdep} c_{\beta}(x-\tilde{x})  {(\alpha_j^{s}*\ker)}^{\otimes 2}(dx, d\tilde{x})
	\\& \leq  e^{c(2\beta)} \int_{\euc{x-\tilde{x}} < r}  {(\alpha_j^{s}*\ker)}^{\otimes 2}(dx, d\tilde{x})
	=  e^{c(2\beta)} \int_{\Rb^d} \alpha_j^{s}*\ker (B_r (x))\alpha_j^{s}*\ker(dx)
	\\& \leq e^{c(2\beta)} \sup_{x \in \Rb^d} \alpha_j^{s}*\ker (B_r (x)) \int_{\Rb^d} (\alpha_j^{s}*\ker)(dx) 
	\leq e^{c(2\beta)} I_r (\alpha_j^{s}*\ker ) \norm{\alpha_j^s} <  e^{c(2\beta)} \delta \norm{\alpha_j^s}.
	\numberthis\label{eq: estimate for the sparse part in (A-B)^2, continued}
\end{align*}
Combining \eqref{eq: estimate for the sparse part in (A-B)^2} with~\eqref{eq: estimate for the sparse part in (A-B)^2, continued} gives
\beq\label{eq: estimate for the sparse part in (A-B)^2, final}
	\Eb \Big(\int_{\Nb \times \Rb^d}  V(u)\mu^{s} * \ker (du)\Big)^2 
	\leq \sum\limits_{j=1}^{\infty} e^{c(2\beta)} \delta \norm{\alpha_j^s} \leq e^{c(2\beta)}\delta.
\eeq
The same upper bound holds  for the third term in~\eqref{eq: split of (A-B)^2}. 
Plugging \eqref{eq: estimate for sum of W(mu*, nu*)},~\eqref{eq: estimate for the core part in (A-B)^2}, 
and~\eqref{eq: estimate for the sparse part in (A-B)^2, final} into~\eqref{eq: split of (A-B)^2}, we obtain
\begin{align*}
	&\big[\Eb(A-B)^2 \big]^{1/2} 
	\leq \sqrt{3} \Big(\sum\limits_{k=1}^{n} \Eb \hW (\mu^*_k, \nu^*_k)^2 + 2 e^{c(2\beta)} \delta \Big)^{1/2}
	\\& \leq \sqrt{3} \left(\sum\limits_{k=1}^{n} \Big(\Eb \hW (\mu^*_k, \nu^*_k)^2\Big)^{1/2} + e^{c(2\beta)/2} \sqrt{2\delta}\right)
	\leq C_1 \Big( \sqrt{\frac{\delta}{\delta_1}} + \epsilon \Big),\numberthis\label{eq: estimate of E(A-B)^2}
\end{align*}
where $C_1 = C_1(\beta)>0$ is some constant depending only on $\beta$. 

\vspace{0.1in}
\noindent\textbf{(Part 1.1 Conclusion)} Now Lemma~\ref{lem: estimate for denominator of conditional update map} and relations~\eqref{eq: estimate for W(muhat_k, nuhat_k), continued}, 
\eqref{eq: estimate for sum of W(mu*, nu*)}, \eqref{eq: estimate of E(A-B)^2} imply 
there is a constant $C_2 = C_2(\beta)>0$ such that
\begin{align*}
	\Eb \Big[\sum\limits_{k=1}^{n} \hW (\hat{\mu}_k, \hat{\nu}_k ) \Big] 
	& \leq C_2 \Big(\sqrt{\frac{\delta}{\delta_1}} + \epsilon \Big),\numberthis\label{eq: estimate for W(muhat_k, nuhat_k), final}
\end{align*}
which is an estimate of the first term in \eqref{eq: gmatching of muhat and nuhat}.

\vspace{0.1in}
\noindent\textbf{(Part 1.2)} Upper bound for $\Eb  I_r \Big(\hat{\mu} - \sum \hat{\mu}_k \Big)$

Let us estimate the second term on the right-hand side of \eqref{eq: gmatching of muhat and nuhat}. Since $r>\rdep$, we have
\begin{align*}
	&\Eb  I_r \Big(\hat{\mu} - \sum\limits_{k=1}^{n} \hat{\mu}_k \Big) 
	=\Eb\Big[\frac{1}{A}\sup_{u_0} \int_{\Nb \times \Rb^d} g_{r} (u-u_0) e^{ \beta Y(u)} \mu^s *\ker(du)\Big]
	\\& \leq (\Eb A^{-2})^{1/2} 
	\bigg[\Eb \Big(\sup_{u_0} \int_{\Nb \times\Rb^d} g_{r} (u-u_0) e^{\beta Y(u)}  \mu^s *\ker(du)\Big)^2\bigg]^{1/2}
	\\ & \leq 2e^{c(-2\beta)/2} 
	\left[\Eb  \sup_{u_0} \left(\int_{\Nb \times\Rb^d} g_{r} (u-u_0)^2 \mu^s *\ker(du) \int_{\Rb^d} e^{2\beta Y(u)}  \mu^s *\ker(du)\right)\right]^{1/2}
	\\& \leq 2e^{c(-2\beta)/2}  
	\left[\Eb  \int_{\Nb \times\Rb^d} e^{2\beta Y(u)}  \mu^s *\ker(du) \right]^{1/2} \left( \sup_{u_0} \int_{\Nb \times \Rb^d} g_{r} (u-u_0) \mu^s *\ker(du)\right)^{1/2}
	\\& \leq 2e^{(c(-2\beta)+c(2\beta))/2} \, I_r (\mu^s * \ker)^{1/2}  
	\\& \leq 2e^{(c(-2\beta)+c(2\beta))/2} \, I_r (\mu^s)^{1/2}
	< 2e^{(c(-2\beta)+c(2\beta))/2} \sqrt{\delta},
	\numberthis\label{eq: estimate for the second term in W(muhat, nuhat)}
\end{align*}
where we used \eqref{eq: I_r of convolution measure} in line 6.
The same upper bound holds for $\Eb  I_r \big(\hat{\nu} - \sum \hat{\nu}_k \big) $.

\vspace{0.1in}
\noindent\textbf{(Part 1 Conclusion)}
Finally, based on~\eqref{eq: Wasserstein distance between Tmu Tnu}, \eqref{eq: gmatching of muhat and nuhat}, 
\eqref{eq: estimate for W(muhat_k, nuhat_k), final} and \eqref{eq: estimate for the second term in W(muhat, nuhat)}, 
we conclude that there is a constant $C_3 = C_3 (\beta)>0$ such that
\begin{align*}
	\Wc(T\mu, T\nu) \leq \Eb d(\hat{\mu}, \hat{\nu})
	< C_3 \Big(\sqrt{\frac{\delta}{\delta_1}}  + \epsilon \Big).
\end{align*}
Choosing $\delta =\min(\delta_1 \epsilon^2, 2^{-\rdep})$ and replacing $\eps$ with $\eps/(2C_3)$, we complete the proof.

\vspace{0.2in}
\noindent\textbf{(Part 2)} We now relax the assumption that $\alpha_k, \gamma_k$ are minorized by at most one $\mu_j, \nu_j$  for each~$k$, respectively.

Our goal is to reduce the problem to that studied in \textbf{(Part 1)}. To that end, we will find $\mu'$ and~$\nu'$ in $\Xct$ such that
\begin{enumerate}[label = $\bullet$]
\item $T\mu'$ and $T\nu'$ are close to $T\mu$ and $T\nu$, respectively,
\item $\mu'$ and $\nu'$ satisfy the conditions of \textbf{(Part 1)}, i.e., $(\mu_k)$ and $(\nu_k)$ are submeasures (as elements in $\Mc_{\leq 1}$) of mutually exclusive orbits in $\mu'$ and $\nu'$, respectively.
\end{enumerate}

First, we choose $R>0$ such that $\ker(\bar{B}_R(0)^c) < \epsilon$ and decompose $\lambda$ into central and exterior parts:
	\[ \ker= \one_{\bar{B}_R(0)} \ker +  \one_{\bar{B}_R(0)^c} \ker
	:= \ker_1+  \ker_2.\]

Recalling that $n$ and $(\mu_i)_{i=1}^{n}$ were defined just before~\eqref{eq: setting (mu, nu)-triple}, let us consider $\mu' = (\alpha'_i) \in \Xc$
defined as follows:
	\[ \alpha'_i = 
	\begin{cases}
	\quad\quad\quad\quad \mu_i, & i\leq n, \\
	\alpha_{i-n} - \sum\limits_{\sss{j : \mu_j \leq \alpha_{i-n}}} \mu_j,  & i > n, \\
	\end{cases} \]
where we view $\mu_i$ as a subprobability measure on $\Rb^d$ instead of $\Nb \times \Rb^d$. In other words, we set the first $n$ layers $(\alpha_i ')_{i=1}^{n}$ in~$\mu'$ to coincide with~$(\mu_i)_{i=1}^{n}$, while the remaining layers $(\alpha_i ')_{i \geq n+1}$ are obtained from 
  $\mu - \sum_{k=1}^{n} \mu_k$ via a shift by $n$ layers. Regarding the interpretation of $\mu_k$, we refer readers to Remark~\ref{rmk:interpretation of mu_k and nu_k}.
We also define a function $ J:\{1, \cdots, n\} \rightarrow \Nb$ by
	\[\mu_k \leq \alpha_l  \quad \Leftrightarrow \quad J(k) = l.\]
	
We denote by $\hat{\mu}'$ an $\Xc$-valued random variable whose law is $T\mu'$.
To estimate $\Wc (T\mu, T\mu')$, we need to introduce a coupling of $(\hat{\mu}, \hat{\mu}')$.
To this end, let $Y, V$ be independent random fields with the same law as $X$ and let us use $Y$ to define $\hat{\mu}$.
We fix $j$ in the range of $J$. For $k \in J^{-1}(j)$,
	\[ U_k = \{ x \in \Rb^d : x \in \bar{B}_R(y) \text{\,\, for some\,\,} y \in \supp({\mu}_k)\}.\]
If we impose $\delta < 2^{-\max(\rdep, R)-R}$(i.e., $r>\max(\rdep, R)+R$), then by the definition of a $(\mu, \nu)$-triple, for all $k \neq l$,
	\[ \deuc \big(\supp(\mu_k), \supp(\mu_l)\big) \geq \sep(\phi) > 2(\max(\rdep, R) +R), \]
which implies that $\min\limits_{k \neq l} \deuc(U_k , U_l) > 2\max(\rdep, R)> \rdep$.
Due to the $\rdep$-dependence of $Y$, there is a coupling between $Y(j, \cdot)$ and \textit{i.i.d.} copies $(Y^{(k)})_{k\in J^{-1} (j)}$ such that for each $k \in J^{-1} (j)$,
\beq\label{eq: separating coupling}
	Y(j, x) = Y^{(k)} (x)   \quad \forall x \in U_k
\eeq
(we refer to Lemma~\ref{lem:coupling-using-m-dep} in the Appendix for a precise statement.)

We now set 
	\[ Y'(k, \cdot) = 
	\begin{cases}
	Y ^{(k)} & 1 \leq k \leq n, \\
	V(k, \cdot)  & k > n, \\
	\end{cases} \]
and use it to define $\hat{\mu}'$.

Combining this with \eqref{eq: separating coupling}, we have
\beq \label{eq: definition of coupling of mu and mu'}
	Y(J(k), x) = Y'(k, x) \quad \forall x \in U_k.
\eeq
Similarly to \eqref{eq: numerator of muhat} and~\eqref{eq: denominator of conditional update map}, let us define $\mu_k ^{\ast \prime}$, $\hat{\mu}'_k$, $\mu^{s \prime}$ and $A'$ 
with the environment $Y'$ and introduce for $j \in \{1, 2\}$,
	\[ \mu_{k, j} ^{\ast} = e^{\beta Y(J(k), x)}\mu_k * \ker_j(dx), \quad \hat{\mu}_{k, j}  = \mu^{\ast}_{k, j} / A, \]
	\[ \mu_{k, j} ^{\ast \prime} = e^{\beta Y'(k, x)}\mu_k * \ker_j(dx), \quad \hat{\mu}'_{k, j}  = \mu^{\ast \prime}_{k, j} / A', \]
Choosing a $(\hat{\mu}, \hat{\mu}')$-g-triple 
\beq \label{eq: choosing g-triple}
r_0 = \frac{r}{2},\ \varphi = (\hat{\mu}_{k, 1}, \hat{\mu}' _{k, 1})_{k=1}^{n},\  \vec{x} = 0,
\eeq
gives
\beq \label{est: Tmu and Tmu prime}
	\Wc(T\mu, T\mu') \leq \Eb d(\hat{\mu}, \hat{\mu}')
	\leq \Eb \Big[\sum\limits_{k=1}^{n} \hW(\hat{\mu}_{k, 1}, \hat{\mu}'_{k, 1}) + I_{r_0} \big(\hat{\mu}-\sum\limits_{k=1}^{n} \hat{\mu}_{k, 1} \big) 
	+ I_{r_0} \big(\hat{\mu}' - \sum\limits_{k=1}^{n} \hat{\mu}'_{k, 1} \big) +2^{-r_0} \Big].
\eeq

We estimate the first term on the right-hand side in \eqref{est: Tmu and Tmu prime}. 
Similarly to \eqref{eq: estimate for W(muhat_k, nuhat_k)} and \eqref{eq: estimate for W(muhat_k, nuhat_k), continued},
\beq\label{est: mhat and mhat'}
	\Eb\sum\limits_{k=1}^{n} \hW(\hat{\mu}_{k, 1}, \hat{\mu}'_{k, 1}) 
	\leq (\Eb A^{-2})^{1/2} \left(\sum\limits_{k=1}^{n} \big[ \Eb  \hW(\mu^{\ast}_{k, 1}, \mu_{k, 1}^{\ast \prime})^2\big]^{1/2} +\big[\Eb(A-A')^2\big]^{1/2}\right).
\eeq 
It follows from $\supp(\mu_k * \ker_1) \in U_k$ and \eqref{eq: definition of coupling of mu and mu'} that 
	\[ \mu^{\ast}_{k, 1} =\mu_{k, 1}^{\ast \prime} \,\,\, \mathbf{P}\text{-a.s.}, \quad \Rightarrow \quad \hW(\mu^{\ast}_{k, 1}, \mu_{k, 1}^{\ast \prime}) = 0,\]
and 
\begin{align*}
	&\Eb\left(\int \big|e^{\beta Y(J(k), x)} - e^{\beta Y'(k, x)}\big| \mu_k *\ker(dx)\right)^2
	= \Eb \left(\int \big|e^{\beta Y(J(k), x)} - e^{\beta Y'(k, x)}\big| \mu_k *\ker_2(dx)\right)^2
	\\& \leq \norm{\mu_k *\ker_2}\Eb \int \Big(e^{\beta Y(J(k), x)} - e^{\beta Y'(k, x)}\Big)^2 \mu_k *\ker_2(dx)
	\leq 2e^{c(2\beta)} \norm{\mu_k}^2 \epsilon^2.
	\numberthis\label{est: W(m ast, m ast prime)}
\end{align*}
Similarly to~\eqref{eq: split of (A-B)^2}, we have
\begin{align*}
	(A-A')^2 &\leq 3\left[ \sum\limits_{k=1}^{n} \int_{\Rb^d} \big( e^{\beta Y(J(k), x)} - e^{\beta Y'(k, x)} \big)  \mu_k *\ker (dx) \right]^2
	\\&+3\Big(\int_{\Nb \times \Rb^d} V(u) \mu^{s} *\ker (du)\Big)^2
	+3\Big( \int_{\Nb \times \Rb^d} V'(u) \mu^{s \prime}*\ker (du)\Big)^2,
	\numberthis\label{est: A-A'}
\end{align*}
where $V'(u)= e^{\beta Y'(u)} - e^{c(\beta)}$ and $\mu^{s \prime} = \mu' - \sum\mu_k$ as in \eqref{eq: sparse part of mu and nu}.
Plugging \eqref{est: W(m ast, m ast prime)} and~\eqref{eq: estimate for the sparse part in (A-B)^2, final} into \eqref{est: A-A'}, we have
\beq\label{est: E(A-A')}
	\Eb(A-A')^2 \leq 6e^{c(2\beta)}(\epsilon^2 + \delta).
\eeq
Plugging \eqref{est: W(m ast, m ast prime)} and \eqref{est: E(A-A')} into \eqref{est: mhat and mhat'} 
and using Lemma~\ref{lem: estimate for denominator of conditional update map}, we obtain
\beq\label{est: W(m hat 1 and m hat prime 1)}
	\Eb \sum\limits_{k=1}^{n} \hW(\hat{\mu}_{k, 1}, \hat{\mu}'_{k, 1})  \leq 2e^{(c(-2\beta)+c(2\beta))/2} \cdot \sqrt{6}(\epsilon + \sqrt{\delta}).
\eeq
As for the $I_{r_0}$ terms on the right-hand side of \eqref{est: Tmu and Tmu prime}, 
we repeat the computation of \eqref{eq: estimate for the second term in W(muhat, nuhat)} to obtain
\begin{align*}
	\Eb I_{r_0} \Big(\hat{\mu}-\sum\limits_{k=1}^{n} \hat{\mu}_{k, 1} \Big)  
	&\leq  \Eb  I_{r} \Big(\hat{\mu}-\sum\limits_{k=1}^{n} \hat{\mu}_{k} \Big) + \Eb  I_{r}\Big(\sum\limits_{k=1}^{n} \hat{\mu}_{k, 2} \Big)
	\\& \leq 2e^{(c(-2\beta)+c(2\beta))/2} \, \left[ I_r (\mu^s * \ker)^{1/2} + I_r ({\textstyle\sum} \mu_k*\ker_2)^{1/2}\right]
	\\& \leq 2e^{(c(-2\beta)+c(2\beta))/2} \, (\sqrt{\delta} + \sqrt{\epsilon}).
	\numberthis\label{eq: outside matching parts}
\end{align*}

Combining \eqref{est: Tmu and Tmu prime}, \eqref{est: W(m hat 1 and m hat prime 1)}, \eqref{eq: outside matching parts} and recalling that $2^{-r_0} = 2^{-r/2} < \sqrt{\delta}$ due to~\eqref{eq: choosing g-triple} and \eqref{eq: setting (mu, nu)-triple}, we conclude that
there is a constant $C>0$ such that if $\delta < \min(\delta_0 \epsilon^2, 2^{-\max(\rdep, M) - M})$, then
	\[ \Wc(T\mu, T\mu') \leq 2e^{(c(-2\beta)+c(2\beta))/2} 
	\big( \sqrt{6}(\epsilon + \sqrt{\delta}) +  2(\sqrt{\delta} + \sqrt{\epsilon}) \big) +\sqrt{\delta} < C\epsilon. \]
The same result holds for $\nu$ so that there is $\nu'$ such that $\Wc(T\nu, T\nu') < C\epsilon$.
One can check that $d_{r, \phi', \vec{x}} (\mu', \nu') = d_{r, \phi, \vec{x}} (\mu, \nu) < \delta$,
where $\phi'=\{(\alpha'_i,\gamma'_i)\}_{i=1}^n$.
 Thus,
\textbf{(Part 1)} can be now applied to estimate $\Wc(T\mu', T\nu')$. 
Finally, the triangle inequality
	\[ \Wc(T\mu, T\nu) \leq \Wc(T\mu, T\mu') + \Wc(T\mu', T\nu') + \Wc(T\nu', T\nu),\]
completes the proof in the general case.
\end{proof}
 
\subsection{Lifting the update map}
We discussed in~\eqref{eq: interpretation of conditional map as a transition kernel} that 
$T\nu(d\mu) = \Gamma (\nu, d\mu)$ can be understood as a transition kernel for the Markov chain $(\rho_i)_{i \geq 0}$ of the endpoint distributions of random polymers.
Integrating $\Gamma(\nu, d \mu)$ over the initial conditions $\nu$, we can extend~$T$  to an operator  $\Tc$ on $\Pc{(\Xct})$:
\beq\label{def: update map}
	\Tc\xi (d\mu) = \int_{\Xct}T \nu (d\mu) \xi(d\nu).
\eeq
Therefore, $\Tc$ maps the law of $\rho_i$ to the law of $\rho_{i+1}$. 
\begin{prop}\label{prop: continuity of update map}
	$\Tc:\Pc{(\Xct}) \rightarrow \Pc{(\Xct})$ is (uniformly) continuous 
\end{prop}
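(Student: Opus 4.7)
The plan is to reduce the problem to the continuity statement already proved for the conditional update map $T$, exploiting the compactness of $(\widetilde{\mathcal{X}}, d)$ established in Theorem~\ref{prop:compactification}. First, since $(\widetilde{\mathcal{X}}, d)$ is compact and $T:(\widetilde{\mathcal{X}}, d) \to (\mathcal{P}(\widetilde{\mathcal{X}}), \mathcal{W})$ is continuous by Proposition~\ref{prop: continuity of conditional update map}, $T$ is automatically uniformly continuous. Hence, for any $\epsilon > 0$, there exists $\delta_0 = \delta_0(\epsilon) > 0$ such that
\[
d(\nu_1, \nu_2) < \delta_0 \quad \Longrightarrow \quad \mathcal{W}(T\nu_1, T\nu_2) < \epsilon/2.
\]

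Next, given $\xi_1, \xi_2 \in \mathcal{P}(\widetilde{\mathcal{X}})$ with $\mathcal{W}(\xi_1, \xi_2) < \delta$ (for some $\delta$ to be chosen), let $\pi$ be an optimal coupling of $(\xi_1, \xi_2)$; the infimum in~\eqref{def: Wasserstein metric on P(X)} is attained because $\widetilde{\mathcal{X}}$ is compact. I want to build a coupling $\Pi$ of $(\mathcal{T}\xi_1, \mathcal{T}\xi_2)$ by averaging, over $\pi$, (near-)optimal couplings of the image measures $(T\nu_1, T\nu_2)$. More precisely, using a measurable selection theorem (applicable since the set of couplings with given marginals is a compact-valued measurable multifunction on the Polish space $\widetilde{\mathcal{X}} \times \widetilde{\mathcal{X}}$), I can pick a Markov kernel $(\nu_1, \nu_2) \mapsto Q_{\nu_1,\nu_2} \in \mathcal{P}(\widetilde{\mathcal{X}} \times \widetilde{\mathcal{X}})$ whose marginals are $T\nu_1$ and $T\nu_2$ and such that $\int d\, dQ_{\nu_1,\nu_2} \le \mathcal{W}(T\nu_1, T\nu_2) + \eta$ for an arbitrarily small $\eta > 0$. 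Then $\Pi(d\mu_1, d\mu_2) := \int Q_{\nu_1,\nu_2}(d\mu_1, d\mu_2) \, \pi(d\nu_1, d\nu_2)$ has marginals $\mathcal{T}\xi_1$ and $\mathcal{T}\xi_2$ by~\eqref{def: update map}.

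The estimation then proceeds by splitting the integral according to whether $d(\nu_1, \nu_2)$ is small or not. Using the trivial bound $d \le 2$ of~\eqref{eq: upper bound of d} and hence $\mathcal{W} \le 2$, together with Markov's inequality applied to $\pi$, I would write
\begin{align*}
\mathcal{W}(\mathcal{T}\xi_1, \mathcal{T}\xi_2)
&\le \int d \, d\Pi
\le \int \mathcal{W}(T\nu_1, T\nu_2)\, \pi(d\nu_1, d\nu_2) + \eta \\
&\le \frac{\epsilon}{2} \cdot \pi\bigl(\{d(\nu_1, \nu_2) < \delta_0\}\bigr) + 2\, \pi\bigl(\{d(\nu_1, \nu_2) \ge \delta_0\}\bigr) + \eta \\
&\le \frac{\epsilon}{2} + \frac{2\,\mathcal{W}(\xi_1, \xi_2)}{\delta_0} + \eta.
\end{align*}
Letting $\eta \downarrow 0$ and taking $\delta := \delta_0 \epsilon / 4$ gives $\mathcal{W}(\mathcal{T}\xi_1, \mathcal{T}\xi_2) < \epsilon$, and this $\delta$ depends only on $\epsilon$ (not on $\xi_1, \xi_2$), so $\mathcal{T}$ is uniformly continuous.

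The only delicate point is the measurable selection of the couplings $Q_{\nu_1,\nu_2}$. This is standard but should be invoked carefully: one can either appeal to a general measurable selection theorem for optimal transport plans on Polish spaces, or simply bypass the issue by using the Kantorovich duality to reduce the problem to testing against $1$-Lipschitz functionals $f$ on $(\widetilde{\mathcal{X}}, d)$ and noting that $\nu \mapsto \int f\, dT\nu$ inherits uniform continuity from $T$ (via $|\int f\, dT\nu_1 - \int f\, dT\nu_2| \le \mathcal{W}(T\nu_1, T\nu_2)$), then integrating the bound against $\pi$. Either route completes the proof.
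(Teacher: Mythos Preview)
Your proposal is correct and follows essentially the same route as the paper: construct a coupling of $(\mathcal{T}\xi_1,\mathcal{T}\xi_2)$ by averaging (near-)optimal couplings of $(T\nu_1,T\nu_2)$ against an optimal coupling $\pi$ of $(\xi_1,\xi_2)$, then split the integral according to whether $d(\nu_1,\nu_2)<\delta_0$ and use the bound $\mathcal{W}\le 2$ together with Markov's inequality, arriving at the same choice $\delta=\delta_0\epsilon/4$. The paper simply takes $\Pi_{\mu,\nu}$ to be the optimal coupling of $(T\mu,T\nu)$ without commenting on measurable dependence, so your explicit discussion of measurable selection (or the duality bypass) is in fact more careful than the paper's version.
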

\begin{proof}
Let $\epsilon > 0$ and $\xi_1, \xi_2 \in \Pc (\Xct)$ be given.
We would like to show that there is $\delta >0$ such that
	\[ \Wc(\xi_1, \xi_2) < \delta \quad \Rightarrow \quad \Wc(\Tc\xi_1, \Tc \xi_2) < \epsilon. \]
By Proposition~\ref{prop: continuity of conditional update map}, there is $\delta_1 > 0$ such that
	\[ d(\mu, \nu) < \delta_1 \quad \Rightarrow \quad \Wc(T\mu, T\nu) < \epsilon/2. \]
Let us assume $\Wc(\xi_1, \xi_2) < \delta:=\delta_1 \epsilon/4$ and
let $\Pi (d\mu, d\nu) \in \Pc(\Xct^2)$ be the optimal coupling of $(\xi_1, \xi_2)$. 
Moreover, for $\mu, \nu \in \Xct$, let $\Pi_{\mu, \nu}$ be the optimal coupling of $(T\mu, T\nu)$. Then, one can check that
	\[ \Pi '(d\eta, d\tau) = \int_{\Xct \times \Xct} \Pi_{\mu, \nu}(d\eta, d\tau) \Pi(d\mu, d\nu) \]
is a coupling of $(\Tc\xi_1, \Tc \xi_2)$. Therefore, 
\begin{align*}
	&\Wc(\Tc\xi_1, \Tc \xi_2) \leq \int d(\eta, \tau) \Pi' (d\eta, d\tau)
	= \iint d(\eta, \tau) \Pi_{\mu, \nu}(d\eta, d\tau) \Pi(d\mu, d\nu)
	\\& =  \int \Wc(T\mu, T\nu) \Pi(d\mu, d\nu) 
	= \int \Wc(T\mu, T\nu) (\one_{d(\mu, \nu) \geq \delta_1} +\one_{d(\mu, \nu) < \delta_1}) \Pi(d\mu, d\nu) 
	\\& \leq \frac{2}{\delta_1} \int d(\mu, \nu) \Pi(d\mu, d\nu) + \frac{\epsilon}{2} = \frac{2}{\delta_1} \Wc(\xi_1, \xi_2) + \frac{\epsilon}{2}  
	< \epsilon.
\end{align*}
In the first inequality above, we used the fact $\Wc(T\mu, T\nu) \leq 2$ which follows from \eqref{eq: upper bound of d} 
and \eqref{def: Wasserstein metric on P(X)}. This completes the proof.
\end{proof}

\begin{rmk}\rm
Based on the definition of $\Tc$, one can readily check that $T\nu = \Tc \delta_{\nu}$.
Therefore, \eqref{def: update map} can be rewritten as 
	\[ \Tc\xi (d\mu) = \int_{\Xct}\Tc \delta_\nu (d\mu) \xi(d\nu), \]
and by iteration, one has that for $\Tc^i=\underbrace{\Tc\circ\ldots\circ\Tc}_{i}$,  $i\ge 1$,
\beq\label{eq: iteration of update map}
	\Tc^{i} \xi (d\mu) =  \int_{\Xct} \Tc^{i} \delta_\nu (d\mu) \xi(d\nu).
\eeq
\end{rmk}

\section{Convergence of empirical measure} \label{sec: convergence of empirical measure}
This section is an adaptation of Section~4 of~\cite{BC16} to our general setting.

\subsection{The convergence to fixed points of the update map}
Let us denote the empirical probability measure of the endpoint distributions on $\Xct$ by
	\[ \psi_n := \frac{1}{n} \sum\limits_{i=0}^{n-1} \delta_{\rho_i} \in \Pc(\Xct). \]
The goal of this section is to  study the asymptotic behavior of $\psi_n$.
We will prove that $\psi_n$ converges to a set $\Kc$ of fixed point of $\Tc$ and  introduce an ``energy functional'' $\Rc$ 
which maps $\psi_{n}$ to a value close to the quenched free energy $F_n$.
The functional $\Rc$ allows us to improve the former result by replacing $\Kc$ with a subset $\Kc_0$ of $\Kc$ with the minimal energy state.

\begin{prop}\label{prop: contraction}
	As $n \rightarrow \infty$, $\Wc (\psi_n, \Tc\psi_n) \rightarrow 0$\,\, $\Pb \text{-a.s.}$
\end{prop}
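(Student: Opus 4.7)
The plan is to express $\Wc(\psi_n, \Tc \psi_n)$ through test functions and reduce the statement to a martingale concentration estimate for the Markov chain $(\rho_i)$. By the Kantorovich--Rubinstein duality applied to the Wasserstein distance $\Wc$ on $\Pc(\Xct)$ associated to the bounded metric $d$,
\[
\Wc(\psi_n, \Tc \psi_n) = \sup_{F \in \mathrm{Lip}_1(\Xct, d)} \Big|\int_{\Xct} F\,d\psi_n - \int_{\Xct} F\,d\Tc \psi_n\Big|.
\]
For any such $F$, the interpretation~\eqref{eq: interpretation of conditional map as a transition kernel} combined with~\eqref{def: update map} gives $\int F\,dT\rho_i = \Eb[F(\rho_{i+1}) \mid \mathscr{G}_i]$, and $\Tc\psi_n = \frac{1}{n}\sum_{i=0}^{n-1} T\rho_i$, so
\[
\int F\,d\Tc \psi_n - \int F\,d\psi_n = \frac{1}{n}\sum_{i=0}^{n-1}\bigl(\Eb[F(\rho_{i+1}) \mid \mathscr{G}_i] - F(\rho_i)\bigr).
\]
Writing the $i$th summand as $\bigl(\Eb[F(\rho_{i+1}) \mid \mathscr{G}_i] - F(\rho_{i+1})\bigr) + \bigl(F(\rho_{i+1}) - F(\rho_i)\bigr)$ and telescoping the second piece, this equals $M_n(F) + (F(\rho_n) - F(\rho_0))/n$, where
\[
M_n(F) := \frac{1}{n}\sum_{i=0}^{n-1}\bigl(\Eb[F(\rho_{i+1})\mid \mathscr{G}_i] - F(\rho_{i+1})\bigr)
\]
is a scaled sum of martingale differences adapted to $(\mathscr{G}_{i+1})$, each bounded in absolute value by $\mathrm{diam}(\Xct, d) \leq 2$ in view of~\eqref{eq: upper bound of d}.

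By Azuma--Hoeffding, $\Pb(|M_n(F)| > \eps) \leq 2\exp(-cn\eps^2)$ for some absolute $c > 0$, and Borel--Cantelli yields $M_n(F) \to 0$ almost surely for each \emph{fixed} $F$. The principal step is to promote this to uniform convergence over $\mathrm{Lip}_1(\Xct, d)$, and here I would invoke the compactness of $(\Xct, d)$ established in Theorem~\ref{prop:compactification}: by the Arzel\`a--Ascoli theorem, $\mathrm{Lip}_1(\Xct, d)$ restricted to functions with range in $[0, 2]$ (which suffices in Kantorovich duality, since additive constants cancel in $\int F\,d\xi_1 - \int F\,d\xi_2$) is totally bounded in $C(\Xct)$. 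For each $\eps > 0$ fix a finite $\eps$-net $\{F_1,\dots,F_N\}$ in this class. Since any admissible $F$ satisfies $\|F - F_k\|_\infty < \eps$ for some $k$,
\[
\Wc(\psi_n, \Tc \psi_n) \leq \max_{1 \leq k \leq N}|M_n(F_k)| + \frac{2}{n} + 2\eps.
\]

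Applying the pointwise almost-sure convergence to the finite collection $\{F_1,\dots,F_N\}$ gives $\limsup_{n} \Wc(\psi_n, \Tc \psi_n) \leq 2\eps$ almost surely; letting $\eps$ run through a countable sequence tending to $0$ completes the proof. The only non-routine ingredient is the total boundedness of $\mathrm{Lip}_1(\Xct, d)$, which is a direct consequence of the compactness of $(\Xct, d)$ built in Section~\ref{sec: MV}; this is where the investment in the new metrization pays off. Beyond this, the argument is a standard martingale law of large numbers, and I do not anticipate serious obstacles.
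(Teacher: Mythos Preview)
Your proof is correct and follows essentially the same approach as the paper: Kantorovich duality, martingale concentration for each fixed Lipschitz test function, and then compactness of $(\Xct,d)$ (via Arzel\`a--Ascoli) to upgrade pointwise convergence to uniform convergence over the test class. The only cosmetic differences are that the paper uses the BDG inequality and fourth moments instead of Azuma--Hoeffding, and introduces the shifted empirical measure $\psi_n' = \frac{1}{n}\sum_{i=1}^{n}\delta_{\rho_i}$ in place of your telescoping step---both routes are equivalent.
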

\begin{proof}
We use martingale analysis similar to that in the proof of Proposition 4.1 in \cite{BC16}.
Let
	\[ \mathcal{L} = \{ h : \Xct \rightarrow \Rb : |h(\mu) - h(\nu)|  \leq d(\mu, \nu) \,\,\text{for all} \,\, \mu, \nu \in \Xct,\,\, h(\mathbf{0})=0  \} \]
	and
	\[ \psi ' _n := \frac{1}{n} \sum\limits_{i=1}^{n} \delta_{\rho_i}. \]
Using $\frac{1}{n} \big(\delta_{(\rho_0, \rho_{n})} + \sum\limits_{i=1}^{n-1} \delta_{(\rho_i, \rho_i)}\big)$ as a coupling of $(\psi_n, \psi'_n)$ 
and applying \eqref{eq: upper bound of d} we conclude that $\Wc(\psi_n,\psi'_n)\le 2/n$.
Therefore, it suffices to prove that  
 \[\Wc(\psi_n ', \Tc \psi_n )\to 0. \]
It follows from \eqref{eq: Kantorovich duality} that
	\[ \Wc(\psi_n ', \Tc \psi_n ) = \sup_{h \in \mathcal{L}} \frac{M_n(h)}{n}, \]
where
	\[ M_n(h) = \sum\limits_{i=0}^{n-1} \Big(h(\rho_{i+1})-\Eb[h(\rho_{i+1})|\mathscr{G}_i]\Big)\]
is a martingale with respect to the filtration $(\mathscr{G}_n)_{n \geq 1}$.
Since $|h|\le 2$, we can apply the Burkholder--Davis--Gundy inequality to see that there is a constant $C>0$ such that
	\[ \Eb M_n(h)^4 \leq C \Eb \left(\sum\limits_{i=0}^{n-1} \Big(h(\rho_{i+1})-\Eb[h(\rho_{i+1})|\mathscr{G}_i]\Big)^2\right)^2 \leq 16Cn^2, \]
which implies $\Eb \Big(M_n (h) / n \Big)^4  \leq 16Cn^{-2}$ and hence, by the Borel--Cantelli lemma, 
\beq\label{eq: pointwise convergence of M_n/n+1}
	\lim\limits_{n \rightarrow \infty} \frac{|M_n (h)|}{n} = 0 \quad \Pb\text{-a.s.}
\eeq
On the other hand, we observe
	\[ \norm{h-h'}_\infty< \epsilon \quad \Rightarrow \quad \left| \frac{M_n (h)}{n} -\frac{M_n (h')}{n} \right| < 2\epsilon,\]
which tells us that $(M_n(\cdot)/n)_{n \geq 1}$ is an equicontinuous sequence of functions on $\mathcal{L}$. 
By the compactness of $\mathcal{L}$ and Arzela--Ascoli theorem, 
the limit in \eqref{eq: pointwise convergence of M_n/n+1} is uniform in $h \in \mathcal{L}$, which completes the proof.
\end{proof}

Proposition~\ref{prop: contraction} suggests that $(\psi_n)_{n \geq 1}$ will be close to the set of fixed points of $\Tc$ as $n$ becomes large. We denote the set of fixed points of $\Tc$ by
	\[ \Kc = \{ \xi \in \Pc(\Xct) : \Tc \xi = \xi \}. \]
Notice that $\Kc$ is nonempty since $\Tc \delta_{\mathbf{0}} = \delta_{\mathbf{0}}$.
By applying the same argument as in Corollary 4.3 and Proposition 4.4 in \cite{BC16}, we can prove the following: 
\begin{prop}\label{prop: convergence of psi to K}
	As $n \rightarrow \infty$, $\Wc(\psi_n, \Kc) := \inf \{\Wc(\psi_n, \xi) : \xi \in \Kc\} \rightarrow 0$\,\, $\mathbf{P}$\rm-a.s.
\end{prop}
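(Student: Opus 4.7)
The plan is to run a standard compactness/continuity argument on the good event $\Omega_0 \subset \Omega_e$ on which the conclusion of Proposition~\ref{prop: contraction} holds, namely $\Wc(\psi_n, \Tc\psi_n) \to 0$. Fix $\omega \in \Omega_0$. Since $(\Xct, d)$ is compact by Theorem~\ref{prop:compactification}, the space $\Pc(\Xct)$ equipped with $\Wc$ is also compact (the $\Wc$-topology coincides with the weak topology on a compact metric space, and $\Pc$ of a compact metric space is compact). In particular, the sequence $(\psi_n(\omega))_{n \geq 1}$ is relatively compact in $(\Pc(\Xct), \Wc)$.

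Suppose, for contradiction, that $\Wc(\psi_n(\omega), \Kc) \not\to 0$. Then there exist $\epsilon>0$ and a subsequence $(n_k)$ with $\Wc(\psi_{n_k}(\omega), \Kc) \geq \epsilon$ for all $k$. By compactness, we may pass to a further subsequence (still denoted $n_k$) such that $\psi_{n_k}(\omega) \to \xi^\ast$ in $\Wc$ for some $\xi^\ast \in \Pc(\Xct)$. By Proposition~\ref{prop: continuity of update map}, $\Tc$ is continuous on $(\Pc(\Xct), \Wc)$, so $\Tc\psi_{n_k}(\omega) \to \Tc\xi^\ast$. Combining this with $\Wc(\psi_{n_k}(\omega), \Tc\psi_{n_k}(\omega)) \to 0$ and the triangle inequality gives $\Tc\xi^\ast = \xi^\ast$, i.e.\ $\xi^\ast \in \Kc$. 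But then $\Wc(\psi_{n_k}(\omega), \Kc) \leq \Wc(\psi_{n_k}(\omega), \xi^\ast) \to 0$, contradicting $\Wc(\psi_{n_k}(\omega), \Kc) \geq \epsilon$. Hence $\Wc(\psi_n(\omega), \Kc) \to 0$ for every $\omega \in \Omega_0$, which proves the claim.

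There is no serious obstacle here; the statement is essentially an immediate consequence of the two previously established facts (contraction along the dynamics and continuity of~$\Tc$) once one invokes compactness of $\Pc(\Xct)$. The only thing one should be slightly careful about is making sure the compactness of $\Pc(\Xct)$ with respect to~$\Wc$ is actually invoked correctly: since $(\Xct, d)$ is compact and therefore separable and complete, $\Pc(\Xct)$ with the weak topology is compact by Prokhorov's theorem, and on this compact space $\Wc$ metrizes the weak topology (because $d$ is bounded), so compactness in $\Wc$ follows.
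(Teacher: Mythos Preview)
Your proof is correct and follows essentially the same approach as the paper: argue by contradiction, extract a convergent subsequence using compactness of $\Pc(\Xct)$, and show the limit is a fixed point of $\Tc$ via continuity of $\Tc$ and Proposition~\ref{prop: contraction}. The paper's version is slightly terser (it does not explicitly fix $\omega\in\Omega_0$ or justify compactness of $\Pc(\Xct)$), but the logic is identical.
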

\begin{proof}
Suppose that $\Wc(\psi_n, \Kc) \nrightarrow 0$. Then, there is $\epsilon>0$ and a subsequence $(\psi_{n_k})_{k \ge 1}$ such that $\Wc (\psi_{n_k}, \Kc)>\epsilon$ for all $k \ge 1$.
Since $\Pc (\Xct)$ is compact, we may assume $\lim\limits_{k \rightarrow \infty} \psi_{n_k} = \psi$ for some $\psi \in \Pc (\Xct)$, if needed, by choosing a further subsequence.
On the other hand,
	\[ \Wc(\psi, \Tc \psi) \leq  \Wc(\psi, \psi_{n_k}) + \Wc(\psi_{n_k},  \Tc \psi_{n_k}) +\Wc(\Tc \psi_{n_k}, \Tc \psi ), \]
and as $k \rightarrow \infty$, each term in the right-hand side converges to $0$ due to the  convergence of $\psi_{n_k}$, Proposition~\ref{prop: contraction}, and continuity of $\Tc$, respectively.
It follows $\Tc \psi = \psi$, i.e., $\psi \in \Kc$, which contradicts the assumption that $\psi_{n_k}$ is $\epsilon$-away from $\Kc$.
\end{proof}

\begin{prop}\label{prop: 0-1 property of fixed point set K}
	If $ \xi \in \Kc$,\, then $\xi(\{\mu \in \Xct : 0 < \norm{\mu} < 1\}) = 0.$
\end{prop}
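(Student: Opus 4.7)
The plan is to show that the mass functional $\mu\mapsto\|\mu\|$ is strictly contracted in expectation by the conditional update map $T$ whenever $0<\|\mu\|<1$, and then average this inequality against any $\xi\in\Kc$. From~\eqref{eq: conditional update map, 2nd version} one reads off
\[
\|\hat\mu\|=\frac{N}{N+a},\qquad N:=\int_{\Nb\times\Rb^d}e^{\beta Y(u)}\,\mu*\ker(du),\qquad a:=(1-\|\mu\|)e^{c(\beta)},
\]
and Fubini gives $\Eb N = \|\mu\|e^{c(\beta)}$. If $\|\mu\|=0$ then $N=0$ and $\|\hat\mu\|=0$; if $\|\mu\|=1$ then $a=0$ and $\|\hat\mu\|=1$ a.s. Otherwise $a>0$ and $x\mapsto x/(x+a)$ is strictly concave on $[0,\infty)$, so Jensen yields
\[
\Eb\|\hat\mu\|\le\frac{\Eb N}{\Eb N+a}=\|\mu\|,
\]
with equality if and only if $N$ is $\Pb$-almost surely constant.

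Next I would check that $N$ has positive variance whenever $\|\mu\|>0$. Using the independence of the fields $Y(i,\cdot)$ across different layers,
\[
\mathrm{Var}(N) = \sum_{i\in\Nb}\iint\kappa(x-y)\,(\alpha_i*\ker)(dx)(\alpha_i*\ker)(dy), \qquad \kappa(h):=\mathrm{Cov}\bigl(e^{\beta Y(1,h)},e^{\beta Y(1,0)}\bigr),
\]
where $\kappa$ depends only on $h$ by spatial stationarity and is finite by~\eqref{def: logarithmic moment generating function} at $2\beta$. Since $X(1,0)$ is non-constant, $\kappa(0)=\mathrm{Var}(e^{\beta Y(1,0)})>0$; path continuity of $X(1,\cdot)$ together with uniform integrability of $e^{2\beta X(1,\cdot)}$ yields $L^2$-continuity of $e^{\beta X(1,\cdot)}$, hence continuity of $\kappa$ at $0$. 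Fix $\eps>0$ with $\kappa\ge\kappa(0)/2$ on $B_\eps(0)$. If $\|\mu\|>0$, then some $\alpha_{i_0}*\ker$ has positive mass; covering its support by countably many balls of radius $\eps/2$ produces at least one ball $B$ with $m:=(\alpha_{i_0}*\ker)(B)>0$, and restricting the double integral to $B\times B$ gives $\mathrm{Var}(N)\ge m^2\kappa(0)/2>0$. Thus the Jensen bound above is strict whenever $0<\|\mu\|<1$.

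To finish, take any $\xi\in\Kc$. Using $\Tc\xi=\xi$ together with~\eqref{def: update map},
\[
\int_{\Xct}\|\mu\|\,\xi(d\mu) = \int_{\Xct}\Eb\|\hat\nu\|\,\xi(d\nu)\le\int_{\Xct}\|\nu\|\,\xi(d\nu),
\]
so equality holds throughout and $\Eb\|\hat\nu\|=\|\nu\|$ for $\xi$-a.e.\ $\nu$. The strict Jensen step then forces $\|\nu\|\in\{0,1\}$ for $\xi$-a.e.\ $\nu$, which is the claim.

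The main obstacle I anticipate is the positive-variance step. Since $\ker$ may be atomic, singular, or concentrated in an arbitrarily small region, one cannot hope to separate pieces of $\supp(\mu*\ker)$ beyond the dependence radius $\rdep$ in order to exploit independence directly. Instead, strict concavity must be won locally at scale $\eps$, using the positive in-place variance $\kappa(0)>0$ on a neighborhood where $\kappa$ stays close to $\kappa(0)$; establishing this continuity at the origin is precisely what the path-continuity and $2\beta$-moment hypotheses are there for, and once it is in hand the rest is a routine covering plus strict-concavity argument.
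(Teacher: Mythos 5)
Your overall architecture is exactly the paper's: apply Jensen's inequality to the strictly concave map $x\mapsto x/(x+a)$ with $a=(1-\norm{\mu})e^{c(\beta)}$, note that equality would force $N=\int e^{\beta Y(u)}\,\mu*\ker(du)$ to be $\Pb$-a.s.\ constant, and then integrate the strict inequality against a fixed point of $\Tc$ to reach a contradiction. That skeleton is correct and coincides with the paper's proof.

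The gap is in your positive-variance step. From $\mathrm{Var}(N)=\sum_i\iint\kappa(x-y)\,(\alpha_i*\ker)(dx)\,(\alpha_i*\ker)(dy)$ you conclude that ``restricting the double integral to $B\times B$'' for a small ball $B$ with $\sigma(B)=m>0$ gives $\mathrm{Var}(N)\ge m^2\kappa(0)/2$. That restriction is not a lower bound: the covariance $\kappa(h)$ is guaranteed to be positive only near $h=0$ (and zero for $\euc{h}>\rdep$); at intermediate lags it may be negative, so the contribution of $(B\times B)^c$ to the quadratic form can be negative and comparable in size. Writing $A=\int_B e^{\beta Y}d\sigma$ and $B'=\int_{B^c}e^{\beta Y}d\sigma$, all that follows is $\mathrm{Var}(A+B')\ge(\sqrt{\mathrm{Var}\,A}-\sqrt{\mathrm{Var}\,B'})^2\ge 0$, i.e.\ nothing. (To be fair, the paper's own proof dismisses this point with ``since $Y$ is non-degenerate, we have a strict inequality'' and supplies no argument either.) One correct way to close the gap is spectral: $\iint\kappa(x-y)\sigma(dx)\sigma(dy)=\int|\hat\sigma(t)|^2\,\nu(dt)$, where $\nu$ is the finite positive spectral measure of the stationary field $e^{\beta Y(1,\cdot)}$. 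Since $\hat\sigma$ is continuous with $\hat\sigma(0)=\norm{\sigma}>0$, vanishing of this variance would force $\nu$ to charge no neighborhood of the origin; but $\rdep$-dependence makes $\kappa$ compactly supported, so $\nu$ has a real-analytic density, which cannot vanish on an open set unless $\kappa\equiv 0$, contradicting $\kappa(0)>0$. With that substitution (or by simply asserting non-degeneracy as the paper does), the rest of your argument goes through.
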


\begin{proof}
Let $\xi \in \Kc$ and let $\mu$ be a $\Xct$-valued random variable whose law is $\xi$. Let us suppose, to derive a contradiction, that 
	$\xi(\{\mu \in \Xct : 0 < \norm{\mu} < 1\}) > 0.$
Recalling that $T\mu$ is the law of the $\Xct$-valued random variable $\hat{\mu}$ defined as in~\eqref{eq: conditional update map, 2nd version}, 
we have, by Jensen's inequality applied to the concave function $x\mapsto x/(x+(1-\norm{\mu})e^{c(\beta)})$,
\begin{align*}
	\Eb \norm{\hat{\mu}} &= \Eb \frac{\int_{\Nb\times \Rb^d} e^{\beta Y(u)}\, \mu * \ker (du)}
	{ \int_{\Nb\times \Rb^d} e^{\beta Y(w)} \mu *\ker (dw) + (1-\norm{\mu})e^{c(\beta)}} 
	\\& \leq \frac{\Eb \int_{\Nb\times \Rb^d} e^{\beta Y(u)}\, \mu * \ker (du)}
	{\Eb  \int_{\Nb\times \Rb^d} e^{\beta Y(w)} \mu *\ker (dw) + (1-\norm{\mu})e^{c(\beta)}}
	\leq  \frac{ e^{c(\beta)} \norm{\mu}}{e^{c(\beta)}} = \norm{\mu},
\end{align*}
where identity holds if and only if $\int_{\Nb\times \Rb^d} e^{\beta Y(u)}\, \mu * \ker (du)$ is a constant $\Pb$-a.s. 
However, since $Y$ is non-degenerate, we have a strict inequality. Combining this fact and the assumption that we made, we see that
	\[ \int \norm{\hat{\mu}} \Tc\xi(d\hat{\mu}) = \iint \norm{\hat{\mu}} T\mu (d\hat{\mu}) \xi(d\mu) 
	< \iint \norm{\mu} T\mu (d\hat{\mu}) \xi(d\mu) =\int \norm{\mu} \xi(d\mu),\]
which is a contradiction since $\Tc \xi = \xi$.
\end{proof}

\subsection{Variational formula for the free energy}
We observe that
	\[F_n(\beta) := \frac{1}{n} \log Z_n(\beta) = \frac{1}{n} \sum\limits_{i=0}^{n-1} \log \frac{Z_{i+1}}{Z_i}  
	= \frac{1}{n} \sum\limits_{i=0}^{n-1} \log\Big(\int_{\Rb^d} e^{\beta X(i+1, x)} \rho_i *\ker (dx)\Big). \]
Conditioning the $i$-th term on $\mathscr{G}_i$, we have
\beq \label{eq: exepctation of F_n}
	\Eb F_n = \frac{1}{n} \sum\limits_{i=0}^{n-1} \Eb R(\rho_i),
\eeq
where 
	\[ R(\rho_i):=\Eb \bigg[\log\Big(\int_{\Rb^d} e^{\beta X(i+1, x)} \rho_i *\ker (dx)\Big) \Big| \mathscr{G}_i \bigg]. \]
It is useful to extend this to a functional on $\Xct$ as follows:  
\beq\label{eq: definition of R}
	R(\mu) := \Eb \log\Big(\int_{\Nb \times \Rb^d} e^{\beta Y(u)} \mu * \ker(du) +(1-\norm{\mu})e^{c(\beta)}\Big),\quad \mu \in \Xct,
\eeq
where $Y$ has the same law as $X$.
\begin{prop}
\label{prop: continuity of R} 
	Let $R : (\Xct, d) \rightarrow (\Rb, |\cdot|)$  be defined by \eqref{eq: definition of R}. Then $R$ is well-defined and uniformly continuous. 
\end{prop}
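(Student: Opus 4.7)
The plan is to establish well-definedness first via crude integrability bounds, and then deduce uniform continuity by reducing the problem to the $L^2$-estimate on the partition-function-like quantities $A_\mu$ that was already worked out in the proof of Proposition~\ref{prop: continuity of conditional update map}.

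For well-definedness, denote $A_\mu := \int_{\Nb\times\Rb^d} e^{\beta Y(u)} \mu*\ker(du) + (1-\|\mu\|)e^{c(\beta)}$, so that $R(\mu) = \Eb \log A_\mu$. The elementary inequality $|\log a| \le a + a^{-1}$ valid for all $a > 0$ gives
\[
\Eb|\log A_\mu| \le \Eb A_\mu + \Eb A_\mu^{-1} = e^{c(\beta)} + \Eb A_\mu^{-1},
\]
where the first equality uses linearity and stationarity, and the last term is finite by Lemma~\ref{lem: estimate for denominator of conditional update map}.

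For uniform continuity, the key observation is that $R(\mu) = \Eb \log A_\mu$ does not depend on which copy of the environment is used, so for any coupling of $(Y, Y')$ with both marginals distributed as $X$, we may write $|R(\mu) - R(\nu)| \le \Eb|\log A_\mu(Y) - \log A_\nu(Y')|$. Applying the mean value theorem together with the elementary bound $(A_\mu \wedge A_\nu)^{-1} \le A_\mu^{-1} + A_\nu^{-1}$ and Cauchy--Schwarz yields
\[
|R(\mu) - R(\nu)|^2 \le \bigl[\Eb(A_\mu^{-1} + A_\nu^{-1})^2\bigr]\cdot \Eb(A_\mu - A_\nu)^2 \le 8 e^{c(-2\beta)}\,\Eb(A_\mu - A_\nu)^2,
\]
where the second inequality uses Lemma~\ref{lem: estimate for denominator of conditional update map} applied to both $\mu$ and $\nu$. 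It remains to show that $\Eb(A_\mu - A_\nu)^2$ can be made arbitrarily small by taking $d(\mu,\nu)$ small, for an appropriate coupling of the environments.

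This last step is exactly the content of Part 1.1.2 in the proof of Proposition~\ref{prop: continuity of conditional update map}: given a $(\mu,\nu)$-triple $(r,\phi,\vec{x})$ realizing $d(\mu,\nu) < \delta$, the decomposition~\eqref{eq: split of (A-B)^2} into a core part (controlled by the matching) and two sparse parts (controlled by $I_r$) yields~\eqref{eq: estimate of E(A-B)^2}, i.e. $[\Eb(A-B)^2]^{1/2} \le C_1(\sqrt{\delta/\delta_1} + \eps)$, valid under the ``layered matching'' hypothesis of Part~1. In the general case one first runs the Part~2 reduction: invoke the $\rdep$-dependence of $X$ to construct auxiliary $\mu',\nu'$ satisfying the Part~1 structure together with couplings showing that $\Eb(A_\mu - A_{\mu'})^2$ and $\Eb(A_\nu - A_{\nu'})^2$ are also small in $d$. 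The main obstacle is precisely keeping track of this two-step coupling so that the bound on $\Eb(A_\mu - A_\nu)^2$ tends to zero uniformly in $\mu,\nu$ as $d(\mu,\nu)\to 0$; however, since the coupling and all estimates on $\Eb(A-B)^2$ already appear in Section~\ref{sec: continuity-cond-update}, the argument is a direct transcription, and choosing $\delta$ in terms of $\eps$ exactly as in Proposition~\ref{prop: continuity of conditional update map} yields uniform continuity.
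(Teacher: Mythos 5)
Your proposal is correct and follows essentially the same route as the paper: well-definedness via an elementary bound on $|\log|$ plus Lemma~\ref{lem: estimate for denominator of conditional update map}, and uniform continuity by reducing to the estimate on $\Eb(A-B)^2$ already established in the proof of Proposition~\ref{prop: continuity of conditional update map} (the paper uses $|\log x|\le|x-1|+|x^{-1}-1|$ where you use the mean value theorem, a cosmetic difference). If anything, you are slightly more careful than the paper in flagging that the general (non-layered) case requires chaining through the Part~2 auxiliary measures $\mu',\nu'$ and their couplings.
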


\begin{proof}
It is easy to check that the right-hand side of \eqref{eq: definition of R}  does not depend on the choice of the representative of $\mu$. We need to prove that $R(\mu)$ is finite.
For any positive random variable $K$, one has
\beq\label{eq: log estimate}
	|\Eb \log K | \leq \max \{\log \Eb K, \log \Eb K^{-1}\} = \log \big(\max\{ \Eb K, \Eb K^{-1}\}\big)
\eeq
by Jensen's inequality.
On the other hand, we see that
\beq\label{eq: EK}
	\Eb \Big[\int_{\Nb \times \Rb^d} e^{\beta Y(u)} \mu * \ker(du) +(1-\norm{\mu})e^{c(\beta)} \Big] = e^{c(\beta)}
\eeq
and by Lemma \ref{lem: estimate for denominator of conditional update map},
\beq \label{eq: EK^(-1)}
	\Eb\Big(\int_{\Nb \times \Rb^d} e^{\beta Y(u)} \mu * \ker(du)
	+(1-\norm{\mu})e^{c(\beta)}\Big)^{-1} \leq 2 e^{c(-\beta)}.
\eeq
Combining \eqref{eq: log estimate}, \eqref{eq: EK}, and \eqref{eq: EK^(-1)} gives $|R(\mu)| < \infty$.
\par We now prove the uniform continuity of $R$:
given any $\eps>0$, there is $\delta = \delta(\eps)>0$ such that
	\[ d(\mu, \nu) < \delta  \quad \Rightarrow \quad |R(\mu)- R(\nu)| < N\epsilon,\]
where $N>0$ is a constant depending only on $\beta$ and the law of $X(1, 0)$. \\
Let $\eps>0$ be given.
Let us define $A, B$ by~\eqref{eq: denominator of conditional update map} and~\eqref{eq: denominator of nuhat} and choose $\delta$ used in the proof of Proposition~\ref{prop: continuity of conditional update map} (see the last line of Part 1). Notice that for any $x>0$,
	\[ |\log x| \leq |x-1|+ \Big|\frac{1}{x}-1\Big|. \]
Combining this with Lemma~\ref{lem: estimate for denominator of conditional update map} and~\eqref{eq: estimate of E(A-B)^2}, we have
\begin{align*}
	| R(\mu)-R(\nu)| &= \Big| \Eb \log \frac{A}{B} \Big| \leq \Eb\Big|\frac{A}{B} -1 \Big|+ \Eb \Big|\frac{B}{A} -1 \Big| 
	\\& \leq \big((\Eb B^{-2})^{1/2} + (\Eb A^{-2})^{1/2}\big) \big(\Eb (A-B)^2 \big)^{1/2} \leq e^{c(-2\beta)} C_1 \epsilon,
\end{align*}
 which completes the proof.
\end{proof}

\begin{rmk}\label{rmk: maximum of R}
We can apply Jensen's inequality to \eqref{eq: definition of R} to obtain
	\[ R(\mu) \leq \log \Eb \Big[\int e^{\beta Y(u)} \mu * \ker (du) + (1-\norm{\mu}) e^{c(\beta)} \Big] 
	= \log \int \Eb e^{\beta Y(u)} \, \mu * \lambda(du) + (1-\norm{\mu}) e^{c(\beta)} = c(\beta).\]
Since $Y$ is non-degenerate, the identity holds only if $\norm{\mu}=0$, i.e., $\mu = \zero$.
Hence, the functional $R$ attains its unique maximum at $\zero$.
\end{rmk}

\begin{prop} \label{prop: continuity of mathcal R}
 The map $\Rc: (\Pc(\Xct),\Wc) \to(\Rb, |\cdot|)$ defined by
	\[\Rc (\xi):= \int R(\mu) \xi(d\mu), \quad \xi \in \Pc(\Xct),\]
is uniformly continuous.
\end{prop}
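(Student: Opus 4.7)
The proof will reduce the uniform continuity of $\Rc$ to the two key properties of $R$ established in Proposition~\ref{prop: continuity of R} and in its proof: (i) $R$ is uniformly continuous on $(\Xct,d)$, and (ii) $R$ is uniformly bounded. The bound is immediate from the proof of Proposition~\ref{prop: continuity of R}: combining \eqref{eq: log estimate}, \eqref{eq: EK}, and \eqref{eq: EK^(-1)} gives $|R(\mu)| \leq M$ for all $\mu \in \Xct$, where $M := \max\{c(\beta),\, \log 2 + c(-\beta)\}$.

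The plan is to estimate $|\Rc(\xi_1) - \Rc(\xi_2)|$ via an optimal coupling. Given $\eps > 0$, use uniform continuity of $R$ to pick $\delta_1 = \delta_1(\eps) > 0$ such that $d(\mu,\nu) < \delta_1$ implies $|R(\mu) - R(\nu)| < \eps/2$. Set $\delta := \delta_1 \eps/(4M)$ and assume $\Wc(\xi_1,\xi_2) < \delta$. Let $\pi \in \Pc(\Xct \times \Xct)$ be the optimal coupling of $(\xi_1,\xi_2)$, so that
\[
|\Rc(\xi_1) - \Rc(\xi_2)| = \Big|\int (R(\mu) - R(\nu))\,\pi(d\mu,d\nu)\Big| \leq \int |R(\mu) - R(\nu)|\,\pi(d\mu,d\nu).
\]
Split the integral according to whether $d(\mu,\nu) < \delta_1$ or $d(\mu,\nu) \geq \delta_1$. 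On the first region the integrand is bounded by $\eps/2$, contributing at most $\eps/2$. On the second region the integrand is bounded by $2M$, and by Markov's inequality
\[
\pi(\{d(\mu,\nu) \geq \delta_1\}) \leq \frac{1}{\delta_1}\int d(\mu,\nu)\,\pi(d\mu,d\nu) = \frac{\Wc(\xi_1,\xi_2)}{\delta_1} < \frac{\delta}{\delta_1} = \frac{\eps}{4M},
\]
so this region contributes at most $2M \cdot \eps/(4M) = \eps/2$. Summing the two contributions yields $|\Rc(\xi_1) - \Rc(\xi_2)| < \eps$, which is the desired uniform continuity.

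This is a soft argument and there is no real obstacle; the only care needed is to verify that the uniform bound on $R$ can actually be extracted from the earlier computation (which it can, since \eqref{eq: EK} and \eqref{eq: EK^(-1)} are uniform in $\mu \in \Xct$), so that $M$ does not depend on the particular measures involved. The Markov-type splitting is the standard device for transferring continuity of a bounded uniformly continuous function to continuity of its integral against the Wasserstein metric, and it works here verbatim because $(\Xct, d)$ is compact (so $\Wc$ is well-defined and finite on all of $\Pc(\Xct)$) and $R$ is globally bounded.
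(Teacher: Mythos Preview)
Your proof is correct and follows essentially the same approach as the paper: both use the uniform bound on $R$ from Proposition~\ref{prop: continuity of R}, the uniform continuity of $R$, an optimal coupling of $(\xi_1,\xi_2)$, and the same split of the integral over $\{d(\mu,\nu)<\delta_1\}$ versus $\{d(\mu,\nu)\ge\delta_1\}$ with a Markov-type bound on the latter. The only differences are cosmetic---you give an explicit value for the bound $M$ and tune the constants so the final estimate is exactly $\eps$ rather than a constant multiple of it.
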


\begin{proof}
Let $\epsilon > 0$ be given and  let $N := \max\limits_{\mu \in \Xct} |R (\mu)|$. It is sufficient to show that there is $\delta>0$ such that
	\[ \Wc(\xi_1, \xi_2) < \delta  \quad \Rightarrow \quad |\Rc(\xi_1)- \Rc(\xi_2)| < (2N+1) \epsilon. \]
By Proposition~\ref{prop: continuity of R}, there is $\delta_1 > 0$ such that
	\[ d(\mu, \nu) < \delta_1 \quad \Rightarrow \quad |R(\mu)- R(\nu)| < \epsilon. \]
Set $\delta = \delta_1 \eps$ and let $\Pi$ be the optimal coupling of $(\xi_1, \xi_2)$. Then, for any $\xi_1, \xi_2 \in \Pc(\Xct)$ with $\Wc(\xi_1, \xi_2) < \delta$, we have
\begin{align*}
	&|\Rc (\xi_1) - \Rc (\xi_2)| \leq \int |R(\mu)- R(\nu)| \Pi (d\mu, d\nu)
	\\& = \int |R(\mu) - R(\nu)| (\one_{d(\mu, \nu) \geq \delta_1} +\one_{d(\mu, \nu) < \delta_1}) \Pi(d\mu, d\nu) 
	\\& \leq \frac{2N}{\delta_1} \int d(\mu, \nu) \Pi(d\mu, d\nu) + \epsilon = \frac{2N}{\delta_1} \Wc(\xi_1, \xi_2) + \epsilon < (2N+1)\epsilon,
\end{align*}
completing the proof.
\end{proof}

We can rewrite \eqref{eq: exepctation of F_n} as

	\[ \Eb F_n = \frac{1}{n} \sum\limits_{i=0}^{n-1} \Eb R(\rho_i) = \Eb \big[ \Rc(\psi_{n})\big]. \]
In fact, not only the expectations but the random variables themselves are close:

\begin{prop}\label{prop: convergence of quenched energy}
	As $n \rightarrow \infty$, $F_n-\Rc(\psi_{n}) \rightarrow 0$ \, $\mathbf{P}$-a.s.
\end{prop}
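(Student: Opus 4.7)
The plan is to write $F_n - \Rc(\psi_n) = n^{-1} M_n$ where $M_n$ is a mean-zero martingale with respect to $(\mathscr{G}_n)_{n\ge 0}$, and to apply a second-moment martingale strong law of large numbers. Setting
\[ Y_i := \log (Z_{i+1}/Z_i) = \log \int_{\Rb^d} e^{\beta X(i+1,x)}\, \rho_i *\ker (dx), \]
a telescoping gives $F_n = \frac{1}{n}\sum_{i=0}^{n-1} Y_i$. Since $\rho_i$ is $\mathscr{G}_i$-measurable, $X(i+1,\cdot)$ is independent of $\mathscr{G}_i$, and $\norm{\rho_i}=1$ (so that the extra $(1-\norm{\mu})e^{c(\beta)}$ term in \eqref{eq: definition of R} vanishes), we have $\Eb[Y_i \mid \mathscr{G}_i] = R(\rho_i)$. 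Hence $\Delta_i := Y_i - R(\rho_i)$ are martingale increments and
\[ F_n - \Rc(\psi_n) = \frac{1}{n}\sum_{i=0}^{n-1}\Delta_i =: \frac{1}{n} M_n. \]

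The key step will be a uniform bound $\sup_i \Eb \Delta_i^2 < \infty$. Since $R$ is continuous on the compact metric space $(\Xct, d)$ by Proposition~\ref{prop: continuity of R}, it is bounded, and it suffices to bound $\Eb Y_i^2$ uniformly. Using the elementary inequality $(\log K)^2 \le (K-1)^2 + (K^{-1}-1)^2$ valid for all $K>0$, together with Jensen's inequality applied to the integral of $e^{\beta X(i+1,\cdot)}$ against the probability measure $\rho_i *\ker$ and to the convex functions $x\mapsto x^2$, $x\mapsto x^{-2}$ on $(0,\infty)$, I would obtain
\[ \Eb (Z_{i+1}/Z_i)^2 \le \int \Eb e^{2\beta X(i+1,x)}\, \rho_i*\ker(dx) = e^{c(2\beta)}, \]
\[ \Eb (Z_{i+1}/Z_i)^{-2} \le \int \Eb e^{-2\beta X(i+1,x)}\, \rho_i*\ker(dx) = e^{c(-2\beta)}, \]
both finite by the hypothesis \eqref{def: logarithmic moment generating function}. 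Therefore $\sup_i \Eb Y_i^2 < \infty$ and, combined with boundedness of $R$, $\sup_i \Eb \Delta_i^2 < \infty$.

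Given this uniform second-moment bound, $\sum_{i\ge 1} i^{-2} \Eb \Delta_i^2 < \infty$, so a martingale strong law of large numbers (Chow's theorem, the martingale analogue of Kolmogorov's criterion) yields $M_n/n \to 0$ $\Pb$-a.s., which is the claim.

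The main obstacle, and the reason this proof differs from that of Proposition~\ref{prop: contraction}, is that the fourth-moment plus Borel--Cantelli argument used there exploited $|h|\le 2$, whereas $Y_i$ is unbounded here; controlling $\Eb Y_i^4$ by the same Jensen scheme would require $c(\pm 4\beta) < \infty$, which is \emph{not} granted by \eqref{def: logarithmic moment generating function}. The second-moment martingale SLLN is precisely the right tool to stay within the paper's moment assumption.
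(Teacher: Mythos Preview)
Your argument is correct. The martingale decomposition is exactly the one the paper uses, and your second-moment bound via $(\log K)^2 \le (K-1)^2 + (K^{-1}-1)^2$ together with Jensen gives the uniform bound $\sup_i \Eb \Delta_i^2 < \infty$ you need for Chow's martingale SLLN.

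Where you differ from the paper is in the last step, and your remark about the obstacle is slightly off. The paper \emph{does} follow the fourth-moment/BDG/Borel--Cantelli route of Proposition~\ref{prop: contraction}, but it avoids $c(\pm 4\beta)$ by using the sharper elementary inequality $(\log x)^4 < x^2 + x^{-2}$. This bounds $\Eb U_i^4$ by $\Eb V_i^2 + \Eb V_i^{-2} \le e^{c(2\beta)} + e^{c(-2\beta)}$, staying entirely within hypothesis~\eqref{def: logarithmic moment generating function}; then BDG gives $\Eb M_n^4 \le C n^2$ and Borel--Cantelli finishes. So the ``Jensen scheme'' does extend to fourth moments once one pairs it with the right logarithmic inequality. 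Your route via Chow's theorem is a perfectly valid alternative that trades this trick for a slightly less elementary martingale convergence theorem; both stay within the same moment hypothesis.
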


\begin{proof}
Let
	\[ V_i = \int_{\Rb^d } e^{\beta X(i+1, x)} \rho_i * \ker (dx),  \quad U_i = \log V_i. \]
We have $ \Eb [U_i |\mathscr{G}_i ] = R(\rho_i)$ and therefore
	\[ M_n := n \big(F_n-\Rc(\psi_{n}) \big) = \sum_{i=0}^{n-1} \Big(U_i - \Eb [U_i | \mathscr{G}_i ]\Big) \]
is a martingale.

We claim that $\Eb\big(U_i-\Eb[U_i | \mathscr{G}_i]\big)^4 $ is bounded. It suffices to show that $\Eb U_i^4$ is bounded. To this end, we observe that
\begin{align*}
	\Eb V_i ^2 \leq \Eb\int_{\Rb^d }e^{2\beta X(i+1, x)} \rho_i *\ker (dx)= e^{c(2\beta)}
\end{align*}
Similarly, we obtain $\Eb [V_i ^{-2} ] \leq  e^{c(-2\beta)}$.
Using the inequality $(\log x)^4 < x^2 + 1/x^2$, we have
	\[ \Eb U_i^4 <  \Eb V_i^2 +  \Eb V_i^{-2} \leq e^{c(2\beta)} + e^{c(-2\beta)}. \]
Using the Burkholder--Davis--Gundy inequality, we obtain
	\[ \Eb M_n ^4 \leq C_2 \Eb \Big(\sum_{i=0} ^{n-1} (U_i - \Eb [U_i | \mathscr{G}_i ] ) ^2 \Big)^2 \leq Cn^2, \]
which implies $\Eb (M_n / n)^4  \leq Cn^{-2}$ and hence, by the Borel--Cantelli lemma, 
	\[ \lim_{n \rightarrow \infty} |F_n - \Rc (\psi_{n}) | = \lim_{n \rightarrow \infty} \frac{|M_n|}{n} = 0 \quad \mathbf{P}\text{-a.s.}, \]
	completing the proof.
\end{proof}

\subsection{A representation of convergence of $F_n$ via $\Rc$ and $\Tc$}
In this section, we explore how energy functional $\Rc$ and $\Tc$ are related to the quenched free energy $F_n$. First, we show that the limit of free energy can be understood as the minimal energy state among the set $\Kc$ of fixed points of $\Tc$, see Theorem~\ref{thm: relation between F_n  and R}. This result allows us to describe more precisely the asymptotic behavior of empirical measures $\psi_n$ previously stated in Proposition~\ref{prop: convergence of psi to K}.

\begin{thm}\label{thm: relation between F_n  and R} 
 \[p(\beta) = \lim_{n \rightarrow \infty}  F_n = \inf_{\xi \in \Kc} \Rc(\xi) \quad \Pb\text{-a.s.} \]
\end{thm}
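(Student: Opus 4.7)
The plan is to combine convergence results already established. By Theorem~\ref{thm: concentration inequality for W_n}, $F_n \to p(\beta)$ a.s., and Proposition~\ref{prop: convergence of quenched energy} gives $F_n - \Rc(\psi_n) \to 0$ a.s., so $\Rc(\psi_n) \to p(\beta)$ a.s. For the upper bound $\inf_{\xi \in \Kc} \Rc(\xi) \le p(\beta)$, Proposition~\ref{prop: convergence of psi to K} provides $\xi_n \in \Kc$ with $\Wc(\psi_n, \xi_n) \to 0$ a.s., and uniform continuity of $\Rc$ (Proposition~\ref{prop: continuity of mathcal R}) then yields $\Rc(\xi_n) \to p(\beta)$; passing to a subsequence along which $\xi_n \to \xi_* \in \Kc$ (using compactness of $\Kc$ as a closed subset of the compact space $\Pc(\Xct)$) gives $\Rc(\xi_*) = p(\beta)$, so the infimum is attained.

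The matching lower bound $\Rc(\xi) \ge p(\beta)$ for every $\xi \in \Kc$ takes more work. By Proposition~\ref{prop: 0-1 property of fixed point set K}, I decompose $\xi = a\,\delta_{\mathbf 0} + (1-a)\xi_1$ with $\xi_1$ supported on $\{\mu \in \Xct : \|\mu\| = 1\}$. Since the conditional update \eqref{eq: conditional update map, 2nd version} preserves both $\|\mu\|=0$ and $\|\mu\|=1$ (the correction $(1-\|\mu\|)e^{c(\beta)}$ vanishes in the mass-one case), $\Tc$ respects this decomposition, and $\Tc\xi = \xi$ forces $\Tc\xi_1 = \xi_1$. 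Running the Markov chain $(\nu_i)_{i \ge 0}$ with $\nu_0 \sim \xi_1$, stationarity gives $\nu_i \sim \xi_1$ and $\Eb R(\nu_i) = \Rc(\xi_1)$ for each $i$. Setting
\[
\tilde F_n := \frac{1}{n}\sum_{i=0}^{n-1}\log\int_{\Nb\times\Rb^d} e^{\beta X(i+1, u)}\,\nu_i*\lambda(du),
\]
a Burkholder--Davis--Gundy argument parallel to that of Proposition~\ref{prop: convergence of quenched energy} gives $\tilde F_n - \frac{1}{n}\sum_{i=0}^{n-1} R(\nu_i) \to 0$ a.s., and Birkhoff's theorem applied to the stationary sequence $(R(\nu_i))$ yields $\frac{1}{n}\sum R(\nu_i) \to L$ a.s.\ for some $L$ with $\Eb L = \Rc(\xi_1)$.

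It remains to identify $L = p(\beta)$ a.s., which I expect to be the main technical obstacle. Conditional on $\nu_0$, $\tilde F_n$ is the per-step log partition function of a polymer chain with initial distribution $\nu_0$. For a single-layer $\nu_0$ corresponding to a probability measure $\alpha$ on $\Rb^d$, this converges to $p(\beta)$ a.s.\ by Theorem~\ref{thm: concentration inequality for W_n} combined with translation-invariance of the environment and a finite-support approximation. For a multi-layer $\nu_0 \in \Xct$, the different layers evolve in independent copies of the environment and one uses that $\log$ of a (possibly infinite) sum is controlled by its maximum, together with a mass-tail estimate exploiting $\sum_i \|\alpha_i\| = 1$. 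Granting this, $L = p(\beta)$ a.s., so $\Rc(\xi_1) = p(\beta)$, and
\[
\Rc(\xi) = a\,c(\beta) + (1-a)\,p(\beta) \ge p(\beta)
\]
by $c(\beta) \ge p(\beta)$ from \eqref{eq: Lyapunov exponent}, completing the variational identification.
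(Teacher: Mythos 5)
Your first half (the inequality $\inf_{\xi\in\Kc}\Rc(\xi)\le p(\beta)$ and the attainment of the infimum) is correct and is essentially the paper's argument. The problem is the reverse inequality. You reduce $\Rc(\xi)\ge p(\beta)$ to the assertion that the per-step free energy $\tilde F_n$ of the stationary chain started from $\xi_1$ has a.s.\ limit $L=p(\beta)$, and you explicitly leave this step --- which you yourself call the main technical obstacle --- unproved; as written this is a genuine gap. Two remarks on it. First, you only need $L\ge p(\beta)$, and the natural route to that is not the one you sketch: writing $\tilde Z_n=\sum_i\int\alpha_i(dx)\,Z_{n,x}^{(i)}$ for the partition function started from $\nu_0=(\alpha_i)$ with $\sum_i\|\alpha_i\|=1$, Jensen's inequality gives $\log\tilde Z_n\ge\sum_i\int\alpha_i(dx)\log Z_{n,x}^{(i)}$, and one must then upgrade the pointwise convergence $\frac1n\log Z_{n,x}^{(i)}\to p(\beta)$ to convergence of the $\nu_0$-average, which requires the concentration inequality of Theorem~\ref{thm: concentration inequality for W_n} uniformly in the starting point (available by stationarity) together with a moment bound on the exceptional set. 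Second, the devices you do name (finite-support approximation, ``$\log$ of a sum is controlled by its maximum'') are aimed at the unneeded upper bound $L\le p(\beta)$ and are themselves problematic: the maximum of $\frac1n\log Z_{n,x}^{(i)}$ over infinitely many independent copies of the environment need not converge to $p(\beta)$, so the reduction to single layers is not as innocent as suggested.

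The paper avoids any almost-sure analysis of polymers with general initial conditions by proving the inequality in expectation, $\sum_{i=0}^{n-1}\Rc(\Tc^i\delta_{\mu^{(0)}})\ge\Eb\log Z_n$, for \emph{every} $\mu^{(0)}\in\Xct$ (so no decomposition $\xi=a\delta_{\mathbf 0}+(1-a)\xi_1$ is needed). The product of the normalizers $D_i$ of the conditional updates telescopes exactly into $\int\mu^{(0)}(du_0)\int\exp\bigl(\beta\sum_iY^{(i)}(u_i)\bigr)P_n^{u_0}(d\vec u)+(1-\|\mu^{(0)}\|)\,\Eb Z_n$; a single application of Jensen's inequality bounds its logarithm below by $\int\mu^{(0)}(du_0)\log Z_{n,u_0}+(1-\|\mu^{(0)}\|)\,\Eb\log Z_n$, and taking expectations and using $\Eb\log Z_{n,u}=\Eb\log Z_n$ gives the claim; averaging over $i$ and integrating in $\mu^{(0)}$ then yields $\Rc(\xi)\ge\Eb F_n$ for $\xi\in\Kc$. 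Your stationary-chain route can be completed, but the decisive step must be supplied by the Jensen-plus-uniform-concentration argument above, not by the tools you list.
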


We already discussed in \eqref{eq: convergence of quenched free energy} that $|F_n - \Eb F_n| \rightarrow 0$ almost surely and in $L_p$ for all $p \geq 1$.
Hence, it is sufficient to show 
	\[ \lim\limits_{n \rightarrow \infty} \Eb F_n = \inf\limits_{\xi \in \Kc} \Rc(\xi). \]
To that end, we need two following propositions.

\begin{prop}
 \[\liminf_{n \rightarrow \infty} \Eb F_n \geq \inf_{\xi \in \Kc} \Rc(\xi)\quad \mathbf{P}\text{-a.s.} \]
\end{prop}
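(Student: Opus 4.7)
The plan is to reduce this statement to properties we already know: the identity $\Eb F_n = \Eb\,\Rc(\psi_n)$ obtained in the previous subsection, the a.s.\ convergence $\Wc(\psi_n,\Kc)\to 0$ from Proposition~\ref{prop: convergence of psi to K}, the continuity of $\Rc$ from Proposition~\ref{prop: continuity of mathcal R}, and the compactness of $\Pc(\Xct)$. Since the right-hand side is deterministic, it suffices to bound $\liminf \Eb\,\Rc(\psi_n)$ below by $\inf_{\xi\in\Kc}\Rc(\xi)$.

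I would first work pathwise. Fix a realization of the environment in a probability-one set on which $\Wc(\psi_n,\Kc)\to 0$. Because $\Tc$ is continuous (Proposition~\ref{prop: continuity of update map}), $\Kc = \{\xi : \Tc\xi=\xi\}$ is closed, hence compact as a subset of the compact space $\Pc(\Xct)$; so for each $n$ one can pick $\xi_n\in\Kc$ with $\Wc(\psi_n,\xi_n)=\Wc(\psi_n,\Kc)\to 0$. Uniform continuity of $\Rc$ then gives $\Rc(\psi_n)-\Rc(\xi_n)\to 0$, and since $\Rc(\xi_n)\ge \inf_{\xi\in\Kc}\Rc(\xi)$, we conclude
\[
\liminf_{n\to\infty} \Rc(\psi_n) \;\ge\; \inf_{\xi\in\Kc}\Rc(\xi) \qquad \Pb\text{-a.s.}
\]

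To upgrade this a.s.\ liminf bound to expectation, I would use the fact that $R$, and hence $\Rc$, is uniformly bounded on the compact space $\Xct$ (this is implicit in the proof of Proposition~\ref{prop: continuity of R} via the bounds \eqref{eq: EK}--\eqref{eq: EK^(-1)}). Fatou's lemma, applied to the bounded-below sequence $\Rc(\psi_n)$, then yields
\[
\liminf_{n\to\infty} \Eb\,\Rc(\psi_n) \;\ge\; \Eb\Big[\liminf_{n\to\infty}\Rc(\psi_n)\Big] \;\ge\; \inf_{\xi\in\Kc}\Rc(\xi).
\]
Combining this with the identity $\Eb F_n = \Eb\,\Rc(\psi_n)$ gives the claim.

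There is no real obstacle here; the work is all in the ingredients assembled in Sections~\ref{sec: MV}--\ref{sec: convergence of empirical measure}. The only points that require a moment's care are (i) noting that $\Kc$ is closed so that the infimum in the definition of $\Wc(\psi_n,\Kc)$ is attained and a convergent sequence $\xi_n\in\Kc$ can be selected, and (ii) justifying the exchange of liminf and expectation, which is immediate from boundedness of $\Rc$.
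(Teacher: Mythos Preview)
Your proof is correct and follows essentially the same approach as the paper: establish the pathwise bound $\liminf_n \Rc(\psi_n)\ge \inf_{\xi\in\Kc}\Rc(\xi)$ from Proposition~\ref{prop: convergence of psi to K} and the continuity of $\Rc$, then pass to expectations via Fatou's lemma using boundedness of $\Rc$. The only cosmetic difference is that the paper invokes Proposition~\ref{prop: convergence of quenched energy} ($F_n-\Rc(\psi_n)\to 0$ a.s.) rather than the identity $\Eb F_n=\Eb\,\Rc(\psi_n)$ before applying Fatou, but the two routes are interchangeable here.
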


\begin{proof}
By Proposition~\ref{prop: convergence of psi to K} and~\ref{prop: continuity of mathcal R}, 
	\[ \liminf _{n \rightarrow \infty} \Rc (\psi_{n}) \geq \inf_{\xi \in \Kc} \Rc(\xi) \quad \mathbf{P}\text{-a.s.} \]
The conclusion follows from Proposition \ref{prop: convergence of quenched energy} and Fatou's Lemma.
\end{proof}

\begin{prop}
	\[\limsup_{n \rightarrow \infty} \Eb F_n \leq \inf_{\xi \in \Kc} \Rc(\xi)\quad \mathbf{P}\text{-a.s.} \]
\end{prop}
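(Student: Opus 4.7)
The plan is to use the already-established identity $\lim_n \Eb F_n = p(\beta)$ and reduce the statement to the deterministic inequality $p(\beta) \le \Rc(\xi)$ for every $\xi \in \Kc$. I would prove this inequality by driving the update dynamics from a stationary random initial orbit with law $\xi$ and comparing the resulting "partition function" to the actual $Z_n$ via Jensen's inequality and translation invariance of the environment.

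For the reduction step, I would invoke Proposition~\ref{prop: 0-1 property of fixed point set K} to decompose any $\xi \in \Kc$ as $\xi = c\,\delta_{\zero} + (1-c)\,\xi'$, where $\xi'$ is a probability measure supported on $\wt{\Mc}_1$. Since $\Tc \delta_{\zero} = \delta_{\zero}$ and the update map preserves mass on $\wt{\Mc}_1$, the identity $\Tc\xi = \xi$ forces $\Tc\xi' = \xi'$, so $\xi' \in \Kc$. Because $R(\zero) = c(\beta) \ge p(\beta)$ by Theorem~\ref{thm: critical beta}, the inequality $\Rc(\xi) = c\,c(\beta) + (1-c)\Rc(\xi') \ge p(\beta)$ will follow from $\Rc(\xi') \ge p(\beta)$, so I may assume $\xi$ is supported on $\wt{\Mc}_1$.

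To prove $\Rc(\xi) \ge p(\beta)$ in this case, I would let $\alpha$ be an $\Mc_1$-valued random variable, independent of the environment, obtained as a measurable lift of an $\wt{\Mc}_1$-valued variable with law $\xi$; by translation invariance of the environment the specific selection is immaterial. Running the Markov chain with $\tilde\rho_0 = \alpha$ and defining
\[
Z_n^\alpha := E^\alpha\Big[\exp\Big(\beta\sum_{k=1}^{n} X(k, \omega_k)\Big)\Big] = \prod_{i=0}^{n-1} \int_{\Rb^d} e^{\beta X(i+1, x)}\, \tilde\rho_i * \ker(dx),
\]
one has $\Eb[\log V_i \mid \tilde\rho_i] = R(\tilde\rho_i)$ for $V_i = \int e^{\beta X(i+1, x)} \tilde\rho_i * \ker(dx)$, since $\|\tilde\rho_i\| = 1$ kills the extra term in~\eqref{eq: definition of R}. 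Because $\xi$ is a fixed point, $\tilde\rho_i$ has law $\xi$ for every $i$, yielding $\Eb \log Z_n^\alpha = n\,\Rc(\xi)$. On the other hand, the superposition identity $Z_n^a = \int Z_n^x\,a(dx)$ valid for each deterministic $a \in \Mc_1$, together with Jensen's inequality applied to the probability measure $a$ and the identity $\Eb\log Z_n^x = \Eb \log Z_n$ (from translation invariance), gives $\Eb \log Z_n^a \ge \Eb \log Z_n = n\,\Eb F_n$. Averaging over $a \sim \xi$ then produces $\Rc(\xi) \ge \Eb F_n$ for every $n$, and letting $n \to \infty$ finishes the argument.

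The only delicate point I anticipate is not a nontrivial estimate but a bookkeeping one, namely passing between $\xi \in \Pc(\Xct)$ and its measurable lift to $\Pc(\Mc_1)$ needed to write down $Z_n^\alpha$, and verifying the inheritance of the fixed-point property by $\xi'$ in the reduction step. Both are handled by translation invariance of the environment and the mass-preserving character of the update map on $\wt{\Mc}_1$.
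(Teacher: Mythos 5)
Your overall strategy---reduce to the deterministic inequality $p(\beta)\le\Rc(\xi)$ for $\xi\in\Kc$, run the stationary chain from an initial condition with law $\xi$, and compare $\Eb\log Z_n^{\alpha}$ to $\Eb\log Z_n$ via a superposition identity plus Jensen's inequality and stationarity of the environment---is the same mechanism as the paper's proof, which packages it as the claim $\sum_{i=0}^{n-1}\Rc(\Tc^i\delta_{\mu^{(0)}})\ge\Eb\log Z_n$ for every $\mu^{(0)}\in\Xct$. Your computation in the single-layer case is correct.

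The gap is in the reduction step. Proposition~\ref{prop: 0-1 property of fixed point set K} only gives that $\xi$-a.e.\ $\mu$ satisfies $\norm{\mu}\in\{0,1\}$; it does \emph{not} give that the full-mass part $\xi'$ is supported on $\wt{\Mc}_1$. An element $\mu=[\wt{\alpha}_i]_{i\in I}\in\Xct$ with $\sum_i\norm{\alpha_i}=1$ may consist of countably many nontrivial layers, and such multi-layer fixed points are not excluded a priori (indeed the functional $Q$ and Lemma~\ref{lem: Q infty} exist precisely because they are not). For such $\mu$ your object $Z_n^{\alpha}$ with $\alpha\in\Mc_1$ is undefined: the update map applies \emph{independent} environment copies $Y(j,\cdot)$ to the different layers, so the correct superposition identity reads $\prod_{i=1}^{n}D_i=\int_{\Nb\times\Rb^d}\mu^{(0)}(du_0)\int\exp\big(\beta\sum_{i}Y^{(i)}(u_i)\big)P_n^{u_0}(d\vec{u})$, an integral over $\Nb\times\Rb^d$ rather than over $\Rb^d$. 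The repair is routine---define $Z_{n,u_0}$ for $u_0=(j,x_0)$ using the $j$-th copy of the environment, note $\Eb\log Z_{n,u_0}=\Eb\log Z_n$ by stationarity in $x$, and apply Jensen to the mixture over $\mu^{(0)}$ exactly as you did---and this is what the paper does; it also handles $\norm{\mu^{(0)}}<1$ directly through the recursion $1-\|\mu^{(i)}\|=(1-\|\mu^{(i-1)}\|)e^{c(\beta)}/D_i$, so it never needs your decomposition of $\xi$. As written, however, your argument only covers fixed points concentrated on single-orbit measures.
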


\begin{proof}
We claim that for any $\mu^{(0)} \in \Xct$ and $n \in \Nb$,
\beq \label{eq: comparing R to E log Z}
	\sum_{i=0}^{n-1} \Rc \big(\Tc^i \delta_{\mu^{(0)}} \big) \geq \Eb \log Z_n.
\eeq
First, let us use this claim to derive the proposition. For any $\xi \in \Pc(\Xct)$, it follows from \eqref{eq: iteration of update map} that
\begin{align*}
	\Rc(\Tc^{i} \xi) = \int R(\nu) \Tc^{i} \xi (d\nu) = \iint R(\nu) \Tc^{i} \delta_{\mu} (d\nu) \xi(d\mu)	
	 = \int \Rc(\Tc^{i} \delta_{\mu}) \xi(d\mu).
\end{align*}
Moreover, if $\xi \in \Kc$, then \eqref{eq: comparing R to E log Z} implies
	\[ \Rc (\xi) =\frac{1}{n} \sum\limits_{i=0}^{n-1} \Rc(\Tc^i \xi ) = \int \frac{1}{n} \sum\limits_{i=0}^{n-1} \Rc(\Tc^i \delta_{\mu}) \xi(d \mu) 
	\geq  \int \frac{\Eb \log Z_n}{n}  \xi(d \mu) = \Eb F_n. \]
Taking infimum over $\xi \in \Kc$ and $\limsup\limits_{n \rightarrow \infty}$, we complete the proof.

Let us prove the claim~\eqref{eq: comparing R to E log Z}. For $i \geq 1$, let $\mu^{(i)}$ be defined inductively by
\beq\label{eq: mu_i}
	 \mu ^{(i)}(du) = \frac{\int_{\Nb \times \Rb^d}  e^{\beta Y^{(i)} (u)} \mu ^{(i-1)} * \ker (du)}
		{\int_{\Nb \times \Rb^d} e^{\beta Y^{(i)} (v)} \mu ^{(i-1)}* \ker (dv) 
		+ (1-\|\mu^{(i-1)}\|)e^{c(\beta)}},
\eeq
where $(Y^{(i)})$ are \textit{i.i.d.} random fields whose law is the same as $X$. 
By induction, we see that the law of $\mu^{(i)}$ is $\Tc^i \delta_{\mu^{(0)}}$. Hence,
	\[ \Rc(\Tc^i \delta_{\mu^{(0)}}) 
	= \Eb \log\Big(\int_{\Nb \times \Rb^d }e^{\beta Y^{(i+1)}(u)}\mu^{(i)}*\ker (du)
	+(1-\|\mu^{(i)}\|)e^{c(\beta)}\Big)= \Eb \log D_{i+1}, \]
where
\begin{align*}
	D_i &= \int_{\Nb \times \Rb^d} e^{\beta Y^{(i)} (u)}\mu^{(i-1)} *\ker (du)
	+(1-\|\mu^{(i-1)}\|)e^{c(\beta)}
	\\& = \int_{(\Nb \times \Rb^d)^2 }e^{\beta Y^{(i)} (u)} P_1 ^v (du) \mu^{(i-1)} (dv)
	+(1-\|\mu^{(i-1)}\|)e^{c(\beta)},\quad i\ge 0.
	\numberthis\label{eq: D_i}
\end{align*}
Here,  we can consider $P_k ^u$ as a probability measure on $(\Nb\times\Rb^d)^k$ 
by extending $P_k ^x$ defined in \eqref{def: finite dimensional distribution of P} as follows: for any $u_0=(m_0, x_0) \in \Nb \times \Rb^d$,
	\[ P_k^{u_0}(du_1, \cdots, du_k) 
	=\one_{\{m_0 = m_1 = \cdots = m_k\}} P_k^{x_0} (dx_1, \cdots, dx_k), \quad u_j = (m_j, x_j) \in \Nb \times \Rb^d, j =1, 2, \cdots, k.\]
Notice that
	\[ \sum\limits_{i=0}^{n-1} \Rc (\Tc^i \delta_{\mu^{(0)}}) = \Eb \log \prod_{i=1}^{n} D_i . \]
Iterating \eqref{eq: mu_i} for $i=n-1, \cdots, 1$, we have  
\begin{align*}
	& \int_{(\Nb \times \Rb^d)^2 } e^{\beta Y^{(n)}(u_n)} P_1 ^{u_{n-1}} (du_n) \mu^{(n-1)} (du_{n-1})
	\\& = \frac{1}{D_{n-1}} \int_{(\Nb \times \Rb^d)^3 } e^{\beta (Y^{(n)}(u_n)+Y^{(n-1)}(u_{n-1}))}
	P_2 ^{u_{n-2}}(du_{n-1}, du_n) \mu^{(n-2)}(du_{n-2})
	\\ & = \,\, \cdots
	\\& = \frac{1}{\prod_{i=1}^{n-1} D_i} \int_{\Nb \times \Rb^d} \mu^{(0)}(du_0) \int_{(\Nb \times \Rb^d)^{n}} \exp \Big( \sum\limits_{i=1}^{n} \beta Y^{(i)} (u_i) \Big) 
	P_n ^{u_0} (d\vec{u}),
	\numberthis\label{eq: iteration of the first part of D_i}
 \end{align*}
 where $\vec{u} = (u_1, \cdots, u_n)$.
Integrating over $u$ in \eqref{eq: mu_i}, we have
\begin{align*}
1-\|\mu^{(i)}\| =  \frac{\big(1-\|\mu^{(i-1)}\| \big) e^{c(\beta)} }{D_{i}}.
\end{align*}
Iterating this relation for $i= n-1, \cdots, 1$, we obtain
\beq\label{eq: iteration of the second part of D_i}
	\big(1-\|\mu^{(n-1)}\| \big) e^{c(\beta)} 
	= \frac{\big(1-\|\mu^{(0)}\|\big) e^{nc(\beta)} }{\prod\limits_{i=1}^{n-1} D_{i}} 
	= \frac{\big(1-\|\mu^{(0)}\|\big) \Eb Z_n }{\prod\limits_{i=1}^{n-1} D_{i}}.
\eeq
Combining \eqref{eq: D_i} for $i=n$, \eqref{eq: iteration of the first part of D_i}, and \eqref{eq: iteration of the second part of D_i}, we obtain
\begin{align*}
	\prod_{i=1}^{n} D_i =  \int \mu^{(0)} (du_0) \int\exp \Big(\sum\limits_{i=1}^{n} \beta Y^{(i)} (u_i) 
	\Big) P_n ^{u_0} (d\vec{u})
	+ \big(1-\|\mu^{(0)}\|\big) \Eb Z_n,
\end{align*}
and by Jensen's inequality,
\begin{align*}
	\log \prod_{i=1}^{n} D_i 
	 &\geq \int  \mu^{(0)}(du_0) \log \bigg( \int  \exp\Big(\sum\limits_{i=1}^{n} \beta Y^{(i)} (u_i)\Big) P_n ^{u_0} (d\vec{u})\bigg)
	  + \big(1-\|\mu^{(0)}\|\big) \log \Eb Z_n
	\\& \geq  \int \mu^{(0)} (du_0)  \log \bigg( \int  \exp\Big(\sum\limits_{i=1}^{n} \beta Y^{(i)} (u_i)\Big) P_n ^{u_0} (d\vec{u})\bigg)
	+ \big(1-\|\mu^{(0)}\| \big) \Eb \log Z_n
	\\& =  \int \mu^{(0)} (du_0) \log Z_{n, u_0} + \big(1-\|\mu^{(0)}\| \big) \Eb \log Z_n.
	\numberthis\label{eq: log of prod D_i}
\end{align*}
Since $\big(Y^{(i)} (j, x) \big)_{j \geq 1}$ is stationary in $x$,  $ \log Z_{n, u} \overset{d}{=}  \log Z_{n}$ for all $u \in \Nb \times \Rb^d$. In particular,
$\Eb \log Z_{n, u} =  \Eb \log Z_{n}$. Taking expectation on the both sides of \eqref{eq: log of prod D_i}, we obtain
\begin{align*}
	\sum_{i=0}^{n-1} \Rc (\Tc^i \delta_{\mu^{(0)}}) = \Eb \Big[\log \prod_{i=1}^{n} D_i \Big] 
	 \geq \int \mu^{(0)} (du_0) \Eb \log Z_n + \big(1- \|\mu^{(0)}\|\big) \Eb \log Z_n = \Eb \log Z_n,
\end{align*}
completing the proof of \eqref{eq: comparing R to E log Z} and the entire proposition.
\end{proof}

Let us denote
	\[ \Kc_0 = \{\xi_0 \in \Kc : \Rc(\xi_0) = \inf_{\xi \in \Kc} \Rc(\xi)\}. \]
Since $\Rc$ is continuous on the compact space $\Pc(\Xct)$, the infimum is attained and $\Kc_0$ is also compact.
Proposition~\ref{prop: convergence of psi to K}, Proposition~\ref{prop: convergence of quenched energy}, and Theorem~\ref{thm: relation between F_n  and R} suggest that 
one can strengthen Proposition~\ref{prop: convergence of psi to K} by taking a subset $\Kc_0$ of $\Kc$.
\begin{thm}\label{thm: convergence of psi to K0}
	As $n \rightarrow \infty$, $\Wc(\psi_n, \Kc_0) \rightarrow 0$\,\, $\mathbf{P}$-a.s.
\end{thm}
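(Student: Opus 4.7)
The plan is to combine the three results already established in this section: the convergence $\Wc(\psi_n,\Kc)\to 0$ from Proposition~\ref{prop: convergence of psi to K}, the asymptotic equality $F_n-\Rc(\psi_n)\to 0$ from Proposition~\ref{prop: convergence of quenched energy}, and the variational formula $\lim_n F_n=\inf_{\xi\in\Kc}\Rc(\xi)$ from Theorem~\ref{thm: relation between F_n  and R}. Chaining these immediately gives
\[
\Rc(\psi_n)\;\xrightarrow[n\to\infty]{}\;\inf_{\xi\in\Kc}\Rc(\xi)\qquad\mathbf{P}\text{-a.s.}
\]
So on an event of full probability, $(\psi_n)$ not only approaches the set of $\Tc$-fixed points but its $\Rc$-value converges to the minimum of $\Rc$ over $\Kc$. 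Heuristically, any limit point of $(\psi_n)$ should then belong to $\Kc_0$.

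To make this rigorous, I would argue by contradiction. Assume there exists $\eps>0$ and a (random) subsequence $(n_k)$ with $\Wc(\psi_{n_k},\Kc_0)>\eps$ for all $k$. Since $(\Pc(\Xct),\Wc)$ is compact (being the Wasserstein space over the compact metric space $(\Xct,d)$ built in Theorem~\ref{prop:compactification}), we may pass to a further subsequence so that $\psi_{n_k}\to\psi_\infty$ for some $\psi_\infty\in\Pc(\Xct)$. By the triangle inequality
\[
\Wc(\psi_\infty,\Tc\psi_\infty)\le\Wc(\psi_\infty,\psi_{n_k})+\Wc(\psi_{n_k},\Tc\psi_{n_k})+\Wc(\Tc\psi_{n_k},\Tc\psi_\infty),
\]
together with Proposition~\ref{prop: contraction} and the continuity of $\Tc$ (Proposition~\ref{prop: continuity of update map}), we conclude $\psi_\infty\in\Kc$. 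Moreover, by the continuity of $\Rc$ (Proposition~\ref{prop: continuity of mathcal R}) and the preliminary observation,
\[
\Rc(\psi_\infty)=\lim_{k\to\infty}\Rc(\psi_{n_k})=\inf_{\xi\in\Kc}\Rc(\xi),
\]
so $\psi_\infty\in\Kc_0$. But then $\Wc(\psi_{n_k},\Kc_0)\le\Wc(\psi_{n_k},\psi_\infty)\to 0$, contradicting $\Wc(\psi_{n_k},\Kc_0)>\eps$. Hence $\Wc(\psi_n,\Kc_0)\to 0$ $\mathbf{P}$-a.s.

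There is no real obstacle here: all the inputs have been arranged in the previous subsections specifically so that this contradiction argument goes through verbatim. The only mild subtlety is that the exceptional $\mathbf{P}$-null set must be chosen so that Proposition~\ref{prop: convergence of psi to K}, Proposition~\ref{prop: contraction}, Proposition~\ref{prop: convergence of quenched energy} and \eqref{eq: convergence of quenched free energy} all hold simultaneously, which is harmless since it is a countable intersection of full-measure events.
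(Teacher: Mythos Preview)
Your proof is correct and follows essentially the same route that the paper indicates (the paper omits the argument, citing Theorem~4.11 of \cite{BC16} and listing exactly the ingredients you use: compactness, continuity of $\Rc$, Proposition~\ref{prop: convergence of quenched energy}, and Theorem~\ref{thm: relation between F_n  and R}). The only cosmetic difference is that the paper phrases it via compactness of $\Kc$ whereas you use compactness of $\Pc(\Xct)$ and re-derive $\psi_\infty\in\Kc$ along the way, but the contradiction argument is the same.
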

We omit the proof. It is identical to that of Theorem 4.11 in \cite{BC16} and is based on compactness of $\Kc$, continuity of $\Rc$,
Proposition~\ref{prop: convergence of quenched energy}, and Theorem~\ref{thm: relation between F_n  and R}.

\section{Characterization of high/low temperature regimes} \label{sec: characterization of high/low temperature regime}

\subsection{Existence of phase transitions}

We recall that the critical inverse temperature $\beta_c$ was introduced in Theorem~\ref{thm: critical beta}.

\begin{thm}\label{thm: chracterstic of K}
\mbox{}
\begin{enumerate}[label=\rm(\alph*)]
	\item $0 \leq \beta \leq \beta_c$, then $\Kc = \Kc_0 = \{\delta_{\mathbf{0}}\}$.
	\item If $\beta > \beta_c$,  then $\Kc$ has an element other than~$\delta_{\mathbf{0}}$.  
	In this case, $\xi (U) := \xi \big(\{\mu \in \Xct : \norm{\mu} =1\} \big) = 1$ for all $\xi \in \Kc_0$. 
\end{enumerate}
\end{thm}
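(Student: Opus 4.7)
The plan is to combine three earlier results: (i) the variational formula $p(\beta)=\inf_{\xi\in\Kc}\Rc(\xi)$ from Theorem~\ref{thm: relation between F_n  and R}; (ii) Remark~\ref{rmk: maximum of R}, which says $R(\mu)\le c(\beta)$ with equality iff $\mu=\zero$, so that $\Rc(\xi)\le c(\beta)$ with equality iff $\xi=\delta_{\zero}$; and (iii) the $0$--$1$ mass dichotomy from Proposition~\ref{prop: 0-1 property of fixed point set K}, which says every $\xi\in\Kc$ is concentrated on $\{\zero\}\cup U$. For part (a), when $\beta\le\beta_c$ one has $p(\beta)=c(\beta)$, so any $\xi\in\Kc$ satisfies $c(\beta)=p(\beta)\le\Rc(\xi)\le c(\beta)$, which forces $\Rc(\xi)=c(\beta)$; by (ii) the only probability measure achieving this is $\delta_{\zero}$, so $\Kc=\Kc_0=\{\delta_{\zero}\}$.

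For the existence assertion of part (b), I will use that when $\beta>\beta_c$ one has $p(\beta)<c(\beta)=\Rc(\delta_{\zero})$. The set $\Kc$ is closed by continuity of $\Tc$ (Proposition~\ref{prop: continuity of update map}) and hence compact in $\Pc(\Xct)$, while $\Rc$ is continuous (Proposition~\ref{prop: continuity of mathcal R}), so the infimum defining $\Kc_0$ is attained and $\Kc_0\ne\emptyset$. Any $\xi\in\Kc_0$ satisfies $\Rc(\xi)=p(\beta)<c(\beta)=\Rc(\delta_{\zero})$, hence $\xi\ne\delta_{\zero}$, and such $\xi$ is the desired non-trivial element of $\Kc$.

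To establish $\xi(U)=1$ for every $\xi\in\Kc_0$ in this regime, set $a:=\xi(\{\zero\})$; by (iii), $\xi(U)=1-a$. If $a=1$ then $\xi=\delta_{\zero}$, contradicting $\Rc(\xi)=p(\beta)<c(\beta)$, so $a<1$ and I may decompose $\xi=a\delta_{\zero}+(1-a)\xi'$ with $\xi':=(1-a)^{-1}\xi|_U\in\Pc(\Xct)$. Evaluating~\eqref{eq: conditional update map, 2nd version} at $\mu=\zero$ gives $T\zero=\delta_{\zero}$, and since $\Tc$ is linear in its argument, the equation $\Tc\xi=\xi$ collapses to $\Tc\xi'=\xi'$, so $\xi'\in\Kc$ and $\Rc(\xi')\ge p(\beta)$. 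Then
$$p(\beta)=\Rc(\xi)=ac(\beta)+(1-a)\Rc(\xi')\ge ac(\beta)+(1-a)p(\beta),$$
which rearranges to $a\bigl(c(\beta)-p(\beta)\bigr)\le 0$, forcing $a=0$.

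The argument is essentially a synthesis of previously established structural facts, so I do not expect any of the individual steps to require heavy new machinery. The main subtlety is the decomposition in the second part of (b): it uses simultaneously the linearity of $\Tc$ in $\xi$, the observation that $\delta_{\zero}$ is itself a fixed point (so that peeling off the atomic part at $\zero$ preserves the fixed-point property of the remainder), and the $0$--$1$ support property from Proposition~\ref{prop: 0-1 property of fixed point set K} (without which $\xi'$ might not be a probability measure after normalizing).
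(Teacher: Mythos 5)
Your proof is correct, and for part (a) and the existence claim in (b) it is the same argument as the paper's, merely run in the forward direction rather than by contraposition: both rest on the variational formula $p(\beta)=\inf_{\xi\in\Kc}\Rc(\xi)$ and on the fact that $R$ attains its unique maximum $c(\beta)$ at $\zero$. The only real divergence is in the proof that $\xi(U)=1$ for $\xi\in\Kc_0$. Your normalized restriction $\xi'=(1-a)^{-1}\xi|_U$ is precisely the paper's conditional measure $\zeta_U$, but the two verifications differ: the paper shows $\zeta_U\in\Kc$ directly from the dynamical invariance $\mu\in U\Rightarrow T\mu(U)=1$ and $\mu\notin U\Rightarrow T\mu(U)=0$ (no appeal to the zero--one mass dichotomy is needed for that step), and then concludes via the strict inequality $\Rc(\zeta_U)<\Rc(\zeta)$, which uses only $R(\mu)<R(\zero)$ on $U$ and rules out any minimizer with $\zeta(U)<1$. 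You instead invoke the zero--one dichotomy to obtain the exact decomposition $\xi=a\dzero+(1-a)\xi'$, deduce $\Tc\xi'=\xi'$ from linearity of $\Tc$ and $T\zero=\dzero$, and close with the lower bound $\Rc(\xi')\ge p(\beta)$ combined with $\Lambda(\beta)=c(\beta)-p(\beta)>0$. Both routes are sound and of comparable length; the paper's final comparison is slightly more self-contained (it never applies the variational formula to $\xi'$), while yours makes explicit the role of the positive Lyapunov exponent in forcing the atom at $\zero$ to vanish.
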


The proof below follows the proof of Theorem 5.2 in \cite{BC16} closely.

\begin{proof}
It is sufficient to prove the inverses of (a) and (b) because their hypotheses are complementary.
We recall from Theorem~\ref{thm: critical beta} that $0 \leq \beta \leq \beta_c$ is equivalent to $\Lambda (\beta) = c(\beta) - p(\beta) = 0$, 
where $\Lambda$, $c$ and $p$ were defined in \eqref{eq: Lyapunov exponent}, \eqref{eq: convergence of E F_n} and \eqref{def: logarithmic moment generating function}, respectively.

If $\Kc$ has no elements other than $\dzero$, i.e., $\Kc = \Kc_0 = \{\dzero\}$, then Theorem~\ref{thm: relation between F_n  and R} tells us that
	\[ p(\beta) = \lim_{n \to \infty} \Eb F_n  = \inf_{\xi \in \Kc} \Rc (\xi) = \Rc (\dzero) = R(\zero) = c(\beta),\]
which implies $0 \leq \beta \leq \beta_c$.

Let us assume that there is an element $\zeta \in \Kc$ which is different from $\dzero$. 
From Remark~\ref{rmk: maximum of R}, we have $R(\mu) <R(\zero)$ for all $\mu \neq \zero$. 
Combining this with the fact $\zeta(\{\zero\}) <1$, we see that
%By Proposition~\ref{prop: 0-1 property of fixed point set K}, 
%we have $\zeta(U) > 0$. Therefore, we see that
	\[ p(\beta) = \lim_{n \to \infty} \Eb F_n \leq \Rc (\zeta) = \int R(\mu) \zeta(d\mu) < R(\zero) = c(\beta),\]
which implies $\beta>\beta_c$.

To see that $\xi (U) = 1$ for all $\xi \in \Kc_0$ in (b), fix $\zeta \in \Kc \setminus \{\zero\}$ and let us consider a conditional probability measure on~$\Xct$ given by
	\[ \zeta_U (A)= \frac{\zeta(A \cap U)}{\zeta(U)} \quad \text{ for all Borel } A \subset \Xct.\]
We claim that $\zeta_U \in \Kc$.  To prove this, first, we notice that due to the presence of $(1-\norm{\mu})e^{c(\beta)}$ term in the denominator of~\eqref{eq: conditional update map, 2nd version},
	\[ \mu \in U \,\,\Rightarrow \,\, T\mu (U) = 1, \quad \mu \notin U \,\,\Rightarrow\,\, T\mu(U) =0. \]
Therefore, for any Borel $A \subset \Xct$,
\begin{align*}
\Tc \zeta_U (A) &= \frac{1}{\zeta (U)} \int_U T\mu (A) \,\zeta (d\mu) =\frac{1}{\zeta (U)} \int_U T\mu (A \cap U) \,\zeta (d\mu)
\\&= \frac{1}{\zeta (U)} \int_{\Xct} T\mu (A \cap U) \,\zeta (d\mu) = \frac{\zeta(A \cap U)}{\zeta(U)} = \zeta_U (A),
\end{align*}
which proves the claim.\\
If $\zeta(U)<1$, then
\begin{align*}
	\Rc(\zeta_U) & = \frac{1}{\zeta(U)}\int_{U} R(\mu) \zeta(d\mu) = \int_{U} R(\mu) \zeta(d\mu) + \frac{1-\zeta(U)}{\zeta(U)}\int_{U} R(\mu) \zeta(d\mu)
	\\& < \int_{U} R(\mu) \zeta(d\mu) + (1-\zeta(U)) R(\zero) = \Rc(\zeta),
\end{align*}
which implies that $\zeta \in \Kc_0$ only if $\zeta(U)=1$.
\end{proof}

\begin{lem}\label{lem: continuity of I_r}
For any $r \geq  0 $, the map $I_r: \Xct \rightarrow [0, 1]$, defined in \eqref{eq: concentration function}, is continuous.
\end{lem}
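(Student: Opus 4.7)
The plan is to show that $I_r \colon (\Xct, d) \to [0,1]$ is in fact $1$-Lipschitz at close distances, which immediately gives (uniform) continuity. The key observation is that the metric $d$ was engineered precisely so that each of the three types of summands in $d_{s,\phi,\vec{x}}(\mu,\nu)$ (see \eqref{eq: psudo metric in mvspace}) contributes to bounding the deviation of $I_r(\mu)$ from $I_r(\nu)$: the Wasserstein terms via the $1$-Lipschitz property of $f_r$ (cf.\ \eqref{eq: relation between concentration function and Wasserstein metric}), the $I_s$-residual terms via the monotonicity $f_r \leq f_s$ pointwise for $s \geq r$, and the $2^{-s}$ term via forcing enough separation in the matching.

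First I would fix $\eps > 0$ and set $\delta = \min(\eps, 2^{-(r+2)})$. Given $\mu, \nu \in \Xct$ with $d(\mu,\nu) < \delta$, by the definition of $d$ there is a $(\mu,\nu)$-triple $(s, \phi = \{(\mu_k,\nu_k)\}_{k=1}^n, \vec{x})$ with $d_{s,\phi,\vec{x}}(\mu,\nu) < \delta$. The summand $2^{-s}$ forces $s > r+1$, and therefore $\sep(\phi) > 2s > 2(r+1)$. This separation is the crucial structural input: since $g_r(u - \cdot)$ is supported in $B_{r+1}(u) \subset \Nb \times \Rb^d$, any such ball can meet the support of at most one submeasure $\mu_k$.

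Next I would decompose $\mu = \sum_k \mu_k + \mu^s$ with $\mu^s := \mu - \sum_k \mu_k$. Subadditivity of $I_r$ combined with the separation above yields
\[
I_r(\mu) \leq I_r\Bigl(\sum_k \mu_k\Bigr) + I_r(\mu^s) = \max_k I_r(\mu_k) + I_r(\mu^s).
\]
Because $|S_{\mu_k}| = |S_{\nu_k}| = 1$ (condition (2) in Definition~\ref{def: matching}), each $\mu_k$, $\nu_k$ may be identified with an element of $\Mc_{\leq 1}$ without changing $I_r(\mu_k)$, $I_r(\nu_k)$. The shift-invariance of $I_r$ gives $I_r(\nu_k * \delta_{x_k}) = I_r(\nu_k)$; inequality \eqref{eq: relation between concentration function and Wasserstein metric}, applicable because $\|\mu_k\| = \|\nu_k * \delta_{x_k}\|$, combined with the monotonicity $I_r(\nu_k) \leq I_r(\nu)$, gives $I_r(\mu_k) \leq I_r(\nu) + W(\mu_k, \nu_k * \delta_{x_k})$. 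Bounding the max by the sum and using $I_r(\mu^s) \leq I_s(\mu^s)$, I arrive at
\[
I_r(\mu) \leq I_r(\nu) + \sum_{k=1}^n W(\mu_k, \nu_k * \delta_{x_k}) + I_s(\mu^s) \leq I_r(\nu) + d_{s,\phi,\vec{x}}(\mu,\nu) < I_r(\nu) + \delta.
\]
The symmetric inequality (swapping $\mu$ and $\nu$, using $\phi^{-1}$ and $-\vec{x}$) then yields $|I_r(\mu) - I_r(\nu)| < \delta \leq \eps$.

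No single step is hard; the proof is essentially careful bookkeeping. The only subtlety worth flagging is that the triple produced by the definition of $d$ must have $s$ large enough for the separation $\sep(\phi) > 2s$ to dominate the support radius $r+1$ of $g_r$. This is automatic once $\delta < 2^{-(r+2)}$, since $2^{-s}$ is one of the summands in $d_{s,\phi,\vec{x}}$.
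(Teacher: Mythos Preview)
Your proof is correct and follows essentially the same approach as the paper's: subadditivity of $I_r$, separation in the matching to reduce $I_r(\sum_k \mu_k)$ to $\max_k I_r(\mu_k)$, the Lipschitz estimate \eqref{eq: relation between concentration function and Wasserstein metric} for matched pieces, and $I_r \le I_s$ for the residuals. The differences are purely organizational---you bound each direction separately rather than both at once, and you are slightly more careful about the separation threshold (forcing $s>r+1$ so that the support radius $r+1$ of $g_r$ is indeed covered, whereas the paper only requires $r_0>r$).
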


\begin{proof}
Let $\epsilon>0$, $r \geq 0$ and $\mu \in\Xct$ be given. We need to show that there is $\delta(\epsilon, r) > 0$ such that
	\[ d(\mu, \nu) < \delta  \quad \Rightarrow \quad  |I_r (\mu) - I_r (\nu)| < \epsilon. \]
We set $\delta = \min (\epsilon/2, 2^{-r})$ and take any $\nu \in \Xct$ such that $d(\mu, \nu) < \delta$. 
There is a $(\mu, \nu)$-triple $\big(r_0, \phi = \{(\mu_k, \nu_k)\}_{k=1}^{n}, \vec{x} \big)$ such that $d_{r_0, \phi, \vec{x}} (\mu, \nu) <\delta$. 
By the choice of $\delta$, we have $r_0 > r $.
From the subadditivity of $I_r$, we have
	\[ I_r \Big(\sum\limits_{k=1}^{n} \mu_k \Big) \leq  I_r (\mu) \leq I_r \Big(\sum\limits_{k=1}^{n} \mu_k \Big) + I_r \Big(\mu - \sum\limits_{k=1}^{n} \mu_k \Big) \]
	\[ I_r  \Big(\sum\limits_{k=1}^{n} \nu_k \Big) \leq  I_r (\nu) \leq I_r  \Big(\sum\limits_{k=1}^{n} \nu_k \Big) + I_r \Big(\nu - \sum\limits_{k=1}^{n} \nu_k \Big).\]
It follows from $\sep(\phi) > 2r_0 > 2r$ that
	\[ \Big|I_r \Big(\sum\limits_{k=1}^{n} \mu_k \Big) - I_r \Big(\sum\limits_{k=1}^{n} \nu_k \Big) \Big|  = \big|\sup_k I_r (\mu_k) - \sup_k I_r (\nu_k) \big|. \]
Combining all these estimates, we conclude that
\begin{align*}
	 |I_r (\mu) - I_r (\nu)| &\leq \Big| I_r \Big( \sum \mu_k \Big) - I_r \Big(\sum \nu_k \Big)\Big| 
	+ \max \Big\{I_{r} \Big(\mu - \sum\limits_{k=1}^{n} \mu_k \Big), I_{r} \Big(\nu - \sum\limits_{k=1}^{n} \nu_k \Big) \Big\}
	\\& < \big|\sup_k I_r (\mu_k) - \sup_k I_r (\nu_k) \big| +  \delta \leq \sup_k \big|I_r(\mu_k) - I_r (\nu_k) \big| +\delta
	\\&  \leq \sup_k W(\mu_k, \nu_k*\delta_{x_k}) + \delta < 2 \delta \leq \epsilon,
\end{align*}
which completes the proof.
\end{proof}

\begin{thm}\label{thm: localization of directed polymers}
\mbox{}
\begin{enumerate}[label=\rm(\alph*)]
	\item If $0 \leq \beta \leq \beta_c$, then
	\[ \lim_{n\rightarrow \infty} \frac{1}{n} \sum_{i=0}^{n-1}\sup_{x \in \Rb^d} \rho_i \big(B_1(x) \big) = 0 \quad \Pb \text{-a.s.} \]

	\item If $\beta > \beta_c$, then there is $c>0$ such that
	\[	 \liminf_{n\rightarrow \infty} \frac{1}{n} \sum_{i=0}^{n-1}\sup_{x \in \Rb^d} \rho_i \big(B_1 (x) \big) \geq  c \quad \Pb\text{-a.s.}	\]
\end{enumerate}
\end{thm}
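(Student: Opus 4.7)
The plan is to exploit the convergence $\Wc(\psi_n,\Kc_0)\to 0$ from Theorem~\ref{thm: convergence of psi to K0} and the structural information about $\Kc_0$ in Theorem~\ref{thm: chracterstic of K}, transferring asymptotic properties of the empirical law $\psi_n$ into the scalar Ces\`aro average $\frac{1}{n}\sum_{i=0}^{n-1}\sup_x\rho_i(B_1(x))$. The bridge between these is the family of continuous concentration functionals $I_r$ of~\eqref{eq: concentration function}, which by~\eqref{eq: relation between I_r and the heaviest ball} sandwich $\sup_u\mu(B_r(u))$ between $I_{r-1}(\mu)$ and $I_r(\mu)$.

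For part~(a), I first use Theorem~\ref{thm: chracterstic of K}(a) to see that $\Kc_0=\{\delta_{\zero}\}$, so Theorem~\ref{thm: convergence of psi to K0} gives $\psi_n\to\delta_{\zero}$ in $(\Pc(\Xct),\Wc)$ almost surely. Since $f_1\ge\one_{B_1}$ pointwise, $\sup_x\rho_i(B_1(x))\le I_1(\rho_i)$, and hence $\frac{1}{n}\sum_i\sup_x\rho_i(B_1(x))\le\int I_1\,d\psi_n$. By Lemma~\ref{lem: continuity of I_r} the functional $\xi\mapsto\int I_1\,d\xi$ is continuous on $\Pc(\Xct)$, so it converges to $I_1(\zero)=0$, which is what part~(a) claims.

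Part~(b) is where the main work lies. Setting $U=\{\mu\in\Xct:\|\mu\|=1\}$, Theorem~\ref{thm: chracterstic of K}(b) ensures $\xi(U)=1$ for every $\xi\in\Kc_0$. I will introduce $a(\mu):=\sup_{r\ge 0}I_r(\mu)$; the identification $a(\mu)=\sup_i\|\alpha_i\|$ for a representative $(\alpha_i)$ of $\mu$ follows from $f_r\nearrow 1$, and in particular $a>0$ on $U$. As a supremum of continuous functions, $a$ is lower semicontinuous, so $\mathcal{A}(\xi):=\int a\,d\xi$ is lsc on $\Pc(\Xct)$ and $\mathcal{A}(\xi)>0$ for every $\xi\in\Kc_0$. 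Compactness of $\Kc_0$ then yields $2c_0:=\inf_{\xi\in\Kc_0}\mathcal{A}(\xi)>0$. Next I will promote $\mathcal{A}>0$ to a bound on a single continuous functional: monotone convergence gives $\mathcal{I}_r(\xi):=\int I_r\,d\xi\nearrow\mathcal{A}(\xi)$, and the open sets $\{\xi:\mathcal{I}_r(\xi)>c_0\}$ form an open cover of $\Kc_0$ as $r$ grows; a finite subcover, combined with the monotonicity of $\mathcal{I}_r$ in $r$, produces a single $R$ with $\inf_{\xi\in\Kc_0}\mathcal{I}_R(\xi)\ge c_0$. Continuity of $\mathcal{I}_R$ and $\Wc(\psi_n,\Kc_0)\to 0$ then give $\liminf_n\mathcal{I}_R(\psi_n)\ge c_0$ a.s. Finally, covering $\bar B_{R+1}(0)$ by $N=N(R,d)$ unit balls yields $I_R(\rho_i)\le\sup_x\rho_i(\bar B_{R+1}(x))\le N\sup_y\rho_i(B_1(y))$, and part~(b) follows with $c=c_0/N$.

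The delicate step I expect to be the main obstacle is the uniform choice of $R$ in part~(b). What is directly available is only the pointwise monotone limit $\mathcal{I}_r\nearrow\mathcal{A}$ on the compact $\Kc_0$, and the limit $\mathcal{A}$ is only lower semicontinuous, so Dini's theorem is not directly applicable. The right workaround is the open-cover / finite-subcover argument on the continuous functionals $\mathcal{I}_r$, coupled with their monotonicity in $r$, which allows me to replace the several indices arising from the finite subcover by their common maximum. This is what converts the qualitative positivity $\mathcal{A}>0$ on $\Kc_0$ into the uniform quantitative lower bound needed to close the estimate.
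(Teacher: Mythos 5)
Your proof is correct, and part (a) coincides with the paper's argument. Part (b), however, takes a genuinely different and noticeably longer route than the paper. The paper's proof of (b) works entirely with the single functional $I_0$: since $f_0$ is a tent function supported on the unit ball (not an indicator of a point), $I_0(\mu)>0$ for \emph{every} nonzero $\mu$, so $\int I_0\,d\xi>0$ for every $\xi\in\Kc_0$ by Theorem~\ref{thm: chracterstic of K}(b); continuity of $\xi\mapsto\int I_0\,d\xi$ (Lemma~\ref{lem: continuity of I_r}) plus compactness of $\Kc_0$ then give a uniform lower bound $c>0$, and the left inequality of \eqref{eq: relation between I_r and the heaviest ball} with $r=0$, namely $I_0(\mu)\le\sup_u\mu(B_1(u))$, transfers this directly to the unit-ball suprema via \eqref{eq: squeez}. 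Your route instead passes through the lower semicontinuous envelope $a(\mu)=\sup_r I_r(\mu)=G(\mu)$, extracts a uniform radius $R$ by a finite-subcover argument on the continuous functionals $\mathcal{I}_r$, and then pays a covering constant $N(R,d)$ to descend from $I_R$ back to balls of radius $1$. All of these steps are sound (the lsc of $\mathcal{A}$, the monotone-convergence/open-cover extraction of $R$, and the covering inequality $I_R(\rho_i)\le N\sup_y\rho_i(B_1(y))$ all check out), but the entire "uniform $R$" machinery is unnecessary: the difficulty you flag as the main obstacle dissolves once one notices that $I_0$ already detects positive mass at scale $1$. Your argument does yield the slightly stronger byproduct that the analogous statement holds with $B_1$ replaced by balls of any fixed radius (after adjusting the constant), but for the theorem as stated the paper's one-line choice $r=0$ is the more economical path.
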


\begin{proof} 
By the (uniform) continuity of the map $\xi \mapsto \int I_r (\mu) \xi(d\mu)$ on the compact space $\Pc(\Xct)$, for any $\epsilon>0$, we can choose some $\delta>0$ such that
	\[ \Wc (\zeta, \Kc_0) <\delta \quad  \Rightarrow \quad
	\inf_{\xi \in \Kc_0} \int I_r (\mu) \xi(d \mu) - \epsilon \leq \int I_r (\mu) \zeta(d \mu) \leq \sup_{\xi \in \Kc_0} \int I_r (\mu) \xi(d \mu) + \epsilon. \]
Theorem~\ref{thm: convergence of psi to K0} implies that 
\begin{align*}
	\inf_{\xi \in \Kc_0} \int I_r (\mu) \xi(d \mu) &\leq \liminf_{n \rightarrow \infty} \int I_r (\mu) \psi_{n} (d \mu) 
	\\&\leq \limsup_{n \rightarrow \infty} \int I_r (\mu) \psi_{n} (d \mu) \leq \sup_{\xi \in \Kc_0} \int I_r (\mu) \xi(d \mu).\numberthis\label{eq: squeez}
\end{align*}
If $0 \leq \beta \leq \beta_c$ (or equivalently $\Kc_0 = \{\delta_{\mathbf{0}}\}$),
we have $\sup\limits_{\xi \in \Kc_0} \int I_1 (\mu) \xi(d \mu) = 0$  and together with~\eqref{eq: relation between I_r and the heaviest ball}, we obtain
	\[ \limsup_{n\rightarrow \infty} \frac{1}{n} \sum_{i=0}^{n-1}\sup_{x \in \Rb^d} \rho_i (B_1(x)) 
	\leq \limsup\limits_{n \rightarrow \infty} \int I_{1} (\mu) \psi_{n} (d \mu) = 0. \]
If $\beta > \beta_c$, by Theorem~\ref{thm: chracterstic of K}, every $\xi \in \Kc_0$ is supported on $\{\mu \in \Xct : \norm{\mu}=1\}$. Therefore, we have
	\[ \int I_{0} (\mu) \xi(d\mu) > 0 \text{\quad for all \,\,} \xi \in \Kc_0.\]
This and compactness of $\Kc_0$ imply that there is $c>0$ such that
	\[ \inf_{\xi \in \Kc_0} \int I_{0} (\mu) \xi(d\mu) \geq c. \]
Combining this with \eqref{eq: squeez} and \eqref{eq: relation between I_r and the heaviest ball} completes the proof of (b).
\end{proof}

\section{Asymptotic clustering } \label{sec: clustering}

\subsection{Definitions and sufficient conditions}
\begin{definition}\label{def:cluster-at-level}
The sequence $(\rho_i)_{i \geq 0}$ of the endpoint distributions is said to be \textit{``asymptotically clustered at level $r>0$''} 
if for every sequence $(\epsilon_i)_{i \geq 0}$ tending to $0$, we have
	\[ \lim_{n\rightarrow \infty}  \frac{1}{n}  \sum_{i=0}^{n-1} \rho_i \big(\Ac_{i} ^{\epsilon_i} (r)\big) = 1 \quad \mathbf{P}\text{-a.s.},\]
where $\Ac_i ^{\epsilon} (r) = \{ x \in \Rb^d : \rho_i (B_r (x)) > \epsilon V_d r^d\}$ and $V_d$ is the volume of the unit ball in $\Rb^d$. 
\end{definition}

\begin{definition}\label{def:asymptotic-local-clustering}
We say that $(\rho_i)_{i \geq 0}$ is \textit{``asymptotically locally clustered''} if for every sequence $(\epsilon_i)_{i \geq 0}$ tending to $0$, we have
 	\[\lim_{n\rightarrow \infty}  \frac{1}{n}  \sum_{i=0}^{n-1} \rho_i (\Ac_{i} ^{\epsilon_i}) = 1 \quad \mathbf{P}\text{-a.s.},\]
	where $\Ac_i ^{\epsilon} = \{ x \in \Rb^d : \liminf\limits_{r \downarrow 0} \frac{\rho_i (B_r (x))}{V_d r^d} > \epsilon\}$.
\end{definition}

\begin{definition}\label{def:clustering-densities}
We say that {\it asymptotic  clustering of densities} holds for $(\rho_i)_{i \geq 0}$ 
if every $\rho_i$ is absolutely continuous with respect to the Lebesgue measure and 
for every sequence $(\epsilon_i)_{i \geq 0}$ tending to~$0$, we have
 \[
\lim_{n\rightarrow \infty}  \frac{1}{n}  \sum_{i=0}^{n-1} \rho_i (\mathcal{B}_{i} ^{\epsilon_i}) = 1 \quad \mathbf{P}\text{-a.s.},
\] 
where
$\mathcal{B}_i ^\epsilon=\left\{x\in\Rb^d:\ \frac{d \rho_i }{ dx}(x)>\eps\right\}.$	

\end{definition}

\begin{rmk}\rm \label{rem:clustering-densities}
If every $\rho_i$ is absolutely continuous with respect to the Lebesgue measure, then, due to the Lebesgue differentiation theorem, clustering of densities is equivalent to
asymptotically local clustering.
\end{rmk}

The above definitions are Euclidean space extensions of the notion of 
of asymptotic pure atomicity that was introduced first by Vargas in \cite{Var07}  and modified by Bates and Chatterjee in \cite{BC16} in their studies of endpoint distributions for discrete polymers. Moreover, asymptotic clustering at positive levels was studied in  \cite{BM18} still under the name of asymptotic pure atomicity. 
 We  introduce the new term {\it asymptotic clustering} 
instead of {\it asymptotic  pure atomicity} to avoid a misleading image of convergence of the measures in question  to a purely atomic measure.
Roughly speaking, $(\rho_i)_{i \geq 0}$ is asymptotically clustered at level $r$ if the mass of $\rho_i$ concentrates on few balls of radius $r$ for large~$i$.

We state a sufficient condition for asymptotic clustering
that is simpler to verify because it is stated in terms of fixed $\eps > 0$ instead of sequences $(\eps_i)_{i \geq 0}$.

\begin{lem}[Lemma 6.2 in \cite{BC16}]
Let $r>0$ be given. If for every $c>0$, there is $\epsilon = \epsilon(r,c) >0$ such that
\beq \label{eq: equivalent form of asymptotic clustering}
	\liminf_{n \rightarrow \infty} \frac{1}{n} \sum\limits_{i=0}^{n-1} \rho_i(\Ac_{i}^{\epsilon} (r))>1-c \quad \Pb \text{-} a.s.,
\eeq
then $(\rho_i)_{i \ge 0}$ is asymptotically clustered at level $r$. 
\end{lem}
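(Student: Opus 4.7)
The plan is straightforward: exploit the monotonicity $\Ac_i^{\epsilon}(r) \supset \Ac_i^{\epsilon'}(r)$ whenever $\epsilon \le \epsilon'$, which implies $\rho_i(\Ac_i^{\epsilon}(r))$ is non-increasing in $\epsilon$. Given a sequence $(\epsilon_i)_{i\ge 0}$ tending to $0$, the strategy is to bound $\rho_i(\Ac_i^{\epsilon_i}(r))$ from below (for all large $i$) by $\rho_i(\Ac_i^{\epsilon}(r))$ for a fixed $\epsilon = \epsilon(r,c)$ provided by the hypothesis, and then let $c \downarrow 0$.

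More concretely, let $c>0$ be arbitrary and let $\epsilon = \epsilon(r,c)>0$ be as in~\eqref{eq: equivalent form of asymptotic clustering}. Since $\epsilon_i \downarrow 0$, there exists $N = N(\epsilon)$ such that $\epsilon_i < \epsilon$ for all $i \ge N$, and hence $\Ac_i^{\epsilon_i}(r) \supset \Ac_i^{\epsilon}(r)$ for all such $i$. Consequently, for $n>N$,
\[
\frac{1}{n}\sum_{i=0}^{n-1}\rho_i(\Ac_i^{\epsilon_i}(r)) \;\ge\; \frac{1}{n}\sum_{i=N}^{n-1}\rho_i(\Ac_i^{\epsilon}(r)) \;\ge\; \frac{1}{n}\sum_{i=0}^{n-1}\rho_i(\Ac_i^{\epsilon}(r)) - \frac{N}{n}.
\]
Taking $\liminf_{n\to\infty}$ and applying the hypothesis~\eqref{eq: equivalent form of asymptotic clustering},
\[
\liminf_{n\to\infty}\frac{1}{n}\sum_{i=0}^{n-1}\rho_i(\Ac_i^{\epsilon_i}(r)) \;>\; 1-c \quad \Pb\text{-a.s.}
\]
Since $c>0$ was arbitrary and $\rho_i(\Ac_i^{\epsilon_i}(r)) \le 1$ for every $i$, the $\liminf$ above equals $1$ and $\limsup$ is trivially at most $1$, so the limit exists and equals $1$ almost surely. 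Thus $(\rho_i)_{i\ge 0}$ is asymptotically clustered at level $r$.

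There is essentially no obstacle here: the monotonicity of $\epsilon \mapsto \Ac_i^{\epsilon}(r)$ reduces the quantifier over sequences $(\epsilon_i)_{i\ge 0}$ to a quantifier over a single $\epsilon$, and the rest is bookkeeping. The only minor care needed is that the exceptional null set on which~\eqref{eq: equivalent form of asymptotic clustering} could fail may depend on $c$ (equivalently on $\epsilon$); taking a countable sequence $c = 1/k$ and intersecting the corresponding full-measure events yields a single full-measure event on which the conclusion holds simultaneously for all $k$, which is enough to conclude that the $\liminf$ equals $1$ almost surely.
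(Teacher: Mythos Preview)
Your proof is correct and is precisely the standard monotonicity argument that the paper defers to (it simply cites Lemma~6.2 of \cite{BC16} and notes the discreteness of $\Zb^d$ plays no role). The only cosmetic point is that you write ``$\epsilon_i\downarrow 0$'' when the hypothesis merely says $(\epsilon_i)$ tends to~$0$, but your argument only uses eventual smallness, so this is harmless.
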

The proof of this lemma repeats the proof of Lemma 6.2 of \cite{BC16} word for word. The discreteness of $\Zb^d$ plays no role in this argument.

\subsection{Auxiliary functionals}
For any $\epsilon > 0$, let us define $f_\epsilon :  \Rb_+ \rightarrow [0, 1]$ by
\begin{align*}
	f_\epsilon (t) = 
\begin{cases} 
		\qquad 0 & \text{for} \quad 0 \leq t < \epsilon, \\
	 	\frac{1}{\epsilon} (t -\epsilon) & \text{for} \quad  \epsilon \leq t \leq 2\epsilon, \\
		\qquad 1 & \text{otherwise.}
\end{cases}
\end{align*}
One can see that $f_\epsilon$ is $1/\epsilon$-Lipschitz continuous and can be interpreted as an approximation of a step function 
$g_\epsilon (t) = \one_{(2\eps, +\infty)} (t)$  for small $\epsilon$.

For any $\mu = (\alpha_i)_{i \in \Nb} \in \Xc$ and $r>0$, let us define a functional $D_r$ on $\Xc \times (\Nb \times \Rb^d)$ as
	\[ D_r (\mu, u) = \frac{1}{V_d r^d}\int_{\Nb \times \Rb^d} \Big(1-\frac{\euc{u-v}}{r} \Big)^+ \mu(dv) 
	= \frac{1}{V_d r^d} \int_{\Rb^d} \Big(1- \frac{\euc{x-y}}{r}\Big)^+ \alpha_i(dy), \]
where $u=(i, x)$ and $a^+ = \max(a, 0)$.
Comparing this with the definition of $I_r$, one has that
	\[ \sup\limits_{u \in \Nb \times \Rb^d} D_r (\mu, u) \leq  \frac{1}{V_d r^d} I_r(\mu). \]
We also observe that 
\begin{align*}
	| D_r (\mu, u_1) - D_r (\mu, u_2) | 
	&\leq \frac{1}{V_d r^d}  \int \Big| \Big(1-\frac{\euc{v-u_1}}{r}\Big)^+ - \Big(1-\frac{\euc{v-u_2}}{r}\Big)^+ \Big|\, \mu(dv) 
	\\& \leq \frac{1}{V_d r^d} \int \frac{\euc{u_1 -u_2}}{r} \mu(dv) \leq  \frac{\euc{u_1-u_2}}{V_d r^{d+1}},
\end{align*}
so $D_r$ is $1/V_d r^{d+1}$-Lipschitz continuous in $u$. Using the embedding of $\Mc_{\leq 1}$ into $\Xc$, we can naturally define  
$D_r$ on $\Mc_{\le 1} \times \Rb^d$.
Combining \eqref{eq: Kantorovich duality} with the fact that $y\mapsto(1-\euc{x-y}/r)^+ $ is $1/r$-Lipschitz for every $x\in\Rb^d$, we obtain
\beq \label{eq: application of kantorovich}
	| D_r (\alpha, x) - D_r (\gamma, x)| \leq \frac{1}{V_d r^{d+1}} W(\alpha, \gamma),\quad \alpha, \gamma \in \Mc_{1},\ x \in \Rb^d.
\eeq
Let us define a functional $J_{r, \epsilon} : \Xc \rightarrow [0, 1]$ by
	\[ J_{r, \epsilon} (\mu) = \int_{\Nb \times \Rb^d} f_\epsilon \!\circ\! D_{r, \mu} (u)  \mu(du) = \sum_{j \geq 1} \int_{\Rb^d} f_{\epsilon} \!\circ\! D_{r, \alpha_j}(x)\alpha_j(dx), \]
where we denoted  $D_r(\mu, \cdot)$ by $D_{r, \mu} (\cdot)$.
$J_{r, \epsilon}$ is well-defined on $\Xct$ due to the following observation 
	\[ \int f_{\epsilon} \!\circ\! D_{r, \alpha*\delta_{y}}(x) (\alpha*\delta_{y})(dx) = \int f_{\epsilon} \!\circ\! D_{r, \alpha}(x) \alpha (dx), 
	\qquad \alpha \in \Mc_{\leq 1},\ y \in \Rb^d. \]

\begin{prop}
For any $r, \eps >0$, $J_{r, \epsilon} : \Xct \rightarrow [0, 1]$ is continuous.
\end{prop}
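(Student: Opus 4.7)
The plan is to prove continuity of $J_{r,\epsilon}$ by reducing it to the continuity of translation-invariant moment functionals of the form $\Lambda(h, \mu)$ introduced in~\eqref{eq: functional for metric D}, and then exploiting the equivalence between $(\Xct, d)$ and $(\Xct, \mathbf{D})$ established earlier in this section. The key elementary observation is that $D_{r,\mu}(u) \le \|\mu\|/(V_d r^d) \le 1/(V_d r^d)$ uniformly in $\mu \in \Xct$ and $u \in \Nb \times \Rb^d$, so for fixed $r$ the composition $f_\epsilon \circ D_{r,\mu}$ only evaluates $f_\epsilon$ on the compact interval $[0, 1/(V_d r^d)]$.

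By the Weierstrass approximation theorem, for any $\eta > 0$ one may choose a polynomial $p(t) = \sum_{k=0}^{K} a_k t^k$ with $\sup_{t \in [0, 1/(V_d r^d)]} |p(t) - f_\epsilon(t)| < \eta$. Since $\|\mu\| \le 1$, this yields
\[
	\sup_{\mu \in \Xct} \Bigl| J_{r,\epsilon}(\mu) - \sum_{k=0}^{K} a_k \int_{\Nb \times \Rb^d} D_{r,\mu}(u)^k\, \mu(du) \Bigr| \le \eta,
\]
reducing the problem to showing that $\mu \mapsto \int D_{r,\mu}(u)^k \mu(du)$ is continuous on $(\Xct, d)$ for each fixed $k \ge 0$. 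Expanding the $k$-th power and noting that $(1-|u-v|/r)^+ = 0$ whenever $u,v$ lie in different copies of $\Rb^d$, one obtains
\[
	\int D_{r,\mu}(u)^k \mu(du) = \Lambda(h_k, \mu), \qquad h_k(x_0, x_1, \ldots, x_k) = \frac{1}{(V_d r^d)^k}\prod_{j=1}^{k}\Bigl(1 - \frac{|x_0 - x_j|}{r}\Bigr)^+,
\]
where $\Lambda$ is as in~\eqref{eq: functional for metric D}. The kernel $h_k$ is continuous, translation-invariant, and supported on $\{\max_j |x_0 - x_j| \le r\}$; the triangle inequality then forces $\max_{i \ne j} |x_i - x_j| \le 2r$ on its support, so $h_k$ vanishes at infinity and lies in $\mathcal{F}_{k+1}$.

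From the definition of $\mathbf{D}$ in~\eqref{eq: original metrization D}, combined with density of the countable subset $\{f_r\}$ in $\mathcal{F}$ and the uniform bound $|\Lambda(f, \mu) - \Lambda(g, \mu)| \le \|f - g\|$, the functional $\Lambda(h_k, \cdot)$ is continuous on $(\Xct, \mathbf{D})$, and by the equivalence of topologies proved earlier in this section it is also continuous on $(\Xct, d)$. A finite linear combination of continuous functionals is continuous, and the uniform polynomial approximation above transfers continuity to $J_{r, \epsilon}$. There is no substantive obstacle beyond checking the vanishing-at-infinity property of $h_k$, which the support argument just given already yields.
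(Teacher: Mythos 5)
Your route is genuinely different from the paper's and, with one repair, it works. The paper proves continuity directly in the coupling metric $d$: it takes a $(\mu,\nu)$-triple realizing $d(\mu,\nu)<\delta_1$, splits $J_{r,\epsilon}(\mu)-J_{r,\epsilon}(\nu)$ into the matched pairs and the sparse remainders, controls the matched pairs via the Lipschitz continuity of $f_\epsilon$ and the Kantorovich bound \eqref{eq: application of kantorovich}, and controls the sparse part by a ball-packing argument using $I_{r'}(\mu^s)<\delta_1$. You instead observe that $D_{r,\mu}$ is uniformly bounded by $1/(V_d r^d)$, approximate $f_\epsilon$ by a polynomial on that compact interval, and identify each moment $\int D_{r,\mu}^k\,d\mu$ with $\Lambda(h_k,\mu)$ for an explicit $h_k\in\mathcal{F}_{k+1}$, so that continuity follows from the definition of $\mathbf{D}$ and the already-proved equivalence of $(\Xct,d)$ and $(\Xct,\mathbf{D})$. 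Your identification of $\int D_{r,\mu}(u)^k\mu(du)$ with $\Lambda(h_k,\mu)$ is correct (cross-layer terms vanish because $|u-v|=\infty$ across layers), and your support argument for $h_k\in\mathcal{F}_{k+1}$ is fine. The trade-off: your argument is shorter and conceptually transparent, but it is purely qualitative and leans on the equivalence theorem; the paper's argument yields an explicit modulus of continuity in terms of $d$ and never leaves the coupling picture.

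The one step that fails as written is the $k=0$ term. A generic Weierstrass approximant has $a_0=p(0)\neq 0$, and the corresponding functional is $\Lambda(h_0,\mu)=\|\mu\|$, which is \emph{not} continuous on $(\Xct,d)$: a sequence of spreading probability measures converges to $\zero$ while keeping total mass $1$. (Consistently, the space $\mathcal{F}_k$ is only defined for $k\ge 2$, so $h_0$ is not an admissible test function, and "a finite linear combination of continuous functionals" does not cover this term.) The uniform-limit argument requires each approximant to be continuous, so a discontinuous constant term genuinely breaks the proof rather than being a cosmetic issue. The fix is immediate: since $f_\epsilon(0)=0$, replace $p$ by $p-p(0)$, which still approximates $f_\epsilon$ within $2\eta$ on $[0,1/(V_d r^d)]$ and has vanishing constant term, so the sum runs over $k\ge 1$ only and every $h_k$ lies in $\mathcal{F}_{k+1}$ with $k+1\ge 2$. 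With that normalization the argument is complete.
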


\begin{proof}
Let $\mu\in\Xct$ and $ \delta_2 > 0$ be given. We claim that
	\[d(\mu, \nu) < \delta_1:=\min\Big(\frac{\epsilon \delta_2 V_d r^{d} (r \wedge 1)}{6}, 2^{-2r}\Big) 
	\quad \Rightarrow \quad |J_{r, \epsilon} (\mu) - J_{r, \epsilon}(\nu)| < \delta_2.\]
Fix $\nu \in \Xct$ satisfying $ d(\mu, \nu) < \delta_1 $. Then, we can find a triple $(r', \phi= \{(\mu_k, \nu_k)\}_{k=1}^{n}, \vec{x})\,$
such that $r'>2r$ and $d_{r', \phi, \vec{x}}(\mu, \nu) < \delta_1$.
Let us denote
	\[\mu^{s} = \mu - \sum\limits_{j=1}^{n} \mu_j , \quad \nu^{s} = \nu - \sum\limits_{j=1}^{n} \nu_j, \quad \nu_k' = \nu_k * \delta_{x_k}.\]
Then, we have
\begin{align*}
	|J_{r, \epsilon} (\mu) - J_{r, \epsilon} (\nu)|
	& \leq \sum\limits_{k=1}^{n} \Big|\int f_\epsilon \!\circ\! D_{r, \mu} (u)  \mu_k(du) - \int f_\epsilon \!\circ\! D_{r, \nu}  (u)  \nu_k(du)\Big|
	\\& + \int f_\epsilon \!\circ\! D_{r, \mu}  (u)  \mu^{s}(du) + \int f_\epsilon \!\circ\! D_{r, \nu}  (u)  \nu^{s}(du). \numberthis\label{eq: split of J(mu)- J(nu)}
\end{align*}
In order to derive the upper bound for the first term in the right-hand side of \eqref{eq: split of J(mu)- J(nu)}, we observe 
\begin{align*}
	&\left|\int f_\epsilon \!\circ\! D_{r, \mu}  (u)  \mu_k(du) - \int f_\epsilon \!\circ\! D_{r, \mu_k}  (u)  \mu_k(du)\right| 
	\\&  = \left|\int f_\epsilon \big( D_{r, \mu_k}(u) + D_{r, \mu-\mu_k}  (u)\big)  \mu_k(du) - \int f_\epsilon \big(D_{r, \mu_k}  (u) \big)  \mu_k(du)\right|
	\\& \leq \frac{1}{\epsilon } \int  D_{r, \mu-\mu_k}(u) \mu_k(du)
	= \frac{1}{\epsilon } \int  D_{r, \mu^s}(u) \mu_k(du)
	\\&\leq \frac{1}{\epsilon V_d r^d} \int  I_r(\mu^s) \mu_k(du)  \leq \frac{1}{\epsilon V_dr^d} I_r(\mu^s) \|\mu_k\|
	\leq \frac{\delta_1}{\epsilon V_d r^d} \|\mu_k\|.
\end{align*}
In the second equality, we used the fact that $\deuc \big(\supp(\mu_k), \supp(\mu_l)\big) > 2r' > 4r$ for all $l \neq k$, which implies  
$D_{r, \mu-\mu_k} = D_{r, \mu^s}$ on $\supp(\mu_k)$.
Combining this with the triangle inequality, we have
\begin{align*}
	& \sum\limits_{k=1}^{n} \Big|\int f_\epsilon \!\circ\! D_{r, \mu}  (u)  \mu_k(du) - \int f_\epsilon \!\circ\! D_{r, \nu}  (u)  \nu_k(du)\Big| 
	\\& \leq \sum\limits_{k=1}^{n} \Big|\int f_\epsilon \!\circ\! D_{r, \mu_k}  (x)  \mu_k(dx) - \int f_\epsilon \!\circ\! D_{r, \nu_k}  (x)  \nu_k(dx)\Big|  
	+  \frac{2\delta_1}{\epsilon V_d r^d}\numberthis\label{eq: first-1 part of J(mu)-J(nu)}.
\end{align*}
On the other hand, we can use  \eqref{eq: application of kantorovich}, \eqref{eq: Kantorovich duality}, and the $1/\epsilon$-Lipschitz continuity of $f_\epsilon$ to write
\begin{align*}
	& \left|\int f_\epsilon \!\circ\! D_{r, \mu_k}  (x)  \mu_k(dx)\! -\! \int f_\epsilon \!\circ\! D_{r, \nu_k}  (x)  \nu_k(dx) \right|
	=\left|\int f_\epsilon \!\circ\! D_{r, \mu_k}  (x)  \mu_k(dx)\! - \!\int f_\epsilon \!\circ\! D_{r, \nu'_k}  (x)  \nu'_k(dx)\right|
	\\& \le \left|\int f_\epsilon \!\circ\! D_{r, \mu_k}  (x)  \mu_k(dx)\! - \!\int f_\epsilon \!\circ\! D_{r, \nu'_k}  (x)  \mu_k(dx) \right| 
	+ \left|\int f_\epsilon \!\circ\! D_{r, \nu'_k}  (x)  \mu_k(dx) \!-  \!\int f_\epsilon \!\circ\! D_{r, \nu'_k}  (x)  \nu'_k(dx) \right|
	\\& \leq \frac{1}{\epsilon} \int  \big|D_{r, \mu_k} (x) - D_{r, \nu'_k}  (x) \big| \mu_k(dx) + \frac{1}{\epsilon V_d r^{d+1}} W(\mu_k, \nu'_k) 
	\leq \frac{2}{\epsilon V_d r^{d+1}} W(\mu_k, \nu'_k).
\end{align*}
Summing over $k$ on both sides gives
\beq\label{eq: first-2 part of J(mu)-J(nu)}
	 \sum\limits_{k=1}^{n} \Big| \int f_\epsilon \!\circ\! D_{r, \mu_k}  (x)  \mu_k(dx) - \int f_\epsilon \!\circ\! D_{r, \nu_k}  (x)  \nu_k(dx) \Big| 
	 \leq  \frac{2}{\epsilon V_d r^{d+1}}  \sum\limits_{k=1}^{n} W(\mu_k, \nu'_k) < \frac{2 \delta_1}{\epsilon V_d r^{d+1}}.
\eeq

Let us estimate the second and the third terms in the right-hand side of \eqref{eq: split of J(mu)- J(nu)}.
\begin{align*}
	 & \int f_\epsilon \!\circ\! D_{r, \mu}  (u)  \mu^{s}(du) =  \int_{\{u: D_{r, \mu} (u) >\epsilon\}} f_\epsilon \!\circ\! D_{r, \mu}  (u)  \mu^{s}(du) 
	  \\& \leq \mu^{s}\big(\{u \in \Nb \times \Rb^d: D_{r, \mu} (u) >\epsilon\} \big) 
	  \leq \mu^{s} \big( \{u \in \Nb \times \Rb^d : \mu(B_r(u)) >\epsilon V_d r^d \} \big).\numberthis\label{eq: second part of J(mu)-J(nu), intermediate}
\end{align*}
Let $C = \{u \in \Nb \times \Rb^d : \mu(B_r(u)) >\epsilon V_d r^d\}$. Then, we can choose a finite number of disjoint balls
	\[ \mathcal{C} = \{B_r (u_i): u_i \in C, 1 \leq i \leq N \} \] 
such that every ball $B_r (u)$, $u \in C$ has non-empty intersection with $\bigcup\limits_{i=1}^{N} B_r (u_i)$.
The disjointness of $\mathcal{C}$ gives $N \leq \frac{1}{\epsilon V_d r^d}$ and 
	\[ C \subset \bigcup\limits_{i=1}^{N} B_{2r} (u_i)  \subset\bigcup\limits_{i=1}^{N} B_{r'} (u_i). \]
Using this and $\mu^{s}(B_{r'} (u)) \leq I_{r'}(\mu^s) <\delta_1$ for all $u \in \Nb \times \Rb^d$, \eqref{eq: second part of J(mu)-J(nu), intermediate} can be continued as
\beq\label{eq: second part of J(mu)-J(nu)}
	 \int f_\epsilon \!\circ\! D_{r, \mu}  (u)  \mu^{s}(du) 
	 \leq \mu^{s} (C) \leq \mu^{s} \left(\bigcup\limits_{i=1}^{N} B_{r'} (u_i)\right) 
	 < N\delta_1 \leq \frac{\delta_1}{\epsilon V_d r^d}.
\eeq
Combining \eqref{eq: split of J(mu)- J(nu)}, \eqref{eq: first-1 part of J(mu)-J(nu)}, \eqref{eq: first-2 part of J(mu)-J(nu)}, 
and \eqref{eq: second part of J(mu)-J(nu)} completes the proof.
\end{proof}

\subsection{Asymptotic clustering of polymer endpoint distributions}
In this section, we prove the following theorem which is a reformulation of 
relations \eqref{eq: asymptotic clustering, intro} and \eqref{eq: endpoint is not asymptotically clusterized in high temperature, intro} 
in Theorem~\ref{thm: asymptotic clursterization, intro}.

\begin{thm}\label{thm: asymptotic clustering}
\mbox{}
\begin{enumerate}[label=\rm(\alph*), nosep]
	\item If $\beta > \beta_c$, then for all $r>0$, $(\rho_i)_{i \geq 0}$ is asymptotically clustered at level $r$.
	\item If $\beta \leq \beta_c$, then  for all $r>0$, $(\rho_i)_{i \geq 0}$ is not asymptotically clustered  at level $r$. 
Moreover, for any $r>0$, there is a sequence $(\epsilon_i)_{i \geq 0}$ tending to $0$ as $i \rightarrow \infty$, such that
\beq\label{eq: endpoint is not asymptotically clustered}
	\lim\limits_{n \rightarrow \infty} \frac{1}{n}\sum\limits_{i=0}^{n-1} \rho_i (\Ac_{i}^{\epsilon_i} (r) ) = 0 \qquad \mathbf{P} \text{-}a.s.
\eeq
\end{enumerate}
\end{thm}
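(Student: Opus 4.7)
The strategy in both parts is to combine the pointwise comparison $J_{r,\eps}(\rho_i) \le \rho_i(\Ac_i^\eps(r))$ (which follows from $f_\eps(t) \le \one_{t > \eps}$ and $D_r(\rho_i, x) \le \rho_i(B_r(x))/(V_d r^d)$) with the convergence $\Wc(\psi_n, \Kc_0) \to 0$ from Theorem~\ref{thm: convergence of psi to K0} and the structural characterization of $\Kc_0$ in Theorem~\ref{thm: chracterstic of K}. For (a), by the reduction~\eqref{eq: equivalent form of asymptotic clustering}, it suffices to show that for every $c>0$ there exists $\eps = \eps(r,c) > 0$ such that $\liminf_n \tfrac{1}{n} \sum_{i<n} \rho_i(\Ac_i^\eps(r)) > 1 - c$ a.s. I set $F_\eps(\xi) := \int J_{r,\eps}\, d\xi$, which is continuous and bounded on $(\Pc(\Xct), \Wc)$. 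For any $\mu$ with $\|\mu\| = 1$ and representative $(\alpha_j)$, monotone convergence gives $J_{r,\eps}(\mu) \uparrow 1$ as $\eps \downarrow 0$, because $D_r(\alpha_j, \cdot) > 0$ holds $\alpha_j$-a.e.\ (every point in $\supp(\alpha_j)$ has $\alpha_j(B_r(\cdot))>0$) and $f_\eps(t) \uparrow 1$ for every $t > 0$. Theorem~\ref{thm: chracterstic of K}(b) ensures every $\xi \in \Kc_0$ is concentrated on $\{\|\mu\|=1\}$, so the continuous functions $F_\eps$ increase pointwise on the compact set $\Kc_0$ to the constant $1$, and Dini's theorem upgrades this to uniform convergence. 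Picking $\eps$ with $\inf_{\xi \in \Kc_0} F_\eps(\xi) > 1 - c$ and combining with $\Wc(\psi_n, \Kc_0) \to 0$ yields $\liminf_n \int J_{r,\eps}\, d\psi_n \ge 1 - c$, which, combined with the pointwise comparison, proves (a).

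For (b), Theorem~\ref{thm: chracterstic of K}(a) gives $\Kc_0 = \{\delta_\zero\}$, so Theorem~\ref{thm: convergence of psi to K0} yields $\psi_n \to \delta_\zero$ in $\Wc$. Applied to the continuous functional $I_r$ (Lemma~\ref{lem: continuity of I_r}), this forces $\tfrac{1}{n}\sum_{i<n} I_r(\rho_i) \to 0$ a.s. From~\eqref{eq: relation between I_r and the heaviest ball}, $\Ac_j^\eps(r) = \emptyset$ whenever $I_r(\rho_j) \le \eps V_d r^d$. Writing $c_d = V_d r^d$, $\bar a_n = \tfrac{1}{n}\sum_{j<n} I_r(\rho_j)$, and $v_i = \sup_{m \ge i} \bar a_m$ (nonincreasing with $v_i \to 0$ a.s.), I define
\[
\eps_i := \frac{v_i^{1/2}}{c_d} + \frac{1}{i}.
\]
Then $\eps_i \to 0$ a.s.; for each $j$, either $I_r(\rho_j) \le \eps_j c_d$ (so $\rho_j(\Ac_j^{\eps_j}(r)) = 0$), or $I_r(\rho_j) > v_j^{1/2} \ge v_n^{1/2}$ for all $j < n$. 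Markov's inequality then gives
\[
\frac{1}{n} \,\#\{j<n : I_r(\rho_j) > v_n^{1/2}\} \le \frac{\bar a_n}{v_n^{1/2}} \le v_n^{1/2} \to 0,
\]
hence $\tfrac{1}{n}\sum_{j<n} \rho_j(\Ac_j^{\eps_j}(r)) \le v_n^{1/2} \to 0$ a.s., as desired.

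The main obstacle is the upgrade from pointwise to uniform convergence in (a): the limit $\mu \mapsto \|\mu\|$ is only upper semicontinuous on the MV compactification $\Xct$ (mass can be lost at infinity in the compactification), so Dini's theorem cannot be applied to $J_{r,\eps}$ directly. The resolution is that after integrating against $\xi$ and restricting to $\Kc_0$ the limit reduces to the constant $1$, which is trivially continuous on $\Kc_0$, so Dini's theorem applies on this compact set; this is exactly the step where the identification $\Kc_0 \subset \{\xi : \xi(\|\mu\|=1) = 1\}$ from Theorem~\ref{thm: chracterstic of K}(b) is used.
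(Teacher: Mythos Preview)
Your proof of part~(a) is correct and follows the same route as the paper: the lower bound $\rho_i(\Ac_i^\eps(r)) \ge J_{r,\eps}(\rho_i)$, monotone convergence $J_{r,\eps}(\mu) \uparrow 1$ on $\{\|\mu\|=1\}$, Dini's theorem applied to $\Jc_{r,\eps}$ on the compact set $\Kc_0$, and then the convergence $\Wc(\psi_n,\Kc_0)\to 0$. Your remark about why Dini must be applied to $\Jc_{r,\eps}$ on $\Kc_0$ rather than to $J_{r,\eps}$ on $\Xct$ (because the pointwise limit $\mu\mapsto\|\mu\|$ is only upper semicontinuous) is exactly the point the paper exploits.

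For part~(b) you take a genuinely different and more elementary route. The paper establishes the reverse comparison $\rho_i(\Ac_i^\eps(r)) \le J_{2r,\eps'}(\rho_i)$ with $\eps'=\eps/2^{d+2}$, via the inequality $D_{2r,\mu}(u)\ge \mu(B_r(u))/(2^{d+1}V_d r^d)$, and then uses continuity of $\Jc_{2r,\eps'}$ together with $\psi_n\to\delta_{\zero}$ to obtain $\tfrac{1}{n}\sum_{i<n}\rho_i(\Ac_i^\eps(r))\to 0$ for each fixed $\eps$; the sequence $(\eps_i)$ is then extracted by a diagonal argument over $\eps=1/k$. You instead work only with $I_r$: from $\psi_n\to\delta_{\zero}$ and continuity of $I_r$ you get $\bar a_n\to 0$, observe that $\Ac_j^\eps(r)=\emptyset$ once $I_r(\rho_j)\le \eps V_d r^d$, and manufacture $\eps_i$ from the tail supremum $v_i=\sup_{m\ge i}\bar a_m$ via a Markov-type counting bound. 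This bypasses the auxiliary comparison through $J_{2r,\eps'}$ entirely. Both constructions produce an environment-dependent sequence $(\eps_i)$ (the paper's $N_k$ are random as well), so the two arguments are on equal footing in that respect. Your approach has the virtue of using only $I_r$, already needed elsewhere, and of giving an explicit formula for $\eps_i$; the paper's approach has the advantage of isolating the intermediate statement $\tfrac{1}{n}\sum_{i<n}\rho_i(\Ac_i^\eps(r))\to 0$ for every fixed $\eps>0$, which is of independent interest.
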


\begin{proof}
(a) Suppose $\beta > \beta_c$. Our goal is to show~\eqref{eq: equivalent form of asymptotic clustering}. 
To this end, for any $r, \eps > 0 $, let us define a functional $\Jc_{r, \eps}$ on $\Pc(\Xct)$ as
	\[ \Jc_{r, \epsilon} (\xi) := \int J_{r, \epsilon} (\mu) \xi(d\mu). \]
Notice that the continuity of $J_{r, \epsilon}$ passes on to the continuity of $\Jc_{r, \epsilon}$. We also observe that
	\[ J_{r, \epsilon} (\rho_i) 
	= \int_{\{x : D_{r, \rho_i}  (x) >\epsilon\}}  f_\epsilon \!\circ\! D_{r, \rho_i}  (x)  \rho_i (dx) 
	\leq \int_{\Ac_{i}^{\epsilon} (r)}  f_\epsilon \!\circ\! D_{r, \mu}  (x)  \rho_i (dx) 
	\leq  \rho_i ( \Ac_{i}^{\epsilon} (r) ).\]
Hence,
\beq\label{eq: lower bound of clustered mass}
	\frac{1}{n} \sum\limits_{i=0}^{n-1}\rho_i(\Ac_{i}^{\epsilon}) 
	\geq \frac{1}{n} \sum\limits_{i=0}^{n-1} J_{r, \epsilon} (\rho_i) =\Jc_{r, \epsilon} (\psi_{n}).
\eeq
On the other hand, for any $\mu \in \Xct$ with $\norm{\mu} = 1$, we have
$\lim\limits_{\epsilon \, \downarrow \, 0} J_{r, \epsilon} (\mu) = 1$
because
$\mu \big(\{u: D_{r, \mu} (u) > 0\}\big) =1$ and $J_{r, \epsilon} (\mu) \geq \mu \big(\{u: D_{r, \mu} (u) > 2\epsilon \}\big)$.
Using this along with Theorem~\ref{thm: chracterstic of K}~(2), we obtain that
for every $\xi \in \Kc_0$, 
	\[ \lim\limits_{\epsilon \, \downarrow \, 0} \Jc_{r, \epsilon} (\xi) =1. \]
Since each $\Jc_{r, \epsilon}$ is continuous on the compact set $\Kc_0$ and $(\Jc_{r, \epsilon})_{\eps>0}$ is monotone increasing as $\eps \downarrow 0$,
the convergence above is uniform in $\xi$ by the Dini's theorem.

Let now $c>0$ be given. By the uniform convergence of $(\Jc_{r, \epsilon})_{\eps>0}$, we can choose $\epsilon = \epsilon(r, c) > 0$ such that
	\[	\Jc_{r, \epsilon} (\xi) >1- c \quad \text{for all} \,\, \xi \in \Kc_0 \]
and, for such $\epsilon$,  we can also find $\delta > 0$ such that
\beq\label{eq: lower bound of J(zeta)}
	 \Wc(\zeta, \Kc_0) < \delta \quad \Rightarrow \quad \Jc_{r, \epsilon} (\zeta) > 1-c.
\eeq
Combining Theorem~\ref{thm: convergence of psi to K0}, \eqref{eq: lower bound of clustered mass}, and \eqref{eq: lower bound of J(zeta)}, we complete the proof of (a).

(b)  Suppose $\beta \leq \beta_c$ and let $r>0$, $\eps >0$ be given. We claim that
\beq \label{eq: no clustering}
	\lim\limits_{n \rightarrow \infty} \frac{1}{n} \sum\limits_{i=0}^{n-1}\rho_i( \Ac_{i}^{\epsilon}(r) ) = 0.
\eeq
To see this, we observe that for any $\mu \in \Xct$ and any $u \in \Ac_\mu ^{\eps} (r) = \{ v \in \Nb \times \Rb^d : \mu (B_r (v)) > \epsilon V_d r^d\}$,
	\[ D_{2r, \mu} (u) = \frac{1}{V_d (2r)^d} \int \Big(1- \frac{\euc{u-v}}{2r}\Big)^+ \mu(dv) 
	\geq \frac{1}{V_d (2r)^d} \int \frac{ \one_{B_r(u)}(v)}{2} \mu(dv)
	= \frac{ \mu(B_r(u))}{2^{d+1} V_d r^d} > 2\eps', \]
where $\eps' = \eps/2^{d+2}$. Therefore, we have
\begin{align*}
	J_{2r, \eps'} (\mu) \geq \int_{\{D_{2r, \mu} (u) > 2\eps'\}} f_{\eps'} \!\circ D_{2r, \mu} (u) \mu(du)
	= \mu \big(\{D_{2r, \mu} (u) > 2\eps' \}\big) \geq \mu(\Ac_{\mu}^{\eps}(r)),
\end{align*}
which implies that
	\[ \frac{1}{n} \sum\limits_{i=0}^{n-1}\rho_i(\Ac_{i}^{\epsilon}(r)) 
	\leq \frac{1}{n} \sum\limits_{i=0}^{n-1} J_{2r, \eps'} (\rho_i) =\Jc_{2r, \eps'} (\psi_{n}). \]
By Theorem~\ref{thm: chracterstic of K}~(a) and the continuity of $\Jc_{2r, \eps'}$, we conclude
	\[ \lim\limits_{n \rightarrow \infty} \frac{1}{n} \sum\limits_{i=0}^{n-1}\rho_i(\Ac_{i}^{\epsilon}(r)) \leq \Jc_{2r, \eps'} (\delta_{\mathbf{0}}) =0.  \]

Fix $r>0$ and let us now construct a sequence $(\eps_i)_{i \geq 0}$ tending to $0$ and satisfying \eqref{eq: endpoint is not asymptotically clustered}.
By \eqref{eq: no clustering}, we see that for each $k \in \Nb$, there is $N_k$ such that
	\[ \frac{1}{n} \sum\limits_{i=0}^{n-1}\rho_i(\Ac_{i}^{1/k}(r)) <\frac{1}{k} \quad \text{for all}\,\, n \geq N_k. \]
We may assume $N_{k+1} > N_k$ for all $k$. Set $\eps_i = 1$ for $i < N_1$ and $\eps_i = 1/k$ for $N_{k} \leq i <N_{k+1}$.
Then, we see that for each $n \in \Nb$, there is $k = k(n)$ such that $N_k \leq n < N_{k+1}$ and hence
	\[ \frac{1}{n} \sum\limits_{i=0}^{n-1}\rho_i(\Ac_{i}^{\eps_i}(r)) 
	\leq \frac{1}{n} \sum\limits_{i=0}^{n-1}\rho_i(\Ac_{i}^{1/k}(r)) < \frac{1}{k}. \]
Since $\lim\limits_{n \rightarrow \infty} k(n) =\infty$, letting $n \rightarrow \infty$ on the both side above completes the proof of (b).
\end{proof}

\subsection{Asymptotic local clustering of the endpoint distribution} \label{subsection: asymptotic thick density}
In this section, we prove the following reformulation of relation~\eqref{eq: asymptotic local clustering, intro} in Theorem~\ref{thm: asymptotic clursterization, intro}:
\begin{thm}\label{thm: asymptotic local clustering}
If $\beta > \beta_c$, then $(\rho_i)_{i \geq 0}$ is asymptotically locally clustered.
In particular, for these values of $\beta$, clustering of densities (see Remark~\ref{rem:clustering-densities}) holds if the reference random walk step distribution
$\ker(dx)$ is absolutely continuous.
\end{thm}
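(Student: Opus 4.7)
The clustering-of-densities claim reduces to the asymptotic local clustering claim via Remark~\ref{rem:clustering-densities}: absolute continuity of $\lambda$ implies that of $\rho_n*\lambda$ and therefore of $\rho_{n+1}$ for every $n\ge 0$ (while $\rho_0=\delta_0$ satisfies $\rho_0(\Ac_0^\eps)=1$ trivially). I therefore focus on asymptotic local clustering.

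The overall scheme mirrors the proof of Theorem~\ref{thm: asymptotic clustering}(a). A direct adaptation of \cite[Lemma~6.2]{BC16} reduces the problem to showing: for every $c>0$ there exists a single $\eps=\eps(c)>0$ with
\[
\liminf_{n\to\infty}\frac{1}{n}\sum_{i=0}^{n-1}\rho_i(\Ac_i^\eps)\ \ge\ 1-c\quad \mathbf{P}\text{-a.s.}
\]
The bridge from the radius-dependent sets controlled by Theorem~\ref{thm: asymptotic clustering}(a) to the radius-free set $\Ac_i^\eps$ is the dyadic comparison
\[
\bigcap_{k\ge K}\Ac_\mu^\eps\bigl(2^{-k}\bigr)\ \subset\ \Ac_\mu^{\eps/2^{d+1}},
\]
valid for every Borel probability measure $\mu$ on $\Rb^d$ and $K\in\Nb$: if $u$ lies in the left-hand side and $r\in(2^{-k-1},2^{-k}]$ with $k\ge K$, then $\mu(B_r(u))\ge \mu(B_{2^{-k-1}}(u))>\eps V_d 2^{-(k+1)d}$, hence $\mu(B_r(u))/V_d r^d>2^{-d}\eps$ for every $r\in(0,2^{-K}]$, which gives $\liminf_{r\downarrow 0}\mu(B_r(u))/V_dr^d\ge 2^{-d}\eps>\eps/2^{d+1}$. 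A union bound then reduces the problem to producing one $\eps>0$ and one $K$ such that, almost surely,
\[
\limsup_n\Bigl(1-\tfrac{1}{n}\sum_{i=0}^{n-1}\rho_i\bigl(\Ac_i^\eps(2^{-k})\bigr)\Bigr)\ \le\ c\cdot 2^{-k-1}\qquad\text{for every }k\ge K.
\]

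Securing this \emph{uniform-in-$k$} strengthening of Theorem~\ref{thm: asymptotic clustering}(a) with a single $\eps$ is the main technical obstacle. My plan is to refine the Dini step in the proof of Theorem~\ref{thm: asymptotic clustering}(a) by establishing uniform convergence $\Jc_{r,\eps}(\xi)\to 1$ as $\eps\downarrow 0$ jointly in $(r,\xi)\in(0,r_0]\times\Kc_0$ for some fixed $r_0>0$. The essential analytic input is the Lebesgue--Besicovitch differentiation theorem, which guarantees that for every Borel probability $\mu$ on $\Rb^d$ and $\mu$-a.e.\ $u$,
\[
\lim_{r\downarrow 0}D_{r,\mu}(u)\in(0,+\infty],
\]
equal to $f_\mu(u)/(d+1)$ on the absolutely continuous part of $\mu$ and to $+\infty$ on the singular part. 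Combined with dominated convergence this yields $\lim_{r\downarrow 0}J_{r,\eps}(\mu)\to\|\mu\|$ as $\eps\downarrow 0$ for every fixed $\mu$ with $\|\mu\|=1$. Promoting the resulting pointwise convergence $\inf_{r\in(0,r_0]}\Jc_{r,\eps}(\xi)\to 1$ to uniform convergence over $\xi\in\Kc_0$ is the most delicate step; my plan is a local-to-global compactness argument exploiting joint continuity of $(r,\xi)\mapsto \Jc_{r,\eps}(\xi)$ on $(0,\infty)\times\Pc(\Xct)$ together with monotonicity in $\eps$: for each $\xi_0\in\Kc_0$ one first selects $r_0(\xi_0)$, $\eps_0(\xi_0)$, and a $\Wc$-neighborhood $V_{\xi_0}$ on which $\Jc_{r,\eps}(\xi)>1-c/4$ for every $r\le r_0(\xi_0)$, $\eps\le\eps_0(\xi_0)$, $\xi\in V_{\xi_0}$; a finite subcover of $\Kc_0$ then yields a common $r_0$ and $\eps$. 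Once this uniform control is in place, the remainder of the argument follows the template of the proof of Theorem~\ref{thm: asymptotic clustering}(a): the uniform estimate combined with Theorem~\ref{thm: convergence of psi to K0} and continuity of $\Jc_{r,\eps}$ provides the desired almost-sure lower bound on $\tfrac1n\sum_i\rho_i(\Ac_i^\eps(2^{-k}))$ uniformly in $k\ge K$, which when combined with the dyadic comparison above produces the single $\eps$ required.
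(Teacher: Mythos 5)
Your reduction to a single $\eps$ per $c$, and your dyadic inclusion $\bigcap_{k\ge K}\Ac_\mu^{\eps}(2^{-k})\subset\Ac_\mu^{\eps/2^{d+1}}$, are both correct, but the route you then take diverges from the paper's and the step you yourself flag as the ``main technical obstacle'' does not go through as proposed. The domain $(0,r_0]\times\Kc_0$ over which you want uniform convergence of $\Jc_{r,\eps}$ is not compact in the $r$ variable, and this is not a cosmetic problem: the modulus of continuity of $\Jc_{r,\eps}$ in $\xi$ degenerates as $r\downarrow0$ (in the paper's continuity proof the admissible $\delta_1$ scales like $\eps\,\delta_2 V_d r^{d}(r\wedge1)$), so for a fixed $\xi_0\in\Kc_0$ there is no single $\Wc$-neighborhood $V_{\xi_0}$ on which $\Jc_{r,\eps}(\xi)>1-c/4$ holds for \emph{all} $r\le r_0(\xi_0)$; the function $\xi\mapsto\inf_{0<r\le r_0}\Jc_{r,\eps}(\xi)$ is only an infimum of continuous functions, hence upper semicontinuous, which is the wrong semicontinuity for the lower bound you need, and Dini's theorem is not applicable. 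Even the pointwise input is delicate: for a fixed $\mu$ the Lebesgue differentiation theorem gives $r(u)$ depending on $u$, so passing to a single $\eps$ valid for all $r\le r_0$ already requires an Egorov-type step, and the resulting $r_0,\eps$ depend on $\mu$ in a way you cannot control over $\Kc_0$ by continuity. A second, smaller gap is the interchange of $\limsup_n$ with the infinite sum $\sum_{k\ge K}$ in your union bound: $\limsup_n\sum_k D_n(k)\le\sum_k\limsup_n D_n(k)$ is false in general and would need uniformity in $n$ across $k$.

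The paper avoids all uniformity in $r$ by a different mechanism: a purely measure-theoretic comparison (Proposition~\ref{prop: comparison between atomicity set  and density set}) showing that for \emph{any} $\alpha\in\Mc_1$, if $\alpha(\Ac_\alpha^{\eps}(r))>1-c$ for the \emph{single fixed} radius $r=1$, then $\alpha(\Ac_\alpha^{\eps_1})>1-2c$ with an explicit $\eps_1=\eps_1(\eps,c,r,d)$. The proof combines the differentiation lemma \eqref{ineq: upper bound for alpha measure in low density region} (inside each ball $B_r(x)$ with $x\in\Ac_\alpha^\eps(r)$, the set of points of small lower density carries at most $\eps_1 V_dr^d$ of mass, a definite fraction of $\alpha(B_r(x))>\eps V_dr^d$) with the Besicovitch covering theorem to control the overlap of these balls. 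Applied with $r=1$ to each $\rho_i$, this converts Theorem~\ref{thm: asymptotic clustering}(a) at level $1$ directly into asymptotic local clustering, with no limit $r\downarrow0$ anywhere in the probabilistic part of the argument. I would encourage you to look for a deterministic, per-measure statement of this kind rather than trying to push the functional-analytic machinery down to $r=0$.
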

Before we prove this, we recall the Besicovitch covering theorem and its related lemma which will be used later.
\begin{thm}{\rm\textbf{(Besicovitch covering theorem)}}\label{thm: Besicovitch covering}
There is a constant $N_d$, depending only on the dimension $d$, with the following property:\\
Let $\mathcal{F} = \{B_{r_\sigma}(x_\sigma):\sigma\in\Ic \}$ be any collection of open balls in $\Rb^d$ with $\sup\{ r_\sigma :  \sigma\in\Ic \} < \infty$.
Let us denote $A = \{x_\sigma:\sigma\in\Ic \}$. Then, there is a countable subcollection $\mathcal{G}$ of $\mathcal{F}$ such that $\mathcal{G}$ is a cover of $A$ and
every $x \in \bigcup\limits_{B \in \mathcal{G} } B$ belongs to at most $N_d$ different balls from the subcover $\mathcal{G}$.
\end{thm}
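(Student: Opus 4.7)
The plan is to construct $\mathcal{G}$ by a greedy transfinite selection that enforces two invariants on the sequence of chosen balls: (i) each newly selected center lies outside all previously selected balls, and (ii) the radius at each step is at least half of the supremum of radii over centers still uncovered. I would first normalize by setting $R := \sup\{r_\sigma : \sigma \in \Ic\} < \infty$, then, using Zorn's lemma or transfinite recursion on $\Ic$, produce a well-ordered family $\{B_{\rho_\alpha}(y_\alpha)\}_{\alpha < \kappa}$ in $\mathcal{F}$ satisfying (i) and (ii), halting when every center in $A$ has been covered by some $B_{\rho_\beta}(y_\beta)$ with $\beta < \alpha$.

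Next I would verify that $\mathcal{G} := \{B_{\rho_\alpha}(y_\alpha)\}$ is countable and covers $A$. Countability comes from a volume packing argument: invariant (i) gives $|y_\alpha - y_\beta| \geq \min(\rho_\alpha, \rho_\beta)$ for $\alpha \neq \beta$, and invariant (ii) forces radii in any given ``generation'' $\rho \in (2^{-k-1}R, 2^{-k}R]$ to be comparable up to factor $2$, so the concentric balls $B_{\rho_\alpha/3}(y_\alpha)$ within that generation are pairwise disjoint; only countably many such open balls of radius bounded below by $2^{-k-2}R/3$ fit into any bounded region, and $A$ itself is $\sigma$-bounded because it is covered by $\mathcal{F}$. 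The covering property of $\mathcal{G}$ follows directly from the termination condition of the recursion: any $x_\sigma \in A$ not yet covered at stage $\alpha$ would remain a legitimate candidate to continue the construction with a ball of radius at least $r_\sigma/2$, contradicting termination.

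The main obstacle is the multiplicity bound. Fix $z \in \bigcup_{B \in \mathcal{G}} B$ and consider all chosen balls $B_{\rho_{\alpha_1}}(y_{\alpha_1}), \ldots, B_{\rho_{\alpha_N}}(y_{\alpha_N})$ containing $z$, indexed so that $\alpha_1 < \cdots < \alpha_N$. Invariant (i) yields $|y_{\alpha_j} - y_{\alpha_i}| \geq \rho_{\alpha_i}$ for $i < j$, and invariant (ii) together with the ordering yields $\rho_{\alpha_i} \geq \tfrac{1}{2}\rho_{\alpha_j}$ for $i < j$ provided the two indices lie within a single generation (and up to a bounded constant across adjacent generations). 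I would split the indices into two regimes relative to a threshold $\tau > 0$ to be fixed:
\begin{itemize}
\item the ``near'' regime $|y_{\alpha_j} - z| \leq \tau \rho_{\alpha_j}$, where the pairwise separation $|y_{\alpha_j} - y_{\alpha_i}| \geq \rho_{\alpha_i}$ forces the centers' concentric balls $B_{\rho_{\alpha_j}/4}(y_{\alpha_j})$ to be disjoint while all sitting inside a single larger ball centered at $z$, giving a volume upper bound on the count;
\item the ``far'' regime $|y_{\alpha_j} - z| > \tau \rho_{\alpha_j}$, where elementary trigonometry on the triangle $y_{\alpha_i} z y_{\alpha_j}$ (using the three relations above) produces a universal lower bound $\theta_d > 0$ on the angle $\angle(y_{\alpha_i} - z, y_{\alpha_j} - z)$, so the unit vectors $(y_{\alpha_j} - z)/|y_{\alpha_j} - z|$ are $\theta_d$-separated on $S^{d-1}$, and their count is bounded by a covering number of the sphere.
\end{itemize}
Summing the two regime bounds defines a constant $N_d$ depending only on $d$. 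The delicate step is the angular argument: one must extract $\theta_d$ from the law of cosines using $|y_{\alpha_j} - y_{\alpha_i}|^2 \geq \rho_{\alpha_i}^2$ together with $|y_{\alpha_k} - z| < \rho_{\alpha_k}$ for $k = i,j$ and $\rho_{\alpha_i} \geq c\rho_{\alpha_j}$, with careful attention to the boundary cases where centers are nearly antipodal to $z$, which is precisely where the split into near/far regimes is needed.
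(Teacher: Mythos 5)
This statement is quoted in the paper as a classical result (the paper immediately cites the literature for bounds on $N_d$ and for the differentiation lemma derived from it), so there is no in-paper proof to compare against; I am assessing your argument on its own terms. Your selection scheme, the countability and covering arguments, and the ``near'' regime are all sound. In fact the near regime is easier than you make it: for any threshold $\tau<1/3$ it contains at most one index, since two indices $k<l$ with $|y_{\alpha_k}-z|\le\tau\rho_{\alpha_k}$ and $|y_{\alpha_l}-z|\le\tau\rho_{\alpha_l}$ would give $\rho_{\alpha_k}\le|y_{\alpha_k}-y_{\alpha_l}|\le\tau(\rho_{\alpha_k}+\rho_{\alpha_l})\le 3\tau\rho_{\alpha_k}<\rho_{\alpha_k}$, a contradiction.

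The genuine gap is in the far regime: the claimed angular separation at the vertex $z$ is false under the hypotheses you have. Take $z=0$, $y_1=0.99\,u$, $\rho_1=1$, $y_2=1.99\,u$, $\rho_2=2$ for a unit vector $u$. Then $z$ lies in both balls, $|y_1-y_2|=1\ge\rho_1$ (the later center is outside the earlier ball, consistent with your invariant (i)), $\rho_1\ge\rho_2/2$ (consistent with invariant (ii)), and both centers satisfy $|y_j-z|>\tau\rho_j$ for every $\tau<0.99$; yet the angle between $y_1-z$ and $y_2-z$ is exactly $0$. So the unit vectors $(y_{\alpha_j}-z)/|y_{\alpha_j}-z|$ need not be separated on $S^{d-1}$, and no choice of $\tau$ in your near/far dichotomy repairs this: the law of cosines using only $|y_{\alpha_j}-y_{\alpha_i}|\ge\rho_{\alpha_i}$, $|y_{\alpha_k}-z|<\rho_{\alpha_k}$ and $\rho_{\alpha_i}\ge\rho_{\alpha_j}/2$ cannot yield $\cos\theta\le 1-\delta$. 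The standard repair is structurally different: fix the \emph{last} selected ball $B_{k}$ containing $z$ and compare all earlier balls through $z$ to it; those with $\rho_j\le 3\rho_k$ are handled by volume packing (their radii are then comparable to $\rho_k$ within fixed factors, since also $\rho_j\ge\rho_k/2$); for those with $\rho_j>3\rho_k$ one proves an angle bound at the vertex $y_k$, not at $z$, by showing that a small angle $\angle y_i\,y_k\,y_j$ forces the later of the two centers $y_i,y_j$ to lie inside the earlier ball, contradicting the selection rule. Your far-regime step needs to be replaced by an argument of this type before the multiplicity bound can be considered proved.
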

We remark that in \cite{FL94}, the lower bound and the upper bound for $N_d$(\textit{Besicovitch constant}) were provided:
	\[ (2.065 + o(1))^d \leq N_d \leq (2.691 + o(1))^d. \]
We now state a lemma which is based on the Besicovitch covering theorem.
\begin{lem}[Lemma 1.2. in \cite{EG15}]
Let $\alpha$, $\gamma$ be Radon measures on $\Rb^d$ and define
	\[ \underline{D}_{\mu} \nu (x) = 
	\begin{cases}
	\liminf\limits_{r \downarrow 0} \frac{\alpha(B_r(x))}{\gamma(B_r(x))} & \text{if} \,\, \gamma(B_r(x)) > 0 \,\, \text{for all} \,\,  r>0,\\
	+ \infty & otherwise.
	\end{cases} \]
Let $\epsilon>0$ be given. Then, for any Borel $A \subset \{ x \in \Rb^d : \underline{D}_{\gamma} \alpha (x) \leq \epsilon \}$, we have
	$\alpha(A)\leq \epsilon \gamma(A)$.
\end{lem}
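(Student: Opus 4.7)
The plan is to prove this statement via the Besicovitch covering theorem (already stated in the paper as Theorem~\ref{thm: Besicovitch covering}) combined with a Vitali-type exhaustion argument for the measure $\alpha$. The overall shape is: cover $A$ by small balls on each of which $\alpha \le (\epsilon+\delta)\gamma$, extract a pairwise-disjoint subfamily that is $\alpha$-almost-everywhere a cover of $A$, and then use disjointness to pass from a local bound to the global bound.

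First I would reduce to the case of bounded $A$ by monotone convergence and fix $\delta>0$. Since $\gamma$ is Radon there is an open set $U\supset A$ with $\gamma(U)\le \gamma(A)+\delta$. For each $x\in A$ the hypothesis $\underline{D}_\gamma\alpha(x)\le \epsilon$ forces $\gamma(B_r(x))>0$ for every $r>0$ and supplies arbitrarily small radii $r$ with
\[
\alpha(B_r(x)) \le (\epsilon+\delta)\,\gamma(B_r(x)).
\]
Shrinking $r$ further if needed so that $B_r(x)\subset U$, we obtain a fine cover $\mathcal{F}$ of $A$ by closed balls on which the desired ratio inequality holds pointwise.

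Next I would apply the Vitali covering theorem for the Radon measure $\alpha$ to extract a countable, pairwise disjoint subfamily $\{B_i\}\subset\mathcal{F}$ with $\alpha(A\setminus\bigcup_i B_i)=0$. Then disjointness together with $\bigcup_i B_i\subset U$ gives
\[
\alpha(A) \;=\; \alpha\Big(A\cap\bigcup_i B_i\Big) \;\le\; \sum_i \alpha(B_i) \;\le\; (\epsilon+\delta)\sum_i \gamma(B_i) \;\le\; (\epsilon+\delta)\,\gamma(U) \;\le\; (\epsilon+\delta)\bigl(\gamma(A)+\delta\bigr),
\]
and letting $\delta\downarrow 0$ yields $\alpha(A)\le \epsilon\,\gamma(A)$.

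The main obstacle is that the Vitali covering theorem for $\alpha$ is not one of the tools already invoked in the excerpt; it has to be derived from Theorem~\ref{thm: Besicovitch covering}. If I had to prove it in line, I would argue as follows: Theorem~\ref{thm: Besicovitch covering} produces $N_d$ subcollections, each a disjoint cover of a portion of $A$, whose union still covers $A$; at least one of them carries $\alpha$-mass at least $\alpha(A)/N_d$, from which a finite subcollection already accounts for a fixed positive fraction $\theta=\theta(N_d)$ of $\alpha(A)$. Remove the chosen balls and their closures from $A$; the remaining points still admit a fine cover by balls from $\mathcal{F}$ that avoid the chosen ones (by shrinking radii), so the same selection can be iterated, reducing the residual $\alpha$-mass by a factor $1-\theta$ at each step. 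The resulting countable disjoint family $\{B_i\}$ then satisfies $\alpha(A\setminus\bigcup_i B_i)=0$, which is the Vitali statement needed above. A minor technical point to check is that the selection at each stage can genuinely be restricted to balls contained in $U$, which is guaranteed because $U$ is open and the fine-cover property allows arbitrarily small radii.
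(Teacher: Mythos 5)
This lemma is quoted from Evans--Gariepy and the paper gives no proof of its own, so there is nothing internal to compare against; your argument is, in substance, the standard proof from that reference, and it is correct. The reduction to bounded $A$, the outer-regular open neighborhood $U$ of $A$ with $\gamma(U)\le\gamma(A)+\delta$, the fine cover by balls on which $\alpha\le(\eps+\delta)\gamma$, the Vitali-type disjoint exhaustion with $\alpha(A\setminus\bigcup_i B_i)=0$, and the final summation using disjointness and $\bigcup_i B_i\subset U$ all fit together as you describe. The one point to be careful about is your in-line derivation of the Vitali property: you invoke the form of the Besicovitch theorem in which the selected balls split into $N_d$ pairwise disjoint subfamilies whose union covers $A$, whereas the version stated in the paper (Theorem~\ref{thm: Besicovitch covering}) only asserts bounded overlap of a single countable subcover. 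The bounded-overlap form does not trivially yield the disjoint-decomposition form (a large ball can meet unboundedly many small disjoint balls, so the intersection graph need not have bounded chromatic number in any naive sense), and using bounded overlap directly in your final estimate would cost an extra factor $N_d$. Since the decomposition version is itself the standard statement in Evans--Gariepy, this is a citation mismatch rather than a mathematical error, but if you want the argument to rest only on the theorem as stated in the paper you would need to supply that step. A second, purely cosmetic caveat is the passage between open balls (used in the definition of $\underline{D}_{\gamma}\alpha$) and closed balls (used in the Vitali selection); this is handled by noting that all but countably many radii give $\alpha$- and $\gamma$-null spheres, or by running the whole selection with open balls.
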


Similarly to $\Ac_i ^\eps (r)$ and $\Ac_i ^\eps$, let us denote
\[ \Ac_\alpha ^{\epsilon} (r) = \{ x \in \Rb^d : \alpha (B_r (x)) > \epsilon V_d r^d\}, \quad 
\Ac_\alpha ^{\epsilon} = \{ x \in \Rb^d : \liminf\limits_{r \downarrow 0} \frac{\alpha \big(B_r (x)\big)}{V_d r^d} > \epsilon\}\]
for any $\alpha \in \Mc_1$, $\eps>0$ and $r>0$.
By substituting $\gamma = \mathfrak{m}$ (Leb esgue measure on $\Rb^d$) in the lemma above, we obtain
\beq\label{ineq: upper bound for alpha measure in low density region}
	\alpha (A) \leq \epsilon \mathfrak{m} (A), \quad  \forall \text{\rm\ Borel}\  A \subset (\Ac_{\alpha} ^{\epsilon})^c.
\eeq

\begin{prop}\label{prop: comparison between atomicity set  and density set}
Let $\epsilon, c, r >0$ be given and let us assume that $\alpha \in \Mc_{1}$ satisfies
\beq\label{eq: A_alpha_is_heavy}
	\alpha(\Ac_\alpha ^{\epsilon}(r)) > 1-c.
\eeq
Then, there is $\epsilon_1 = \epsilon_1 (\epsilon, c, r, d)>0$, independent of $\alpha$, such that 
	\[ \alpha (\Ac_{\alpha} ^{\epsilon_1} ) > 1-2c. \]
\end{prop}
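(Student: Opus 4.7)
The plan is to compare the two sets through the key inequality~\eqref{ineq: upper bound for alpha measure in low density region}, which says that on $(\Ac_\alpha^{\epsilon_1})^c$ the measure $\alpha$ is dominated by $\epsilon_1 \mathfrak{m}$. So if we can control the Lebesgue measure of the ``bad'' part of $\Ac_\alpha^{\epsilon}(r)$ that still has low lower density, we will control its $\alpha$-mass by choosing $\epsilon_1$ small.

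More precisely, set $B := \Ac_\alpha^{\epsilon}(r) \cap (\Ac_\alpha^{\epsilon_1})^c$, a Borel set. It suffices to show that $\alpha(B) < c$, because then
\[
\alpha\bigl((\Ac_\alpha^{\epsilon_1})^c\bigr) \leq \alpha(B) + \alpha\bigl((\Ac_\alpha^{\epsilon}(r))^c\bigr) < c + c = 2c,
\]
which is the desired conclusion. By~\eqref{ineq: upper bound for alpha measure in low density region}, $\alpha(B) \le \epsilon_1 \mathfrak{m}(B)$, so the task reduces to an $\alpha$-independent bound on $\mathfrak{m}(B)$.

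The main step is to bound $\mathfrak{m}(B)$ using the Besicovitch covering theorem. Each $x \in B$ satisfies $\alpha(B_r(x)) > \epsilon V_d r^d$. Applying Theorem~\ref{thm: Besicovitch covering} to the family $\{B_r(x)\}_{x \in B}$, we extract a countable subfamily $\{B_r(x_i)\}_{i}$ that covers $B$ with overlap multiplicity at most $N_d$. Then
\[
\sum_i \alpha\bigl(B_r(x_i)\bigr) \le N_d \, \alpha\Bigl(\bigcup_i B_r(x_i)\Bigr) \le N_d,
\]
while each term on the left is strictly greater than $\epsilon V_d r^d$. Hence the number of balls is at most $N_d/(\epsilon V_d r^d)$, and
\[
\mathfrak{m}(B) \le \sum_i \mathfrak{m}\bigl(B_r(x_i)\bigr) \le \frac{N_d}{\epsilon V_d r^d} \cdot V_d r^d = \frac{N_d}{\epsilon}.
\]
Choosing $\epsilon_1 := c \epsilon / N_d$ gives $\alpha(B) \le \epsilon_1 \mathfrak{m}(B) \le c$, completing the argument.

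The only technical subtlety I foresee is the Borel measurability of $B$, which follows from the fact that $x \mapsto \alpha(B_r(x))$ is lower semicontinuous (so $\Ac_\alpha^\epsilon(r)$ is open) and $x \mapsto \liminf_{s \downarrow 0} \alpha(B_s(x))/(V_d s^d)$ is Borel measurable (a countable $\liminf$ of lower semicontinuous functions); this ensures we may legitimately apply~\eqref{ineq: upper bound for alpha measure in low density region}. Everything else is a direct application of Besicovitch together with the density bound, and the constant $\epsilon_1$ depends only on $\epsilon, c, r, d$ as required (in fact it does not even depend on $r$).
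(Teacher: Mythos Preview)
Your proof is correct. Both your argument and the paper's rely on the same two ingredients—the density bound~\eqref{ineq: upper bound for alpha measure in low density region} and the Besicovitch covering theorem—but you combine them in a different order. The paper first applies~\eqref{ineq: upper bound for alpha measure in low density region} \emph{locally} inside each ball $B_r(x)$, $x\in\Ac_\alpha^\epsilon(r)$, to show that $\alpha\bigl((\Ac_\alpha^{\epsilon_1})^c\cap B_r(x)\bigr)\le (1-t)\,\alpha(B_r(x))$, and then globalizes via Besicovitch to bound $\alpha\bigl((\Ac_\alpha^{\epsilon_1})^c\cap\Ac_\alpha^\epsilon(r)\bigr)$ by $N_d(1-t)$; choosing $t=1-c/N_d$ gives $\epsilon_1=c\epsilon/(V_dN_dr^d)$. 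You instead use Besicovitch first to count the covering balls and bound $\mathfrak{m}(B)\le N_d/\epsilon$, and then apply~\eqref{ineq: upper bound for alpha measure in low density region} once globally. Your route is a little more direct and, as you observe, yields $\epsilon_1=c\epsilon/N_d$, which is independent of $r$ (the paper's $\epsilon_1$ carries an extra factor $1/(V_dr^d)$). Either organization proves the proposition; yours has the minor bonus of the $r$-free constant.
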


\begin{proof}
Let $\eps >0$ be given. For any $ t \in(0,1)$, let us set $s=(1-t)/(V_dr^d)$ and 
\beq\label{eq: choice of eps_1}
	\epsilon_1=s\eps = \frac{1-t}{V_d r^d}\eps.
\eeq
We will determine the value of $t$ later.

Let $x\in \Ac_\alpha^{\eps}(r)$ and suppose $\alpha(\Ac_\alpha^{\epsilon_1} \cap B_r(x))\le t \alpha(B_r(x)) $. Then, we see that
\begin{align*}
	\alpha(B_r(x))&= \alpha(\Ac_\alpha^{\epsilon_1} \cap B_r(x)) + \alpha((\Ac_\alpha^{\epsilon_1})^c \cap B_r(x))
 	\\& \le t \alpha(B_r(x)) +  \eps_1 \mathfrak{m}((A_\alpha^{\epsilon_1})^c \cap B_r(x)) \le t \alpha(B_r(x)) + s\eps V_d r^d.
\end{align*}
In the first inequality above, we used \eqref{ineq: upper bound for alpha measure in low density region}.
By~\eqref{eq: choice of eps_1}, we have
	\[	\alpha(B_r(x))\le \frac{s\eps  V_d r^d}{1-t} = \eps, \] 
which contradicts $x\in \Ac_\alpha^{\eps}(r)$. Therefore, we obtain
\[
\alpha(\Ac_\alpha^{\eps_1} \cap B_r(x))> t \alpha(B_r(x)) \]
or, equivalently,
\begin{equation}
\label{eq: intersection-of D_alpha-with-B_r_x-ratio}
 \alpha((\Ac_\alpha^{\eps_1})^c \cap B_r(x))  \le  (1-t)\alpha(B_r(x)).
\end{equation}

Let us now apply Theorem~\ref{thm: Besicovitch covering} with $\mathcal{F} = \{B_r(x) : x \in \Ac_\alpha ^{\eps}(r)\}$ and $A= \Ac_\alpha ^{\eps}(r)$.
Then, we can find a countable subset $\Ac \subset \Ac_\alpha ^{\eps}(r)$ such that $\mathcal{G}=\{B_r(x)$ : $x\in\Ac \}$ is a cover of $\Ac_\alpha ^{\eps}(r)$ and
every $x \in \bigcup\limits_{y \in \Ac} B_r (y)$ is covered by at most $N_d$ balls from $\mathcal{G}$.
Therefore, due to~\eqref{eq: intersection-of D_alpha-with-B_r_x-ratio}, we have
\begin{align*}
	\alpha\left((\Ac_\alpha^{\eps_1})^c\cap \Ac_\alpha^{\eps}(r) \right) 
	&\le \alpha\left((\Ac_\alpha^{\eps_1})^c\cap \bigcup_{x\in \Ac} B_r(x)\right)
	\le  \sum_{x\in\Ac} \alpha((\Ac_\alpha^{\eps_1})^c \cap B_r(x)) 
	\le  (1-t)\sum_{x\in\Ac} \alpha(B_r(x))
	\\&\le N_d (1-t) \alpha\left(\bigcup_{x\in \Ac} B_r(x)\right)
	\le N_d(1-t).\numberthis\label{ineq: A_alpha c cap A_alpha,1  is small}
\end{align*}
Therefore, due to~\eqref{ineq: A_alpha c cap A_alpha,1  is small} and \eqref{eq: A_alpha_is_heavy},
\begin{align*}
	\alpha\left((\Ac_\alpha^{\eps_1})^c\right) 
	&= \alpha\left((\Ac_\alpha^{\eps_1})^c\cap \Ac_\alpha^{\eps}(r) \right)  + \alpha\left((\Ac_\alpha^{\eps_1})^c \cap (\Ac_\alpha^{\eps}(r))^c \right)
	\\& \le N_d(1-t)+\alpha((\Ac_\alpha^{\eps}(r))^c )
	< N_d(1-t)+c.
\end{align*}
Choosing $t = 1-\frac{c}{N_d}$ \Big($s=\frac{c}{V_d N_d r^d }$\Big) completes the proof.
\end{proof}

\begin{subproof}[Proof of Theorem~\ref{thm: asymptotic local clustering}]
Suppose $\beta > \beta_c$. Let $(\epsilon_i)_{i \geq 0}$ tending to $0$ be given. 
For any $c>0$, let us denote $s=\frac{c}{2  V_d N_d}$ and
	\[ F_c = \{ i \geq 0 : \rho_i(\Ac_{i} ^{\epsilon_i/s}(1)) > 1-c/2 \}, \quad F'_c = \{ i \geq 0 : \rho_i(\Ac_{i} ^{\epsilon_i}) > 1-c \}.\]
By Theorem~\ref{thm: asymptotic clustering} (a), $(\rho_i)_{i \geq 0}$ is asymptotically clustered at level $1$. This can be rewritten as
	\[ \lim\limits_{n \rightarrow \infty} \frac{1}{n} |F_{c} \cap [0, n-1]| = 1. \]
Since Proposition~\ref{prop: comparison between atomicity set  and density set} implies $F_c \subset F'_c$, the same relation holds for $F'_c$ and hence
	\[ 	\liminf\limits_{n\rightarrow \infty}  \frac{1}{n}  \sum_{i=0}^{n-1} \rho_i (\Ac_{i} ^{\epsilon_i}) \geq 1-c \quad \mathbf{P}\text{-a.s.}\]
Letting $c \downarrow 0$ completes the proof.
\end{subproof}

\section{Geometric localization} \label{sec: geometric localization}
Adapting the terminology from~\cite{BC16}, we say that the sequence $(\rho_n)_{n \ge 0}$ is \textit{geometrically localized with positive density} if for any $\delta >0$, 
there exist $K < \infty$ and $\theta > 0$ such that
\beq \label{def: geometric localization}
	\liminf\limits_{n \rightarrow \infty} \frac{1}{n} \sum\limits_{i=0}^{n-1} \one_{\{\rho_i \in \mathcal{G}_{\delta, K}\}} \geq \theta \quad \mathbf{P} \text{-}a.s.,
\eeq
where 
	\[ \mathcal{G}_{\delta, K} = \{\alpha \in \Mc_1 : \max\limits_{x \in \Rb^d} \alpha(B_K (x)) > 1 - \delta \}.\]
If $\theta$ can be taken equal to 1, then the sequence would be \textit{geometrically localized with full density}. 
A full density localization is an open question. In this section, we prove that
$(\rho_i)_{i\geq 0}$ is geometrically localized with positive density if and only if $\beta > \beta_c$.

\subsection{Useful functionals}
Given $\mu \in \Xct$, we choose its representative $\mu = (\alpha_i)_{i \in \Nb} \in \Xc$. 
To describe $\mathcal{G}_{\delta, K}$ in the language of $(\Xct, d)$, let us consider
	\[ W_\delta (\mu)  = \inf \{ r \geq 0 : I_r(\mu) > 1-\delta \}, \quad \mathcal{V}_{\delta, K} =\{ \mu \in \Xct : W_\delta (\mu) < K \}. \]
Using~\eqref{eq: relation between I_r and the heaviest ball} and the natural embedding of $\wt{\Mc}_1$ into $\Xct$, we obtain
\beq\label{eq: relation between set G and set V}
\mathcal{G}_{\delta, K} \subset \mathcal{V}_{\delta, K} \cap \{\mu \in \Xct : |S_\mu| =1 , \norm{\mu} = 1\} \subset \mathcal{G}_{\delta, K+1},
\eeq
where $S_\mu$ was defined in \eqref{eq: N-support}.
We also define functionals
	\[ G(\mu) = \max\limits_{i \in \Nb} \norm{\alpha_i}, \quad Q(\mu) = \sum\limits_{i \in \Nb} \frac{\norm{\alpha_i}}{1-\norm{\alpha_i}}.  \]
One can check that $ W_\delta, G$ and $Q$ are well-defined on $\Xct$.
\begin{prop}
\mbox{}
\begin{enumerate}[label=\rm(\alph*),nolistsep]
	\item $W_\delta$ is upper semi-continuous.
	\item $G$ is lower semi-continuous.
	\item $Q$ is lower semi-continuous.
\end{enumerate}
\end{prop}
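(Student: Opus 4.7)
My plan is to reduce each of the three semi-continuity claims to properties of $I_r$ (continuous by Lemma~\ref{lem: continuity of I_r}) or, for part~(c), to properties of the MV test-function functionals $\Lambda(f,\cdot)$, which are continuous on $(\Xct,d)$ by the equivalence of the MV topology and the $d$-metric established in Section~\ref{sec: MV}.

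For part~(a), the identity
\[
\{\mu\in\Xct:W_\delta(\mu)<c\}\;=\;\bigcup_{r\in[0,c)}\{\mu\in\Xct:I_r(\mu)>1-\delta\}
\]
exhibits each sublevel set $\{W_\delta<c\}$ as a union of open sets, so $\{W_\delta\ge c\}$ is closed and $W_\delta$ is upper semi-continuous. For part~(b), the function $r\mapsto f_r$ is pointwise non-decreasing with $f_r\uparrow 1$, so $r\mapsto I_r(\mu)$ is non-decreasing, and monotone convergence applied layer by layer gives $I_r(\mu)\uparrow G(\mu)$ as $r\to\infty$. Thus $G=\sup_{r\ge 0}I_r$ is a pointwise supremum of continuous functions, hence lower semi-continuous.

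For part~(c), I would expand $F(x)=x/(1-x)=\sum_{n\ge 1}x^n$ and use Tonelli to write
\[
Q(\mu)\;=\;\sum_{n\ge 1}M_n(\mu),\qquad M_n(\mu):=\sum_{i}\|\alpha_i\|^n.
\]
Since a countable sum of non-negative lower semi-continuous functions is lower semi-continuous, it suffices to show each $M_n$ is l.s.c. For $n\ge 2$ I would use the translation-invariant test function $f_r^{(n)}(x_1,\dots,x_n):=\prod_{j=2}^n f_r(x_j-x_1)\in\mathcal{F}_n$, and verify by direct computation and monotone convergence that
\[
\Lambda(f_r^{(n)},\mu)\;=\;\sum_i\int\Big(\int f_r(y-x)\,\alpha_i(dy)\Big)^{n-1}\alpha_i(dx)\;\uparrow\;M_n(\mu)\quad(r\to\infty),
\]
exhibiting $M_n$ as a supremum of continuous functionals, hence l.s.c.

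The main obstacle will be the $n=1$ case: the total mass $M_1(\mu)=\|\mu\|$ cannot be captured through any translation-invariant test function in $\mathcal{F}_k$. To handle it I would invoke the decomposition built in the proof of Theorem~\ref{prop:compactification}(b): given $\mu_n\to\mu=[\widetilde{\alpha_j}]_j$, a subsequence admits a decomposition $\mu_n=\sum_j\alpha_{n,j}+\beta_n$ with translations $a_{n,j}\in\Rb^d$ such that $\alpha_{n,j}*\delta_{a_{n,j}}\Rightarrow\alpha_j$ weakly (mass preserving) and $\beta_n\hookrightarrow\zero$ vaguely (mass only decreases). This yields $\|\mu_n\|\ge\sum_j\|\alpha_{n,j}\|\to\sum_j\|\alpha_j\|=\|\mu\|$ along the subsequence, and a standard subsequence argument extends the liminf bound to the full sequence, completing the proof of lower semi-continuity of $Q$.
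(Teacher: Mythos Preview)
Your arguments for parts~(a) and~(b) are correct and in fact cleaner than the paper's. The paper proves both by unpacking a near-optimal $(\mu,\nu)$-triple and estimating directly; you instead observe that $\{W_\delta<c\}=\bigcup_{r<c}\{I_r>1-\delta\}$ is open and that $G=\sup_{r\ge0}I_r$ is a pointwise supremum of continuous functions. Both reductions to Lemma~\ref{lem: continuity of I_r} are valid and buy you a shorter proof.

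For part~(c) your approach is genuinely different from the paper's and mostly works, but there is a real gap in the $M_1$ step. The expansion $Q=\sum_{n\ge1}M_n$ and the identification $M_n=\sup_r\Lambda(f_r^{(n)},\cdot)$ for $n\ge2$ are correct: $f_r^{(n)}(x_1,\dots,x_n)=\prod_{j\ge2}f_r(x_j-x_1)$ is indeed in $\mathcal{F}_n$ (it vanishes once $\max_{i\neq j}|x_i-x_j|\ge 2(r+1)$), and $\Lambda(f,\cdot)$ is continuous on $(\Xct,d)$ by the equivalence with $(\Xct,\mathbf{D})$. So each $M_n$, $n\ge2$, is l.s.c. as a supremum of continuous maps. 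The problem is $M_1(\mu)=\|\mu\|$. You invoke ``the decomposition built in the proof of Theorem~\ref{prop:compactification}(b)'', but that construction is carried out only for sequences in $\wt{\Mc}_1$, not for arbitrary $\mu_n\in\Xct$; and the decomposition you write, $\mu_n=\sum_j\alpha_{n,j}+\beta_n$ with $\alpha_{n,j}*\delta_{a_{n,j}}\Rightarrow\alpha_j$, does not even parse for a general $\mu_n$ living on multiple layers of $\Nb\times\Rb^d$. The lower semi-continuity of $\|\cdot\|$ is true, but proving it forces you back to a hands-on matching argument: given $\epsilon>0$, concentrate all but $\epsilon$ of $\|\mu\|$ in finitely many balls $\{j\}\times B_R(0)$, take a $(\mu,\mu_n)$-triple with $r>R$, and use $I_r(\mu-\sum_k\mu_k)<\delta$ to show the matched submeasures already carry mass at least $\|\mu\|-2\epsilon-N\delta$, whence $\|\mu_n\|\ge\sum_k\|\mu_k\|>\|\mu\|-2\epsilon-N\delta$. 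This is essentially the technique the paper uses for (b) and (c) via the triple structure; your test-function route does not circumvent it for the linear term.

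In short: the paper works directly with the metric $d$ throughout part~(c), truncating to finitely many layers and matching. Your approach trades most of that work for the continuity of $\Lambda(f,\cdot)$, which is elegant, but the total-mass term $M_1$ cannot be captured by any $f\in\mathcal{F}_k$ and still requires the matching argument you were trying to avoid.
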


\begin{proof}
(a) Let $\mu \in \Xct$ and $\epsilon > 0$ be given.
Then, $I_{W_\delta (\mu)+\epsilon} (\mu) > 1- \delta$. By Lemma~\ref{lem: continuity of I_r}, there is $\delta_1 > 0$ such that
	\[ d(\mu, \nu) < \delta_1 \quad  \Rightarrow \quad |I_{W_\delta (\mu)+\epsilon} (\mu) - I_{W_\delta (\mu)+\epsilon} (\nu)| < \epsilon', \]
where $\epsilon' = \frac{1}{2}\big( I_{W_\delta (\mu)+\epsilon} (\mu) - 1 + \delta \big)$. For such $\nu$, we obtain $I_{W_\delta (\mu)+\epsilon} (\nu) > 1 - \delta$, which implies
	\[ W_\delta(\nu) \le   W_\delta (\mu) + \epsilon. \]

(b) Fix $\mu \in \Xct$ and $0<\epsilon_2 <G(\mu).$ We must show that there is $\epsilon_1>0$ such that
	\[ d(\mu, \nu)  < \epsilon_1  \quad \Rightarrow \quad G(\nu) > G(\mu) - \epsilon_2. \]
Without loss of generality, we may assume $\norm{\alpha_1} = G(\mu)$. 
There is $R = R(\epsilon_2)>0$ such that $\mu(B_R (1, 0)) = \alpha_1(B_R(0)) > G(\mu) - \epsilon_2/2>\epsilon_2/2$.
Choose $\epsilon_1 = \min(\epsilon_2/2, 2^{-R})$. Then, assuming $d(\mu, \nu)  < \epsilon_1$, there is a triple $(r, \phi = \{(\mu_k, \nu_k)\}_{k=1}^{n}, \vec{x})$ such that
$d_{r, \phi, \vec{x}} (\mu, \nu) < \epsilon_1$.
Since $\sep(\phi)>2r>2R$, there is at most one $\mu_k$ whose support intersects with $B_R((1,0))$. If there is no $\mu_k$ whose support intersects with $B_R((1,0))$, then
$\one_{B_R(1, 0)} \mu \leq \mu- \sum\limits_{k=1}^{n} \mu_k$. It follows
	\[ d_{r, \phi ,\vec{x}} (\mu, \nu) > I_r \Big(\mu-\sum\limits_{k=1}^{n} \mu_k \Big) \geq \alpha_1\big(B_R(0)\big) > \epsilon_2/2 \geq \epsilon_1, \]
which is a contradiction. Therefore, we may assume $\mu_1$ is a unique submeasure of $\alpha_1$ whose support overlaps with $B_R(0)$.
Since $\one_{B_R(1, 0)} (\mu-\mu_1) \leq \mu- \sum\limits_{k=1}^{n} \mu_k$, we have
	\[ (\mu - \mu_1) (B_R(1, 0)) \leq I_R \Big(\mu - \sum\limits_{k=1}^{n} \mu_k\Big) \leq  I_r \Big(\mu - \sum\limits_{k=1}^{n} \mu_k \Big). \]
Therefore,
\begin{align*}
	& G(\nu) \geq \norm{\nu_1}	 = \norm{\mu_1} \geq \mu_1(B_R(1,0))
	\\& \geq \mu(B_R(1,0)) -  I_r \Big(\mu -\sum\limits_{k=1}^{n} \mu_k \Big) > \big(\norm{\alpha_1} - \frac{\epsilon_2}{2} \big) - \frac{\epsilon_2}{2} =G(\mu) -\epsilon_2.
	\numberthis\label{ineq: lower bound for m(nu)}
\end{align*}

(c)  Fix $\mu \in \Xct$ . If $Q(\mu) = \infty$ (i.e. $G(\mu) = 1$), then for any $L>0$, by part (b), we can find $\epsilon_1>0$ such that 
	\[ d(\mu, \nu)< \epsilon_1 \quad \Rightarrow \quad G(\nu) > 1 - \frac{1}{L+1} \quad \Rightarrow \quad  Q(\nu) > L. \]
Now consider the case $Q(\mu) < \infty$ and fix $\epsilon_2 >0$. First, we can find $N$ such that 
	\[ \sum\limits_{i > N} \norm{\alpha_i} < \frac{\epsilon_2}{2+\epsilon_2}. \]
Since for any nonnegative $x_1,x_2,\ldots$ satisfying $\sum_i x_i <1$, we see that
\[
\sum_i\frac{x_i}{1-x_i}\le \sum_i \frac{x_i}{1-\sum_j x_j} = \frac{\sum_i x_i}{1-\sum_i x_i},
\]
which implies $\sum\limits_{i >N} \frac{\norm{\alpha_i}}{1-\norm{\alpha_i}} <\epsilon_2/2$.\\
We may assume $\norm{\alpha_i} \geq \norm{\alpha_{i+1}}$ for $ i \leq N-1$. Let
	\[ N_1 = \sup \Big\{ i\in\{1,\ldots,N\}: \norm{\alpha_i} > \frac{\eps_2}{2N+\eps_2} \Big\} \vee 0. \]
We can choose $R = R(\epsilon_2)>0$ such that 
	\[ \max\limits_{1 \leq i \leq N_1} \alpha_i(B_R (0)^c) < \frac{\epsilon_2}{4N+2\eps_2} . \] 
Applying the argument that we used in \eqref{ineq: lower bound for m(nu)}, we can find $\eps_1>0$ such that
$d(\mu,\nu)<\eps_1$ guarantees the existence of $(\mu, \nu)$-matching $\{(\mu_k, \nu_k)\}_{k=1}^{n}$  
such that $n \geq N_1$ and 
	\[ \norm{\nu_k} \geq \norm{\alpha_k} - \frac{\epsilon_2}{2N+\eps_2} \]
for all $1 \leq k \leq N_1$. It follows that
	\[ \frac{\norm{\nu_k}}{1-\norm{\nu_k}} \geq \frac{\norm{\alpha_k}}{1-\norm{\alpha_k}} - \frac{\epsilon_2}{2N+\eps_2}
	> \frac{\norm{\alpha_k}}{1-\norm{\alpha_k}} - \frac{\epsilon_2}{2N} \qquad \text{for}\,\, 1 \leq  k \leq N_1 . \]
On the other hand, by the definition of $N_1$, we have
	\[ \frac{\norm{\alpha_k}}{1-\norm{\alpha_k}} \leq \frac{\eps_2}{2N}\qquad \text{for}\,\, N_1 < k \leq N. \]
Therefore, we conclude
\begin{align*}
	 Q(\nu) &\geq \sum\limits_{k=1}^{n} \frac{\norm{\nu_k}}{1-\norm{\nu_k}}
	\geq \sum\limits_{k=1}^{N_1} \left(\frac{\norm{\alpha_k}}{1-\norm{\alpha_k}} - \frac{\epsilon_2}{2N}\right)
	+ \sum\limits_{k=N_1 +1}^{N} \left(\frac{\norm{\alpha_k}}{1-\norm{\alpha_k}} - \frac{\epsilon_2}{2N} \right)
	\\& > \sum\limits_{k=1}^{N} \frac{\norm{\alpha_k}}{1-\norm{\alpha_k}} - \frac{\eps_2}{2} > Q(\mu) - \eps_2.
\end{align*}
\end{proof}

\begin{lem}\label{lem: Q infty}
Assume $\beta > \beta_c$. Then, for any $\xi \in \Kc_0$,
 \[ \int Q(\mu) \xi(d\mu) = \infty. \]
\end{lem}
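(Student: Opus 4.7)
The plan is to compare $Q(\mu)$ with $\Eb\,Q(\hat{\mu})$, where $\hat{\mu}$ is the conditional update from~\eqref{eq: conditional update map, 2nd version}. Exploiting independence of the environment across layers of $\Nb \times \Rb^d$, I will establish $\Eb\,Q(\hat{\mu}) \geq Q(\mu)$ with strict inequality whenever $|S_\mu| \geq 2$, and then combine this with the fixed-point property $\Tc\xi = \xi$ together with the characterization of $\Kc_0$ in Theorem~\ref{thm: chracterstic of K}(b).

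\textbf{Step 1 (Key inequality).} Fix a representative $(\alpha_i)_{i\in\Nb}$ of $\mu$, set $p_i := \norm{\alpha_i}$,
\[
A_i := \int_{\Rb^d} e^{\beta Y(i,x)}\, \alpha_i * \ker(dx), \qquad A := \sum_{j} A_j + (1-\norm{\mu})e^{c(\beta)},
\]
so that by~\eqref{eq: conditional update map}, $\norm{\hat{\alpha}_i} = A_i/A$ and $\norm{\hat{\alpha}_i}/(1-\norm{\hat{\alpha}_i}) = A_i/(A-A_i)$. Since the fields $(Y(i, \cdot))_{i \in \Nb}$ are independent, $A_i$ is independent of $A-A_i$. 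Direct calculation gives $\Eb\,A_i = p_i e^{c(\beta)}$ and $\Eb(A-A_i) = (1-p_i)e^{c(\beta)}$, so by Tonelli and Jensen's inequality applied to the strictly convex function $x \mapsto 1/x$,
\[
\Eb\,Q(\hat{\mu}) = \sum_i \Eb A_i \cdot \Eb\frac{1}{A-A_i} \geq \sum_i \frac{p_i e^{c(\beta)}}{(1-p_i)e^{c(\beta)}} = Q(\mu).
\]
If $|S_\mu| \geq 2$, then for each $i \in S_\mu$ there exists $j \in S_\mu \setminus \{i\}$; a short variance computation using non-constancy and path continuity of $X$ shows $A_j$ is non-degenerate, hence $A - A_i$ is non-degenerate and Jensen's step is strict, giving $\Eb\,Q(\hat{\mu}) > Q(\mu)$.

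\textbf{Step 2 (Fixed-point argument).} Let $\xi \in \Kc_0$. Theorem~\ref{thm: chracterstic of K}(b) gives $\xi(\{\norm{\mu}=1\}) = 1$. If $\xi(\{|S_\mu|=1\}) > 0$, then on this set the unique $i \in S_\mu$ satisfies $p_i = 1$, so $Q(\mu) = +\infty$ and $\int Q\, d\xi = \infty$. Otherwise, $|S_\mu| \geq 2$ holds $\xi$-a.s., and Step~1 gives $\Eb\,Q(\hat{\mu}) > Q(\mu)$ $\xi$-a.s. Supposing $\int Q\, d\xi < \infty$ for contradiction and using $\Tc\xi = \xi$,
\[
\int Q\, d\xi = \int Q\, d\Tc\xi = \int \Eb\,Q(\hat{\mu})\, \xi(d\mu) > \int Q(\mu)\, \xi(d\mu) = \int Q\, d\xi,
\]
a contradiction.

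The main technical point requiring care is the non-degeneracy of $A_j$ when $p_j > 0$. This follows from
\[
\mathrm{Var}(A_j) = \iint c_\beta(x-y)\,(\alpha_j*\ker)^{\otimes 2}(dx,dy),
\]
where $c_\beta(x-y) := \mathrm{Cov}(e^{\beta Y(x)}, e^{\beta Y(y)})$; strict positivity at the origin (since $X$ is non-constant, $c_\beta(0) = e^{c(2\beta)} - e^{2c(\beta)} > 0$) and continuity nearby (by path continuity of $X$ together with the moment condition~\eqref{def: logarithmic moment generating function}) make the integral strictly positive whenever $\alpha_j*\ker$ is a non-zero measure.
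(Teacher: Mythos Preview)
Your proof is correct and follows essentially the same approach as the paper: both exploit the layerwise independence of the environment to factor $\Eb[A_i/(A-A_i)]$, apply Jensen's inequality to $x\mapsto 1/x$, and derive a contradiction from the fixed-point identity $\Tc\xi=\xi$. Your organization differs only cosmetically (you case-split on $\xi(\{|S_\mu|=1\})>0$ before assuming finiteness, whereas the paper assumes $\int Q\,d\xi<\infty$ first and deduces $\xi(\{|S_\mu|=1\})=0$), and you are more explicit than the paper about why $A_j$ is non-degenerate, which is a welcome clarification.
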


\begin{proof}
Let $\xi \in \Kc_0$ be given. Suppose, towards a contradiction, that
	\[\int Q(\mu) \xi(d\mu) < \infty.\]
This implies $\xi\big(\{\mu \in \Xct: |S_\mu| = 1\}\big) = 0$. Let $\eta = [\wt{\alpha}_i]$ be a $\Xct$-valued random variable whose law is $\xi$ and 
$Y$ be a random field with the same law as $X$ and independent of $\eta$. 
Recalling $\xi \big(\{\mu \in \Xct: \norm{\mu} = 1\}\big)=1$ in Theorem~\ref{thm: chracterstic of K}~(b), we have $\norm{\eta} = 1$ almost surely.
Since $\Tc \eta = \eta$, the law of
 	\[ \hat{\eta} (du) = \frac{e^{\beta Y(u)}\, \eta * \ker (du)}
	{ \int_{\Nb\times \Rb^d} e^{\beta Y(w)} \eta *\ker (dw)} \]
is also $\xi$. We observe that
\begin{align*}
	\Eb [Q(\hat{\eta}) | \eta] &= \sum_{i \in\Nb}  \Eb \bigg[ \frac{\int e^{\beta Y(i, x)}\, \alpha_i * \ker (dx)}
	{ \sum_{j \neq i } \int_{\Rb^d} e^{\beta Y(j, x)} \alpha_j *\ker (dx)} \Big| \eta \bigg]
	\\& =  \sum_{i \in\Nb} \Eb \Big[ \int e^{\beta Y(i, x)}\, \alpha_i * \ker (dx) \Big| \eta \Big]
	\Eb \Big[ \frac{1}{\sum_{j \neq i } \int_{\Rb^d} e^{\beta Y(j, x)} \alpha_j *\ker (dx)} \Big| \eta \Big]
	\\& > \sum_{i \in\Nb} \frac{\Eb \Big[ \int e^{\beta Y(i, x)}\, \alpha_i * \ker (dx) \Big| \eta \Big]}
	{\Eb \Big[\sum_{j \neq i } \int_{\Rb^d} e^{\beta Y(j, x)} \alpha_j *\ker (dx) \Big| \eta \Big]}
	=  \sum_{i \in\Nb} \frac{\norm{\alpha_i}}{\sum_{j \neq i } \norm{\alpha_j}} = Q(\eta),
\end{align*}
where we used the independence between $Y(i, \cdot)$ and $(Y(j, \cdot))_{j \neq i}$ in the second line, Jensen's inequality in the third line. 
Integrating with respect to $\eta$ of the both sides leads to a contradiction, which completes the proof.
\end{proof}

The following result is a reformulation of Theorem~\ref{thm: geometric localization, intro}:
\begin{thm}\label{thm: localization}
\mbox{}
\begin{enumerate}[label=\rm(\alph*), nolistsep]
	\item If $\beta > \beta_c$, then $(\rho_i)_{i \geq 0}$ is geometrically localized with positive density.
	\item If $\beta \leq \beta_c$, then for any $\delta \in (0, 1)$ and any $K>0$,
\beq \label{eq: endpoint measure is not geometrically localized}
	\lim\limits_{n \rightarrow \infty} \frac{1}{n}\sum\limits_{i=0}^{n-1} \one_{\{\rho_i \in \mathcal{G}_{\delta, K}\}} = 0 \quad \mathbf{P} \text{-} a.s.
\eeq
\end{enumerate}
\end{thm}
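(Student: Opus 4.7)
For part~(b), Theorem~\ref{thm: chracterstic of K} gives $\Kc_0 = \{\dzero\}$, and thus Theorem~\ref{thm: convergence of psi to K0} yields $\Wc(\psi_n, \dzero) \to 0$ $\mathbf{P}$-a.s. By Lemma~\ref{lem: continuity of I_r}, the functional $I_K : \Xct \to [0,1]$ is continuous with $I_K(\mathbf{0}) = 0$, so $\int I_K \, d\psi_n \to 0$ almost surely. Since $\rho \in \mathcal{G}_{\delta,K}$ forces $I_K(\rho) \ge \sup_x \rho(B_K(x)) > 1-\delta$ by~\eqref{eq: relation between I_r and the heaviest ball}, Markov's inequality gives
\[
\frac{1}{n}\sum_{i=0}^{n-1}\one_{\{\rho_i\in \mathcal{G}_{\delta,K}\}} \le \psi_n(\{I_K > 1-\delta\}) \le \frac{1}{1-\delta}\int I_K \, d\psi_n \xrightarrow[n\to\infty]{} 0.
\]

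For part~(a), the other half of~\eqref{eq: relation between I_r and the heaviest ball} yields $I_K(\rho) > 1-\delta \Rightarrow \rho \in \mathcal{G}_{\delta, K+1}$, so $\frac{1}{n}\sum \one_{\{\rho_i \in \mathcal{G}_{\delta, K+1}\}} \ge \psi_n(\{I_K > 1-\delta\})$. The set $\{I_K > 1-\delta\}$ is open (as $I_K$ is continuous), hence $\xi \mapsto \xi(\{I_K > 1-\delta\})$ is lower semi-continuous on $\Pc(\Xct)$, and combining Theorem~\ref{thm: convergence of psi to K0} with the Portmanteau theorem reduces the task to producing, for each fixed $\delta>0$, some $K=K(\delta)<\infty$ and $\theta = \theta(\delta) > 0$ with
\[
\inf_{\xi \in \Kc_0} \xi(\{I_K > 1-\delta\}) \ge \theta.
\]

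The crucial step is the uniform positivity bound $\inf_{\xi \in \Kc_0} \xi(\{G > 1-\delta\}) > 0$ for every $\delta > 0$. For any $\mu = [\wt{\alpha}_i] \in \Xct$ with $\|\mu\| = 1$, the elementary estimate
\[
Q(\mu) = \sum_i \frac{\|\alpha_i\|}{1-\|\alpha_i\|} \le \frac{\|\mu\|}{1-G(\mu)} = \frac{1}{1-G(\mu)}
\]
yields the inclusion $\{Q > 1/\delta\} \cap \{\|\mu\|=1\} \subset \{G > 1-\delta\}$. For $\xi \in \Kc_0$, Theorem~\ref{thm: chracterstic of K} gives $\xi(\{\|\mu\|=1\}) = 1$ while Lemma~\ref{lem: Q infty} gives $\int Q \, d\xi = +\infty$, and consequently $\xi(\{Q > 1/\delta\}) > 0$ (otherwise the integral would be bounded by $1/\delta$), whence $\xi(\{G > 1-\delta\}) > 0$. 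Since $\{G > 1-\delta\}$ is open by lower semi-continuity of $G$, the map $\xi \mapsto \xi(\{G > 1-\delta\})$ is itself lower semi-continuous and attains its infimum $\theta^*$ on the compact set $\Kc_0$, with $\theta^*>0$.

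It remains to transfer the bound from $G$ to $I_K$. Since $I_K$ is non-decreasing in $K$ with $I_K \to G$ pointwise, the open sets $\{I_K > 1-\delta\}$ increase to $\{G > 1-\delta\}$. If the bound $\inf_\xi \xi(\{I_K > 1-\delta\}) \ge \theta^*/2$ failed for every $K$, one could extract $\xi_K \in \Kc_0$ with $\xi_K(\{I_K > 1-\delta\}) < \theta^*/2$ and a subsequential limit $\xi^\ast \in \Kc_0$; for any fixed $K_0$, lower semi-continuity of $\xi \mapsto \xi(\{I_{K_0} > 1-\delta\})$ together with $\xi_K(\{I_{K_0} > 1-\delta\}) \le \xi_K(\{I_K > 1-\delta\}) < \theta^*/2$ for $K\ge K_0$ along the subsequence would yield $\xi^\ast(\{I_{K_0} > 1-\delta\}) \le \theta^*/2$, and then $K_0\to\infty$ would give $\xi^\ast(\{G > 1-\delta\}) \le \theta^*/2 < \theta^*$, contradicting the definition of $\theta^*$. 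The main obstacle is isolating the right auxiliary functional: it is the Jensen-type bound $Q\le 1/(1-G)$ on $\{\|\mu\|=1\}$ together with Lemma~\ref{lem: Q infty} that converts the non-integrability of $Q$ (i.e., the nontriviality of $\Kc_0$ beyond high temperature) into a positive-measure event for the open set $\{G > 1-\delta\}$; the remaining compactness/semi-continuity steps are routine.
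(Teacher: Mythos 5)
Your argument is correct and follows essentially the same route as the paper's: part (b) reduces to $\Kc_0=\{\dzero\}$ plus a Markov bound on a concentration functional, and part (a) rests on Lemma~\ref{lem: Q infty} to show $\xi(\{G>1-\delta\})>0$ for every $\xi\in\Kc_0$, then uses openness of that set, lower semi-continuity, and compactness of $\Kc_0$ to extract a uniform $\theta$ and a finite $K$. The only (cosmetic) differences are that you work with the open sets $\{I_K>1-\delta\}$ and a subsequence-extraction argument where the paper uses $\mathcal{V}_{\delta,K}=\{W_\delta<K\}$ and a finite subcover, and that in (b) you argue directly with $I_K$ rather than citing Theorem~\ref{thm: localization of directed polymers}.
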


\begin{proof}
(a) Let $\delta >0 $ be given. The left-hand side of \eqref{def: geometric localization} can be expressed in terms of the empirical measure $\psi_n$:
	 \[ \frac{1}{n} \sum\limits_{i=0}^{n-1} \one_{\{\rho_i \in \mathcal{G}_{\delta, K}\}}
	= \frac{1}{n} \sum\limits_{i=0}^{n-1} \delta_{\rho_i} (\mathcal{G}_{\delta, K})
	=\psi_{n} (\mathcal{G}_{\delta, K}). \]
Therefore, it suffices to show that there are $K>0$ and $\theta>0$ such that
\beq  \label{ineq: representation of geometric localization via empirical measure}
	\liminf\limits_{n \rightarrow \infty} \psi_{n} (\mathcal{G}_{\delta, K}) \geq \theta.
\eeq 
To see this, let us define 
\beq \label{eq:def of U_delta}
	\mathcal{U}_\delta := \{\mu \in \Xc : G(\mu) > 1 -\delta \} = \bigcup_{K=1}^{\infty} \mathcal{V}_{\delta, K}.
\eeq
By Lemma \ref{lem: Q infty}, for all $\xi \in \Kc_0$,
	\[\xi(\mathcal{U}_\delta)>0. \]
The lower semi-continuity of $G$ implies that $\mathcal{U}_\delta$ is an open set, so the map $\xi \rightarrow \xi (\mathcal{U}_\delta)$ is also lower semi-continuous. Together with the compactness of $\Kc_0$,
we have
	\[\theta := \inf\limits_{\xi \in \Kc_0} \xi(\mathcal{U}_\delta) >0. \]
For each $\xi \in \Kc_0$, we can use \eqref{eq:def of U_delta} and monotonicity of $\mathcal{V}_{\delta, K}$ in~$K$ to choose $K=K_\xi < \infty$ such that
	\[ \xi(\mathcal{V}_{\delta, K}) > (1-\epsilon)\theta. \]
The upper semi-continuity of $W_\delta$ implies that the map $\xi \rightarrow \xi (\mathcal{V}_{\delta, K})$ is lower semi-continuous. Hence, there is $r_\xi >0$ such that
	\[ \inf\limits_{\zeta \in \mathcal{B}(\xi, r_\xi)} \zeta(\mathcal{V}_{\delta, K_\xi}) >(1-\epsilon)\theta. \]
Since $\Kc_0$ is compact and $\{\mathcal{B}(\xi, r_{\xi}/2)\}_{\xi \in \Kc_0}$ is a open covering of $\Kc_0$, 
we can choose a finite sub-covering $\{\mathcal{B}(\xi_i, r_{\xi_i}/2)\}_{i=1}^{n}$.
Now let $K = \max\limits_{1\leq i \leq n} \{K_{\xi_i}\}$, $r = \min\limits_{1\leq i \leq n} \{r_{\xi_i}/2\}$. 
Using the finite open covering of $\Kc_0$ above and \eqref{eq: relation between set G and set V}, we have
	\[ \Wc(\xi, \Kc_0) < r \quad \Rightarrow \quad \xi (\mathcal{V}_{\delta, K}) > (1-\epsilon)\theta 
	\quad \Rightarrow \quad \xi (\mathcal{G}_{\delta, K+1}) > (1-\epsilon)\theta. \]
Notice that $\lim\limits_{n \rightarrow \infty} \Wc(\psi_n, \Kc_0) \rightarrow 0$ from Theorem~\ref{thm: convergence of psi to K0}.
Therefore, letting $\epsilon \downarrow 0$, we obtain \eqref{ineq: representation of geometric localization via empirical measure}. 
\medskip

(b) Suppose $\beta \leq \beta_c$ and let $\delta \in (0, 1)$, $K>0$, and $\epsilon>0$ be given. We write
	\[ G_{n} = \Big\{i\in \{0,\ldots,n-1\}: \max\limits_{x\in\Rb^d} \rho_i \big(B_K(x)\big) >1-\delta\Big\}. \]
Then, 	\eqref{eq: endpoint measure is not geometrically localized} is equivalent to
\beq \label{eq: equivalent form of geometric delocalization}
	\lim\limits_{n \rightarrow \infty} \frac{|G_n|}{n} = 0 \quad \mathbf{P} \text{-} a.s.
\eeq
Recalling Theorem~\ref{thm: localization of directed polymers}, we can write
	\[ \lim\limits_{n \rightarrow \infty} \frac{1}{n} \sum\limits_{i=0}^{n-1} \max\limits_{x\in\Rb^d} \rho_i \big(B_K (x) \big) = 0  \quad \mathbf{P} \text{-} a.s.\]
Therefore, there is $N \in \Nb$ such that
	\[  \frac{1}{n} \sum\limits_{i=0}^{n-1} \max\limits_{x \in \Rb^d} \rho_i \big(B_K(x)\big) < (1-\delta)\epsilon \quad \text{for all}\,\, n \geq N, \]
and for such $n$, we have $ |G_n|/n <\epsilon $. Letting $n \rightarrow \infty$ and then $\eps \downarrow 0$, we obtain \eqref{eq: equivalent form of geometric delocalization}.
\end{proof}

\appendix
\section{An auxiliary coupling lemma}

Here we formally state and prove a coupling lemma used in Section~\ref{sec: continuity-cond-update}, we used a statement 

Let us recall that the LU topology (topology of locally uniform convergence) on the space $C[\Rb^d, \Rb]$ 
of all continuous real-valued functions on $\Rb^d$
is defined by the following metric: 
\[ \rho(\omega_1, \omega_2) = \sum\limits_{n=1}^{\infty} \frac{1}{2^n}\Big(\sup\limits_{\sss{\euc{x} \leq n}} |\omega_1 (x) - \omega_2 (x)| \wedge 1 \Big) .\]
Let $\mathscr{H} = \mathscr{B}(C[\Rb^d, \Rb])$ be the Borel sigma-algebra on $C[\Rb^d, \Rb]$ equipped with LU topology.
Let $\mathbb{P}$ be the distribution of $X(\cdot)$ on $(\Omega, \mathscr{F})$. Under $\mathbb{P}$, the canonical process $Y_x(\omega) = \omega (x)$ is a  distributional copy of $X(x)$.
\begin{lem}
\label{lem:coupling-using-m-dep}
Suppose closed sets $U_1,  U_2, \cdots, U_n \subset \Rb^d$ satisfy $\min\limits_{i\neq j} \deuc(U_i, U_j) >\rdep$.
Then, there is an extended probability space $(\Omega', \mathscr{F}', \mathbb{P}')$ and stationary processes $Y^{(1)}, Y^{(2)}, \cdots Y^{(n)}$ defined on this space such that
\begin{enumerate}[label=(\arabic*), nolistsep]
\item	$Y^{(1)},\ldots, Y^{(n)}$ are mutually independent and have the same distribution as $X(\cdot)$;
\item $\mathbb{P}'\{Y^{(i)}_x = Y_x\ \textrm{\rm for all}\ x \in U_i\}=1$ for all $i=1,\ldots,n$.
\end{enumerate}
\end{lem}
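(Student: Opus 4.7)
The plan is to construct $Y^{(1)},\ldots,Y^{(n)}$ by disintegrating the law of $Y$ with respect to each restriction $V_i:=Y|_{U_i}$ separately and then resampling the ``complement'' of $U_i$ independently across $i$ using fresh auxiliary randomness. The hypothesis $\min_{i\ne j}\deuc(U_i,U_j)>\rdep$ will ensure that $V_1,\ldots,V_n$ are \emph{mutually} independent under $\mathbb{P}$, which is the ingredient that will make the resulting copies $Y^{(i)}$ unconditionally independent.

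First I would record that $\big(C[\Rb^d,\Rb],\rho\big)$ is Polish: separability follows since, e.g., polynomials with rational coefficients restricted to balls of integer radius form a countable dense set in the LU topology, and completeness of $\rho$ is standard. Each $V_i$ then takes values in the Polish space $C(U_i,\Rb)$. By the existence of regular conditional distributions in the Polish setting, for each $i$ there is a Markov kernel $Q_i$ from $C(U_i,\Rb)$ to $C[\Rb^d,\Rb]$ such that $\mathbb{P}(Y\in\cdot\mid V_i)=Q_i(V_i,\cdot)$ a.s., and in particular $Q_i\bigl(v,\{f:f|_{U_i}=v\}\bigr)=1$ for $\mathbb{P}_{V_i}$-a.e.\ $v$.

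Next I would upgrade pairwise independence of the $V_i$'s (immediate from $\rdep$-dependence) to mutual independence by induction. Since $\deuc\bigl(U_1,\bigcup_{j\ge 2}U_j\bigr)=\min_{j\ge 2}\deuc(U_1,U_j)>\rdep$, the $\rdep$-dependence of $X$ applied to the pair $\bigl(U_1,\bigcup_{j\ge 2}U_j\bigr)$ shows that $V_1$ is independent of $\sigma(V_2,\ldots,V_n)$; iterating on $(V_2,\ldots,V_n)$ yields the full mutual independence of $V_1,\ldots,V_n$.

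With these ingredients in hand, the construction is routine. Take $(\Omega',\mathscr{F}',\mathbb{P}'):=\bigl(\Omega\times[0,1]^n,\mathscr{F}\otimes\mathscr{B}([0,1]^n),\mathbb{P}\otimes\mathrm{Leb}^{\otimes n}\bigr)$ and choose measurable maps $\Psi_i:C(U_i,\Rb)\times[0,1]\to C[\Rb^d,\Rb]$ parameterizing $Q_i$ in the sense that $\Psi_i(v,\cdot)_\ast\mathrm{Leb}=Q_i(v,\cdot)$ for every $v$ (a standard fact for kernels between Polish spaces). Define $Y^{(i)}(\omega,u_1,\ldots,u_n):=\Psi_i\bigl(V_i(\omega),u_i\bigr)$. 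Conclusion~(2) is automatic from the support property of $Q_i$. For (1), the unconditional law of $Y^{(i)}$ equals $\int Q_i(v,\cdot)\,\mathbb{P}_{V_i}(dv)=\mathbb{P}$, so $Y^{(i)}\stackrel{d}{=}X$ and is in particular stationary. Mutual independence of the $Y^{(i)}$'s follows from
\[
\mathbb{P}'\Big(\bigcap_{i=1}^n\{Y^{(i)}\in A_i\}\Big)=\mathbb{E}\Big[\prod_{i=1}^n Q_i(V_i,A_i)\Big]=\prod_{i=1}^n\mathbb{E}[Q_i(V_i,A_i)]=\prod_{i=1}^n\mathbb{P}(X\in A_i),
\]
where the middle equality uses mutual independence of $V_1,\ldots,V_n$ together with independence of $u_1,\ldots,u_n$. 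The only substantive point in the whole argument is the passage from pairwise to mutual independence of the $(V_i)_{i=1}^n$; the rest is a straightforward application of regular conditional distributions on Polish spaces.
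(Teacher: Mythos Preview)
Your proof is correct and follows the same essential strategy as the paper: disintegrate the law of $Y$ with respect to its restriction to each $U_i$ via regular conditional distributions (available since $C[\Rb^d,\Rb]$ is Polish), then resample the complements independently. The technical packaging differs only superficially---the paper reduces to $n=2$ and iterates, building the coupling on the product space $\Omega^{\{0,1,2\}}$ with explicit kernels $Q_0,Q_1,Q_2$, whereas you treat general $n$ at once on $\Omega\times[0,1]^n$ using the noise-outsourcing representation of kernels, after first upgrading pairwise to mutual independence of the restrictions $V_i$.
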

\begin{rmk}\label{rem:regular_conditional_prob}
\rm Our proof of this lemma uses regular conditional probabilities.~Their existence is guaranteed 
by~our choice of $C[\Rb^d, \Rb]$ as the space of realizations but in principle we could impose weaker requirements on the potential than continuity in Section~\ref{sec: model}.
\end{rmk}

\begin{subproof}[Proof of Lemma~\ref{lem:coupling-using-m-dep}.]
It suffices to give a construction for $n=2$ since then one can iterate it to prove the lemma for general $n$.
Let us define $U_0=U_1\cup U_2$ and  $\mathscr{H}_k = \mathscr{B}\big(C[U_k, \Rb]\big)$ (the Borel sigma algebra on $C[U_k, \Rb]$ equipped with uniform topology), $k=0,1,2$.
For $k=0,1,2$, there is a regular conditional probability $Q_k$  defined on $\Omega  \times \mathscr{H}$ such that $Q_k (\omega, A) = \mathbb{P}(A\,| \mathscr{H}_k) (\omega)$. 
For any $A \in \mathscr{B}(\Omega)$, since $Q_k (\cdot, A)$ is $\mathscr{H}_k$-measurable, 
$Q_k$ depends only on  $u_k := \omega | _{U_k} \in C[U_k, \Rb]$. Therefore, $Q_k$ can be viewed as a function defined on $C[U_k, \Rb] \times \mathscr{H}$.
Let $T_k$ be the projection of $\Omega$ on $C[U_k, \Rb]$ and define $\mu_k(du_k) = \mathbb{P}\big(T_k ^{-1} (du_k)\big)$.\\
Note that by the $\rdep$-dependence of $Y$, $\mu_0=\mu_1\otimes \mu_2$.
Let us now take $\Omega' = \Omega^{\{0,1,2\}}, \mathscr{H}' = \mathscr{B}(\Omega')$ and define $\mathbb{P}'$ as
	\[ \mathbb{P}'(A \times B \times C) = \int Q_0(u_1, u_2, A) Q_1(u_1, B) Q_2(u_2, C) \mu_1(du_1) \mu_2(du_2),	\]
For $i=0,1,2,$ we denote the $i$-th marginal distribution of $\mathbb{P}'$ by $\mathbb{P}'_i$ 
and set $Y_x^{(i)}(\omega) = \omega_i (x)$ for $\omega =(\omega_0, \omega_1, \omega_2) \in \Omega'$. 
It is easy to check that $\mathbb{P}'_0 = \mathbb{P}'_1 = \mathbb{P}'_2 = \mathbb{P}$.
In addition, if we let $\mathbb{P}'_{ij}$ be the marginal distribution of $\mathbb{P}'$ with respect to the $i$-th and $j$-th arguments for $0 \leq i < j \leq 2$,
 then $\mathbb{P}'_{12} = \mathbb{P}'_1 \otimes \mathbb{P}'_2$, i.e., $Y^{(1)}$ and $Y^{(2)}$ are independent and the proof of part~(1) is completed.\\
Let us define $\bar{\mu}_{1}(du\,dv) = \mathbb{P}_{01} ' \big((T_1 \times T_1)^{-1} (du\,dv)\big)$ on $C[U_1, \Rb]^2$.
For $A, B \in \mathscr{B} (C[U_1, \Rb])$, we have
\begin{align*}
	&\bar{\mu}_1(A \times B) = \int Q_0\big(u_1, u_2, T_1^{-1}(A)\big) Q_1\big(u_1, T_1^{-1}(B)\big) \mu_1(du_1) \mu_2(du_2) 
	\\& = \int Q_1(u_1, T_1^{-1}(A)) Q_1(u_1, T_1^{-1}(B)) \mu_1(du_1) 
	= \int \one_A (u_1)  \one_B (u_1)  \mu_1(du_1) = \mu_1 (A\cap B),
\end{align*}
which implies that all the mass of $\bar{\mu}_{1}$ lie on the diagonal of $C[U_1, \Rb]^2$, i.e. 
	\[ \bar{\mu}_{1}\big(\{(u, v) \in C[U_1, \Rb]^2 : u=v\}\big) =1. \]
Therefore, $Y^{(0)}_x = Y^{(1)}_x $ $\mathbb{P}'$-a.s. for all $x \in U_1$.
Similarly, we also obtain $Y^{(0)}_x = Y^{(2)}_x $ $\mathbb{P}'$-a.s. for all $x \in U_2$. Identifying $Y^{(0)}$ with $Y$ completes the proof.
\end{subproof}

\bibliographystyle{alpha}
\bibliography{}
\end{document}